\title[Convolutionally observed diffusion processes]{Parametric estimation for convolutionally observed diffusion processes}
\author[S H Nakakita]{Shogo H Nakakita$^{1}$}
\author[M Uchida]{Masayuki Uchida$^{1,2}$}
\address{$^{1}$Graduate School of Engineering Science, Osaka University}
\address{$^{2}$Center for Mathematical Modeling and Data Science, Osaka University}
\begin{document}
\maketitle

\begin{abstract}
    We propose a new statistical observation scheme of diffusion processes named convolutional observation, where it is possible to deal with smoother observation than ordinary diffusion processes by considering convolution of diffusion processes and some kernel functions with respect to time parameter. We discuss the estimation and test theories for the parameter determining the smoothness of the observation, as well as the least-square-type estimation for the parameters in the diffusion coefficient and the drift one of the latent diffusion process. In addition to the theoretical discussion, we also examine the performance of the estimation and the test with computational simulation, and show an example of real data analysis for one EEG data whose observation can be regarded as smoother one than ordinary diffusion processes with statistical significance.
\end{abstract}

\section{Introduction}

We consider a $d$-dimensional diffusion process defined by the following stochastic differential equation,
\begin{align*}
    \mathrm{d}X_{t}=b\left(X_{t},\beta\right)\mathrm{d}t+a\left(X_{t},\alpha\right)\mathrm{d}w_{t},\ X_{-\lambda}=x_{-\lambda},
\end{align*}
where $\lambda> 0$, $\left\{w_{t}\right\}_{t\ge -\lambda}$ is a standard $r$-dimensional Wiener process, $x_{-\lambda}$ is an $\mathbf{R}^{d}$-valued random variable independent of  $\left\{w_{t}\right\}_{t\ge -\lambda}$, $\alpha\in\Theta_{1}$ and $\beta\in\Theta_{2}$ are unknown parameters, $\Theta_{1}\subset\mathbf{R}^{m_{1}}$ and $\Theta_{2}\subset\mathbf{R}^{m_{2}}$ are compact and convex parameter spaces, $a:\mathbf{R}^{d}\times\Theta_{1}\to\mathbf{R}^{d}\otimes\mathbf{R}^{r}$ and $b:\mathbf{R}^{d}\times\Theta_{2}\to\mathbf{R}^{d}$ are known functions. 
Our concern is statistical estimation for $\alpha$ and $\beta$ from observation.
$\theta_{\star}=\left(\alpha_{\star},\beta_{\star}\right)$ denotes the true value of $\theta:=\left(\alpha,\beta\right)$.

We denote the observation as the discretised process $\left\{\overline{X}_{ih_{n},n}:i=0,\ldots,n\right\}$ with discretisation step $h_{n}>0$ such that $h_{n}\to0$ and $T_{n}:=nh_{n}\to\infty$, where the convoluted process $\left\{\overline{X}_{t,n}\right\}_{t\ge0}$ is defined as
\begin{align*}
    \overline{X}_{t,n}:=\int_{t-\overline{\rho}h_{n}}^{t}V_{h_{n}}\left(t-s\right)X_{s}\mathrm{d}s=\int_{\mathbf{R}}V_{h_{n}}\left(t-s\right)X_{s}\mathrm{d}s=\left(V_{h_{n}}\ast X\right)\left(t\right),
\end{align*}
where $V_{h_{n}}$ is an $\mathbf{R}^{d}\otimes\mathbf{R}^{d}$-valued kernel function whose support is a subset of $\left[0,\overline{\rho}h_{n}\right]$, and $\overline{\rho}>0$ such that $\sup_{n}\overline{\rho}h_{n}\le\lambda$. In this paper, we specify
$V_{h_{n}}=V_{\rho,h_{n}}$ which is a parametric kernel function whose support is a subset of $\left[0,\overline{\rho}h_{n}\right]$ defined as
\begin{align*}
    V_{\rho,h_{n}}^{\left(i,j\right)}\left(t\right):=\begin{cases}
    \left(\rho^{\left(i\right)}h_{n}\right)^{-1}\mathbf{1}_{\left[0,\rho^{\left(i\right)}h_{n}\right]}\left(t\right) & \text{if }i=j\text{ and }\rho^{\left(i\right)}>0,\\
    \delta\left(t\right) & \text{if }i=j\text{ and }\rho^{\left(i\right)}=0,\\
    0 &\text{if }i\neq j,
    \end{cases}
\end{align*}
$\delta\left(t\right)$ is the Dirac-delta function, $\rho=\left[\rho^{\left(1\right)},\ldots,\rho^{\left(d\right)}\right]^{T}\in\Theta_{\rho}:=\left[0,\overline{\rho}\right]^{d}$ is the smoothing parameter determining the smoothness of observation. That is to say, the observed process is defined as follows:
\begin{align*}
    \overline{X}_{ih_{n},n}^{\left(\ell\right)}=\begin{cases}
    \left(\rho^{\left(\ell\right)}h_{n}\right)^{-1}\int_{\left(i-\rho^{\left(\ell\right)}\right)h_{n}}^{ih_{n}}X_{s}^{\left(\ell\right)}\mathrm{d}s&\text{ if }\rho^{\left(\ell\right)}>0,\\
    X_{ih_{n}}^{\left(\ell\right)}&\text{ if }\rho^{\left(\ell\right)}=0,
    \end{cases}
\end{align*}
for all $\ell=1,\ldots,d$.
Let us consider both the problems that (i) $\rho$ is a known parameter, and 
(ii) $\rho$ is an unknown one 
and this is estimated by observation $\left\{\overline{X}_{ih_{n},n}\right\}$, and the parameter space is denoted as $\Xi:=\Theta_{\rho}\times\Theta$.

When assuming $\rho$ as a known parameter, we can find researches for parametric estimation for $\alpha$ and/or $\beta$ based on observation schemes which can be represented as special cases for some specific $\rho$. If $\rho=\mathbf{0}$, our scheme is simply equivalent to parametric inference based on discretely observed diffusion processes $\left\{X_{ih_{n}}:i=0,\ldots,n\right\}$ studied in \citet{Florens-Zmirou-1989, Yoshida-1992, Bibby-Sorensen-1995, Kessler-1997, Kessler-Sorensen-1999, Yoshida-2011, Uchida-Yoshida-2012, Uchida-Yoshida-2014} and references therein. If $\rho=\left[1,\ldots,1\right]^{T}$, we can regard the problem as parametric estimation for integrated diffusion processes discussed in \citet{Gloter-2000, Ditlevsen-Sorensen-2004, Gloter-2006, Gloter-Gobet-2008, Sorensen-2011}. Even for the case $\rho=\left[0,\ldots,0,1,\ldots,1\right]^{T}$ where some axes correspond to direct observation and the others do to integrated observation, we give consistent estimators for $\alpha$ and $\beta$ by considering the scheme of convolutionally observed diffusion processes and this is one of the contributions of our study.

What is more, our contribution is to consider the scheme where $\rho$ is unknown and succeed in representation of the microstructure noise which makes the observation smoother than the latent diffusion process itself.
As \citet{Zhang-et-al-2005} studies, the existence of microstructure noise in financial data affects realised volatilities to increase as the subsampling frequency gets higher \citep[for instance, see Figure 7.1 in][]{Ait-Sahalia-Jacod-2014}. However, realised volatilities of some biological data such as EEG decrease as subsampling frequency increases: for instance, some time series data for the 2nd participant in the dataset named Two class motor imagery (002-2014) of \citet{BNCI-2014} show clear tendency of decreasing realised volatilities as subsampling frequency increases. 
Figure \ref{fig:BNCI:paths} shows
the path of the 2nd axis of the data S02E.mat \citet{BNCI-2014} for all 222 seconds (the observation frequency is 512Hz, 
and hence the entire data size is 113664) and that for the first one second;
it seems to perturb like a diffusion process.
\begin{figure}[ht]
    \centering
    \includegraphics[width=.45\textwidth]{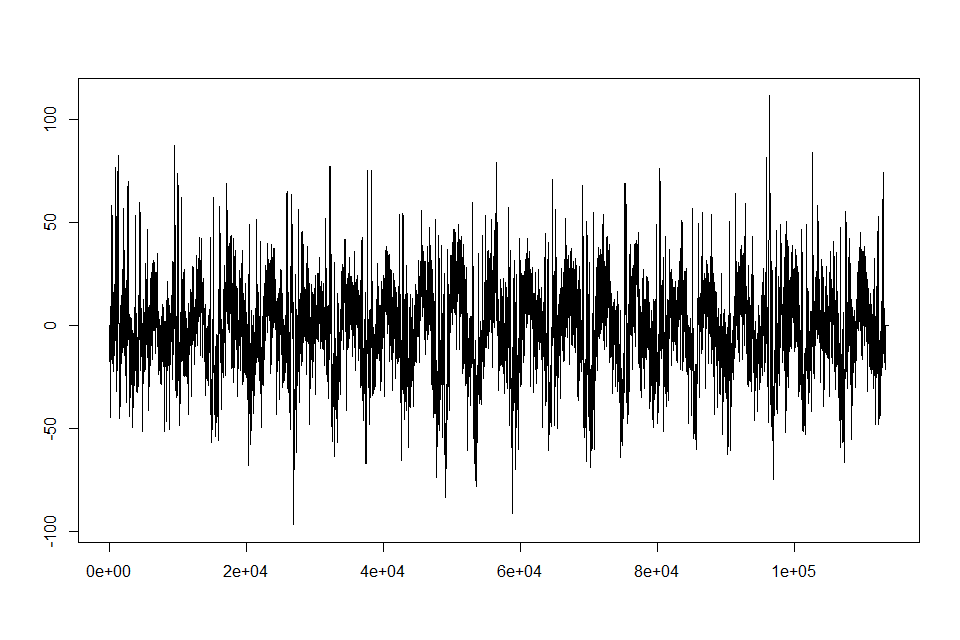}
    \includegraphics[width=.45\textwidth]{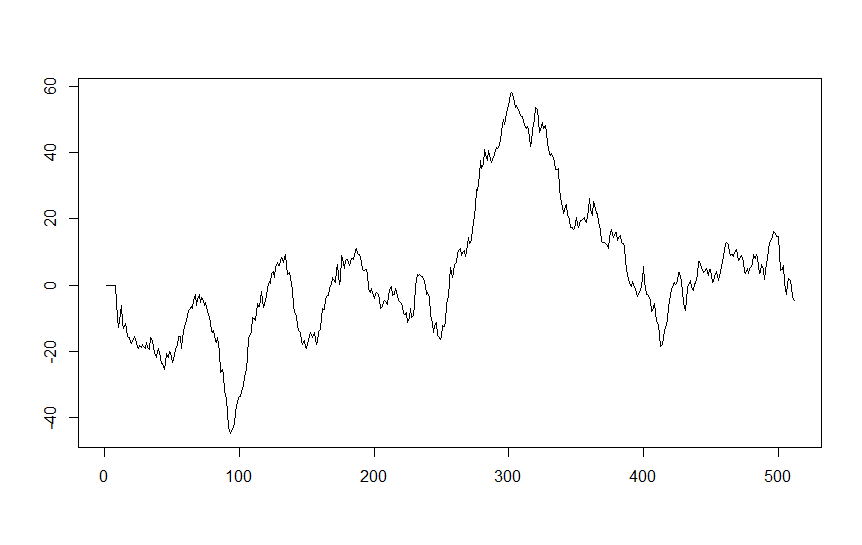}
    \caption{The path of the second column of S02E.mat of \citet{BNCI-2014} 
    for all 222 seconds (left) and the first one second (right).}
    \label{fig:BNCI:paths}
\end{figure}
We define realised volatilities with subsampling as for a one dimensional observation $\left\{Y_{i}\right\}_{i=0,\ldots,n}$,
\begin{align*}
    RV\left(k\right)=\sum_{1\le i\le \left[n/k\right]}\left(Y_{ik}-Y_{\left(i-1\right)k}\right)^{2},
\end{align*}
where $k=1,\ldots,100$ is the subsampling frequency parameter, and provide a plot of the realised volatilities the 2nd axis of the data S02E.mat in Figure \ref{fig:BNCI:RV}: 
\begin{figure}[ht]
    \centering
    \includegraphics[width=.8\textwidth]{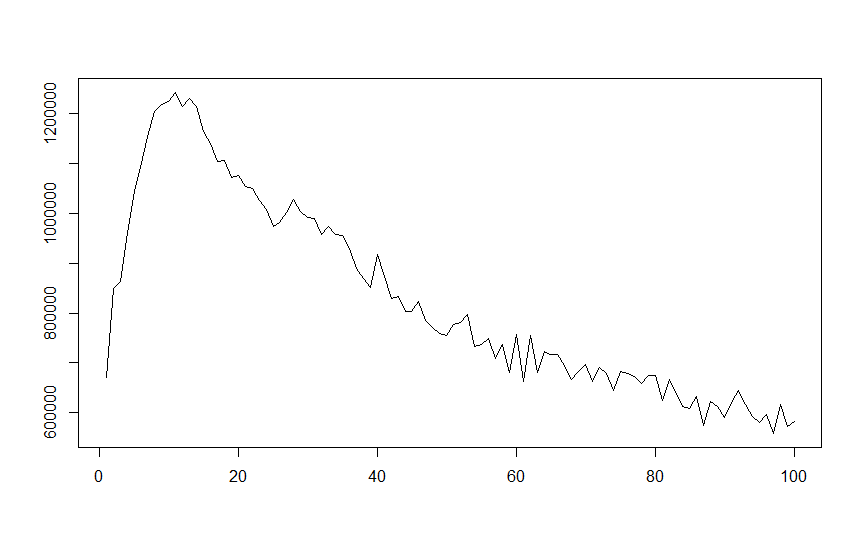}
    \caption{Realised volatilities with subsampling of the 2nd axis of data S02E.mat in Two class motor imagery (002-2014) \citep{BNCI-2014}.}
    \label{fig:BNCI:RV}
\end{figure}
the altitudes of the graph represented in the $y$-axis correspond to the values of the realised volatilities $RV\left(k\right)$ with subsampling at every $k$ observation represented in the $x$-axis. It is observable that the increasing subsampling frequency results in decreasing realised volatilities, which cannot be explained by the existent major microstructure noises \citep[e.g., see][]{Jacod-et-al-2009, Jacod-et-al-2010, Bibinger-et-al-2014, Koike-2016, Ogihara-2018}. To explain this phenomenon, we consider the smoother process than the latent one though ordinarily microstructure noises make the observation rougher than the latent process, because quadratic variation of a sufficiently smooth function is zero. One way to deal with smoother observation than the latent state is convolutional observation. As a concrete example, we show a convolutionally observed diffusion process and its characteristics in realised volatilities: let us consider the following 1-dimensional stochastic differential equation defining an Ornstein-Uhlenbeck (OU) process:
\begin{align*}
    \mathrm{d}X_{t} = -20X_{t}\mathrm{d}t+10\mathrm{d}w_{t}, X_{-\lambda}=0,
\end{align*}
where $\lambda=10^{-2/5}$. We simulate the stochastic differential equation by Euler-Maruyama method \citep[see][]{Iacus-2008} with parameters $n=10^{7}$, $h_{n}=10^{-5}$, and $T_{n}=10^{2}$ and its convolution approximated by summation with the smoothing parameter $\rho=10$ (for details, see Section 5). 
Figure \ref{fig:intro:latentAndConvoluted} shows the latent diffusion process and the convoluted observation on $\left[0,1\right]$, and we can see that the observation is indeed smoothed compared to the latent state.
\begin{figure}[ht]
    \centering
    \includegraphics[width=.45\textwidth]{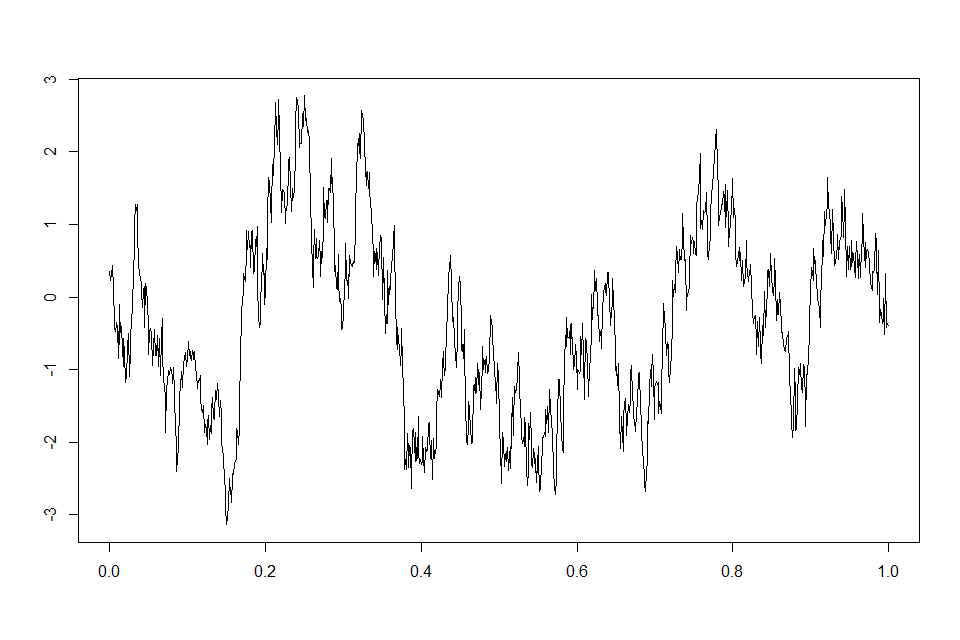}
    \includegraphics[width=.45\textwidth]{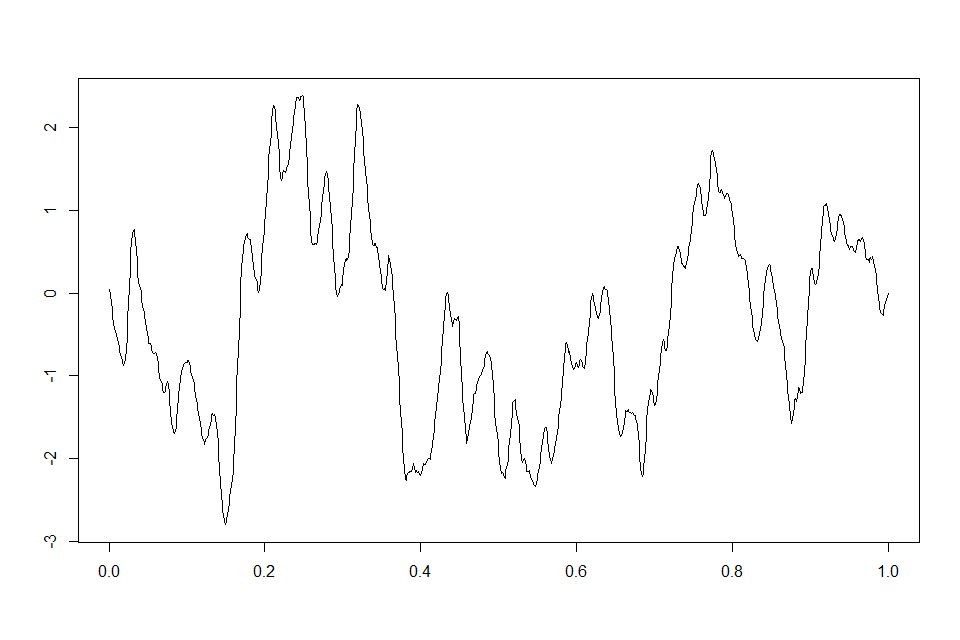}
    \caption{The left figure is the plot of the latent diffusion process, and the right one is that of the convolutionally observed process on $\left[0,1\right]$ respectively.}
    \label{fig:intro:latentAndConvoluted}
\end{figure}
In Figure \ref{fig:intro:rv}, we also give the plot of realised volatilities of the convolutionally observed process with subsampling as Figure \ref{fig:BNCI:RV}.
\begin{figure}[ht]
    \centering
    \includegraphics[width=.8\textwidth]{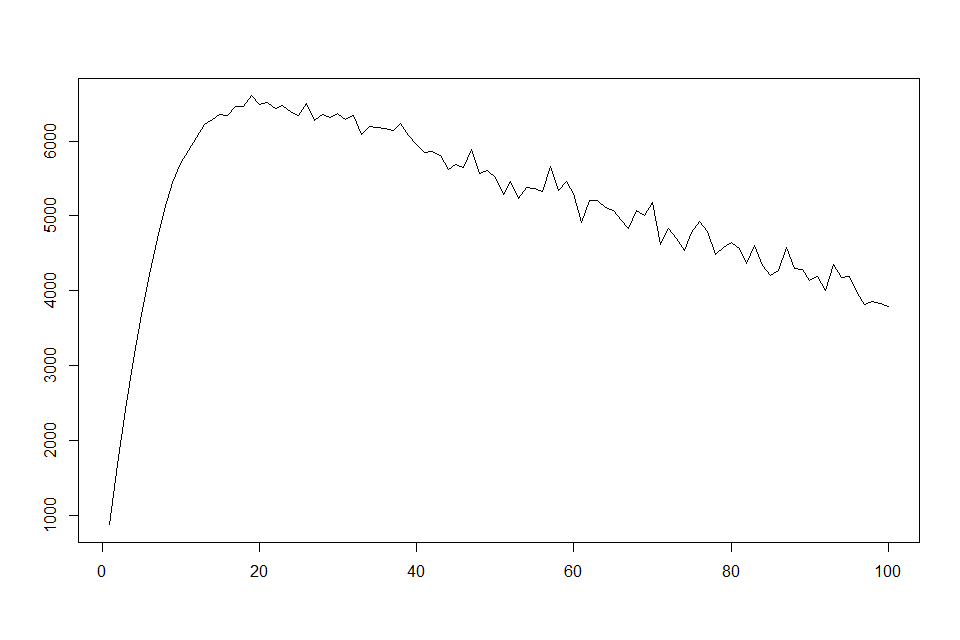}
    \caption{The realised volatilities of the convolutionally observed diffusion process with subsampling.}
    \label{fig:intro:rv}
\end{figure}
It is seen that the convolutional observation of a diffusion process also has the characteristics of decreasing realised volatilities as subsampling frequency increases, which can bee seen in some biological data such as \citet{BNCI-2014}. Of course, graphically comparing characteristics of simulation and real data is insufficient to verify the convolutional observation with smoothing parameter $\rho>0$ in 1-dimensional case; therefore, we propose statistical estimation method for unknown $\rho$ and hypothesis test with the null hypothesis $H_{0}:\rho=\mathbf{0}$ and the alternative one $H_{1}:\rho\neq\mathbf{0}$ from convolutional observation in Section 3. Moreover, in Section 6, we examine the real EEG data plotted in Figure \ref{fig:BNCI:RV} by the statistical hypothesis testing we propose, and see it is more appropriate to consider the data as a convolutional observation of a latent diffusion process with $\rho\neq\mathbf{0}$ rather than direct observation of the latent process, which indicates the validity to deal with the problem 
of the convolutional observation scheme with unknown $\rho$.

The paper is composed of the following sections: Section 2 gives the notations and assumptions used in this paper; Section 3 discusses the estimation and test for smoothing parameter $\rho$, and the discussion provides us with the tools to examine whether we should consider the convolutional observation scheme; Section 4 proposes the quasi-likelihood functions for the parameter of diffusion processes $\alpha$ and $\beta$, and corresponding estimators with consistency; Section 5 is for the computational simulation to examine the theoretical results in the previous sections; 
and Section 6 shows an application of the methods we propose in real data analysis.

\section{Notations and assumptions}
\subsection{Notations}
First of all, we set $A\left(x,\alpha\right):=a\left(x,\alpha\right)^{\otimes 2}$, $a\left(x\right):=a\left(x,\alpha_{\star}\right)$, $A\left(x\right):=A\left(x,\alpha_{\star}\right)$ and $b\left(x\right):=b\left(x,\beta_{\star}\right)$. We also give the notation for a matrix-valued function $\mathbb{G}\left(x,\alpha|\rho\right)$ such that
$\mathbb{G}^{\left(i,j\right)}\left(x,\alpha|\rho\right):=A^{\left(i,j\right)}\left(x,\alpha\right)f_{\mathbb{G}}\left(\rho^{\left(i\right)},\rho^{\left(j\right)}\right)$, where
\begin{footnotesize}
\begin{align*}
    &f_{\mathbb{G}}\left(\rho^{\left(i\right)},\rho^{\left(j\right)}\right)\\
    &:=\begin{cases}
    1& \text{ if }\rho^{\left(i\right)}=\rho^{\left(j\right)}=0,\\
    1-\frac{\rho^{\left(j\right)}}{2}& \text{ if }\rho^{\left(i\right)}=0,\rho^{\left(j\right)}\in\left(0,1\right],\\
    \frac{1}{2\rho^{\left(j\right)}}& \text{ if }\rho^{\left(i\right)}=0,\rho^{\left(j\right)}\in\left(1,\overline{\rho}\right],\\
    1-\frac{\rho^{\left(i\right)}}{2}& \text{ if }\rho^{\left(i\right)}\in\left(0,1\right],\rho^{\left(j\right)}=0,\\
    \frac{1}{2\rho^{\left(i\right)}}& \text{ if }\rho^{\left(i\right)}\in\left(1,\overline{\rho}\right],\rho^{\left(j\right)}=0,\\
    \frac{-3\left(\rho^{\left(i\right)}\right)^{2}\rho^{\left(j\right)}+3\rho^{\left(i\right)}\left(\rho^{\left(j\right)}\right)^{2}+6\rho^{\left(i\right)}\rho^{\left(j\right)}-2\left(\rho^{\left(j\right)}\right)^{3}}{6\rho^{\left(i\right)}\rho^{\left(j\right)}}& \text{ if }\rho^{\left(i\right)},\rho^{\left(j\right)}\in\left(0,1\right],\rho^{\left(i\right)}>\rho^{\left(j\right)},\\
    \frac{3\left(\rho^{\left(i\right)}\right)^{2}\rho^{\left(j\right)}-3\rho^{\left(i\right)}\left(\rho^{\left(j\right)}\right)^{2}+6\rho^{\left(i\right)}\rho^{\left(j\right)}-2\left(\rho^{\left(i\right)}\right)^{3}}{6\rho^{\left(i\right)}\rho^{\left(j\right)}}& \text{ if }\rho^{\left(i\right)},\rho^{\left(j\right)}\in\left(0,1\right],\rho^{\left(i\right)}\le\rho^{\left(j\right)},\\
    \frac{3\left(\rho^{\left(j\right)}\right)^{2}+3\rho^{\left(j\right)}-\left(\rho^{\left(j\right)}\right)^{3}}{6\rho^{\left(i\right)}\rho^{\left(j\right)}}&\text{ if }\rho^{\left(i\right)}\in\left(1,\overline{\rho}\right],\rho^{\left(j\right)}\in\left(0,1\right],\rho^{\left(i\right)}>\rho^{\left(j\right)}+1,\\
    \frac{6\rho^{\left(j\right)}-1}{6\rho^{\left(i\right)}\rho^{\left(j\right)}}&\text{ if }\rho^{\left(i\right)},\rho^{\left(j\right)}\in\left(1,\overline{\rho}\right],\rho^{\left(i\right)}>\rho^{\left(j\right)}+1,\\
    \frac{\left(\rho^{\left(i\right)}-\rho^{\left(j\right)}\right)^3 - 3\left(\rho^{\left(i\right)}\right)^2 + 6\rho^{\left(i\right)}\rho^{\left(j\right)} + 3 \rho^{\left(i\right)}- 1-\left(\rho^{\left(j\right)}\right)^{3}}{6\rho^{\left(i\right)}\rho^{\left(j\right)}}&\text{ if }\rho^{\left(i\right)}\in\left(1,\overline{\rho}\right],\rho^{\left(j\right)}\in\left(0,1\right],\rho^{\left(i\right)}\le \rho^{\left(j\right)}+1,\\
    \frac{\left(\rho^{\left(i\right)}-\rho^{\left(j\right)}\right)^3 - 3\left(\rho^{\left(i\right)}\right)^2 + 6\rho^{\left(i\right)}\rho^{\left(j\right)} + 3 \rho^{\left(i\right)}-3\left(\rho^{\left(j\right)}\right)^{2}+3\rho^{\left(j\right)}- 2}{6\rho^{\left(i\right)}\rho^{\left(j\right)}}&\text{ if }\rho^{\left(i\right)},\rho^{\left(j\right)}\in\left(1,\overline{\rho}\right],\rho^{\left(j\right)}<\rho^{\left(i\right)}\le \rho^{\left(j\right)}+1,\\
    \frac{-\left(\rho^{\left(i\right)}-\rho^{\left(j\right)}\right)^{3}+6\rho^{\left(i\right)}\rho^{\left(j\right)}-\left(\rho^{\left(i\right)}\right)^{3}-3\left(\rho^{\left(j\right)}\right)^{2}+3\rho^{\left(j\right)}-1}{6\rho^{\left(i\right)}\rho^{\left(j\right)}}&\text{ if }\rho^{\left(i\right)}\in\left(0,1\right],\rho^{\left(j\right)}\in\left(1,\overline{\rho}\right],\rho^{\left(j\right)}\le \rho^{\left(i\right)}+1,\\
    \frac{-\left(\rho^{\left(i\right)}-\rho^{\left(j\right)}\right)^{3}-3\left(\rho^{\left(i\right)}\right)^{2}-3\left(\rho^{\left(j\right)}\right)^{2}+6\rho^{\left(i\right)}\rho^{\left(j\right)}+3\rho^{\left(i\right)}+3\rho^{\left(j\right)}-2}{6\rho^{\left(i\right)}\rho^{\left(j\right)}}&\text{ if }\rho^{\left(i\right)},\rho^{\left(j\right)}\in\left(1,\overline{\rho}\right],\rho^{\left(i\right)}\le\rho^{\left(j\right)}\le \rho^{\left(i\right)}+1,\\
    \frac{3\left(\rho^{\left(i\right)}\right)^{2}+3\rho^{\left(i\right)}-\left(\rho^{\left(i\right)}\right)^{3}}{6\rho^{\left(i\right)}\rho^{\left(j\right)}}&\text{ if }\rho^{\left(i\right)}\in\left(0,1\right],\rho^{\left(j\right)}\in\left(1,\overline{\rho}\right],\rho^{\left(j\right)}> \rho^{\left(i\right)}+1,\\
    \frac{6\rho^{\left(i\right)}-1}{6\rho^{\left(i\right)}\rho^{\left(j\right)}}&\text{ if }\rho^{\left(i\right)},\rho^{\left(j\right)}\in\left(1,\overline{\rho}\right],\rho^{\left(j\right)}> \rho^{\left(i\right)}+1.
    \end{cases}
\end{align*}
\end{footnotesize}

The continuity of these functions is examined in Lemma \ref{LemmaFunctionD} and Lemma \ref{LemmaFunctionG}. In addition, we also give the notation used throughout this paper.
\begin{itemize}
	\item For every matrix $A$, $A^{T}$ is the transpose of $A$, and $A^{\otimes 2}:=AA^{T}$.
	\item For every set of matrices $A$ and $B$ 
of the same size,
$A\left[B\right]:=\mathrm{tr}\left(AB^{T}\right)$. Moreover, for any $m\in\mathbf{N}$, $A\in\mathbf{R}^{m}\otimes\mathbf{R}^{m}$ and $u,v\in\mathbf{R}^{m}$, $A\left[u,v\right]:=v^{T}Au$.
	\item $v^{\left(\ell\right)}$ and $A^{\left(\ell_{1},\ell_{2}\right)}$ denote 
	the $\ell$-th element of a vector $v$ and the $\left(\ell_{1},\ell_{2}\right)$-th one of a matrix $A$, respectively.
	\item For any vector $v$ and any matrix $A$, $\left|v\right|:=\sqrt{\mathrm{tr}\left(v^{T}v\right)}$ and $\left\|A\right\|:=\sqrt{\mathrm{tr}\left(A^{T}A\right)}$.
	\item $\left(\Omega,P,\mathcal{F},\mathcal{F}_{t}\right)$ denotes the stochastic basis, where  $\mathcal{F}_{t}:=\sigma\left(x_{-\lambda},w_{s}:s\le t\right)$.
\end{itemize}

\subsection{Assumptions}
With respect to $X_{t}$, we assume the following conditions.
\begin{itemize}
\item[{[A1]}]
\begin{itemize}
\item[(i)] For a constant $C$, for all $x_{1},x_{2}\in\mathbf{R}^{d}$,
\begin{align*}
	\sup_{\alpha\in\Theta_{1}}\left\|a\left(x_{1},\alpha\right)-a\left(x_{2},\alpha\right)\right\|+
	\sup_{\beta\in\Theta_{2}}\left|b\left(x_{1},\beta\right)-b\left(x_{2},\beta\right)\right|\le C\left|x_{1}-x_{2}\right|.
\end{align*}
\item[(ii)] For all $p\ge0$, $\sup_{t\ge-\lambda}\mathbf{E}_{\theta_{\star}}\left[\left|X_{t}\right|^{p}\right]<\infty$.
\item[(iii)] There exists a unique invariant measure $\nu_{0}$ on $\left(\mathbf{R}^{d},\mathcal{B}\left(\mathbf{R}^{d}\right)\right)$ and for all $p\ge1$ and $f\in L^{p}\left(\nu_{0}\right)$ with polynomial growth,
\begin{align*}
\frac{1}{T}\int_{-\lambda}^{T}f\left(X_{t}\right)\mathrm{d}t\to^{P}\int_{\mathbf{R}^{d}}f\left(x\right)\nu_{0}\left(\mathrm{d}x\right).
\end{align*}
\end{itemize}
\item[{[A2]}] There exists $C>0$ such that $a:\mathbf{R}^{d}\times \Theta_{1}\to \mathbf{R}^{d}\otimes \mathbf{R}^{r}$ and $b:\mathbf{R}^{d}\times \Theta_{2}\to \mathbf{R}^{d}$ have continuous derivatives satisfying
\begin{align*}
\sup_{\alpha\in\Theta_{1}}\left|\partial_{x}^{j}\partial_{\alpha}^{i}a\left(x,\alpha\right)\right|&\le
C\left(1+\left|x\right|\right)^{C},\ 0\le i\le 2,\ 0\le j\le 2,\\
\sup_{\beta\in\Theta_{2}}\left|\partial_{x}^{j}\partial_{\beta}^{i}b\left(x,\beta\right)\right|&\le C\left(1+\left|x\right|\right)^{C},\ 0\le i\le 2,\ 0\le j\le 2.
\end{align*}
\end{itemize}

With the invariant measure $\nu_{0}$, we define
\begin{align*}
    \mathbb{V}_{1} \left(\alpha|\xi_{\star}\right)
	&:=-\int_{\mathbf{R}^{d}}
	\left\|\mathbb{G}\left(x,\alpha|\rho_{\star}\right)-
    \mathbb{G}\left(x,\alpha_{\star}|\rho_{\star}\right)\right\|^{2}\nu_{0} \left(\mathrm{d}x\right),\\
	\mathbb{V}_{2}\left(\beta|\xi_{\star}\right)
    &:=-\int_{\mathbf{R}^{d}}\left|b\left(x,\beta\right)-b\left(x,\beta_{\star}\right)\right|^{2}\nu_{0}\left(\mathrm{d}x\right).
\end{align*}
For these functions, let us assume the following identifiability conditions hold.

\begin{itemize}
\item[{[A3]}] 
There exist $\chi_1\left(\alpha_{\star}\right)>0$ and $\chi_1'\left(\beta_{\star}\right)>0$
such that for all $\alpha\in\Theta_{1}$ and $\beta\in\Theta_{2}$,
$\mathbb{V}_{1}\left(\alpha|\xi_{\star}\right)\le -\chi_1\left(\alpha_{\star}\right)\left|\alpha-\alpha_{\star}\right|^2$ 
and $\mathbb{V}_{2}\left(\beta|\xi_{\star}\right)\le -\chi_1'\left(\beta_{\star}\right)\left|\beta-\beta_{\star}\right|^2$.
\end{itemize}

\section{Estimation and test of the smoothing parameter}
In this section, we discuss the case where the smoothing parameter $\rho$ of the kernel function $V_{\rho,h_{n}}$ is unknown. 
The estimation is significant for estimation of $\alpha$ and $\beta$ since we utilise the estimate for $\rho$ in quasi-likelihood functions of $\alpha$ and $\beta$. The test problem for hypotheses $H_{0}:\rho=\mathbf{0}$ and $H_{1}:\rho\neq\mathbf{0}$ is also important to examine whether our framework of convolutional observation is meaningful.
\subsection{Estimation of the smoothing parameter}
For simplicity of notation, let us consider the case $\overline{\rho}>2$; otherwise the discussion is quite parallel. We should note that for all $i=1,\ldots,d$,
\begin{align*}
    \mathbb{G}^{\left(i,i\right)}\left(x|\rho\right)=\begin{cases}
    A^{\left(i,i\right)}\left(x\right)&\text{ if }\rho^{\left(i\right)}=0,\\
    A^{\left(i,i\right)}\left(x\right)\left(1-\frac{\rho^{\left(i\right)}}{3}\right)&\text{ if }\rho^{\left(i\right)}\in\left(0,1\right],\\
    A^{\left(i,i\right)}\left(x\right)\left(\frac{1}{\rho^{\left(i\right)}}-\frac{1}{3\left(\rho^{\left(i\right)}\right)^{2}}\right)&\text{ if }\rho^{\left(i\right)}\in\left(1,\overline{\rho}\right].
    \end{cases}
\end{align*}
Let us consider the estimation of $\rho^{\left(i\right)}$ with using the next statistics: the full quadratic variation
\begin{align*}
    \frac{1}{nh_{n}}\sum_{k=1}^{n}\left(\overline{X}_{kh_{n},n}^{\left(i\right)}-\overline{X}_{\left(k-1\right)h_{n},n}^{\left(i\right)}\right)^{2}\to^{P} \begin{cases}
    \nu_{0}\left(A^{\left(i,i\right)}\left(\cdot\right)\right)&\text{ if }\rho_{\star}^{\left(i\right)}=0,\\
    \nu_{0}\left(A^{\left(i,i\right)}\left(\cdot\right)\right)\left(1-\frac{\rho_{\star}^{\left(i\right)}}{3}\right)&\text{ if }\rho_{\star}^{\left(i\right)}\in\left(0,1\right],\\
    \nu_{0}\left(A^{\left(i,i\right)}\left(\cdot\right)\right)\left(\frac{1}{\rho_{\star}^{\left(i\right)}}-\frac{1}{3\left(\rho_{\star}^{\left(i\right)}\right)^{2}}\right)&\text{ if }\rho_{\star}^{\left(i\right)}\in\left(1,\overline{\rho}\right],
    \end{cases}
\end{align*}
because of Proposition \ref{propEmpQ} in Appendix A,
and the reduced quadratic variation defined as $\frac{1}{nh_{n}}\sum_{2\le 2k\le n}\left(\overline{X}_{2kh_{n},n}^{\left(i\right)}-\overline{X}_{\left(2k-2\right)h_{n},n}^{\left(i\right)}\right)^{2}$ converges in probability as follows.
\begin{lemma}\label{LemmaReducedQ}
Under [A1], we have the convergence in probability such that
\begin{align*}
    &\frac{1}{nh_{n}}\sum_{2\le 2k\le n}\left(\overline{X}_{2k,n}^{\left(i\right)}-\overline{X}_{2k-2,n}^{\left(i\right)}\right)^{2}\\
    &\to^{P}\begin{cases}
    \nu_{0}\left(A^{\left(i,i\right)}\left(\cdot\right)\right)&\text{ if }\rho_{\star}^{\left(i\right)}=0,\\
    \nu_{0}\left(A^{\left(i,i\right)}\left(\cdot\right)\right)\left(1-\frac{\rho_{\star}^{\left(i\right)}}{6}\right)&\text{ if }\rho_{\star}^{\left(i\right)}\in\left(0,2\right],\\
    \nu_{0}\left(A^{\left(i,i\right)}\left(\cdot\right)\right)\left(\frac{2}{\rho_{\star}^{\left(i\right)}}-\frac{4}{3\left(\rho_{\star}^{\left(i\right)}\right)^{2}}\right)&\text{ if }\rho_{\star}^{\left(i\right)}\in\left(2,\overline{\rho}\right].
    \end{cases}
\end{align*}
\end{lemma}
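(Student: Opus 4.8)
The plan is to recognise the reduced quadratic variation as the ordinary full quadratic variation of a coarsened convolutional observation and then to invoke Proposition \ref{propEmpQ} with a halved smoothing parameter. Set $\tilde{h}_{n}:=2h_{n}$, $\tilde{n}:=\left[n/2\right]$ and $\tilde{\rho}^{\left(i\right)}:=\rho^{\left(i\right)}/2$, and consider the coarsened observation $\tilde{X}_{k,n}^{\left(i\right)}:=\overline{X}_{2kh_{n},n}^{\left(i\right)}$. I would first check that this is itself a convolutional observation of $X^{\left(i\right)}$ with step $\tilde{h}_{n}$ and smoothing $\tilde{\rho}^{\left(i\right)}$: the normalisation matches since $\left(\tilde{\rho}^{\left(i\right)}\tilde{h}_{n}\right)^{-1}=\left(\rho^{\left(i\right)}h_{n}\right)^{-1}$, and the averaging window matches since $\left[\left(k-\tilde{\rho}^{\left(i\right)}\right)\tilde{h}_{n},k\tilde{h}_{n}\right]=\left[2kh_{n}-\rho^{\left(i\right)}h_{n},2kh_{n}\right]$, which is exactly the window defining $\overline{X}_{2kh_{n},n}^{\left(i\right)}$. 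Hence the reduced quadratic variation coincides with $\frac{1}{\tilde{n}\tilde{h}_{n}}\sum_{k=1}^{\tilde{n}}(\tilde{X}_{k,n}^{\left(i\right)}-\tilde{X}_{k-1,n}^{\left(i\right)})^{2}$ up to replacing the normalisation $(nh_{n})^{-1}$ by $(\tilde{n}\tilde{h}_{n})^{-1}$; since $\tilde{n}\tilde{h}_{n}/(nh_{n})=2\left[n/2\right]/n\to1$ and the sum converges in probability, this discrepancy does not affect the limit.

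Next I would verify that the coarsened scheme meets the hypotheses of Proposition \ref{propEmpQ}. We have $\tilde{h}_{n}=2h_{n}\to0$ and $\tilde{n}\tilde{h}_{n}\to\infty$; the support condition is preserved because $\sup_{n}\tilde{\rho}^{\left(i\right)}\tilde{h}_{n}=\sup_{n}\rho^{\left(i\right)}h_{n}\le\lambda$; and $\tilde{\rho}^{\left(i\right)}=\rho^{\left(i\right)}/2\in\left[0,\overline{\rho}\right]$ remains admissible. As the present lemma assumes only [A1], which constrains the law of $X$ rather than the observation grid, Proposition \ref{propEmpQ} applies verbatim to $\{\tilde{X}_{k,n}^{\left(i\right)}\}$ and yields the three-case limit evaluated at the coarsened true smoothing $\tilde{\rho}_{\star}^{\left(i\right)}=\rho_{\star}^{\left(i\right)}/2$.

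It then remains to substitute $\tilde{\rho}_{\star}^{\left(i\right)}=\rho_{\star}^{\left(i\right)}/2$ into that limit. The branch boundaries transform as $\tilde{\rho}_{\star}^{\left(i\right)}=0\Leftrightarrow\rho_{\star}^{\left(i\right)}=0$, $\tilde{\rho}_{\star}^{\left(i\right)}\in\left(0,1\right]\Leftrightarrow\rho_{\star}^{\left(i\right)}\in\left(0,2\right]$ and, for admissible $\rho_{\star}^{\left(i\right)}$, $\tilde{\rho}_{\star}^{\left(i\right)}\in\left(1,\overline{\rho}\right]\Leftrightarrow\rho_{\star}^{\left(i\right)}\in\left(2,\overline{\rho}\right]$, which is precisely why the threshold in the statement sits at $\rho_{\star}^{\left(i\right)}=2$ rather than $1$. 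The surviving factors simplify to $1-\tilde{\rho}_{\star}^{\left(i\right)}/3=1-\rho_{\star}^{\left(i\right)}/6$ on the middle branch and $1/\tilde{\rho}_{\star}^{\left(i\right)}-1/(3(\tilde{\rho}_{\star}^{\left(i\right)})^{2})=2/\rho_{\star}^{\left(i\right)}-4/(3(\rho_{\star}^{\left(i\right)})^{2})$ on the last, matching the claim; the middle and last branches agree at $\rho_{\star}^{\left(i\right)}=2$ (both equal $2/3$), a useful consistency check. The one step deserving care is the kernel identification, that is, confirming that coarsening the grid by a factor of two is exactly a halving of the smoothing parameter; once this is in place no fresh probabilistic estimate is required. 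Should one prefer to avoid the reduction, the same limit follows by the method underlying Proposition \ref{propEmpQ}: represent $\overline{X}_{2kh_{n},n}^{\left(i\right)}-\overline{X}_{2k-2,n}^{\left(i\right)}$ via Itô's formula as a stochastic integral against $w$ with weight $G\left(u\right)=\int_{u}^{\infty}g\left(s\right)\mathrm{d}s$, where $g$ is the signed difference of the two averaging indicators, apply the Itô isometry together with the ergodic theorem [A1](iii), and evaluate $(\rho_{\star}^{\left(i\right)})^{-2}\int G^{2}$ separately on $\rho_{\star}^{\left(i\right)}\le2$ and $\rho_{\star}^{\left(i\right)}>2$, the drift contribution being $O\left(h_{n}\right)$ and hence negligible.
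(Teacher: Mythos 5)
Your argument is correct, and it reaches the limit by a genuinely different route from the paper. The paper proves Lemma \ref{LemmaReducedQ} by retracing the proof of Proposition \ref{propEmpQ} with the lag-$2h_{n}$ difference kernel and then evaluating from scratch the Gaussian covariance double integral
\begin{align*}
\int_{0}^{\left(p+2\right)h_{n}}\!\!\int_{0}^{\left(p+2\right)h_{n}}A^{\left(i,i\right)}\left(x\right)\min\left\{s,s'\right\}\prod_{u\in\left\{s,s'\right\}}\left(V_{\rho,h_{n}}^{\left(i,i\right)}\left(\left(p+2\right)h_{n}-u\right)-V_{\rho,h_{n}}^{\left(i,i\right)}\left(ph_{n}-u\right)\right)\mathrm{d}s'\mathrm{d}s
\end{align*}
case by case on $\rho_{\star}^{\left(i\right)}=0$, $\rho_{\star}^{\left(i\right)}\in\left(0,2\right]$ and $\rho_{\star}^{\left(i\right)}\in\left(2,\overline{\rho}\right]$, obtaining $2h_{n}A^{\left(i,i\right)}\left(x\right)$ times the claimed factor. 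You instead observe the exact identity $\left(\tilde{\rho}^{\left(i\right)}\tilde{h}_{n}\right)^{-1}\int_{\left(k-\tilde{\rho}^{\left(i\right)}\right)\tilde{h}_{n}}^{k\tilde{h}_{n}}X_{s}^{\left(i\right)}\mathrm{d}s=\overline{X}_{2kh_{n},n}^{\left(i\right)}$ with $\tilde{h}_{n}=2h_{n}$, $\tilde{\rho}^{\left(i\right)}=\rho^{\left(i\right)}/2$, so that the reduced quadratic variation is (up to the harmless factor $n/\left(2\left[n/2\right]\right)\to1$) the full quadratic variation of the same convolution scheme on the doubled grid, and then read the limit off the already-established diagonal formula $f_{\mathbb{G}}\left(\rho^{\left(i\right)},\rho^{\left(i\right)}\right)$ by substituting $\rho_{\star}^{\left(i\right)}/2$; your boundary check at $\rho_{\star}^{\left(i\right)}=2$ confirms the branches match. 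Your reduction is the more economical and the more explanatory one --- it makes transparent why the breakpoint moves from $1$ to $2$ and why $\rho/3$ becomes $\rho/6$ --- at the cost of two small verifications you correctly flag: that the kernel identification is exact (it is, since $V_{\tilde{\rho},\tilde{h}_{n}}^{\left(i,i\right)}=V_{\rho,h_{n}}^{\left(i,i\right)}$ as functions), and that the appendix machinery applies on the coarsened grid, where the window-length condition $\sup_{n}\tilde{p}\tilde{h}_{n}\le\lambda$ holds up to the same $O\left(h_{n}\right)$ slack the paper already permits itself. The paper's direct computation is longer but self-contained and would extend to lags or kernels for which no such self-similar rescaling is available; your closing sketch of the It\^{o}-isometry route is essentially that computation.
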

Then we define the ratio of those statistics such that
\begin{align*}
    R_{n}^{\left(i\right)}&:=\left(\frac{1}{nh_{n}}\sum_{k=1}^{n}\left(\overline{X}_{kh_{n},n}^{\left(i\right)}-\overline{X}_{\left(k-1\right)h_{n},n}^{\left(i\right)}\right)^{2}\right)\left(\frac{1}{nh_{n}}\sum_{2\le 2k\le n}\left(\overline{X}_{2kh_{n},n}^{\left(i\right)}-\overline{X}_{\left(2k-2\right)h_{n},n}^{\left(i\right)}\right)^{2}\right)^{-1}\\
    &\to^{P}\begin{cases}
    1 & \text{ if }\rho_{\star}^{\left(i\right)}=0,\\
    \left(1-\frac{\rho_{\star}^{\left(i\right)}}{3}\right)\left(1-\frac{\rho_{\star}^{\left(i\right)}}{6}\right)^{-1} & \text{ if }\rho_{\star}^{\left(i\right)}\in\left(0,1\right],\\
    \left(\frac{1}{\rho_{\star}^{\left(i\right)}}-\frac{1}{3\left(\rho_{\star}^{\left(i\right)}\right)^{2}}\right)\left(1-\frac{\rho_{\star}^{\left(i\right)}}{6}\right)^{-1} &\text{ if }\rho_{\star}^{\left(i\right)}\in\left(1,2\right],\\
    \left(\frac{1}{\rho_{\star}^{\left(i\right)}}-\frac{1}{3\left(\rho_{\star}^{\left(i\right)}\right)^{2}}\right)\left(\frac{2}{\rho_{\star}^{\left(i\right)}}-\frac{4}{3\left(\rho_{\star}^{\left(i\right)}\right)^{2}}\right)^{-1}&\text{ if }\rho_{\star}^{\left(i\right)}\in\left(2,\overline{\rho}\right]
    \end{cases}\\
    &=\begin{cases}
    1 & \text{ if }\rho^{\left(i\right)}=0,\\
    \left(6-2\rho_{\star}^{\left(i\right)}\right)\left(6-\rho_{\star}^{\left(i\right)}\right)^{-1} & \text{ if }\rho_{\star}^{\left(i\right)}\in\left(0,1\right],\\
    \left(6\rho_{\star}^{\left(i\right)}-2\right)\left(6\left(\rho_{\star}^{\left(i\right)}\right)^{2}-\left(\rho_{\star}^{\left(i\right)}\right)^{3}\right)^{-1} &\text{ if }\rho_{\star}^{\left(i\right)}\in\left(1,2\right],\\
    \left(3\rho_{\star}^{\left(i\right)}-1\right)\left(6\rho_{\star}^{\left(i\right)}-4\right)^{-1}&\text{ if }\rho_{\star}^{\left(i\right)}\in\left(2,\overline{\rho}\right],
    \end{cases}\\
    &=: R\left(\rho_{\star}^{\left(i\right)}\right),
\end{align*}
where $R$ has the next property.
\begin{lemma}\label{LemmaFunctionR}
    $R$ is a $\left[\left(3\overline{\rho}-1\right)\left(6\overline{\rho}-4\right)^{-1},1\right]$-valued monotonically decreasing continuous function, and has a continuous inverse $R^{-1}:\left[\left(3\overline{\rho}-1\right)\left(6\overline{\rho}-4\right)^{-1},1\right]\to\left[0,\overline{\rho}\right]$.
\end{lemma}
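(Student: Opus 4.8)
The plan is to establish three facts about the piecewise-defined function $R$ on $[0,\overline{\rho}]$: (i) continuity at the junction points $\rho^{\left(i\right)}=0,1,2$; (ii) strict monotone decrease on each of the open pieces $\left(0,1\right)$, $\left(1,2\right)$, $\left(2,\overline{\rho}\right)$; and (iii) continuity of the inverse, which will then follow automatically from (i)--(ii) together with the standard fact that a continuous strictly monotone function on a compact interval is a homeomorphism onto its image.

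First I would check continuity at the three interior break-points by matching one-sided values. At $\rho=0$ the defining value is $1$ and the right-hand branch $\left(6-2\rho\right)\left(6-\rho\right)^{-1}$ also tends to $1$; at $\rho=1$ both $\left(6-2\rho\right)\left(6-\rho\right)^{-1}$ and $\left(6\rho-2\right)\left(6\rho^2-\rho^3\right)^{-1}$ equal $4/5$; and at $\rho=2$ both $\left(6\rho-2\right)\left(6\rho^2-\rho^3\right)^{-1}$ and $\left(3\rho-1\right)\left(6\rho-4\right)^{-1}$ equal $5/8$. These are immediate substitutions. The right endpoint value is $R\left(\overline{\rho}\right)=\left(3\overline{\rho}-1\right)\left(6\overline{\rho}-4\right)^{-1}$, which supplies the lower end of the claimed range.

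Next I would prove strict decrease piecewise by differentiating. On $\left(0,1\right)$ one gets $R'\left(\rho\right)=-6\left(6-\rho\right)^{-2}<0$, and on $\left(2,\overline{\rho}\right)$ one gets $R'\left(\rho\right)=-6\left(6\rho-4\right)^{-2}<0$; both are elementary quotient-rule computations. The only nontrivial piece is $\left(1,2\right)$, where the numerator of the derivative of $\left(6\rho-2\right)\left(\rho^2\left(6-\rho\right)\right)^{-1}$ works out to $6\rho\left(2\rho^2-7\rho+4\right)$ up to the positive factor $\left(\rho^2\left(6-\rho\right)\right)^{-2}$. I would then argue that the quadratic $2\rho^2-7\rho+4$ has its two roots $\left(7\pm\sqrt{17}\right)/4$ lying strictly outside $\left(1,2\right)$ (one near $0.72$, the other near $2.78$), and since it is negative at $\rho=1$ it stays negative on all of $\left(1,2\right)$; hence $R'<0$ there as well. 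This sign analysis of the middle branch is the main obstacle, being the only place where the derivative's sign is not manifest and requiring one to locate the roots of a quadratic relative to the interval.

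Finally, combining continuity at the junctions with strict decrease on each subinterval shows that $R$ is continuous and strictly decreasing on all of $[0,\overline{\rho}]$, so it maps that interval bijectively onto $[R\left(\overline{\rho}\right),R\left(0\right)]=[\left(3\overline{\rho}-1\right)\left(6\overline{\rho}-4\right)^{-1},1]$. A continuous strictly monotone bijection between compact intervals has a continuous (and strictly decreasing) inverse, which yields $R^{-1}$ with the stated domain and codomain and completes the proof.
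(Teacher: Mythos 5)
Your proposal is correct and follows essentially the same route as the paper: continuity is verified at the break-points, monotonicity is obtained by differentiating each branch, and the continuous inverse follows from strict monotone continuity on a compact interval. If anything you are more careful than the paper, which merely asserts the negativity of the middle-branch derivative $6\rho\left(2\rho^{2}-7\rho+4\right)\left(6\rho^{2}-\rho^{3}\right)^{-2}$ without locating the roots of the quadratic, and whose stated constants $-12$ and $-18$ for the outer branches should in fact be the $-6$ you obtained (the signs, and hence the conclusion, are unaffected).
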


We define $\hat{\rho}_{n}$ such that
\begin{align*}
    \hat{\rho}_{n}^{\left(i\right)}&:=\begin{cases}
    0 & \text{ if }R_{n}^{\left(i\right)}>1,\\
    R^{-1}\left(R_{n}^{\left(i\right)}\right) & \text{ if }R_{n}^{\left(i\right)}\in \left[\left(3\overline{\rho}-1\right)\left(6\overline{\rho}-4\right)^{-1},1\right],\\
    \overline{\rho} & \text{ if } R_{n}^{\left(i\right)}<\left(3\overline{\rho}-1\right)\left(6\overline{\rho}-4\right)^{-1},
    \end{cases}
\end{align*}
and then continuous mapping theorem for convergence in probability verifies the next result.

\begin{theorem}\label{thmRhoEstimate}
Under [A1], $\hat{\rho}_{n}$ has consistency, i.e., $\hat{\rho}_{n}\to^{P}\rho_{\star}$.
\end{theorem}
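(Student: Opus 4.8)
The plan is to obtain consistency coordinate by coordinate through two successive applications of the continuous mapping theorem for convergence in probability, and then to assemble the $d$ coordinates. Fix $i\in\{1,\ldots,d\}$. First I would record that the numerator and the denominator of $R_{n}^{(i)}$ each converge in probability to the deterministic limits displayed above: the full quadratic variation converges by Proposition \ref{propEmpQ}, and the reduced quadratic variation converges by Lemma \ref{LemmaReducedQ}. Because both limits are constants, the two marginal convergences in probability automatically give joint convergence of the pair to the pair of limits. To pass to the ratio I would then check that the limit of the denominator is strictly positive: in each regime for $\rho_{\star}^{(i)}$ the multiplicative factor ($1$, $1-\rho_{\star}^{(i)}/6$, or $2/\rho_{\star}^{(i)}-4/(3(\rho_{\star}^{(i)})^{2})$) is positive on the relevant range, so provided $\nu_{0}(A^{(i,i)}(\cdot))>0$, which is the standing non-degeneracy of the latent diffusion in the $i$-th coordinate, the map $(u,v)\mapsto u/v$ is continuous at the limit point. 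The continuous mapping theorem then yields $R_{n}^{(i)}\to^{P}R(\rho_{\star}^{(i)})$, the value computed in the display preceding Lemma \ref{LemmaFunctionR}.

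For the second application I would package the definition of the estimator as $\hat{\rho}_{n}^{(i)}=g(R_{n}^{(i)})$, where $g:\mathbf{R}\to[0,\overline{\rho}]$ is the clamped inverse equal to $0$ on $(1,\infty)$, equal to $R^{-1}$ on $[(3\overline{\rho}-1)(6\overline{\rho}-4)^{-1},1]$, and equal to $\overline{\rho}$ on $(-\infty,(3\overline{\rho}-1)(6\overline{\rho}-4)^{-1})$. The crucial point is that $g$ is continuous on all of $\mathbf{R}$: at the upper breakpoint the identity $R(0)=1$ gives $R^{-1}(1)=0$, which matches the constant $0$; at the lower breakpoint $R(\overline{\rho})=(3\overline{\rho}-1)(6\overline{\rho}-4)^{-1}$ gives $R^{-1}((3\overline{\rho}-1)(6\overline{\rho}-4)^{-1})=\overline{\rho}$, which matches the constant $\overline{\rho}$; and continuity of $R^{-1}$ on the middle interval is exactly Lemma \ref{LemmaFunctionR}. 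Since $R(\rho_{\star}^{(i)})$ lies in the domain $[(3\overline{\rho}-1)(6\overline{\rho}-4)^{-1},1]$ of $R^{-1}$, we have $g(R(\rho_{\star}^{(i)}))=R^{-1}(R(\rho_{\star}^{(i)}))=\rho_{\star}^{(i)}$. Applying the continuous mapping theorem to the continuous $g$ then gives $\hat{\rho}_{n}^{(i)}=g(R_{n}^{(i)})\to^{P}\rho_{\star}^{(i)}$, and repeating for $i=1,\ldots,d$ yields $\hat{\rho}_{n}\to^{P}\rho_{\star}$.

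I expect the main obstacle to be bookkeeping rather than conceptual. The only genuine analytic inputs are the positivity of the denominator limit (so that the ratio is a continuous function of the two quadratic variations at the limit) and the continuity of the piecewise function $g$ at its two breakpoints; the latter rests entirely on the endpoint values $R(0)=1$ and $R(\overline{\rho})=(3\overline{\rho}-1)(6\overline{\rho}-4)^{-1}$ together with the continuity and surjectivity asserted in Lemma \ref{LemmaFunctionR}. The clamping built into the definition of $\hat{\rho}_{n}^{(i)}$ is precisely what makes $g(R_{n}^{(i)})$ well defined for every $n$, even when sampling fluctuation pushes $R_{n}^{(i)}$ outside the range $[(3\overline{\rho}-1)(6\overline{\rho}-4)^{-1},1]$ of $R$; in the limit, however, $R(\rho_{\star}^{(i)})$ sits inside that range, so the clamping is inactive and the inverse recovers $\rho_{\star}^{(i)}$ exactly.
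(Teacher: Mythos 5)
Your proposal is correct and follows essentially the same route as the paper, which dispatches the theorem in one line by invoking the continuous mapping theorem on top of Proposition \ref{propEmpQ}, Lemma \ref{LemmaReducedQ} and Lemma \ref{LemmaFunctionR}; your write-up merely makes explicit the details the paper leaves implicit (joint convergence to constant limits, continuity of the clamped inverse at the breakpoints via $R(0)=1$ and $R(\overline{\rho})=(3\overline{\rho}-1)(6\overline{\rho}-4)^{-1}$). Your observation that one needs $\nu_{0}(A^{(i,i)}(\cdot))>0$ for the denominator limit to be nonzero is a fair point --- the paper does not state this non-degeneracy explicitly in [A1] but uses it tacitly.
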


\begin{remark}
    We can compute $y=R^{-1}\left(x\right),\ x\in\left[\left(3\overline{\rho}-1\right)\left(6\overline{\rho}-4\right)^{-1},1\right]$ by solving the following equations:
    \begin{align*}
        \text{(i) }&x=\left(6-2y\right)\left(6-y\right)^{-1} && \text{ if }x\in\left(4/5,1\right],\\
        \text{(ii) }& x=\left(6-2y\right)\left(6y^2-y^3\right)^{-1}&&\text{ if }x\in\left(5/8,4/5\right],\\
        \text{(iii) }& x=\left(3y-1\right)\left(6y-4\right)^{-1}&&\text{ if }x\in\left[\left(3\overline{\rho}-1\right)\left(6\overline{\rho}-4\right)^{-1},5/8\right].
    \end{align*}
\end{remark}

\subsection{Test for smoothed observation}
For all $i=1,\ldots,d$, we consider the next hypothesis testing:
\begin{align*}
    H_{0}:\rho^{\left(i\right)}=0,\ H_{1}:\rho^{\left(i\right)}>0.
\end{align*}
Let us consider the following test statistic:
\begin{align*}
    \mathcal{T}_{i,n}&:=\sqrt{\frac{n}{\frac{2}{3nh_{n}^{2}}\sum_{k=1}^{n}\left(\overline{X}_{kh_{n},n}^{\left(i\right)}-\overline{X}_{\left(k-1\right)h_{n},n}^{\left(i\right)}\right)^{4}}}\\
    &\quad\times\left(\frac{1}{nh_{n}}\sum_{k=1}^{n}\left(\overline{X}_{kh_{n},n}^{\left(i\right)}-\overline{X}_{\left(k-1\right)h_{n},n}^{\left(i\right)}\right)^{2}-\frac{1}{nh_{n}}\sum_{2\le 2k\le n}\left(\overline{X}_{2kh_{n},n}^{\left(i\right)}-\overline{X}_{\left(2k-2\right)h_{n},n}^{\left(i\right)}\right)^{2}\right)\\
    &=\sqrt{\frac{3/2}{\sum_{k=1}^{n}\left(\overline{X}_{kh_{n},n}^{\left(i\right)}-\overline{X}_{\left(k-1\right)h_{n},n}^{\left(i\right)}\right)^{4}}}\\
    &\quad\times\left(\sum_{k=1}^{n}\left(\overline{X}_{k,n}^{\left(i\right)}-\overline{X}_{k-1,n}^{\left(i\right)}\right)^{2}-\sum_{2\le 2k\le n}\left(\overline{X}_{2kh_{n},n}^{\left(i\right)}-\overline{X}_{\left(2k-2\right)h_{n},n}^{\left(i\right)}\right)^{2}\right),
\end{align*}
and we abbreviate $\mathcal{T}_{i,n}$ to $\mathcal{T}_{n}$ if $d=1$.
Under $H_{0}$, we have the next result.
\begin{theorem}\label{thmTestH0}
    Under $H_{0}$ and [A1], we have the convergence in law such that
    \begin{align*}
        \mathcal{T}_{i,n}\to^{\mathcal{L}}N\left(0,1\right).
    \end{align*}
\end{theorem}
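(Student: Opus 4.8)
The plan is to rewrite $\mathcal{T}_{i,n}$ as a normalised sum of products of consecutive increments and then to invoke a martingale central limit theorem. Under $H_{0}$ the $i$-th coordinate is observed directly, $\overline{X}^{(i)}_{kh_{n},n}=X^{(i)}_{kh_{n}}$; set $D_{k}:=X^{(i)}_{kh_{n}}-X^{(i)}_{(k-1)h_{n}}$. Since $X^{(i)}_{2kh_{n}}-X^{(i)}_{(2k-2)h_{n}}=D_{2k}+D_{2k-1}$, expanding the square and telescoping gives, up to a single boundary increment that is negligible after normalisation,
\[
\sum_{k=1}^{n}D_{k}^{2}-\sum_{2\le 2k\le n}\left(D_{2k}+D_{2k-1}\right)^{2}=-2\sum_{2\le 2k\le n}D_{2k-1}D_{2k},
\]
so that
\[
\mathcal{T}_{i,n}=\frac{-2\sqrt{3/2}\,\sum_{2\le 2k\le n}D_{2k-1}D_{2k}}{\bigl(\sum_{k=1}^{n}D_{k}^{4}\bigr)^{1/2}}.
\]
Everything then reduces to the joint behaviour of the numerator sum and the quartic sum in the denominator.

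For the denominator I would prove a law of large numbers of the same flavour as Proposition \ref{propEmpQ}. Using the It\^o expansion of $D_{k}$ together with the moment bounds in [A2] and the uniform moment estimate [A1](ii), one obtains $\mathbf{E}_{\theta_{\star}}\bigl[D_{k}^{4}\mid\mathcal{F}_{(k-1)h_{n}}\bigr]=3h_{n}^{2}\bigl(A^{(i,i)}(X_{(k-1)h_{n}})\bigr)^{2}+o(h_{n}^{2})$ uniformly in $k$, and the ergodic averaging in [A1](iii) then yields
\[
\frac{1}{nh_{n}^{2}}\sum_{k=1}^{n}D_{k}^{4}\to^{P}3\,\nu_{0}\!\left(\bigl(A^{(i,i)}(\cdot)\bigr)^{2}\right).
\]

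The core of the proof is a martingale CLT for $\xi_{k,n}:=D_{2k-1}D_{2k}$ with respect to $\mathcal{H}_{k}:=\mathcal{F}_{2kh_{n}}$. Splitting each increment into its drift and diffusion parts, I would first show that all contributions containing a drift factor are negligible after dividing by $h_{n}\sqrt{n}$: the mixed diffusion--drift terms are themselves mean-zero with summed variance of order $nh_{n}^{3}$, while the remaining drift terms contribute a bias of order $nh_{n}^{2}$, which vanishes under the balance condition $nh_{n}^{2}\to0$. Thus $\xi_{k,n}$ may be replaced by the product of the two martingale increments, which satisfies $\mathbf{E}_{\theta_{\star}}[\,\cdot\mid\mathcal{H}_{k-1}]=0$ by the tower property. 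It then remains to verify the two hypotheses of the martingale CLT, namely the conditional-variance convergence
\[
\frac{1}{nh_{n}^{2}}\sum_{2\le 2k\le n}\mathbf{E}_{\theta_{\star}}\bigl[\xi_{k,n}^{2}\mid\mathcal{H}_{k-1}\bigr]\to^{P}\tfrac12\,\nu_{0}\!\left(\bigl(A^{(i,i)}(\cdot)\bigr)^{2}\right),
\]
which follows from the conditional independence of the two factors, their conditional variances $h_{n}A^{(i,i)}$ and [A1](iii), together with the Lyapunov bound $\frac{1}{n^{2}h_{n}^{4}}\sum_{k}\mathbf{E}_{\theta_{\star}}[\xi_{k,n}^{4}\mid\mathcal{H}_{k-1}]=O(1/n)\to0$. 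These give
\[
\frac{1}{h_{n}\sqrt{n}}\sum_{2\le 2k\le n}\xi_{k,n}\to^{\mathcal{L}}N\!\left(0,\tfrac12\,\nu_{0}\!\left(\bigl(A^{(i,i)}(\cdot)\bigr)^{2}\right)\right).
\]

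Finally I would combine the two limits by Slutsky's theorem. Dividing numerator and denominator of $\mathcal{T}_{i,n}$ by $h_{n}\sqrt{n}$, the numerator tends in law to $N\bigl(0,(2\sqrt{3/2})^{2}\cdot\tfrac12\,\nu_{0}((A^{(i,i)})^{2})\bigr)=N\bigl(0,3\,\nu_{0}((A^{(i,i)})^{2})\bigr)$, while the denominator tends in probability to $\bigl(3\,\nu_{0}((A^{(i,i)})^{2})\bigr)^{1/2}$; the factor $\nu_{0}((A^{(i,i)})^{2})$ cancels and the ratio converges in law to $N(0,1)$, as claimed. The main obstacle I expect is the middle step: turning $\xi_{k,n}$ into a genuine martingale-difference array by discarding the drift and higher-order diffusion remainders, and establishing the conditional-variance limit uniformly. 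This rests on careful It\^o/Taylor expansions controlled through [A2] and on the ergodic theorem [A1](iii), and it is here that a sampling balance condition such as $nh_{n}^{2}\to0$ enters in an essential way.
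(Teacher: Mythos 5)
Your proposal is correct and in fact supplies the argument that the paper compresses into a single sentence: the paper's proof of Theorem \ref{thmTestH0} merely invokes Lemma 7 of \citet{Kessler-1997} (a central limit theorem for triangular arrays of martingale differences), Proposition 7 of \citet{Nakakita-Uchida-2019a}, and Slutsky's theorem. Your reduction of the statistic to $-2\sqrt{3/2}\,\sum_{2\le 2k\le n}D_{2k-1}D_{2k}\big/\bigl(\sum_{k}D_{k}^{4}\bigr)^{1/2}$ (plus a boundary term, negligible after normalisation, when $n$ is odd), the law of large numbers $\frac{1}{nh_{n}^{2}}\sum_{k}D_{k}^{4}\to^{P}3\,\nu_{0}\bigl((A^{(i,i)})^{2}\bigr)$, the martingale CLT with asymptotic conditional variance $\frac12\nu_{0}\bigl((A^{(i,i)})^{2}\bigr)$ (the factor $\frac12$ coming from there being only $n/2$ summands), and the final cancellation $(2\sqrt{3/2})^{2}\cdot\frac12/3=1$ are exactly what those citations are meant to deliver, and your constants check out.

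The one substantive point is the balance condition $nh_{n}^{2}\to0$. You are right that it is needed: one computes $\mathbf{E}\bigl[D_{2k-1}D_{2k}\,|\,\mathcal{F}_{(2k-2)h_{n}}\bigr]=h_{n}^{2}\bigl((b^{(i)})^{2}+\sum_{j}A^{(i,j)}\partial_{j}b^{(i)}\bigr)(X_{(2k-2)h_{n}})+O(h_{n}^{5/2})$, whose $\nu_{0}$-average is generically nonzero (for a stationary Ornstein--Uhlenbeck process it equals $-\sigma^{2}\theta h_{n}^{2}/2$), so the drift-induced bias of $\mathcal{T}_{i,n}$ is of exact order $\sqrt{nh_{n}^{2}}$ and does not vanish under $h_{n}\to0$, $nh_{n}\to\infty$ alone. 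The paper never states this condition (its simulation settings happen to satisfy it), so your caveat points to a missing hypothesis of the theorem rather than a defect of your own argument; you should simply record $nh_{n}^{2}\to0$ as an explicit assumption before applying the martingale CLT.
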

We also obtain the result to support the consistency of the test.
\begin{theorem}\label{thmTestH1}
    Under $H_{1}$ and [A1], we have the divergence in probability such that for any $c\in\mathbf{R}$,
    \begin{align*}
        &P\left(\mathcal{T}_{i,n}<c\right)\to 1.
    \end{align*}
\end{theorem}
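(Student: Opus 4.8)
The plan is to show that under $H_{1}$ the statistic $\mathcal{T}_{i,n}$ diverges to $-\infty$ in probability, which immediately gives $P\left(\mathcal{T}_{i,n}<c\right)\to1$ for every fixed $c$. I would write the statistic as $\mathcal{T}_{i,n}=\sqrt{n/F_{n}}\,D_{n}$, where
\begin{align*}
    D_{n}:=\frac{1}{nh_{n}}\sum_{k=1}^{n}\left(\overline{X}_{kh_{n},n}^{\left(i\right)}-\overline{X}_{\left(k-1\right)h_{n},n}^{\left(i\right)}\right)^{2}-\frac{1}{nh_{n}}\sum_{2\le 2k\le n}\left(\overline{X}_{2kh_{n},n}^{\left(i\right)}-\overline{X}_{\left(2k-2\right)h_{n},n}^{\left(i\right)}\right)^{2}
\end{align*}
is the difference of the full and reduced quadratic variations and $F_{n}:=\frac{2}{3nh_{n}^{2}}\sum_{k=1}^{n}\left(\overline{X}_{kh_{n},n}^{\left(i\right)}-\overline{X}_{\left(k-1\right)h_{n},n}^{\left(i\right)}\right)^{4}$ is the squared normalising factor under the square root.

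First I would pin down the limit of the numerator difference. By Proposition \ref{propEmpQ} the full quadratic variation converges in probability to a limit $q\left(\rho_{\star}^{\left(i\right)}\right)$, and by Lemma \ref{LemmaReducedQ} the reduced one converges to $\tilde{q}\left(\rho_{\star}^{\left(i\right)}\right)$, both being strictly positive multiples of $\nu_{0}\left(A^{\left(i,i\right)}\left(\cdot\right)\right)$; hence $D_{n}\to^{P}c_{\star}:=q\left(\rho_{\star}^{\left(i\right)}\right)-\tilde{q}\left(\rho_{\star}^{\left(i\right)}\right)$. Rather than comparing the three cases by hand, I would invoke Lemma \ref{LemmaFunctionR}: since $q/\tilde{q}=R\left(\rho_{\star}^{\left(i\right)}\right)$ and $R$ is strictly decreasing with $R\left(0\right)=1$, under $H_{1}$ (that is, $\rho_{\star}^{\left(i\right)}>0$) we have $R\left(\rho_{\star}^{\left(i\right)}\right)<1$, so $c_{\star}=\tilde{q}\left(\rho_{\star}^{\left(i\right)}\right)\left(R\left(\rho_{\star}^{\left(i\right)}\right)-1\right)<0$ is a strictly negative constant.

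Next I would control the denominator, for which I only need $F_{n}=O_{P}\left(1\right)$. This follows from a uniform fourth-moment estimate $\mathbf{E}_{\theta_{\star}}\left[\left(\overline{X}_{kh_{n},n}^{\left(i\right)}-\overline{X}_{\left(k-1\right)h_{n},n}^{\left(i\right)}\right)^{4}\right]\le Ch_{n}^{2}$, obtained from the moment bound [A1](ii), the Lipschitz and growth conditions in [A1](i) and [A2], and Burkholder--Davis--Gundy-type estimates applied to the convolved increments; summing over $k$ gives $\mathbf{E}_{\theta_{\star}}\left[F_{n}\right]\le C$, and Markov's inequality yields $F_{n}=O_{P}\left(1\right)$. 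I expect this moment computation for the convolved process to be the main obstacle, since the increment of $\overline{X}^{\left(i\right)}$ is a difference of two moving averages of $X^{\left(i\right)}$ over windows of length $\rho^{\left(i\right)}h_{n}$ and the bound must hold uniformly in $k$; however, it runs in parallel to the second-moment analysis already carried out for Proposition \ref{propEmpQ} and Lemma \ref{LemmaReducedQ}.

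Finally I would combine the two ingredients. Fix $c\in\mathbf{R}$ and $\varepsilon>0$, and choose $M>0$ with $P\left(F_{n}\le M\right)\ge1-\varepsilon/2$ for all large $n$. On the event $\left\{F_{n}\le M\right\}\cap\left\{D_{n}\le c_{\star}/2\right\}$, whose probability exceeds $1-\varepsilon$ eventually because $D_{n}\to^{P}c_{\star}$ and $c_{\star}/2<0$, the positivity of $\sqrt{n/F_{n}}$ together with $D_{n}<0$ gives
\begin{align*}
    \mathcal{T}_{i,n}=\sqrt{\frac{n}{F_{n}}}\,D_{n}\le\sqrt{\frac{n}{M}}\,\frac{c_{\star}}{2}\longrightarrow-\infty.
\end{align*}
Thus $\mathcal{T}_{i,n}<c$ on this event for all sufficiently large $n$, whence $P\left(\mathcal{T}_{i,n}<c\right)\ge1-\varepsilon$ eventually; letting $\varepsilon\downarrow0$ completes the argument. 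The whole proof is therefore a routine Slutsky-type combination once the strictly negative drift $c_{\star}<0$ and the stochastic boundedness of $F_{n}$ are established.
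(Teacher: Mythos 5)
Your proposal is correct and follows essentially the same route as the paper's proof: establish that the difference of the full and reduced quadratic variations converges in probability to a strictly negative constant (the paper cites Lemma \ref{LemmaReducedQ} for this; your appeal to the strict monotonicity of $R$ with $R\left(0\right)=1$ is a clean way to make the negativity explicit), and show that the normalising fourth-moment statistic is $O_{P}\left(1\right)$ via a uniform bound on its expectation, which the paper obtains from the same Gloter-type moment estimates you invoke. The final Slutsky-type combination you spell out is left implicit in the paper but is exactly the intended conclusion.
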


Hence when we set the significance level $\alpha_{\mathrm{sig}}\in\left(0,1\right)$, then we have the rejection region
\begin{align*}
    &\mathcal{T}_{i,n}<\Phi^{-1}\left(\alpha_{\mathrm{sig}}\right)
\end{align*}
where 
$\Phi$ is the distribution function of the standard Gaussian distribution. 
Theorem \ref{thmTestH1} supports the consistency of the test.

This test is essential in terms of examining the validity to consider the scheme of convolutional observation: if $\rho=\mathbf{0}$, then the ordinary observation scheme can be applied, but if $\rho\neq\mathbf{0}$, then we have the motivation to consider the convolutional 
observation scheme.

\section{Least square estimation of the diffusion and drift parameters} 
Let us set the least-square quasi-likelihood functions such that
\begin{align*}
    \mathbb{H}_{1,n}\left(\alpha|\rho\right)&:=-\sum_{k=1}^{n}\left\|\frac{1}{h_{n}}\left(\overline{X}_{kh_{n},n}-\overline{X}_{\left(k-1\right)h_{n},n}\right)^{\otimes2}-\mathbb{G}\left(\overline{X}_{\left(k-1\right)h_{n},n},\alpha|\rho\right)\right\|^{2},\\
    \mathbb{H}_{2,n}\left(\beta|\rho\right)& :=-\sum_{k=\left[\max_{i}\rho^{\left(i\right)}\right]+2}^{n}\frac{1}{h_{n}}\left|\overline{X}_{kh_{n},n}-\overline{X}_{\left(k-1\right)h_{n},n}-h_{n}b\left(\overline{X}_{\left(k-2-\left[\max_{i}\rho^{\left(i\right)}\right]\right)h_{n},n},\beta\right)\right|^{2},
\end{align*}
and the least-square estimators $\hat{\alpha}_{n}$ and $\hat{\beta}_{n}$ satisfying 
\begin{align*}
    \mathbb{H}_{1,n}\left(\hat{\alpha}_{n}|\rho_{\star}\right)=\sup_{\alpha\in\Theta_{1}}\mathbb{H}_{1,n}\left(\alpha|\rho_{\star}\right),\ 
    \mathbb{H}_{2,n}\left(\hat{\beta}_{n}|\rho_{\star}\right)=
    \sup_{\beta\in\Theta_{2}}\mathbb{H}_{2,n}\left(\beta|\rho_{\star}\right).
\end{align*}
when $\rho_{\star}$ is known, and 
\begin{align*}
    \mathbb{H}_{1,n}\left(\hat{\alpha}_{n}|\hat{\rho}_{n}\right)=\sup_{\alpha\in\Theta_{1}}\mathbb{H}_{1,n}\left(\alpha|\hat{\rho}_{n}\right),\ 
    \mathbb{H}_{2,n}\left(\hat{\beta}_{n}|\hat{\rho}_{n}\right)=
    \sup_{\beta\in\Theta_{2}}\mathbb{H}_{2,n}\left(\beta|\hat{\rho}_{n}\right).
\end{align*}
when $\rho_{\star}$ is unknown.

\begin{theorem}\label{thmThetaEstimate}
    Under [A1]-[A3], $\hat{\alpha}_{n}$ and $\hat{\beta}_{n}$ are consistent, i.e., $\hat{\alpha}_{n}\to^{P}\alpha_{\star}$ and $\hat{\beta}_{n}\to^{P}\beta_{\star}$.
\end{theorem}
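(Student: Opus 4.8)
The plan is to treat $\hat{\alpha}_{n}$ and $\hat{\beta}_{n}$ as M-estimators and follow the classical argmax route: normalise each contrast so that it converges uniformly in probability to a deterministic limit with a well-separated maximum at the truth, then conclude by [A3] and continuous mapping. I would set $\mathbb{Y}_{1,n}(\alpha):=\frac{1}{n}(\mathbb{H}_{1,n}(\alpha|\rho_{\star})-\mathbb{H}_{1,n}(\alpha_{\star}|\rho_{\star}))$ and $\mathbb{Y}_{2,n}(\beta):=\frac{1}{T_{n}}(\mathbb{H}_{2,n}(\beta|\rho_{\star})-\mathbb{H}_{2,n}(\beta_{\star}|\rho_{\star}))$, the normalisations $n^{-1}$ and $T_{n}^{-1}$ being dictated by the respective orders of the summands. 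The target is $\sup_{\alpha\in\Theta_{1}}|\mathbb{Y}_{1,n}(\alpha)-\mathbb{V}_{1}(\alpha|\xi_{\star})|\to^{P}0$ and $\sup_{\beta\in\Theta_{2}}|\mathbb{Y}_{2,n}(\beta)-\mathbb{V}_{2}(\beta|\xi_{\star})|\to^{P}0$; once these hold, [A3] guarantees that $\mathbb{V}_{1}$ and $\mathbb{V}_{2}$ are uniquely maximised at $\alpha_{\star}$ and $\beta_{\star}$ with quadratic separation, and consistency of the maximisers is immediate.

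For $\hat{\alpha}_{n}$, write $Z_{k,n}:=\frac{1}{h_{n}}(\overline{X}_{kh_{n},n}-\overline{X}_{(k-1)h_{n},n})^{\otimes2}$ and $\mathbb{G}_{\alpha}:=\mathbb{G}(\cdot,\alpha|\rho_{\star})$. Expanding the difference of squared Frobenius norms shows that $\mathbb{Y}_{1,n}(\alpha)$ is, up to sign, the empirical average of $\|\mathbb{G}_{\alpha}\|^{2}-\|\mathbb{G}_{\alpha_{\star}}\|^{2}-2Z_{k,n}[\mathbb{G}_{\alpha}-\mathbb{G}_{\alpha_{\star}}]$ evaluated at $\overline{X}_{(k-1)h_{n},n}$. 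The deterministic pieces converge by [A1](iii) together with the closeness of $\overline{X}_{(k-1)h_{n},n}$ to the latent state, whereas the piece weighted by $Z_{k,n}$ needs a weighted analogue of the empirical quadratic-variation limit behind Proposition \ref{propEmpQ}, namely $\frac{1}{n}\sum_{k}Z_{k,n}[\Psi(\overline{X}_{(k-1)h_{n},n})]\to^{P}\int\mathbb{G}(x,\alpha_{\star}|\rho_{\star})[\Psi(x)]\nu_{0}(\mathrm{d}x)$ for smooth matrix-valued $\Psi$ of polynomial growth. Substituting this limit and using the symmetry of $\mathbb{G}$ collapses the bracket to $\|\mathbb{G}_{\alpha}-\mathbb{G}_{\alpha_{\star}}\|^{2}$, so the pointwise limit is exactly $\mathbb{V}_{1}(\alpha|\xi_{\star})$; uniformity over $\Theta_{1}$ follows by bounding $\partial_{\alpha}\mathbb{Y}_{1,n}$ through the polynomial derivative estimates of [A2], giving tightness in $C(\Theta_{1})$.

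For $\hat{\beta}_{n}$, writing $b_{\beta}:=b(\cdot,\beta)$, I would expand $\frac{1}{h_{n}}|\overline{X}_{kh_{n},n}-\overline{X}_{(k-1)h_{n},n}-h_{n}b_{\beta}|^{2}$ and subtract its value at $\beta_{\star}$: the purely diffusive part cancels, and after normalising by $T_{n}^{-1}$ the identity $2(b_{\beta}-b_{\beta_{\star}})^{T}b_{\beta_{\star}}-|b_{\beta}|^{2}+|b_{\beta_{\star}}|^{2}=-|b_{\beta}-b_{\beta_{\star}}|^{2}$ turns the drift contribution into $-\frac{1}{n}\sum_{k}|(b_{\beta}-b_{\beta_{\star}})(y_{k})|^{2}\to^{P}\mathbb{V}_{2}(\beta|\xi_{\star})$, where $y_{k}:=\overline{X}_{(k-2-[\max_{i}\rho^{(i)}])h_{n},n}$. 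The backward time-shift by $2+[\max_{i}\rho^{(i)}]$ in the drift evaluation point is what I would exploit: a short computation shows that the convolution window feeding $\overline{X}_{(k-2-[\max_{i}\rho^{(i)}])h_{n},n}$ ends before the window $[(k-1)h_{n}-\overline{\rho}h_{n},kh_{n}]$ that generates the increment, so $b_{\beta}-b_{\beta_{\star}}$ is measurable with respect to the past and the cross term with the martingale part of the increment is a bounded-lag martingale-difference-type sum whose normalised $L^{2}$-norm is $O(T_{n}^{-1/2})$, hence negligible; the remaining drift cross term is handled by [A1](iii) and uniformity again by [A2].

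The unknown-$\rho$ case reduces to the known one through the consistency $\hat{\rho}_{n}\to^{P}\rho_{\star}$ of Theorem \ref{thmRhoEstimate}: since $\mathbb{G}(x,\alpha|\rho)$ is jointly continuous in $(\alpha,\rho)$ by the continuity of $f_{\mathbb{G}}$ in Lemma \ref{LemmaFunctionG}, with polynomial majorants uniform over the compact $\Xi$, replacing $\rho_{\star}$ by $\hat{\rho}_{n}$ perturbs both $\mathbb{Y}_{1,n}$ and $\mathbb{Y}_{2,n}$ uniformly by an $o_{P}(1)$ term, and the limits together with their separation are unchanged. I expect the main obstacle to be the weighted quadratic-variation limit invoked for $\alpha$ (and the analogous martingale bound for $\beta$): because neighbouring convolution windows overlap, the increments $\overline{X}_{kh_{n},n}-\overline{X}_{(k-1)h_{n},n}$ do not form a martingale-difference array, so the usual Markovian ergodic decomposition must be replaced by a blocking argument that isolates the fixed number, $O(\overline{\rho})$, of genuinely dependent lags and bounds their contribution. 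Pushing the moment estimates from [A1]-[A2] through this short-range dependence, uniformly over the compact parameter set, is the technical heart of the proof.
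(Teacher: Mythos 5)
Your overall strategy coincides with the paper's: normalise $\mathbb{H}_{1,n}$ by $n^{-1}$ and $\mathbb{H}_{2,n}$ by $(nh_{n})^{-1}$, prove uniform convergence in probability of the centred contrasts to $\mathbb{V}_{1}$ and $\mathbb{V}_{2}$, and conclude via [A3] by the standard argmax argument (the paper cites Kessler's scheme for this last step). The weighted quadratic-variation limit you require for the $\alpha$-part is exactly Proposition \ref{propEmpQ}, and your observation that the backward shift by $2+\left[\max_{i}\rho^{\left(i\right)}\right]$ places the drift evaluation point strictly before the window generating the increment is precisely what makes $D_{\ell}=O$ for $\ell\ge\left[\max_{i}\rho_{\star}^{\left(i\right)}\right]+1$ in Proposition \ref{propEmpI}. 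The blocking argument over the $O\left(\overline{\rho}\right)$ genuinely dependent lags that you identify as the technical heart is indeed how those propositions are proved (summation over residue classes modulo $2p+1$ plus Lemma 9 of Genon-Catalot and Jacod).

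The one place your argument as written would not go through is the unknown-$\rho$ case for $\hat{\beta}_{n}$. For $\mathbb{H}_{1,n}$ the dependence on $\rho$ is through $f_{\mathbb{G}}$, which is continuous by Lemma \ref{LemmaFunctionG}, so $\hat{\rho}_{n}\to^{P}\rho_{\star}$ together with the continuous mapping theorem does yield a uniform $o_{P}(1)$ perturbation, as in the paper. But $\mathbb{H}_{2,n}\left(\beta|\rho\right)$ depends on $\rho$ only through the integer $\left[\max_{i}\rho^{\left(i\right)}\right]+2$, a discontinuous function of $\rho$: when $\max_{i}\rho_{\star}^{\left(i\right)}$ is an integer, $\left[\max_{i}\hat{\rho}_{n}^{\left(i\right)}\right]$ need not converge to $\left[\max_{i}\rho_{\star}^{\left(i\right)}\right]$, and no continuity argument applies. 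The paper instead decomposes over the events $\left\{\left[\max_{i}\hat{\rho}_{n}^{\left(i\right)}\right]+1=j\right\}$, shows that with probability tending to one the realised lag is the correct integer $\ell$ (or, in the integer-boundary case, one of $\ell,\ell+1$), and checks that each admissible lag functional $F_{j}$ converges uniformly to the same limit $\mathbb{V}_{2}$. Your measurability mechanism is robust to the lag being any integer at least as large as the critical one, so the repair is only to replace the phrase ``perturbs $\mathbb{Y}_{2,n}$ uniformly by an $o_{P}(1)$ term'' by this event decomposition; but as stated that claim is not justified for $\mathbb{H}_{2,n}$.
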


\section{Simulations}
In this simulation section, we only consider the case where $\rho$ is unknown and should be estimated by data with the method proposed in Section 3.

\subsection{1-dimensional simulation} We examine the following 1-dimensional stochastic differential equation whose solution is a 1-dimensional Ornstein-Uhlenbeck (OU) process:
\begin{align*}
    \mathrm{d}X_{t}=\left(\beta^{\left(1\right)}X_{t}+\beta^{\left(2\right)}\right)\mathrm{d}t+\alpha\mathrm{d}w_{t},\ X_{-\lambda}=0,
\end{align*}
$\alpha\in\Theta_{1}:=\left[0.01,10\right]$, $\beta\in\Theta_{2}:=\left[-10,-0.01\right]\times\left[-10,10\right]$, and $\lambda=10^{-7/3}$. The procedure of the simulation is as follows: in the first place we iterate an approximated OU process by Euler-Maruyama scheme \citep[for example, see][]{Iacus-2008} with simulation parameters $n_{\mathrm{sim}}=10^{5+m}$, $h_{\mathrm{sim}}=10^{-10/3-m}$, $T_{\mathrm{sim}}=10^{5/3}$ where $m\in\mathbf{N}$ is 
a parameter to determine the precision of approximation; secondly, we give the approximation of convolution by summation such that
\begin{align*}
    \overline{X}_{ih_{n},n}\approxeq \begin{cases}\frac{1}{\left[10^{m}\rho\right]}\sum_{k=0}^{10^{m}-1}X_{ih_{n}-kh_{\mathrm{sim}}}&\text{ if }\left[10^{m}\rho\right]\ge1,\\
    X_{ih_{n}}&\text{ if } \left[10^{m}\rho\right]<0,
    \end{cases}
\end{align*}
where $i=0,\ldots,n$, the sampling frequency $h_{n}=10^{-10/3}$ and $n=10^{5/3}$. In this Section 5.1, we fix the true value of $\alpha$ and $\beta$ as $\alpha_{\star}=3$ and $\beta_{\star}=\left[-2,1\right]^{T}$, and change the true value of $\rho\in\Theta_{\rho}:=\left[0,100\right]$ to see the corresponding changes of performance of estimation for $\xi$, and test for $\rho$ in comparison to estimation by an existent method called local Gaussian approximation (LGA) for parametric estimation of discretely observed diffusion processes \citep[e.g., see][]{Kessler-1997} which does not concern convolutional observation. All the numbers of iterations for different $\rho$'s are 1000.

In the first place, we see the estimation and test with small values of $\rho_{\star}$ such that $\rho_{\star}=0,0.1,0.2,\ldots,1$ to observe how the performance of statistics changes by difference in $\rho$. Table \ref{tab:sim:1dim:small:rho:estimate} summarises the results of simulation of $\hat{\rho}_{n}$ for $\rho$'s with respective empirical means and root mean square error (RMSE).
\begin{table}[ht]
    \begin{tabular}{c|c|c|c|c|c}
         $\rho$ & $0.0$ & $0.1$ & $0.2$ & $0.3$ & $0.4$ \\\hline
         mean & $0.00990$ & $0.0971$ & $0.198$ & $0.298$ & $0.398$ \\
         RMSE & $(0.0182)$ & $(0.0256)$ & $(0.0235)$ & $(0.0215)$ & $(0.0197)$\\\\
    \end{tabular}
    \begin{tabular}{c|c|c|c|c|c|c}
         $\rho$ & 0.5 & 0.6 & 0.7 & 0.8 & 0.9 & 1.0\\\hline
         mean &  $0.498$ & $0.598$ & $0.699$ & $0.799$ & $0.899$ & $0.999$ \\
         RMSE & $(0.0180)$ & $(0.0164)$ & $(0.0150)$ & $(0.0135)$ & $(0.0123)$ & $(0.0110)$
    \end{tabular}
    \caption{Estimation performance of $\rho$ with small $\rho$.}
    \label{tab:sim:1dim:small:rho:estimate}
\end{table}
We can see the proposed estimator $\hat{\rho}_{n}$ works well for small $\rho$. With respect to the performance of the test statistic $\mathcal{T}_{n}$ proposed in Section 3.2, Table \ref{tab:sim:1dim:small:rho:test} shows the empirical ratio of the number of iterations whose $\mathcal{T}_{n}$ is lower than some typical critical values where $\Phi$ indicates the distribution function of 1-dimensional standard Gaussian distribution as well as the maximum value of $\mathcal{T}_{n}$ in 1000 iterations.
\begin{table}[ht]
    \centering
    \begin{tabular}{c|c|c|c|c|c|c}
        & \multicolumn{5}{c|}{empirical ratio of $\mathcal{T}_{n}$ less than ...}& \multirow{2}{*}{max. of $\mathcal{T}_{n}$}\\
        & $\Phi^{-1}\left(0.10\right)$ & $\Phi^{-1}\left(0.05\right)$ & $\Phi^{-1}\left(0.025\right)$ & $\Phi^{-1}\left(0.01\right)$ & $\Phi^{-1}\left(0.001\right)$\\\hline
        $\rho=0.0$ & $0.101$ & $0.053$ & $0.025$ & $0.005$ & $0.000$ & $3.060$\\\hline
        $\rho=0.1$ & $0.989$ & $0.980$ & $0.966$ & $0.914$ & $0.759$ & $-0.710$\\\hline
        $\rho=0.2$ & $1.000$ & $1.000$ & $1.000$ & $1.000$ & $1.000$ & $-4.593$\\\hline
        $\rho=0.3$ & $1.000$ & $1.000$ & $1.000$ & $1.000$ & $1.000$ &  $-9.341$\\\hline
        $\rho=0.4$ & $1.000$ & $1.000$ & $1.000$ & $1.000$ & $1.000$ & $-13.985$\\\hline
        $\rho=0.5$ & $1.000$ & $1.000$ & $1.000$ & $1.000$ & $1.000$ & $-19.152$\\\hline
        $\rho=0.6$ & $1.000$ & $1.000$ & $1.000$ & $1.000$ & $1.000$ & $-24.816$\\\hline
        $\rho=0.7$ & $1.000$ & $1.000$ & $1.000$ & $1.000$ & $1.000$ & $-30.848$\\\hline
        $\rho=0.8$ & $1.000$ & $1.000$ & $1.000$ & $1.000$ & $1.000$ & $-37.557$\\\hline
        $\rho=0.9$ & $1.000$ & $1.000$ & $1.000$ & $1.000$ & $1.000$ & $-44.829$\\\hline
        $\rho=1.0$ & $1.000$ & $1.000$ & $1.000$ & $1.000$ & $1.000$ & $-52.759$\\
    \end{tabular}
    \caption{Empirical ratio of test statistic $\mathcal{T}_{n}$ less than some critical values, and the maximum value of $\mathcal{T}_{n}$ in 1000 iterations}
    \label{tab:sim:1dim:small:rho:test}
\end{table}
Even for $\rho=0.1$, the simulation result supports the theoretical discussion of the test with consistency. Because $\Phi\left(10^{-16}\right)=-8.222$, all the iterations with $\rho\ge0.3$ result in rejection of $H_{0}$ with substantially significance level $10^{-16}$.
Let us see the estimation for $\alpha$ and $\beta$ by our proposal method and that by the LGA in Table \ref{tab:sim:1dim:small:theta:estimate}. 
\begin{table}[ht]
    \centering
    \begin{tabular}{c|c|c|c|c|c|c|c}
        \multicolumn{2}{c|}{} & \multicolumn{3}{c|}{the proposed method} & \multicolumn{3}{c}{LGA}\\\hline
        \multicolumn{2}{c|}{}& $\alpha$ & $\beta^{\left(1\right)}$ & $\beta^{\left(2\right)}$ & $\alpha$ & $\beta^{\left(1\right)}$ & $\beta^{\left(2\right)}$\\\hline
        \multicolumn{2}{c|}{true value}& $3.0$ & $-2.0$ & $1.0$ & $3.0$ & $-2.0$ & $1.0$\\\hline
        \multirow{2}{*}{$\rho=0.0$} & mean & $3.004$ & $-2.091$ & $1.036$ & $2.999$ & $-2.095$ & $1.037$ \\
        & RMSE & $(0.0109)$ & $(0.318)$ & $(0.497)$ & $(0.00679)$ & $(0.320)$ & $(0.497)$\\\hline
        \multirow{2}{*}{$\rho=0.1$} & mean & $2.999$ & $-2.091$ & $1.035$ & $2.949$ & $-2.026$ & $1.003$ \\
        & RMSE & $(0.0146)$ & $(0.319)$ & $(0.496)$ & $(0.0509)$ & $(0.297)$ & $(0.480)$ \\\hline
        \multirow{2}{*}{$\rho=0.2$} & mean & $2.998$ & $-2.091$ & $1.035$ & $2.898$ & $-1.955$ & $0.967$ \\
        & RMSE & $(0.0142)$ & $(0.319)$ & $(0.496)$ & $(0.102)$ & $(0.290)$ & $(0.464)$ \\\hline
        \multirow{2}{*}{$\rho=0.3$} & mean & $2.998$ & $-2.092$ & $1.036$ & $2.846$ & $-1.885$ & $0.932$ \\
        & RMSE & $(0.0139)$ & $(0.319)$ & $(0.497)$ & $(0.155)$ & $(0.299)$ & $(0.452)$\\\hline
        \multirow{2}{*}{$\rho=0.4$} & mean & $2.998$ & $-2.091$ & $1.036$ & $2.792$ & $-1.815$ & $0.897$\\
        & RMSE & $(0.0135)$ & $(0.319)$ & $(0.497)$ & $(0.208)$ & $(0.324)$ & $(0.442)$\\\hline
        \multirow{2}{*}{$\rho=0.5$} & mean & $2.998$ & $-2.092$ & $1.036$ & $2.738$ & $-1.744$ & $0.862$\\
        & RMSE & $(0.0132)$ & $(0.319)$ & $(0.497)$ & $(0.262)$ & $(0.361)$ & $(0.436)$ \\\hline
        \multirow{2}{*}{$\rho=0.6$} & mean & $2.998$ & $-2.091$ & $1.036$ & $2.683$ & $-1.674$ & $0.827$\\
        & RMSE & $(0.0129)$ & $(0.319)$ & $(0.497)$ & $(0.317)$ & $(0.408)$ & $(0.434)$\\\hline
        \multirow{2}{*}{$\rho=0.7$} & mean & $2.998$ & $-2.092$ & $1.036$ & $2.626$ & $-1.604$ & $0.792$\\
        & RMSE & $(0.0126)$ & $(0.319)$ & $(0.497)$ & $(0.374)$ & $(0.460)$ & $(0.434)$ \\\hline
        \multirow{2}{*}{$\rho=0.8$} & mean & $2.998$ & $-2.092$ & $1.036$ & $2.568$ & $-1.534$ & $0.757$\\
        & RMSE & $(0.0124)$ & $(0.319)$ & $(0.496)$ & $(0.432)$ & $(0.517)$ & $(0.439)$\\\hline
        \multirow{2}{*}{$\rho=0.9$} & mean & $2.998$ & $-2.092$ & $1.036$ & $2.509$ & $-1.464$ & $0.722$ \\
        & RMSE & $(0.0121)$ & $(0.319)$ & $(0.497)$ & $(0.491)$ & $(0.578)$ & $(0.445)$ \\\hline
        \multirow{2}{*}{$\rho=1.0$} & mean & $2.998$ & $-2.091$ & $1.036$ & $2.449$ & $-1.394$ & $0.687$\\
        & RMSE & $(0.0119)$ & $(0.319)$ & $(0.497)$ & $(0.551)$ & $(0.640)$ & $(0.456)$\\
    \end{tabular}
    \caption{Estimation of $\theta$ by the proposed method and LGA with small $\rho$}
    \label{tab:sim:1dim:small:theta:estimate}
\end{table}
Note that the biases of the estimation by LGA increase as the true value of $\rho$ gets larger, while the estimation by our proposal method is not influenced by the true value of $\rho$. This result of the simulation supports the theoretical discussion in Section 4 stating the consistency of $\hat{\theta}_{n}$, and necessity to consider the convolutional observation scheme where the LGA method does not work properly. 

Secondly, we consider the estimation and test with large $\rho_{\star}$ such that $\rho_{\star}=10,15,20$ to see if our proposal methods work even for large $\rho$. We note that the maximum values of $\mathcal{T}_{n}$ for $\rho=10,15,20$ in 1000 iterations are $-55.091$, $-68.462$ and $-79.105$, and hence we can detect the smoothed observation easily. Table \ref{tab:sim:1dim:large:rho:estimate} shows the empirical means and RMSEs of $\hat{\rho}_{n}$ for $\rho=10,15,20$ and we can see that the RMSEs increase as $\rho$'s increase; it indicates the difficulty to estimate $\rho$ accurately when $\rho_{\star}$ is large.
\begin{table}[ht]
    \centering
    \begin{tabular}{c|c|c|c}
         & $\rho=10$ & $\rho=15$ & $\rho=20$\\\hline
        mean of $\hat{\rho}_{n}$ & $9.919$ & $14.980$ & $19.751$\\
        RMSE of $\hat{\rho}_{n}$ & $(0.145)$ & $(0.240)$ & $(0.409)$\\
    \end{tabular}
    \caption{The performance of $\hat{\rho}_{n}$ for $\rho=10, 15, 20$ in 1000 iterations}
    \label{tab:sim:1dim:large:rho:estimate}
\end{table}
Table \ref{tab:sim:1dim:large:theta:estimate} summarises the estimation for $\theta$ by means and RMSE, and tells us that although the large RMSE of $\hat{\rho}_{n}$ results in increase of RMSE of $\hat{\alpha}_{n}$, estimation by our method is substantially better than that by LGA of course.
\begin{table}[ht]
    \centering
    \begin{tabular}{c|c|c|c|c|c|c|c}
        \multicolumn{2}{c|}{} & \multicolumn{3}{c|}{the proposed method} & \multicolumn{3}{c}{LGA}\\\hline
        \multicolumn{2}{c|}{}& $\alpha$ & $\beta^{\left(1\right)}$ & $\beta^{\left(2\right)}$ & $\alpha$ & $\beta^{\left(1\right)}$ & $\beta^{\left(2\right)}$\\\hline
        \multicolumn{2}{c|}{true value}& $3.0$ & $-2.0$ & $1.0$ & $3.0$ & $-2.0$ & $1.0$\\\hline
        \multirow{2}{*}{$\rho=10$} & mean & $2.989$ & $-2.101$ & $1.030$ & $0.933$ & $-0.204$ & $0.0811$ \\
        & RMSE & $(0.0347)$ & $(0.323)$ & $(0.496)$ & $(2.067)$ & $(1.796)$ & $(0.920)$\\\hline
        \multirow{2}{*}{$\rho=15$} & mean & $2.996$ & $-2.095$ & $1.027$ & $0.765$ & $-0.138$ & $0.0473$ \\
        & RMSE & $(0.0475)$ & $(0.321)$ & $(0.495)$ & $(2.235)$ & $(1.862)$ & $(0.953)$ \\\hline
        \multirow{2}{*}{$\rho=20$} & mean & $2.977$ & $-2.090$ & $1.024$ & $0.664$ & $-0.104$ & $0.0302$ \\
        & RMSE & $(0.0526)$ & $(0.319)$ & $(0.493)$ & $(2.336)$ & $(1.896)$ & $(0.970)$ \\
    \end{tabular}
    \caption{Estimation of $\theta$ by the proposed method with large $\rho$}
    \label{tab:sim:1dim:large:theta:estimate}
\end{table}

\subsection{2-dimensional simulation}
We consider the following 2-dimensional stochastic differential equation whose solution is a 2-dimensional OU process:
\begin{align*}
    \mathrm{d}\left[\begin{matrix}
    X_{t}^{\left(1\right)}\\
    X_{t}^{\left(2\right)}
    \end{matrix}\right]
    &=
    \left(
    \left[\begin{matrix}
    \beta^{\left(1\right)} & \beta^{\left(2\right)}\\
    \beta^{\left(4\right)} & \beta^{\left(5\right)}
    \end{matrix}\right]
    \left[\begin{matrix}
    X_{t}^{\left(1\right)}\\
    X_{t}^{\left(2\right)}
    \end{matrix}\right]
    +
    \left[\begin{matrix}
    \beta^{\left(3\right)}\\
    \beta^{\left(6\right)}
    \end{matrix}\right]
    \right)\mathrm{d}t
    +
    \left[\begin{matrix}
    \alpha^{\left(1\right)} & \alpha^{\left(2\right)}\\
    \alpha^{\left(2\right)} & \alpha^{\left(3\right)}
    \end{matrix}\right]\mathrm{d}w_{t}, X_{-\lambda}=\left[\begin{matrix}
    0\\
    0
    \end{matrix}\right],
\end{align*}
$\lambda=10^{-7/3}$.
The simulation is conducted with the settings as follows: firstly, we iterate the OU process by Euler-Maruyama scheme with the simulation sample size $n_{\mathrm{sim}}=10^{5+m}$, $T_{\mathrm{sim}}=10^{5/3}$ and discretisation step $h_{\mathrm{sim}}=10^{-10/3-m}$, where $m=2$ is the precision parameter for approximation of convolution; in the second place, we approximate the convoluted process with summation such that
\begin{align*}
    \overline{X}_{ih_{n},n}^{\left(j\right)}\approxeq \begin{cases}\frac{1}{\left[10^{m}\rho^{\left(j\right)}\right]}\sum_{k=0}^{10^{m}-1}X_{ih_{n}-kh_{\mathrm{sim}}}^{\left(j\right)}&\text{ if }\left[10^{m}\rho^{\left(j\right)}\right]\ge1,\\
    X_{ih_{n}}^{\left(j\right)}&\text{ if } \left[10^{m}\rho^{\left(j\right)}\right]<0,
    \end{cases}
\end{align*}
where $i=0,\ldots,n$, $j=1,2,$ the sampling scheme for inference is defined as $n=10^{5}$ and $h_{n}=10^{-10/3}$; the true value of $\rho$, $\alpha$ and $\beta$ are set as $\rho_{\star}=\left[2,4\right]^{T}$, $\alpha_{\star}=\left[2,0,3\right]^{T}$, $\beta_{\star}=\left[-2, -0.4, 0, 0.1, -3, 5\right]^{T}$; the parameter spaces are defined as $\Theta_{\rho}=\left[0,10\right]^{2}$, $\Theta_{1}=\left[1+10^{-8},10\right]\times\left[-1+10^{-8},1-10^{-8}\right]\times\left[1+10^{-8},10\right]$, and $\Theta_{2}=\left[-10,10\right]^{6}$; the total iteration number is set to 1000.

Table \ref{tab:sim:2dim:rho:estimate} summarises the estimation for $\rho$ with the method proposed in Section 3 (the inverse of $r$ is computationally obtained) with empirical means and empirical RMSEs of $\hat{\rho}_{n}$ in 1000 iterations. We can see that $\hat{\rho}_{n}$ is sufficiently precise to estimate the true value of $\rho$ indeed in this result, which is significant to estimate the other parameters $\alpha$ and $\beta$.
\begin{table}[ht]
    \centering
    \begin{tabular}{c|c|c}
         & $\rho^{\left(1\right)}$ & $\rho^{\left(2\right)}$\\\hline
         true value & $2.0$ & $4.0$ \\\hline
         empirical mean & $1.988$ & $3.966$ \\\hline
         empirical RMSE & $(0.0207)$ & $(0.0514)$
    \end{tabular}
    \caption{summary for $\rho$ estimate}
    \label{tab:sim:2dim:rho:estimate}
\end{table}
We also note that the maximum values of the test statistics for smoothed observation proposed in Section 3.2 in 1000 iterations are $-17.947$ and $-33.159$ for each axis. The $p$-value for them are smaller than $10^{-16}$; therefore, we can conclude that it is possible to detect the smoothed observation with the proposed test statistic in the case $\rho^{\left(i\right)}=2.0$ if $d=2$ from this result.

With respect to the estimation for $\alpha$ and $\beta$, we compare the estimates by our proposal method with that by LGA which does not concern convolutional observation. Table \ref{tab:sim:2dim:alpha:estimate} is the summary for $\alpha$ estimate by both the methods: 
\begin{table}[ht]
    \centering
    \begin{tabular}{c|c|c|c|c}
         & & $\alpha^{\left(1\right)}$ & $\alpha^{\left(2\right)}$ & $\alpha^{\left(3\right)}$\\\hline
         & true value & $2.0$ & $0.0$ & $3.0$ \\\hline
         \multirow{2}{*}{Our proposal}& mean & $1.993$ &  $0.000256$ & $2.992$ \\
         & RMSE & $(0.0115)$ & $(0.00739)$ & $(0.0213)$\\\hline
         \multirow{2}{*}{LGA}& mean & $1.295$ & $-0.00320$ & $1.442$ \\
         & RMSE & $(0.705)$ & $(0.0154)$ & $(1.558)$\\
    \end{tabular}
    \caption{summary for $\alpha$ estimate}
    \label{tab:sim:2dim:alpha:estimate}
\end{table}
we can see that the estimation precision for $\alpha$ by our proposal outperforms those by LGA. This results support validity of our estimation method if we have convolutional observation for diffusion processes. Regarding $\beta$, the simulation result is summarised in Table \ref{tab:sim:2dim:beta:estimate}:
\begin{table}[ht]
    \centering
    \begin{tabular}{c|c|c|c|c|c|c|c}
         & & $\beta^{\left(1\right)}$ & $\beta^{\left(2\right)}$ & $\beta^{\left(3\right)}$ & $\beta^{\left(4\right)}$ & $\beta^{\left(5\right)}$ & $\beta^{\left(6\right)}$\\\hline
         & true value & $-2.0$ & $-0.4$ & $0.0$ & $0.1$ & $-3.0$ & $5.0$ \\\hline
         \multirow{2}{*}{Our proposal}& mean & $-2.137$ & $-0.408$ & $-0.0439$ & $0.0788$ & $-3.103$ & $5.091$ \\
         & RMSE & $(0.362)$ & $(0.252)$ & $(0.540)$ & $(0.473)$ & $(0.399)$ & $(0.777)$\\\hline
         \multirow{2}{*}{LGA}& mean & $-0.917$ & $0.340$ & $-0.326$ & $-0.696$ & $0.221$ & $1.243$\\
         & RMSE & $(1.093)$ & $(0.802)$ & $(0.386)$ & $(0.804)$ & $(3.242)$& $(3.765)$\\
    \end{tabular}
    \caption{summary for $\beta$ estimate}
    \label{tab:sim:2dim:beta:estimate}
\end{table}
though the estimation for $\beta^{\left(3\right)}$ by our method has the smaller bias in comparison to that by LGA, the RMSE of our method is larger than that of LGA; in the estimation for other parameters, our proposal method outperforms the method by LGA. We can conclude that our proposal for estimation of $\alpha$ and $\beta$ concerning convolutional observation performs better than that not considering this observation scheme.

\section{Real data analysis}

In this section, we analyse the EEG dataset named S02E.mat provided in ``2.\ Two class motor imagery (002-2014)" of \citet{BNCI-2014}. The datasets including S02E.mat are also studied by \citet{Steyrl-et-al-2016}.

\subsection{Estimation and test for the smoothing parameters} In the first place, we pick up the first 15 axes of the dataset and compute $\hat{\rho}_{n}$ and $\mathcal{T}_{n}$ proposed in Section 3.1 and 3.2 respectively. The results are shown in Table \ref{tab:rda:est_and_tst}.
\begin{table}[ht]
    \centering
    \begin{tabular}{c|c|c|c|c|c}
         & 1st axis & 2nd axis & 3rd axis & 4th axis & 5th axis\\\hline
        $\hat{\rho}_{n}$ & $0.449$ & $1.037$ & $0.894$ & $0.736$ & $0.937$ \\\hline
        $\mathcal{T}_{n}$ & $-20.398$ & $-58.631$ & $-46.649$ & $-35.201$ & $-49.741$
    \end{tabular}\\
    \begin{tabular}{c|c|c|c|c|c}
         & 6th axis & 7th axis & 8th axis & 9th axis & 10th axis\\\hline
        $\hat{\rho}_{n}$ & $0.951$ & $0.971$ & $1.017$ & $0.958$ & $0.967$ \\\hline
        $\mathcal{T}_{n}$ & $-51.392$ & $-52.607$ & $-55.455$ & $-51.221$ & $-51.797$
    \end{tabular}\\
    \begin{tabular}{c|c|c|c|c|c}
        & 11th axis & 12th axis & 13th axis & 14th axis & 15th axis\\\hline
        $\hat{\rho}_{n}$ & $0.949$ & $0.649$ & $0.952$ & $0.977$ & $0.932$ \\\hline
        $\mathcal{T}_{n}$ & $-50.457$ & $-30.094$ & $-50.633$ & $-50.978$ & $-48.842$
    \end{tabular}\\
    \caption{The values of $\hat{\rho}_{n}$ and $\mathcal{T}_{n}$ for the first 15 axes of S02.mat by \citet{BNCI-2014}.}
    \label{tab:rda:est_and_tst}
\end{table}
We can observe that all the 15 time series data have the smoothing parameter $\rho>0$ with statistical significance when we assume ordinary significance levels. These results motivate us to use our methods for parametric estimation proposed in Section 4 when we fit stochastic differential equations for these data.

\subsection{Parametric estimation for a diffusion process}
We fit a 1-dimensional OU process for the time series data in the 2nd column of the data file S02E.mat with 512Hz observation for 222 seconds (the plot of the path can be seen in Figure \ref{fig:BNCI:paths}), whose $\hat{\rho}_{n}=1.037$ is the largest among those for the 15 axes and it is larger than 0 with statistical significance. According to the simulation result shown in Section 5.1, this size of the smoothing parameter gives critical biases when we estimate $\alpha$ and $\beta$ with LGA method not concerning convolutional observation scheme.

The stochastic differential equation with parameters $\alpha\in\Theta_{1}:=\left[0.01,200\right]$ and $\beta\in\Theta_{2}:=\left[-100,-0.01\right]\times\left[-100,100\right]$ is defined as follows:
\begin{align*}
    \mathrm{d}X_{t}=\left(\beta^{\left(1\right)}X_{t}+\beta^{\left(2\right)}\right)\mathrm{d}t+\alpha\mathrm{d}w_{t},\ X_{-\lambda}=x_{-\lambda}.
\end{align*}
We set 5 seconds as the time unit: hence $n=113664$ and $h_{n}=1/\left(5\times512\right)$. If we fit the OU process with the LGA method, i.e., we do not concern convolutional observation scheme, we obtain the fitting result such that
\begin{align*}
    \mathrm{d}X_{t}=\left(\left(-17.378\right)X_{t}+\left(-1.091\right)\right)\mathrm{d}t+\left(122.892\right)\mathrm{d}w_{t},\ X_{-\lambda}=x_{-\lambda}.
\end{align*}
In the next place, we fit $\alpha$ and $\beta$ with the least square method proposed in Section 4, and then we have the next fitting result:
\begin{align*}
    \mathrm{d}X_{t}=\left(\left(-2.146\right)X_{t}+\left(0.552\right)\right)\mathrm{d}t+\left(151.919\right)\mathrm{d}w_{t},\ X_{-\lambda}=x_{-\lambda}.
\end{align*}
It is worth noting that this fitting result is substantially different to that by LGA as shown above: hence these results indicate the significance to examine if the observation is convoluted with the smoothing parameter $\rho>0$ and otherwise the estimation is strongly biased.

\section{Summary}

We have discussed the convolutional observation scheme which deals with the smoothness of observation in comparison to ordinary diffusion processes. The first contribution is to propose this new observation scheme with the statistical test to confirm whether this scheme is valid in real data. The second one is to prove consistency of the estimator $\hat{\rho}_{n}$ for the smoothing parameter $\rho$, those for parameters in diffusion and drift coefficient, i.e., $\alpha$ and $\beta$, of the latent diffusion process $\left\{X_{t}\right\}$. Thirdly, we have examined the performance of those estimators and the test statistics in computational simulation, and verified these statistics work well in realistic settings. In the fourth place, we have shown a real example of observation where $\rho\neq0$ holds with statistical significance.

These contributions, especially the third one, will cultivate the motivation to study statistical approaches for convolutionally observed diffusion processes furthermore, such as estimation of kernel function $V$ appearing in the convoluted diffusion $\overline{X}_{t}:=\left(V\ast X\right)\left(t\right)$, test theory for parameters $\alpha$ and $\beta$ as likelihood-ratio-type test statistics \citep[for example, see][]{Kitagawa-Uchida-2014, Nakakita-Uchida-2019b}, large deviation inequalities for quasi-likelihood functions and associated discussion of Bayes-type estimators \citep[e.g.,][]{Yoshida-2011, Ogihara-Yoshida-2011, Clinet-Yoshida-2017, Nakakita-Uchida-2018}. By these future works, it is expected that the applicability of stochastic differential equations in real data analysis and contributions to the areas with high frequency observation of phenomena such as EEG will be enhanced.

\bibliography{bib190620}
\bibliographystyle{apalike}

\section*{Appendix A. Proofs for preliminary lemmas and main results}
\subsection*{Nonasymptotic results}
We assume $\Delta \le \lambda$,  $k\in\mathbf{N}, M>0$, and consider a class of $\overline{\mathbf{R}}^{k}\otimes\overline{\mathbf{R}}^{d}$-valued kernel functions on $\mathbf{R}$ denoted as $\mathcal{K}\left(\Delta,k,M\right)$ such that for all $\Phi_{\Delta}\in\mathcal{K}\left(\Delta,k,M\right)$, it holds:
\begin{align*}
    \text{(i) }&\text{supp}\Phi_{\Delta}\subset \left[0,\Delta\right],\\
    \text{(ii) }&\text{for all } f:\left[0,\Delta\right]\times\Omega\to\mathbf{R}^{k}, \omega\in\Omega,\left|\int_{0}^{\Delta}\Phi_{\Delta}\left(\Delta-s\right)f\left(s,\omega\right)\mathrm{d}s\right|\le M\sup_{s\in\left[0,\Delta\right]}\left|f\left(s,\omega\right)\right|\\
    \text{(iii) }&\text{for all }t_{0}\ge -\lambda, f:\mathbf{R}^{d}\to\mathbf{R}\text{\ which is continuous and at most polynomial growth},\\
    & \mathbf{E}\left[\left.\int_{t-\Delta}^{t}\Phi_{\Delta}\left(t-s\right)f\left(X_{s}\right)\mathrm{d}s\right|\mathcal{F}_{t_{0}}\right]=\int_{t-\Delta}^{t}\Phi_{\Delta}\left(t-s\right)\mathbf{E}\left[\left.f\left(X_{s}\right)\right|\mathcal{F}_{t_{0}}\right]\mathrm{d}s.
\end{align*}
\begin{remark}
    {Note one sufficient condition for $\Phi_{\Delta}\in\mathcal{K}\left(\Delta,k,M\right)$ is (i) $\Phi_{\Delta}:\mathbf{R}\to\mathbf{R}^{k}\otimes\mathbf{R}^{d}$, (ii) $\text{supp}\Phi_{\Delta}\subset \left[0,\Delta\right]$, (iii) $\int_{0}^{\Delta}\left\|\Phi_{\Delta}\left(\Delta-s\right)\right\|\mathrm{d}s\le M$ and (iv) $\mathcal{B}\left(\left[0,\Delta\right]\right)$-measurable since}
    \begin{align*}
        \left|\int_{0}^{\Delta}\Phi_{\Delta}\left(\Delta-s\right)f\left(s,\omega\right)\mathrm{d}s\right|&\le \int_{0}^{\Delta}\left\|\Phi_{\Delta}\left(\Delta-s\right)\right\|\left|f\left(s,\omega\right)\right|\mathrm{d}s\le M\sup_{s\in\left[0,\Delta\right]}\left|f\left(s,\omega\right)\right|
    \end{align*}
    for Cauchy-Schwarz inequality, and Fubini's theorem.
\end{remark}
It is easily checked that $V_{\rho,h_{n}}\in \mathcal{K}\left(\max_{i=1,\ldots,d}\rho_{i}h_{n},d, d\right)$.

The next theorem is a generalisation of Proposition 2.2 of \citet{Gloter-2000},  Theorem 1 of \citet{Gloter-2006} and Corollary 1 of \citet{Nakakita-Uchida-2019a}.
\begin{theorem}\label{thmAppox}
    Set $t\ge0$, $\Delta\in\left(0,\lambda\right]$, $k\in\mathbf{N}$, $M>0$, $\Phi_{\Delta},\Psi_{\Delta}\in\mathcal{K}\left(\Delta,k,M\right)$, and assume [A1].
    Then we have
    \begin{align*}
        \int_{t-\Delta}^{t}\Phi_{\Delta}\left(t-s\right)X_{s}\mathrm{d}s
        & = \left(\int_{0}^{\Delta}\Phi_{\Delta}\left(\Delta-s\right)\mathrm{d}s\right)X_{t-\Delta}+ \left(\int_{0}^{\Delta}\Phi_{\Delta}\left(\Delta-s\right)s\mathrm{d}s\right)b\left(X_{t-\Delta}\right)\\
        &\quad+\int_{t-\Delta}^{t}\Phi_{\Delta}\left(t-s_{1}\right)\left(\int_{t-\Delta}^{s_{1}}a\left(X_{t-\Delta}\right)\mathrm{d}w_{s_{2}}\right)\mathrm{d}s_{1}+e_{t-\Delta,\Delta},
    \end{align*}
    where $e_{t-\Delta,\Delta}$ is an $\mathbf{R}^{k}$-valued $\mathcal{F}_{t}$-measurable random variable such that
    \begin{align}
        \text{(i) }&\left|\mathbf{E}\left[e_{t-\Delta,\Delta}|\mathcal{F}_{t-\Delta}\right]\right|\le C\left(M\right)\Delta^{2} \left(1+\left|X_{t-\Delta}\right|\right)^{C\left(M\right)},\\
        \text{(ii) }& 
        \text{for all} \ m>0,\ \mathbf{E}\left[\left|e_{t-\Delta,\Delta}\right|^{m}|\mathcal{F}_{t-\Delta}\right]\le C\left(m,M\right)\Delta^{m} \left(1+\left|X_{t-\Delta}\right|\right)^{C\left(m,M\right)},\\
        \text{(iii) }&\left|\mathbf{E}\left[e_{t-\Delta,\Delta}\left[\int_{t-\Delta}^{t}\Psi_{\Delta}\left(t-s_{1}\right)\left(\int_{t-\Delta}^{s_{1}}a\left(X_{t-\Delta}\right)\mathrm{d}w_{s_{2}}\right)\mathrm{d}s_{1}\right]|\mathcal{F}_{t-\Delta}\right]\right|\notag\\
        &\quad\le C\left(M\right)\Delta^{2} \left(1+\left|X_{t-\Delta}\right|\right)^{C\left(M\right)}.
    \end{align}
\end{theorem}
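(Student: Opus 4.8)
The plan is to begin from the integral form of the governing equation: for $s\in\left[t-\Delta,t\right]$,
\[
X_s=X_{t-\Delta}+\int_{t-\Delta}^s b\left(X_u\right)\,\mathrm{d}u+\int_{t-\Delta}^s a\left(X_u\right)\,\mathrm{d}w_u.
\]
Substituting this into $\int_{t-\Delta}^t\Phi_\Delta\left(t-s\right)X_s\,\mathrm{d}s$ and interchanging the order of integration --- by Fubini for the $\mathrm{d}u$-part and by stochastic Fubini for the $\mathrm{d}w_u$-part, both justified by [A1](ii) and the integrability built into $\mathcal{K}\left(\Delta,k,M\right)$ --- produces the three stated leading terms once the coefficients are frozen at $X_{t-\Delta}$; after the substitution $\sigma=s-\left(t-\Delta\right)$ these become $\left(\int_0^\Delta\Phi_\Delta\left(\Delta-\sigma\right)\,\mathrm{d}\sigma\right)X_{t-\Delta}$, $\left(\int_0^\Delta\Phi_\Delta\left(\Delta-\sigma\right)\sigma\,\mathrm{d}\sigma\right)b\left(X_{t-\Delta}\right)$, and the iterated stochastic integral. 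The remainder is then $e_{t-\Delta,\Delta}=e^{b}+e^{a}$, where $e^{b}=\int_{t-\Delta}^t\Phi_\Delta\left(t-s\right)\int_{t-\Delta}^s\left[b\left(X_u\right)-b\left(X_{t-\Delta}\right)\right]\,\mathrm{d}u\,\mathrm{d}s$ and $e^{a}$ is the analogous term carrying $\left[a\left(X_u\right)-a\left(X_{t-\Delta}\right)\right]\,\mathrm{d}w_u$.

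For (ii) I would estimate $e_{t-\Delta,\Delta}$ in $L^m$ directly. Property (ii) of $\mathcal{K}\left(\Delta,k,M\right)$ pulls the kernel out as $M\sup_{s}\left|\cdot\right|$, so that the Burkholder--Davis--Gundy inequality applied to $e^{a}$, the Lipschitz bound [A1](i), and the standard increment estimate $\mathbf{E}\left[\left|X_u-X_{t-\Delta}\right|^m\mid\mathcal{F}_{t-\Delta}\right]\le C\left(u-\left(t-\Delta\right)\right)^{m/2}\left(1+\left|X_{t-\Delta}\right|\right)^{C}$ give $\mathbf{E}\left[\left|e^{a}\right|^m\mid\mathcal{F}_{t-\Delta}\right]\le C\Delta^m\left(1+\left|X_{t-\Delta}\right|\right)^{C}$; the drift remainder $e^{b}$ is of the smaller order $\Delta^{3/2}$, so $e_{t-\Delta,\Delta}=O\left(\Delta\right)$ in every $L^m$, which is (ii).

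The sharpening from $\Delta$ to $\Delta^2$ in (i) and (iii) comes entirely from martingale cancellation. For (i), stochastic Fubini rewrites $e^{a}$ as the single It\^o integral $\int_{t-\Delta}^t G\left(u\right)\left[a\left(X_u\right)-a\left(X_{t-\Delta}\right)\right]\,\mathrm{d}w_u$ with $G\left(u\right)=\int_u^t\Phi_\Delta\left(t-s\right)\,\mathrm{d}s$; its integrand is adapted and conditionally square-integrable, so $\mathbf{E}\left[e^{a}\mid\mathcal{F}_{t-\Delta}\right]=0$ and only $e^{b}$ contributes. Moving the conditional expectation inside by Fubini, the decisive input is $\mathbf{E}\left[b\left(X_u\right)-b\left(X_{t-\Delta}\right)\mid\mathcal{F}_{t-\Delta}\right]=O\left(u-\left(t-\Delta\right)\right)$: applying It\^o's formula to $b\left(X_\cdot\right)$, the martingale part has vanishing conditional mean and the finite-variation part is $O\left(u-\left(t-\Delta\right)\right)\left(1+\left|X_{t-\Delta}\right|\right)^{C}$ thanks to the $C^2$-bounds of [A2]. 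Feeding this into $e^{b}$ and bounding the kernel by $M$ yields $\left|\mathbf{E}\left[e_{t-\Delta,\Delta}\mid\mathcal{F}_{t-\Delta}\right]\right|\le C\Delta^2\left(1+\left|X_{t-\Delta}\right|\right)^{C}$, which is (i).

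For (iii) the same representation turns the cross term into a bracket. Writing $Z=\int_{t-\Delta}^t H\left(s_2\right)a\left(X_{t-\Delta}\right)\,\mathrm{d}w_{s_2}$ with $H\left(s_2\right)=\int_{s_2}^t\Psi_\Delta\left(t-s_1\right)\,\mathrm{d}s_1$, the It\^o isometry expresses $\mathbf{E}\left[e^{a}\left[Z\right]\mid\mathcal{F}_{t-\Delta}\right]$ as the conditional expectation of a $\mathrm{d}u$-integral whose integrand is a fixed bilinear form in $G\left(u\right)\left[a\left(X_u\right)-a\left(X_{t-\Delta}\right)\right]$ and $H\left(u\right)a\left(X_{t-\Delta}\right)$; since $G,H$ are deterministic and bounded by $M$ and $a\left(X_{t-\Delta}\right)$ is $\mathcal{F}_{t-\Delta}$-measurable, the estimate reduces to controlling $\int_{t-\Delta}^t\left|\mathbf{E}\left[a\left(X_u\right)-a\left(X_{t-\Delta}\right)\mid\mathcal{F}_{t-\Delta}\right]\right|\,\mathrm{d}u$, which is $O\left(\Delta^2\right)$ by the same It\^o argument applied to $a\left(X_\cdot\right)$. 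The leftover piece $\mathbf{E}\left[e^{b}\left[Z\right]\mid\mathcal{F}_{t-\Delta}\right]$ is $O\left(\Delta^2\right)$ by Cauchy--Schwarz, since $e^{b}=O\left(\Delta^{3/2}\right)$ and $Z=O\left(\Delta^{1/2}\right)$ in $L^2$. The main obstacle is exactly this half-power: without the refinement $\mathbf{E}\left[a\left(X_u\right)-a\left(X_{t-\Delta}\right)\mid\mathcal{F}_{t-\Delta}\right]=O\left(u-\left(t-\Delta\right)\right)$ --- the reason the $C^2$-regularity of [A2] must be invoked alongside [A1] --- the crude bound would stall at $O\left(\Delta^{3/2}\right)$, one half-power short of the claim.
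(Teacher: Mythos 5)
Your proposal is correct and takes essentially the same route as the paper's proof: the same drift/diffusion remainder decomposition $e_{t-\Delta,\Delta}=e^{b}+e^{a}$, martingale cancellation of $e^{a}$ plus the $O\left(\Delta\right)$ conditional-mean increment bound for (i), the kernel bound and Burkholder--Davis--Gundy for (ii), and the It\^o-isometry reduction for the $e^{a}$ cross term together with Cauchy--Schwarz for the $e^{b}$ cross term in (iii). The only cosmetic difference is that where you derive the key estimates $\mathbf{E}\left[b\left(X_{u}\right)-b\left(X_{t-\Delta}\right)\mid\mathcal{F}_{t-\Delta}\right]=O\left(u-\left(t-\Delta\right)\right)$ and its analogue for $a$ via It\^o's formula, the paper simply cites Proposition A and Proposition 5.1 of Gloter (2000).
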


The following corollaries correspond more directly to Proposition 2.2 and Theorem 2.3 of \citet{Gloter-2000}, or Proposition 1 and Theorem 1 of \citet{Gloter-2006}.

\begin{corollary}\label{corApprox1}
    Set $t\ge0$, $\Delta\in\left(0,\lambda\right]$, $M>0$ and $\Phi_{\Delta}\in\mathcal{K}\left(\Delta,d,M\right)$. We assume [A1] and $\int_{0}^{\Delta}\Phi_{\Delta}\left(\Delta-s\right)\mathrm{d}s=I_{d}$.
    Then we obtain
    \begin{align*}
        \int_{t-\Delta}^{t}\Phi_{\Delta}\left(t-s\right)X_{s}\mathrm{d}s
        & = X_{t-\Delta}+\int_{t-\Delta}^{t}\Phi_{\Delta}\left(t-s_{1}\right)\left(\int_{t-\Delta}^{s_{1}}a\left(X_{t-\Delta}\right)\mathrm{d}w_{s_{2}}\right)\mathrm{d}s_{1}+e_{t-\Delta,\Delta},
    \end{align*}
    where $e_{t-\Delta,\Delta}$ is an $\mathbf{R}^{d}$-valued $\mathcal{F}_{t}$-measurable random variable such that
    \begin{align}
        \text{(i) }&\left|\mathbf{E}\left[e_{t-\Delta,\Delta}|\mathcal{F}_{t-\Delta}\right]\right|\le C\left(M\right)\Delta \left(1+\left|X_{t-\Delta}\right|\right)^{C\left(M\right)},\\
        \text{(ii) }&\text{for all } m>0,\ \mathbf{E}\left[\left|e_{t-\Delta,\Delta}\right|^{m}|\mathcal{F}_{t-\Delta}\right]\le C\left(m,M\right)\Delta^{m} \left(1+\left|X_{t-\Delta}\right|\right)^{C\left(m,M\right)}.
    \end{align}
\end{corollary}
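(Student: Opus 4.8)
The plan is to deduce the corollary directly from Theorem \ref{thmAppox} by specialising to $k=d$ and exploiting the normalisation $\int_{0}^{\Delta}\Phi_{\Delta}(\Delta-s)\,\mathrm{d}s=I_{d}$. Under this hypothesis the leading term of the theorem's expansion collapses, since $\left(\int_{0}^{\Delta}\Phi_{\Delta}(\Delta-s)\,\mathrm{d}s\right)X_{t-\Delta}=I_{d}X_{t-\Delta}=X_{t-\Delta}$, reproducing exactly the first term claimed in the corollary. The only structural difference between the two expansions is the drift term $D_{\Delta}b(X_{t-\Delta})$ with $D_{\Delta}:=\int_{0}^{\Delta}\Phi_{\Delta}(\Delta-s)s\,\mathrm{d}s$, which the theorem keeps separate but the corollary does not display. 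Accordingly, I would define the corollary's error by $e_{t-\Delta,\Delta}^{\prime}:=e_{t-\Delta,\Delta}+D_{\Delta}b(X_{t-\Delta})$, where $e_{t-\Delta,\Delta}$ is the error furnished by the theorem, and the whole task reduces to verifying that $e_{t-\Delta,\Delta}^{\prime}$ still satisfies bounds (i) and (ii).

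The key estimate is the control of the drift term, and this is the single nontrivial point of the argument. Writing $f(s,\omega):=s\,b(X_{t-\Delta}(\omega))$, one has $D_{\Delta}b(X_{t-\Delta})=\int_{0}^{\Delta}\Phi_{\Delta}(\Delta-s)f(s,\omega)\,\mathrm{d}s$, so property (ii) in the definition of $\mathcal{K}(\Delta,d,M)$ gives $\left|D_{\Delta}b(X_{t-\Delta})\right|\le M\sup_{s\in[0,\Delta]}|f(s,\omega)|\le M\Delta\,|b(X_{t-\Delta})|$. Since $b(\cdot)=b(\cdot,\beta_{\star})$ is Lipschitz by [A1](i), it has at most linear growth, $|b(x)|\le|b(0,\beta_{\star})|+C|x|\le C(1+|x|)$, whence $\left|D_{\Delta}b(X_{t-\Delta})\right|\le C(M)\Delta(1+|X_{t-\Delta}|)$. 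Note that the drift term is genuinely of order $\Delta$ rather than $\Delta^{2}$, which is precisely why the conditional-mean bound (i) of the corollary carries one fewer power of $\Delta$ than that of the theorem; everything else is bookkeeping.

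For the two bounds I would use that $D_{\Delta}b(X_{t-\Delta})$ is $\mathcal{F}_{t-\Delta}$-measurable. For (i), taking conditional expectations gives $\mathbf{E}[e_{t-\Delta,\Delta}^{\prime}|\mathcal{F}_{t-\Delta}]=\mathbf{E}[e_{t-\Delta,\Delta}|\mathcal{F}_{t-\Delta}]+D_{\Delta}b(X_{t-\Delta})$; the triangle inequality together with bound (i) of Theorem \ref{thmAppox} and the drift estimate, and the absorption $\Delta^{2}\le\lambda\Delta\le C\Delta$ coming from $\Delta\le\lambda$, delivers $\left|\mathbf{E}[e_{t-\Delta,\Delta}^{\prime}|\mathcal{F}_{t-\Delta}]\right|\le C(M)\Delta(1+|X_{t-\Delta}|)^{C(M)}$. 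For (ii), I would apply the elementary inequality $|x+y|^{m}\le C(m)(|x|^{m}+|y|^{m})$ and combine bound (ii) of the theorem with the $m$-th power of the drift estimate; both contributions are of the form $C(m,M)\Delta^{m}(1+|X_{t-\Delta}|)^{C(m,M)}$, which gives the claimed bound. I expect no genuine obstacle: the corollary is essentially a restatement of the theorem in the normalised case, the only analytic inputs being the linear growth of $b$ under [A1] and property (ii) of the kernel class.
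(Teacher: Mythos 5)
Your derivation is correct and is exactly the route the paper intends: Corollary \ref{corApprox1} is stated as an immediate consequence of Theorem \ref{thmAppox} (no separate proof is given), obtained by setting $k=d$, using the normalisation to reduce the leading term to $X_{t-\Delta}$, and absorbing the drift term $\left(\int_{0}^{\Delta}\Phi_{\Delta}\left(\Delta-s\right)s\,\mathrm{d}s\right)b\left(X_{t-\Delta}\right)$ into the error, which is why bound (i) loses one power of $\Delta$. Your control of that drift term via property (ii) of $\mathcal{K}\left(\Delta,d,M\right)$ and the linear growth of $b$ from [A1](i) is precisely the needed estimate, so nothing is missing.
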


\begin{remark}
    This corollary leads to the same result as Proposition 2.2 of \citet{Gloter-2000} by setting $\Phi_{\Delta}\left(s\right)=I_{d}\left(\mathbf{1}_{\left[0,\Delta\right]}\left(s\right)/\Delta\right)$; we can see $\Phi_{\Delta}\in\mathcal{K}\left(\Delta,d,\sqrt{d}\right)$ because $\int_{0}^{\Delta}\left\|\Phi_{\Delta}\left(s\right)\right\|\mathrm{d}s=\sqrt{d}$.
    We have the following equalities
    \begin{align*}
        \int_{0}^{\Delta}\Phi_{\Delta}\left(s\right)\mathrm{d}s&=I_{d},\\
        \int_{t-\Delta}^{t}\Phi_{\Delta}\left(t-s_{1}\right)\left(\int_{t-\Delta}^{s_{1}}a\left(X_{t-\Delta}\right)\mathrm{d}w_{s_{2}}\right)\mathrm{d}s_{1}&=\int_{t-\Delta}^{t}\left(\int_{s_{1}}^{t}\Phi_{\Delta}\left(t-s_{2}\right)\mathrm{d}s_{2}\right)a\left(X_{t-\Delta}\right)\mathrm{d}w_{s_{1}}\\
        &=\frac{1}{\Delta}a\left(X_{t-\Delta}\right)\int_{t-\Delta}^{t}\left(t-s_{1}\right)\mathrm{d}w_{s_{1}}.
    \end{align*}
    Then we can see that this result is identical to that of \citet{Gloter-2000}.
\end{remark}

\begin{corollary}\label{corCorApprox1}
    Set $t\ge0$, $\Delta\in\left(0,\lambda\right]$, $M>0$,  $\Phi_{\Delta}\in\mathcal{K}\left(\Delta,d,M\right)$, and $f\left(x,\xi\right)$ is a real-valued function such that $f:\mathbf{R}^{d}\times\Xi\to\mathbf{R}$, and $f$, $\partial_{x}f$ and $\partial_{x}^{2}f$ are polynomial growth uniformly in $\xi\in\Xi$. We assume [A1] and $\int_{0}^{\Delta}\Phi_{\Delta}\left(\Delta-s\right)\mathrm{d}s=I_{d}$.
    Then we obtain
    \begin{align*}
        &\sup_{\xi\in\Xi}\left|\mathbf{E}\left[f\left(\int_{t-\Delta}^{t}\Phi_{\Delta}\left(t-s\right)X_{s}\mathrm{d}s,\xi\right)-f\left(X_{t-\Delta},\xi\right)|\mathcal{F}_{t-\Delta}\right]\right|\\
        &\quad\le C\left(M\right)\Delta\left(1+\left|X_{t-\Delta}\right|\right)^{C\left(M\right)},\\
        &\mathbf{E}\left[\sup_{\xi\in\Xi}\left|f\left(\int_{t-\Delta}^{t}\Phi_{\Delta}\left(t-s\right)X_{s}\mathrm{d}s,\xi\right)-f\left(X_{t-\Delta},\xi\right)\right|^{m}|\mathcal{F}_{t-\Delta}\right]\\
        &\quad\le C\left(m,M\right)\Delta^{m/2}\left(1+\left|X_{t-\Delta}\right|\right)^{C\left(m,M\right)}
    \end{align*}
    for all $m>0$.
\end{corollary}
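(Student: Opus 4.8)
The plan is to reduce the statement to the additive decomposition of Corollary \ref{corApprox1} together with conditional moment estimates for its summands, and then to Taylor-expand $f$. Write $Y_{t,\Delta}:=\int_{t-\Delta}^{t}\Phi_{\Delta}\left(t-s\right)X_{s}\mathrm{d}s$ and $U:=Y_{t,\Delta}-X_{t-\Delta}$. Since $\int_{0}^{\Delta}\Phi_{\Delta}\left(\Delta-s\right)\mathrm{d}s=I_{d}$, Corollary \ref{corApprox1} gives $U=Z+e_{t-\Delta,\Delta}$, where $Z:=\int_{t-\Delta}^{t}\Phi_{\Delta}\left(t-s_{1}\right)\left(\int_{t-\Delta}^{s_{1}}a\left(X_{t-\Delta}\right)\mathrm{d}w_{s_{2}}\right)\mathrm{d}s_{1}$ and $e_{t-\Delta,\Delta}$ satisfies bounds (i) and (ii) of that corollary. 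First I would record two facts about $Z$. Interchanging the time integrals exactly as in the remark after Corollary \ref{corApprox1} rewrites $Z=\int_{t-\Delta}^{t}g\left(s_{1}\right)a\left(X_{t-\Delta}\right)\mathrm{d}w_{s_{1}}$ with $g\left(s_{1}\right):=\int_{s_{1}}^{t}\Phi_{\Delta}\left(t-s_{2}\right)\mathrm{d}s_{2}$ and $\sup_{s_{1}}\left\|g\left(s_{1}\right)\right\|\le C\left(M\right)$ by the kernel bound; since its integrand is $\mathcal{F}_{t-\Delta}$-measurable, $Z$ is conditionally centred, so $\mathbf{E}\left[Z|\mathcal{F}_{t-\Delta}\right]=0$, and the Burkholder--Davis--Gundy inequality combined with $\left\|a\left(X_{t-\Delta}\right)\right\|\le C\left(1+\left|X_{t-\Delta}\right|\right)$ (a consequence of [A1](i)) yields $\mathbf{E}\left[\left|Z\right|^{m}|\mathcal{F}_{t-\Delta}\right]\le C\left(m,M\right)\Delta^{m/2}\left(1+\left|X_{t-\Delta}\right|\right)^{C}$ for all $m>0$. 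Combining with Corollary \ref{corApprox1}(ii) and $\Delta\le\lambda$, I obtain $\mathbf{E}\left[\left|U\right|^{m}|\mathcal{F}_{t-\Delta}\right]\le C\left(m,M\right)\Delta^{m/2}\left(1+\left|X_{t-\Delta}\right|\right)^{C\left(m,M\right)}$ and $\left|\mathbf{E}\left[U|\mathcal{F}_{t-\Delta}\right]\right|=\left|\mathbf{E}\left[e_{t-\Delta,\Delta}|\mathcal{F}_{t-\Delta}\right]\right|\le C\left(M\right)\Delta\left(1+\left|X_{t-\Delta}\right|\right)^{C\left(M\right)}$.

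Next I would Taylor-expand to first order with integral remainder,
\begin{align*}
    f\left(Y_{t,\Delta},\xi\right)-f\left(X_{t-\Delta},\xi\right)=\partial_{x}f\left(X_{t-\Delta},\xi\right)^{T}U+\int_{0}^{1}\left(1-\tau\right)\partial_{x}^{2}f\left(X_{t-\Delta}+\tau U,\xi\right)\left[U,U\right]\mathrm{d}\tau.
\end{align*}
For the first estimate I take $\mathbf{E}\left[\cdot|\mathcal{F}_{t-\Delta}\right]$. Because $\partial_{x}f\left(X_{t-\Delta},\xi\right)$ is $\mathcal{F}_{t-\Delta}$-measurable, the linear term equals $\partial_{x}f\left(X_{t-\Delta},\xi\right)^{T}\mathbf{E}\left[U|\mathcal{F}_{t-\Delta}\right]$, which is $O\left(\Delta\right)$ by the bound on $\mathbf{E}\left[U|\mathcal{F}_{t-\Delta}\right]$ and the uniform polynomial growth of $\partial_{x}f$. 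The remainder is dominated in absolute value by $C\left(1+\left|X_{t-\Delta}\right|+\left|U\right|\right)^{C}\left|U\right|^{2}$ via the polynomial growth of $\partial_{x}^{2}f$; splitting the polynomial factor and using the moment bounds for $\left|U\right|$---whose leading contribution is $\mathbf{E}\left[\left|U\right|^{2}|\mathcal{F}_{t-\Delta}\right]=O\left(\Delta\right)$---controls its conditional expectation by $C\left(M\right)\Delta\left(1+\left|X_{t-\Delta}\right|\right)^{C\left(M\right)}$. Since all constants are independent of $\xi$ (the growth bounds on the derivatives being uniform in $\xi$), the supremum over $\xi$ may be taken outside, which is the first claim.

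For the second estimate I would instead apply the mean value theorem, giving $\sup_{\xi}\left|f\left(Y_{t,\Delta},\xi\right)-f\left(X_{t-\Delta},\xi\right)\right|\le C\left(1+\left|X_{t-\Delta}\right|+\left|U\right|\right)^{C}\left|U\right|$, raise to the $m$-th power, split the polynomial factor, and invoke the moment bounds for $\left|U\right|$; this produces $\mathbf{E}\left[\sup_{\xi}\left|f\left(Y_{t,\Delta},\xi\right)-f\left(X_{t-\Delta},\xi\right)\right|^{m}|\mathcal{F}_{t-\Delta}\right]\le C\left(m,M\right)\Delta^{m/2}\left(1+\left|X_{t-\Delta}\right|\right)^{C\left(m,M\right)}$, the leading term carrying the factor $\mathbf{E}\left[\left|U\right|^{m}|\mathcal{F}_{t-\Delta}\right]=O\left(\Delta^{m/2}\right)$. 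I expect the only genuinely delicate point to be the first estimate: obtaining the rate $\Delta$ rather than $\Delta^{1/2}$ rests on the martingale identity $\mathbf{E}\left[Z|\mathcal{F}_{t-\Delta}\right]=0$, so that the linear term sees only the $O\left(\Delta\right)$ bias $e_{t-\Delta,\Delta}$, and on the quadratic remainder contributing only $\mathbf{E}\left[\left|U\right|^{2}|\mathcal{F}_{t-\Delta}\right]=O\left(\Delta\right)$. The conditional moment bound on $Z$ and the control of the polynomial factor evaluated at the random point $X_{t-\Delta}+\tau U$ are the routine ingredients, both relying on the uniform moment bounds of $\left\{X_{t}\right\}$ from [A1](ii).
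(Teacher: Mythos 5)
Your proposal is correct and follows essentially the same route as the paper: a first-order Taylor expansion with integral remainder, the observation that the conditional mean of $\int_{t-\Delta}^{t}\Phi_{\Delta}\left(t-s\right)X_{s}\mathrm{d}s-X_{t-\Delta}$ reduces to the $O\left(\Delta\right)$ bias of $e_{t-\Delta,\Delta}$ because the stochastic-integral term is conditionally centred, and Cauchy--Schwarz splitting of the polynomial factor against the conditional moments of the increment for both the quadratic remainder and the second estimate. The only difference is that you make explicit (via the Burkholder--Davis--Gundy inequality) the $\Delta^{m/2}$ moment bound on the martingale part that the paper obtains by citing Corollary 11 together with Proposition 5.1 of Gloter (2000); this is a presentational, not a substantive, difference.
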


\begin{remark}
    We see that this result generalises Proposition 4 of \citet{Nakakita-Uchida-2019a}; let us set $p\in\mathbf{N}$ and $h>0$ such that $ph\le \lambda$, and $\Phi_{ph}$ as follows:
    \begin{align*}
        \Phi_{ph}\left(s\right)=\frac{1}{p}\sum_{i=0}^{p-1}\delta\left(ih-s\right)I_{d}.
    \end{align*}
    It is obvious that {$\Phi_{ph}\in\mathcal{K}\left(ph,d,d\right)$} and $\int_{0}^{ph}\Phi_{ph}\left(ph-s\right)\mathrm{d}s=I_{d}$.
\end{remark}

\begin{corollary}\label{corApprox2}
    Set $t\ge0$, $\Delta\in\left(0,\lambda\right]$, $k\in\mathbf{N}$, $M>0$ and $\Phi_{\Delta}\in\mathcal{K}\left(\Delta,k,M\right)$. We assume that $\int_{0}^{\Delta}\Phi_{\Delta}\left(\Delta-s\right)\mathrm{d}s=O$.
    Then we obtain
    \begin{align*}
        \int_{t-\Delta}^{t}\Phi_{\Delta}\left(t-s\right)X_{s}\mathrm{d}s
        & = \left(\int_{0}^{\Delta}\Phi_{\Delta}\left(\Delta-s\right)s\mathrm{d}s\right)b\left(X_{t-\Delta}\right)\\
        &\quad+\int_{t-\Delta}^{t}\Phi_{\Delta}\left(t-s_{1}\right)\left(\int_{t-\Delta}^{s_{1}}a\left(X_{t-\Delta}\right)\mathrm{d}w_{s_{2}}\right)\mathrm{d}s_{1}+e_{t-\Delta,\Delta},
    \end{align*}
    where $e_{t-\Delta,\Delta}$ is an $\mathbf{R}^{k}$-valued $\mathcal{F}_{t}$-measurable random variable such that
    \begin{align}
        \text{(i) }&\left|\mathbf{E}\left[e_{t-\Delta,\Delta}|\mathcal{F}_{t-\Delta}\right]\right|\le C\left(M\right)\Delta^{2} \left(1+\left|X_{t-\Delta}\right|\right)^{C\left(M\right)},\\
        \text{(ii) }&\text{for all } m>0,\ \mathbf{E}\left[\left|e_{t-\Delta,\Delta}\right|^{m}|\mathcal{F}_{t-\Delta}\right]\le C\left(m,M\right)\Delta^{m} \left(1+\left|X_{t-\Delta}\right|\right)^{C\left(m,M\right)},\\
        \text{(iii) }&\left|\mathbf{E}\left[e_{t-\Delta,\Delta}\left[\int_{t-\Delta}^{t}\Phi_{\Delta}\left(t-s_{1}\right)\left(\int_{t-\Delta}^{s_{1}}a\left(X_{t-\Delta}\right)\mathrm{d}w_{s_{2}}\right)\mathrm{d}s_{1}\right]|\mathcal{F}_{t-\Delta}\right]\right|^{m}\notag\\
        &\quad\le C\left(M\right)\Delta^{2} \left(1+\left|X_{t-\Delta}\right|\right)^{C\left(M\right)}.
    \end{align}
\end{corollary}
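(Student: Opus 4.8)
The plan is to obtain Corollary \ref{corApprox2} as a direct specialisation of Theorem \ref{thmAppox}, since the hypotheses of the corollary are exactly those of the theorem together with the additional requirement $\int_{0}^{\Delta}\Phi_{\Delta}\left(\Delta-s\right)\mathrm{d}s=O$. First I would invoke Theorem \ref{thmAppox} for the given $t$, $\Delta$, $k$, $M$ and the kernel $\Phi_{\Delta}\in\mathcal{K}\left(\Delta,k,M\right)$, writing out the full three-term expansion of $\int_{t-\Delta}^{t}\Phi_{\Delta}\left(t-s\right)X_{s}\mathrm{d}s$ together with its remainder $e_{t-\Delta,\Delta}$ and the estimates (i)--(iii) supplied there.

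Next I would substitute the hypothesis $\int_{0}^{\Delta}\Phi_{\Delta}\left(\Delta-s\right)\mathrm{d}s=O$ into the leading term $\left(\int_{0}^{\Delta}\Phi_{\Delta}\left(\Delta-s\right)\mathrm{d}s\right)X_{t-\Delta}$. This term is the zero matrix applied to $X_{t-\Delta}$ and hence vanishes identically, so the expansion collapses to the drift term $\left(\int_{0}^{\Delta}\Phi_{\Delta}\left(\Delta-s\right)s\,\mathrm{d}s\right)b\left(X_{t-\Delta}\right)$, plus the iterated stochastic-integral term, plus $e_{t-\Delta,\Delta}$, which is precisely the claimed identity. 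The remainder $e_{t-\Delta,\Delta}$ is inherited unchanged from the theorem, so estimates (i) and (ii) of the corollary are literally estimates (i) and (ii) of Theorem \ref{thmAppox}. For the cross-term estimate (iii), I would apply Theorem \ref{thmAppox}(iii) with the auxiliary kernel $\Psi_{\Delta}$ chosen to be $\Phi_{\Delta}$ itself; this is admissible because $\Phi_{\Delta}\in\mathcal{K}\left(\Delta,k,M\right)$ already belongs to the class required of $\Psi_{\Delta}$, and it reproduces exactly the inner product of $e_{t-\Delta,\Delta}$ with the stochastic-integral term occurring in the expansion.

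I do not anticipate any genuine obstacle, as the content of the corollary is entirely contained in Theorem \ref{thmAppox}. The only point requiring attention is the bookkeeping observation that, in contrast with Corollary \ref{corApprox1}, here the drift term is retained explicitly rather than absorbed into the remainder; consequently no $O\left(\Delta\right)$ contribution is folded into $e_{t-\Delta,\Delta}$, and the remainder keeps the sharper conditional-mean order $\Delta^{2}$ rather than $\Delta$. Verifying that this retention is consistent with the error bounds inherited from the theorem is the sole step I would double-check.
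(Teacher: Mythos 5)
Your proposal is correct and matches the paper's (implicit) treatment: the corollary is stated without separate proof precisely because it is the specialisation of Theorem \ref{thmAppox} in which the hypothesis $\int_{0}^{\Delta}\Phi_{\Delta}\left(\Delta-s\right)\mathrm{d}s=O$ annihilates the leading term $\left(\int_{0}^{\Delta}\Phi_{\Delta}\left(\Delta-s\right)\mathrm{d}s\right)X_{t-\Delta}$, while $e_{t-\Delta,\Delta}$ and the bounds (i)--(ii) are inherited verbatim and (iii) follows from the theorem's (iii) with $\Psi_{\Delta}=\Phi_{\Delta}$. Your closing observation that the drift term is retained explicitly (unlike in Corollary \ref{corApprox1}, where it is absorbed into the remainder, degrading the conditional-mean bound from $\Delta^{2}$ to $\Delta$) is exactly the right point to note.
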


\begin{remark}
    We can have a result similar to Theorem 3.2 of \citet{Gloter-2000} if we assume that $\Delta'=\Delta/2$ and  $\Phi_{\Delta}\left(s\right)=I_{d}\left(\mathbf{1}_{\left[0,\Delta'\right]}\left(s\right)-\mathbf{1}_{\left[\Delta',\Delta\right]}\left(s\right)\right)/\Delta'$ where $\Delta\le \lambda$ and $k=d$. {$\Phi_{\Delta}\left(s\right)\in\mathcal{K}\left(\Delta,d,2\sqrt{d}\right)$} because $\int_{0}^{\Delta}\left\|\Phi_{\Delta}\left(\Delta-s\right)\right\|\mathrm{d}s=2\sqrt{d}$.
    We have the following equalities
    \begin{align*}
        &\int_{0}^{\Delta}\Phi_{\Delta}\left(\Delta-s\right)\mathrm{d}s=O,\\
        &\int_{0}^{\Delta}\Phi_{\Delta}\left(\Delta-s\right)s\mathrm{d}s=I_{d}\left(\frac{1}{\Delta'}\int_{\Delta'}^{\Delta}s\mathrm{d}s-\frac{1}{\Delta'}\int_{0}^{\Delta'}s\mathrm{d}s\right)=I_{d}\left(\Delta-\frac{\Delta'}{2}-\frac{\Delta'}{2}\right)=\Delta' I_{d},\\
        &\int_{t-\Delta}^{t}\Phi_{\Delta}\left(t-s_{1}\right)\left(\int_{t-\Delta}^{s_{1}}a\left(X_{t-\Delta}\right)\mathrm{d}w_{s_{2}}\right)\mathrm{d}s_{1}\\
        &\quad=\int_{t-\Delta}^{t}\left(\int_{s_{1}}^{t}\Phi_{\Delta}\left(t-s_{2}\right)\mathrm{d}s_{2}\right)a\left(X_{t-\Delta}\right)\mathrm{d}w_{s_{1}}\\
        &\quad=\frac{1}{\Delta'}a\left(X_{t-\Delta}\right)\int_{t-\Delta}^{t}\left(\int_{s_{1}}^{t}\left(\mathbf{1}_{\left[0,\Delta'\right]}\left(t-s_{2}\right)-\mathbf{1}_{\left[\Delta',\Delta\right]}\left(t-s_{2}\right)\right)\mathrm{d}s_{2}\right)\mathrm{d}w_{s_{1}}\\
        &\quad=\frac{1}{\Delta'}a\left(X_{t-\Delta}\right)\int_{t-\Delta}^{t}\left(\Delta'\mathbf{1}_{\left[\Delta',\Delta\right]}\left(t-s_{1}\right)+\left(t-s_{1}\right)\mathbf{1}_{\left[0,\Delta'\right]}\left(t-s_{1}\right)\right)\mathrm{d}w_{s_{1}}\\
        &\qquad-\frac{1}{\Delta'}a\left(X_{t-\Delta}\right)\int_{t-\Delta}^{t}\left(t-\Delta'-s_{1}\right)\mathbf{1}_{\left[0,\Delta'\right]}\left(t-\Delta'-s_{1}\right)\mathrm{d}w_{s_{1}}\\
        &\quad=\frac{1}{\Delta'}a\left(X_{t-\Delta}\right)\left(\Delta'\int_{t-\Delta}^{t-\Delta'}\mathrm{d}w_{s}+\int_{t-\Delta'}^{t}\left(t-s_{1}\right)\mathrm{d}w_{s_{1}}\right)\\
        &\qquad-\frac{1}{\Delta'}a\left(X_{t-\Delta}\right)\int_{t-\Delta}^{t-\Delta'}\left(t-\Delta'-s_{1}\right)\mathrm{d}w_{s_{1}}\\
        &\quad=\frac{1}{\Delta'}a\left(X_{t-\Delta}\right)\left(\int_{t-\Delta}^{t-\Delta'}\left(s_{1}-\left(t-\Delta\right)\right)\mathrm{d}w_{s}+\int_{t-\Delta'}^{t}\left(t-s_{1}\right)\mathrm{d}w_{s_{1}}\right)
    \end{align*}
    because of Lemma \ref{LemmaExchangeIntegrals} in Appendix B. Hence this result and Corollary \ref{corCorApprox1} give the same evaluation as \citet{Gloter-2000}.
\end{remark}

\begin{remark}
This corollary 
also generalises Corollary 1 of \citet{Nakakita-Uchida-2019a} when we ignore noise term; let us set $p\in\mathbf{N}$ and $h>0$ such that $2ph\le \lambda$, and $\Phi_{2ph}$ as follows:
    \begin{align*}
        \Phi_{2ph}\left(s\right)=\frac{1}{p}\sum_{i=0}^{p-1}\delta\left(s-ih\right)I_{d}-\frac{1}{p}\sum_{i=0}^{p-1}\delta\left(s-\left(p+i\right)h\right)I_{d}.
    \end{align*}
    It is obvious that {$\Phi_{2ph}\in\mathcal{K}\left(2ph,d,2d\right)$} and $\int_{0}^{2ph}\Phi_{2ph}\left(2ph-s\right)\mathrm{d}s=O$. We can evaluate 
    \begin{align*}
        &\int_{0}^{2ph}\Phi_{2ph}\left(2ph-s\right)s\mathrm{d}s=\frac{1}{p}\sum_{i=0}^{p-1}\left(2ph-ih\right)I_{d}-\frac{1}{p}\sum_{i=0}^{p-1}\left(2ph-\left(p+i\right)h\right)I_{d}=phI_{d},\\
        &\int_{t-2ph}^{t}\Phi_{2ph}\left(t-s_{1}\right)\left(\int_{t-2ph}^{s_{1}}a\left(X_{t-2ph}\right)\mathrm{d}w_{s_{2}}\right)\mathrm{d}s_{1}\\
        &\quad=a\left(X_{t-2ph}\right)\left(\frac{1}{p}\sum_{i=0}^{p-1}\int_{t-2ph}^{t-ih}\mathrm{d}w_{s}-\frac{1}{p}\sum_{i=0}^{p-1}\int_{t-2ph}^{t-\left(p+i\right)h}\mathrm{d}w_{s}\right)\\
        &\quad=a\left(X_{t-2ph}\right)\left(\frac{1}{p}\sum_{i=0}^{p-1}\left(i+1\right)\int_{t-\left(2p-i\right)h}^{t-\left(2p-i+1\right)h}\mathrm{d}w_{s}+\frac{1}{p}\sum_{i=0}^{p-1}\left(p-1-i\right)\int_{t-\left(p-i\right)h}^{t-\left(p-i+1\right)h}\mathrm{d}w_{s}\right).
    \end{align*}
\end{remark}

\begin{proof}[Proof of Theorem \ref{thmAppox}]
We have
\begin{align*}
    & \int_{t-\Delta}^{t}\Phi_{\Delta}\left(t-s\right)X_{s}\mathrm{d}s \\
    & = \int_{t-\Delta}^{t}\Phi_{\Delta}\left(t-s_{1}\right)\left(X_{t-\Delta}+\int_{t-\Delta}^{s_{1}}b\left(X_{s_{2}}\right)\mathrm{d}s_{2}+\int_{t-\Delta}^{s_{1}}a\left(X_{s_{2}}\right)\mathrm{d}w_{s_{2}}\right)\mathrm{d}s_{1}\\
    & = \left(\int_{t-\Delta}^{t}\Phi_{\Delta}\left(t-s_{1}\right)\mathrm{d}s\right)X_{t-\Delta}+\int_{t-\Delta}^{t}\Phi_{\Delta}\left(t-s_{1}\right)\left(\int_{t-\Delta}^{s_{1}}b\left(X_{t-\Delta}\right)\mathrm{d}s_{2}\right)\mathrm{d}s_{1}\\
    &\quad+\int_{t-\Delta}^{t}\Phi_{\Delta}\left(t-s_{1}\right)\left(\int_{t-\Delta}^{s_{1}}a\left(X_{t-\Delta}\right)\mathrm{d}w_{s_{2}}\right)\mathrm{d}s_{1}\\
    &\quad+\int_{t-\Delta}^{t}\Phi_{\Delta}\left(t-s_{1}\right)\left(\int_{t-\Delta}^{s_{1}}\left(b\left(X_{s_{2}}\right)-b\left(X_{t-\Delta}\right)\right)\mathrm{d}s_{2}\right)\mathrm{d}s_{1}\\
    &\quad+\int_{t-\Delta}^{t}\Phi_{\Delta}\left(t-s_{1}\right)\left(\int_{t-\Delta}^{s_{1}}\left(a\left(X_{s_{2}}\right)-a\left(X_{t-\Delta}\right)\right)\mathrm{d}w_{s_{2}}\right)\mathrm{d}s_{1}.
\end{align*}
Set
$e_{t-\Delta,\Delta}:=e_{t-\Delta,\Delta,1}+e_{t-\Delta,\Delta,2}$ where
\begin{align*}
    e_{t-\Delta,\Delta,1}&:=\int_{t-\Delta}^{t}\Phi_{\Delta}\left(t-s_{1}\right)\left(\int_{t-\Delta}^{s_{1}}\left(b\left(X_{s_{2}}\right)-b\left(X_{t-\Delta}\right)\right)\mathrm{d}s_{2}\right)\mathrm{d}s_{1},\\
    e_{t-\Delta,\Delta,2}&:=\int_{t-\Delta}^{t}\Phi_{\Delta}\left(t-s_{1}\right)\left(\int_{t-\Delta}^{s_{1}}\left(a\left(X_{s_{2}}\right)-a\left(X_{t-\Delta}\right)\right)\mathrm{d}w_{s_{2}}\right)\mathrm{d}s_{1}.
\end{align*}
We examine that the properties (i)-(iii) hold for this $e_{t-\Delta,\Delta}$.
The assumption of $\Phi_{\Delta}$ and martingale property of stochastic integral verify
\begin{align*}
    \mathbf{E}\left[ e_{t-\Delta,\Delta,2}|\mathcal{F}_{t-\Delta}\right]=\mathbf{0}.
\end{align*}
Then, 
in order to show (i) and (ii),
it is sufficient to prove the following inequalities.
\begin{align*}
    \left|\mathbf{E}\left[e_{t-\Delta,\Delta,1}|\mathcal{F}_{t-\Delta}\right]\right|&\le C\left(M\right)\Delta^{2}\left(1+\left|X_{t-\Delta}\right|\right)^{C\left(M\right)},\\
    \mathbf{E}\left[\left|e_{t-\Delta,\Delta,1}\right|^{m}|\mathcal{F}_{t-\Delta}\right]
    &\le C\left(m,M\right)\Delta^{3m/2}\left(1+\left|X_{t-\Delta}\right|\right)^{C\left(m,M\right)},\\
    \mathbf{E}\left[\left| e_{t-\Delta,\Delta,2}\right|^{m}|\mathcal{F}_{t-\Delta}\right]
    &\le C\left(m,M\right)\Delta^{m}\left(1+\left|X_{t-\Delta}\right|\right)^{C\left(m,M\right)}.
\end{align*}
{In the first place, we have}
\begin{align*}
    &\left|\mathbf{E}\left[\int_{t-\Delta}^{t}\Phi_{\Delta}\left(t-s_{1}\right)\left(\int_{t-\Delta}^{s_{1}}\left(b\left(X_{s_{2}}\right)-b\left(X_{t-\Delta}\right)\right)\mathrm{d}s_{2}\right)\mathrm{d}s_{1}|\mathcal{F}_{t-\Delta}\right]\right|\\
    &=\left|\int_{t-\Delta}^{t}\mathbf{E}\left[\Phi_{\Delta}\left(t-s_{1}\right)\left(\int_{t-\Delta}^{s_{1}}\left(b\left(X_{s_{2}}\right)-b\left(X_{t-\Delta}\right)\right)\mathrm{d}s_{2}\right)|\mathcal{F}_{t-\Delta}\right]\mathrm{d}s_{1}\right|\\
    &=\left|\int_{t-\Delta}^{t}\Phi_{\Delta}\left(t-s_{1}\right)\mathbf{E}\left[\int_{t-\Delta}^{s_{1}}\left(b\left(X_{s_{2}}\right)-b\left(X_{t-\Delta}\right)\right)\mathrm{d}s_{2}|\mathcal{F}_{t-\Delta}\right]\mathrm{d}s_{1}\right|\\
    &=\left|\int_{t-\Delta}^{t}\Phi_{\Delta}\left(t-s_{1}\right)\left(\int_{t-\Delta}^{s_{1}}\mathbf{E}\left[b\left(X_{s_{2}}\right)-b\left(X_{t-\Delta}\right)|\mathcal{F}_{t-\Delta}\right]\mathrm{d}s_{2}\right)\mathrm{d}s_{1}\right|\\
    &\le C\left(M\right)\sup_{s_{1}\in\left[t-\Delta,t\right]}\left|\int_{t-\Delta}^{s_{1}}\mathbf{E}\left[b\left(X_{s_{2}}\right)-b\left(X_{t-\Delta}\right)|\mathcal{F}_{t-\Delta}\right]\mathrm{d}s_{2}\right|\\
    &\le C\left(M\right)\sup_{s_{1}\in\left[t-\Delta,t\right]}\int_{t-\Delta}^{s_{1}}\left|\mathbf{E}\left[b\left(X_{s_{2}}\right)-b\left(X_{t-\Delta}\right)|\mathcal{F}_{t-\Delta}\right]\right|\mathrm{d}s_{2}\\
    &\le C\left(M\right)\Delta\sup_{s\in\left[t-\Delta,t\right]}\left|\mathbf{E}\left[b\left(X_{s}\right)-b\left(X_{t-\Delta}\right)|\mathcal{F}_{t-\Delta}\right]\right|\\
    &\le C\left(M\right)\Delta^{2}\left(1+\left|X_{t-\Delta}\right|\right)^{C\left(M\right)}
\end{align*}
because of Proposition A of \citet{Gloter-2000}. Secondly, we obtain
\begin{align*}
    &\left|\int_{t-\Delta}^{t}\Phi_{\Delta}\left(t-s_{1}\right)\left(\int_{t-\Delta}^{s_{1}}\left(b\left(X_{s_{2}}\right)-b\left(X_{t-\Delta}\right)\right)\mathrm{d}s_{2}\right)\mathrm{d}s_{1}\right|\\
    &\le C\left(M\right)\sup_{s_{1}\in\left[t-\Delta,t\right]}\left|\int_{t-\Delta}^{s_{1}}\left(b\left(X_{s_{2}}\right)-b\left(X_{t-\Delta}\right)\right)\mathrm{d}s_{2}\right|\\
    &\le C\left(M\right)\sup_{s_{1}\in\left[t-\Delta,t\right]}\int_{t-\Delta}^{s_{1}}\left|b\left(X_{s_{2}}\right)-b\left(X_{t-\Delta}\right)\right|\mathrm{d}s_{2}\\
    &\le C\left(M\right)\Delta \sup_{s\in\left[t-\Delta,t\right]}\left|b\left(X_{s}\right)-b\left(X_{t-\Delta}\right)\right|
\end{align*}
by {$\Phi_{\Delta}\in\mathcal{K}\left(\Delta,k,M\right)$}; therefore, we obtain
\begin{align*}
    &\mathbf{E}\left[\left|\int_{t-\Delta}^{t}\Phi_{\Delta}\left(t-s_{1}\right)\left(\int_{t-\Delta}^{s_{1}}\left(b\left(X_{s_{2}}\right)-b\left(X_{t-\Delta}\right)\right)\mathrm{d}s_{2}\right)\mathrm{d}s_{1}\right|^{m}|\mathcal{F}_{t-\Delta}\right]\\
    &\le C\left(m,M\right)\Delta^{m}\mathbf{E}\left[\sup_{s\in\left[t-\Delta,t\right]}\left|b\left(X_{s}\right)-b\left(X_{t-\Delta}\right)\right|^{m}|\mathcal{F}_{t-\Delta}\right]\\
    &\le C\left(m,M\right)\Delta^{3m/2}\left(1+\left|X_{t-\Delta}\right|\right)^{C\left(m,M\right)}
\end{align*}
due to Proposition 5.1 of \citet{Gloter-2000}. Thirdly, by H\"{o}lder's inequality and the Burkholder-Davis-Gundy one, and Proposition A of \citet{Gloter-2000},
\begin{align*}
    &\mathbf{E}\left[\left|\int_{t-\Delta}^{t}\Phi_{\Delta}\left(t-s_{1}\right)\left(\int_{t-\Delta}^{s_{1}}\left(a\left(X_{s_{2}}\right)-a\left(X_{t-\Delta}\right)\right)\mathrm{d}w_{s_{2}}\right)\mathrm{d}s_{1}\right|^{m}|\mathcal{F}_{t-\Delta}\right]\\
    &\le C\left(m,M\right)\mathbf{E}\left[\left|\sup_{s_{1}\in\left[t-\Delta,t\right]}\int_{t-\Delta}^{s_{1}}\left(a\left(X_{s_{2}}\right)-a\left(X_{t-\Delta}\right)\right)\mathrm{d}w_{s_{2}}\right|^{m}|\mathcal{F}_{t-\Delta}\right]\\
    &\le C\left(m,M\right)\mathbf{E}\left[\left|\int_{t-\Delta}^{t}\left\|a\left(X_{s_{2}}\right)-a\left(X_{t-\Delta}\right)\right\|^{2}\mathrm{d}s_{2}\right|^{m/2}|\mathcal{F}_{t-\Delta}\right]\\
    &\le C\left(m,M\right)\Delta^{m/2}\sup_{s\in\left[t-\Delta,t\right]}\mathbf{E}\left[\left\|a\left(X_{s}\right)-a\left(X_{t-\Delta}\right)\right\|^{m}|\mathcal{F}_{t-\Delta}\right]\\
    &\le  C\left(m,M\right)\Delta^{m}\left(1+\left|X_{t-\Delta}\right|\right)^{C\left(m,M\right)}.
\end{align*}
To show (iii), it is obvious that
\begin{align*}
    &\left|\mathbf{E}\left[e_{t-\Delta,\Delta,1}\left[\int_{t-\Delta}^{t}\Phi_{\Delta}\left(t-s_{1}\right)\left(\int_{t-\Delta}^{s_{1}}a\left(X_{t-\Delta}\right)\mathrm{d}w_{s_{2}}\right)\mathrm{d}s_{1}\right]|\mathcal{F}_{t-\Delta}\right]\right|\\
    &\le C\left(M\right)\Delta^{2} \left(1+\left|X_{t-\Delta}\right|\right)^{C\left(M\right)}
\end{align*}
due to H\"{o}lder's inequality and the evaluation analogous to $\mathbf{E}\left[\left|e_{t-\Delta,\Delta,2}\right|^{m}|\mathcal{F}_{t-\Delta}\right]$ such that
    \begin{align*}
        &\mathbf{E}\left[\left|\int_{t-\Delta}^{t}\Phi_{\Delta}\left(t-s_{1}\right)\left(\int_{t-\Delta}^{s_{1}}a\left(X_{t-\Delta}\right)\mathrm{d}w_{s_{2}}\right)\mathrm{d}s_{1}\right|^{m}|\mathcal{F}_{t-\Delta}\right]\\
        &\le C\left(m,M\right)\Delta^{m/2}\left(1+\left|X_{t-\Delta}\right|\right)^{C\left(m,M\right)}.
    \end{align*}
Hence it is sufficient to show
\begin{align*}
    &\left|\mathbf{E}\left[e_{t-\Delta,\Delta,2}\left[\int_{t-\Delta}^{t}\Phi_{\Delta}\left(t-s_{1}\right)\left(\int_{t-\Delta}^{s_{1}}a\left(X_{t-\Delta}\right)\mathrm{d}w_{s_{2}}\right)\mathrm{d}s_{1}\right]|\mathcal{F}_{t-\Delta}\right]\right|\\
    &\le C\left(M\right)\Delta^{2} \left(1+\left|X_{t-\Delta}\right|\right)^{C\left(M\right)}.
\end{align*}
We can evaluate the left hand side such that 
\begin{align*}
    &\left|\mathbf{E}\left[\int_{t-\Delta}^{t}\Phi_{\Delta}\left(t-s_{1}\right)\left(\int_{t-\Delta}^{s_{1}}\left(a\left(X_{s_{2}}\right)-a\left(X_{t-\Delta}\right)\right)\mathrm{d}w_{s_{2}}\right)\mathrm{d}s_{1}\right.\right.\\
    &\hspace{5cm}\left.\left.\left[\int_{t-\Delta}^{t}\Psi_{\Delta}\left(t-s_{1}'\right)\left(\int_{t-\Delta}^{s_{1}'}a\left(X_{t-\Delta}\right)\mathrm{d}w_{s_{2}'}\right)\mathrm{d}s_{1}'\right]|\mathcal{F}_{t-\Delta}\right]\right|\\
    &=\left|\int_{t-\Delta}^{t}\Phi_{\Delta}\left(t-s_{1}\right)\mathbf{E}\left[\left(\int_{t-\Delta}^{s_{1}}\left(a\left(X_{s_{2}}\right)-a\left(X_{t-\Delta}\right)\right)\mathrm{d}w_{s_{2}}\right)\right.\right.\\
    &\hspace{5cm}\left.\left.\left[\int_{t-\Delta}^{t}\Psi_{\Delta}\left(t-s_{1}'\right)\left(\int_{t-\Delta}^{s_{1}'}a\left(X_{t-\Delta}\right)\mathrm{d}w_{s_{2}'}\right)\mathrm{d}s_{1}'\right]|\mathcal{F}_{t-\Delta}\right]\mathrm{d}s_{1}\right|\\
    &\le C\left(M\right)\sup_{s_{1}\in\left[t-\Delta,t\right]}\left|\mathbf{E}\left[\left(\int_{t-\Delta}^{s_{1}}\left(a\left(X_{s_{2}}\right)-a\left(X_{t-\Delta}\right)\right)\mathrm{d}w_{s_{2}}\right)\right.\right.\\
    &\hspace{5cm}\left.\left.\left[\int_{t-\Delta}^{t}\Psi_{\Delta}\left(t-s_{1}'\right)\left(\int_{t-\Delta}^{s_{1}'}a\left(X_{t-\Delta}\right)\mathrm{d}w_{s_{2}'}\right)\mathrm{d}s_{1}'\right]|\mathcal{F}_{t-\Delta}\right]\right|\\
    &= C\left(M\right)\sup_{s_{1}\in\left[t-\Delta,t\right]}\left|\int_{t-\Delta}^{t}\Psi_{\Delta}\left(t-s_{1}'\right)\mathbf{E}\left[\left(\int_{t-\Delta}^{s_{1}}\left(a\left(X_{s_{2}}\right)-a\left(X_{t-\Delta}\right)\right)\mathrm{d}w_{s_{2}}\right)\right.\right.\\
    &\hspace{7cm}\left.\left.\left[\int_{t-\Delta}^{s_{1}'}a\left(X_{t-\Delta}\right)\mathrm{d}w_{s_{2}'}\right]|\mathcal{F}_{t-\Delta}\right]\mathrm{d}s_{1}'\right|\\
    &\le C\left(M\right)\\
    &\quad\times\sup_{s_{1}\in\left[t-\Delta,t\right]}\sup_{s_{1}'\in\left[t-\Delta,t\right]}\left|\mathbf{E}\left[\int_{t-\Delta}^{s_{1}}\left(a\left(X_{s_{2}}\right)-a\left(X_{t-\Delta}\right)\right)\mathrm{d}w_{s_{2}}\left[\int_{t-\Delta}^{s_{1}'}a\left(X_{t-\Delta}\right)\mathrm{d}w_{s_{2}'}\right]|\mathcal{F}_{t-\Delta}\right]\right|\\
    &\le C\left(M\right)\sup_{s_{1}\in\left[t-\Delta,t\right]}\left|\mathbf{E}\left[\int_{t-\Delta}^{s_{1}}\left(a\left(X_{s_{2}}\right)-a\left(X_{t-\Delta}\right)\right)\mathrm{d}w_{s_{2}}\left[\int_{t-\Delta}^{s_{1}}a\left(X_{t-\Delta}\right)\mathrm{d}w_{s_{2}'}\right]|\mathcal{F}_{t-\Delta}\right]\right|\\
    &\le C\left(M\right)\sup_{s_{1}\in\left[t-\Delta,t\right]}\left|\mathbf{E}\left[\int_{t-\Delta}^{s_{1}}\left(a\left(X_{s_{2}}\right)-a\left(X_{t-\Delta}\right)\right)\mathrm{d}w_{s_{2}}\left[\int_{t-\Delta}^{s_{1}}a\left(X_{t-\Delta}\right)\mathrm{d}w_{s_{2}'}\right]|\mathcal{F}_{t-\Delta}\right]\right|\\
    &= C\left(M\right)\sup_{s_{1}\in\left[t-\Delta,t\right]}\left|\mathbf{E}\left[\int_{t-\Delta}^{s_{1}}\left(a\left(X_{s_{2}}\right)-a\left(X_{t-\Delta}\right)\right)\mathrm{d}s_{2}\left[a\left(X_{t-\Delta}\right)\right]|\mathcal{F}_{t-\Delta}\right]\right|\\
    &= C\left(M\right)\sup_{s_{1}\in\left[t-\Delta,t\right]}\left|\int_{t-\Delta}^{s_{1}}\mathbf{E}\left[\left(a\left(X_{s_{2}}\right)-a\left(X_{t-\Delta}\right)\right)|\mathcal{F}_{t-\Delta}\right]\mathrm{d}s_{2}\left[a\left(X_{t-\Delta}\right)\right]\right|\\
    &\le C\left(M\right)\Delta\sup_{s\in\left[t-\Delta,t\right]}\left\|\mathbf{E}\left[\left(a\left(X_{s}\right)-a\left(X_{t-\Delta}\right)\right)|\mathcal{F}_{t-\Delta}\right]\right\|\left\|a\left(X_{t-\Delta}\right)\right\|\\
    &\le C\left(M\right)\Delta^{2}\left(1+\left|X_{t-\Delta}\right|\right)^{C\left(M\right)}.
\end{align*}
Hence we obtain the proof of (iii).
\end{proof}

\begin{proof}[Proof of Corollary \ref{corCorApprox1}]
By Taylor's expansion,
\begin{align*}
    &f\left(\int_{t-\Delta}^{t}\Phi_{\Delta}\left(t-s\right)X_{s}\mathrm{d}s,\xi\right)-f\left(X_{t-\Delta},\xi\right)\\
    &=\partial_{x}f\left(X_{t-\Delta},\xi\right)\left(\int_{t-\Delta}^{t}\Phi_{\Delta}\left(t-s\right)X_{s}\mathrm{d}s-X_{t-\Delta}\right)\\
    &\quad+\int_{0}^{1}\left(1-u\right)\partial_{x}^{2}f\left(X_{t-\Delta}+u\left(\int_{t-\Delta}^{t}\Phi_{\Delta}\left(t-s\right)X_{s}\mathrm{d}s-X_{t-\Delta}\right),\xi\right)\mathrm{d}u\\
    &\hspace{2cm}\left[\left(\int_{t-\Delta}^{t}\Phi_{\Delta}\left(t-s\right)X_{s}\mathrm{d}s-X_{t-\Delta}\right)^{\otimes2}\right].
\end{align*}
It is obvious that
\begin{align*}
    &\sup_{\xi\in\Xi}\left|\mathbf{E}\left[\partial_{x}f\left(X_{t-\Delta},\xi\right)\left(\int_{t-\Delta}^{t}\Phi_{\Delta}\left(t-s\right)X_{s}\mathrm{d}s-X_{t-\Delta}\right)|\mathcal{F}_{t-\Delta}\right]\right|\\
    &\le \sup_{\xi\in\Xi}\left|\partial_{x}f\left(X_{t-\Delta},\xi\right)\right|\left|\mathbf{E}\left[\int_{t-\Delta}^{t}\Phi_{\Delta}\left(t-s\right)X_{s}\mathrm{d}s-X_{t-\Delta}|\mathcal{F}_{t-\Delta}\right]\right|\\
    &\le C\left(M\right)\left(1+\left|X_{t-\Delta}\right|\right)^{C\left(M\right)}\left|\mathbf{E}\left[e_{t-\Delta,\Delta}|\mathcal{F}_{t-\Delta}\right]\right|\\
    &\le C\left(M\right)\Delta\left(1+\left|X_{t-\Delta}\right|\right)^{C\left(M\right)}
\end{align*}
by Corollary \ref{corApprox1}. We also have 
\begin{align*}
    &\sup_{\xi\in\Xi}\left|\mathbf{E}\left[\int_{0}^{1}\left(1-u\right)\partial_{x}^{2}f\left(X_{t-\Delta}+u\left(\int_{t-\Delta}^{t}\Phi_{\Delta}\left(t-s\right)X_{s}\mathrm{d}s-X_{t-\Delta}\right),\xi\right)\mathrm{d}u\right.\right.\\
    &\hspace{7cm}\left.\left.\left[\left(\int_{t-\Delta}^{t}\Phi_{\Delta}\left(t-s\right)X_{s}\mathrm{d}s-X_{t-\Delta}\right)^{\otimes2}\right]|\mathcal{F}_{t-\Delta}\right]\right|\\
    &\le \sup_{\xi\in\Xi}\mathbf{E}\left[\left\|\int_{0}^{1}\left(1-u\right)\partial_{x}^{2}f\left(X_{t-\Delta}+u\left(\int_{t-\Delta}^{t}\Phi_{\Delta}\left(t-s\right)X_{s}\mathrm{d}s-X_{t-\Delta}\right),\xi\right)\mathrm{d}u\right\|\right.\\
    &\hspace{8cm}\left.\left|\int_{t-\Delta}^{t}\Phi_{\Delta}\left(t-s\right)X_{s}\mathrm{d}s-X_{t-\Delta}\right|^{2}|\mathcal{F}_{t-\Delta}\right]\\
    &\le \mathbf{E}\left[C\left(1+\left|X_{t-\Delta}\right|+\left|\int_{t-\Delta}^{t}\Phi_{\Delta}\left(t-s\right)X_{s}\mathrm{d}s\right|\right)^{C}|\mathcal{F}_{t-\Delta}\right]^{1/2}\\
    &\quad \times \mathbf{E}\left[\left|\int_{t-\Delta}^{t}\Phi_{\Delta}\left(t-s\right)X_{s}\mathrm{d}s-X_{t-\Delta}\right|^{4}|\mathcal{F}_{t-\Delta}\right]^{1/2}\\
    &\le \mathbf{E}\left[C\left(1+\sup_{s\in\left[t-\Delta,t\right]}\left|X_{s}\right|\right)^{C}|\mathcal{F}_{t-\Delta}\right]^{1/2} C\left(M\right)\Delta\left(1+\left|X_{t-\Delta}\right|\right)^{C\left(M\right)}\\
    &\le C\left(M\right)\Delta\left(1+\left|X_{t-\Delta}\right|\right)^{C\left(M\right)}
\end{align*}
because of Corollary \ref{corApprox1} and Proposition 5.1 of \citet{Gloter-2000}. Here we obtain the first evaluation. With respect to the second one, we can have the following evaluation as above:
\begin{align*}
    &\mathbf{E}\left[\sup_{\xi\in\Xi}\left|f\left(\int_{t-\Delta}^{t}\Phi_{\Delta}\left(t-s\right)X_{s}\mathrm{d}s,\xi\right)-f\left(X_{t-\Delta},\xi\right)\right|^{m}|\mathcal{F}_{t-\Delta}\right]\\
    &\le \mathbf{E}\left[C\left(m\right)\left(1+\sup_{s\in\left[t-\Delta,t\right]}\left|X_{s}\right|\right)^{C\left(m\right)}|\mathcal{F}_{t-\Delta}\right]^{\frac{1}{2}}\\
    &\hspace{6cm}\times\mathbf{E}\left[\left|\int_{t-\Delta}^{t}\Phi_{\Delta}\left(t-s\right)X_{s}\mathrm{d}s-X_{t-\Delta}\right|^{2m}|\mathcal{F}_{t-\Delta}\right]^{\frac{1}{2}}\\
    &\le C\left(m,M\right)\Delta^{m/2}\left(1+\left|X_{t-\Delta}\right|\right)^{C\left(m,M\right)}.
\end{align*}
Hence the proof is complete.
\end{proof}

\subsection*{Proofs of the results for some laws of large numbers} 

\subsubsection*{General results}
Let $p$ denote an integer such that $\sup_{n\in\mathbf{N}}ph_{n}\le \lambda$, $\Delta_{n}:=ph_{n}$. We set the sequence of the kernels $\left\{\Phi_{\Delta_{n},n}\right\}_{n\in\mathbf{N}}$ such that $\Phi_{\Delta_{n},n}\in\mathcal{K}\left(\Delta_{n},d,M\right)$ for some $M>0$, $\int_{0}^{\Delta_{n}}\Phi_{\Delta_{n},n}\mathrm{d}s=I_{d}$ and there exist a matrix $B\in\mathbf{R}^{d}\otimes\mathbf{R}^{d}$ such that
\begin{align*}
    \left\|\int_{0}^{\Delta_{n}+h_{n}}\left(\Phi_{\Delta_{n},n}\left(\left(\Delta_{n}+h_{n}\right)-s\right)-\Phi_{\Delta_{n},n}\left(\Delta_{n}-s\right)\right)s\mathrm{d}s - h_{n}B\right\|\le Ch_{n}^{2}\left(1+\left|x\right|\right)^{2}, 
\end{align*}
a set $\mathbb{L}\subset\left\{0,\ldots,p\right\}$ such that there exist functions $D_{\ell}:\mathbf{R}^{d}\to\mathbf{R}^{d}\otimes\mathbf{R}^{d}$ for $\ell\in\mathbb{L}$ such that
\begin{align*}
    &\left\|\mathbf{E}\left[\left(\int_{0}^{\Delta_{n}+\left(1+\ell\right)h_{n}}\Phi_{\Delta_{n},n}\left(\Delta_{n}-s_{1}\right)\left(\int_{0}^{s_{1}}a\left(x\right)\mathrm{d}w_{s_{2}}\right)\mathrm{d}s_{1}\right)\right.\right.\\
    &\hspace{1cm}\left(\int_{0}^{\Delta_{n}+\left(1+\ell\right)h_{n}}\left(\Phi_{\Delta_{n},n}\left(\left(\Delta_{n}+\left(1+\ell\right)h_{n}\right)-s_{1}\right)-\Phi_{\Delta_{n},n}\left(\Delta_{n}+\ell h_{n}-s_{1}\right)\right)\right.\\
    &\hspace{8cm}\left.\left.\left.\times\left(\int_{0}^{s_{1}}a\left(x\right)\mathrm{d}w_{s_{2}}\right)\mathrm{d}s_{1}\right)^{T}\right]- h_{n}D_{\ell}\left(x\right)\right\|\\
    &\le Ch_{n}^{2}\left(1+\left|x\right|\right)^{C},
\end{align*}
a function $G:\mathbf{R}^{d}\to\mathbf{R}^{d}\otimes\mathbf{R}^{d}$ such that
\begin{align*}
    &\left\|\mathbf{E}\left[\left(\int_{0}^{\Delta_{n}+h_{n}}\left(\Phi_{\Delta_{n},n}\left(\left(\Delta_{n}+h_{n}\right)-s_{1}\right)-\Phi_{\Delta_{n},n}\left(\Delta_{n}-s_{1}\right)\right)\left(\int_{0}^{s_{1}}a\left(x\right)\mathrm{d}w_{s_{2}}\right)\mathrm{d}s_{1}\right)\right.\right.\\
    &\quad\left.\left(\int_{0}^{\Delta_{n}+h_{n}}\left(\Phi_{\Delta_{n},n}\left(\left(\Delta_{n}+h_{n}\right)-s_{1}\right)-\Phi_{\Delta_{n},n}\left(\Delta_{n}-s_{1}\right)\right)\left(\int_{0}^{s_{1}}a\left(x\right)\mathrm{d}w_{s_{2}}\right)\mathrm{d}s_{1}\right)^{T}\right]\\
    &\quad\left.- h_{n}G\left(x\right)\right\|\\
    &\le Ch_{n}^{2}\left(1+\left|x\right|\right)^{C}.
\end{align*}
We define
\begin{align*}
    \overline{X}_{t,n}=\int_{t-\Delta_{n}}^{t}\Phi_{\Delta_{n},n}\left(t-s\right)X_{s}\mathrm{d}s,
\end{align*}
and the following random quantities such that
\begin{align*}
    \overline{\nu}_{n}\left(f\left(\cdot,\xi\right)\right)&:=\frac{1}{n}\sum_{i=1}^{n}f\left(\overline{X}_{ih_{n},n},\xi\right),\\
    \overline{I}_{\ell,n}\left(v\left(\cdot,\xi\right)\right)&:=\frac{1}{nh_{n}}\sum_{i=1+\ell}^{n}v\left(\overline{X}_{\left(i-1-\ell\right)h_{n},n},\xi\right)\left[\overline{X}_{ih_{n},n}-\overline{X}_{\left(i-1\right)h_{n},n}-\left(h_{n}B\right)b\left(\overline{X}_{\left(i-1-\ell\right)h_{n},n}\right)\right],\\
    \overline{Q}_{n}\left(M\left(\cdot,\xi\right)\right)&:=\frac{1}{nh_{n}}\sum_{i=1}^{n}M\left(\overline{X}_{\left(i-1\right)h_{n},n},\xi\right)\left[\left(\overline{X}_{ih_{n},n}-\overline{X}_{\left(i-1\right)h_{n},n}\right)^{\otimes2}\right],
\end{align*}
where $f:\mathbf{R}^{d}\times\Xi\to\mathbf{R}$, $v:\mathbf{R}^{d}\times\Xi\to\mathbf{R}^{d}$, $M:\mathbf{R}^{d}\times\Xi\to\mathbf{R}^{d}\otimes\mathbf{R}^{d}$ are in $\mathcal{C}^{2}$-class, and their first and second derivatives and themselves are at most polynomial growth uniformly in $\xi\in\Xi$.

\begin{proposition}\label{propEmpMean}
Under [A1], $\overline{\nu}_{n}\left(f\left(\cdot,\xi\right)\right)\to^{P}\nu_{0}\left(f\left(\cdot,\xi\right)\right)\text{ uniformly in }\xi\in\Xi.$
\end{proposition}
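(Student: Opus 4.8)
The plan is to reduce the convolutional empirical average to a classical ergodic average of the latent diffusion sampled at the grid points, then to pass from the discrete average to the invariant-measure integral via the ergodicity assumption [A1](iii), and finally to upgrade the pointwise-in-$\xi$ convergence to convergence uniform over the compact set $\Xi$.

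First I would remove the convolution. Writing $\Delta_{n}=ph_{n}$ and using that the kernel satisfies $\int_{0}^{\Delta_{n}}\Phi_{\Delta_{n},n}\left(\Delta_{n}-s\right)\mathrm{d}s=I_{d}$, Corollary \ref{corCorApprox1} applies to each summand with $t=ih_{n}$ and $\Delta=\Delta_{n}$, so that $t-\Delta=\left(i-p\right)h_{n}$, yielding the conditional bound
\[
\mathbf{E}\left[\sup_{\xi\in\Xi}\left|f\left(\overline{X}_{ih_{n},n},\xi\right)-f\left(X_{\left(i-p\right)h_{n}},\xi\right)\right|\,\middle|\,\mathcal{F}_{\left(i-p\right)h_{n}}\right]\le C\Delta_{n}^{1/2}\left(1+\left|X_{\left(i-p\right)h_{n}}\right|\right)^{C}.
\]
Taking expectations, invoking the moment bound [A1](ii) which makes $\sup_{i}\mathbf{E}\left[\left(1+\left|X_{\left(i-p\right)h_{n}}\right|\right)^{C}\right]$ finite, and averaging over $i$ gives
\[
\mathbf{E}\left[\sup_{\xi\in\Xi}\left|\overline{\nu}_{n}\left(f\left(\cdot,\xi\right)\right)-\frac{1}{n}\sum_{i=1}^{n}f\left(X_{\left(i-p\right)h_{n}},\xi\right)\right|\right]\le C\Delta_{n}^{1/2}=C\left(ph_{n}\right)^{1/2}\to 0,
\]
so this difference is negligible in probability, uniformly in $\xi$.

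Next I would treat the surviving latent average $\frac{1}{n}\sum_{i=1}^{n}f\left(X_{\left(i-p\right)h_{n}},\xi\right)=\frac{1}{T_{n}}\sum_{i=1}^{n}h_{n}f\left(X_{\left(i-p\right)h_{n}},\xi\right)$ as a Riemann sum for $\frac{1}{T_{n}}\int_{-\lambda}^{T_{n}}f\left(X_{s},\xi\right)\mathrm{d}s$. The Riemann-sum error is controlled termwise by $\sup_{s}\left|f\left(X_{\left(i-p\right)h_{n}},\xi\right)-f\left(X_{s},\xi\right)\right|$ over intervals of length $h_{n}$, which by Taylor expansion of $f$, the polynomial growth of $\partial_{x}f$, and the increment estimate of Proposition 5.1 of \citet{Gloter-2000} is $O\left(h_{n}^{1/2}\right)$ in $L^{1}$ uniformly in $\xi$, while the discrepancy between the integration range and $\left[-\lambda,T_{n}\right]$ contributes $O\left(1/T_{n}\right)$. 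For each fixed $\xi$, assumption [A1](iii) applied to the polynomially growing function $f\left(\cdot,\xi\right)$ then yields $\frac{1}{T_{n}}\int_{-\lambda}^{T_{n}}f\left(X_{s},\xi\right)\mathrm{d}s\to^{P}\nu_{0}\left(f\left(\cdot,\xi\right)\right)$, giving the convergence pointwise in $\xi$.

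Finally I would upgrade this to uniform convergence by a net-plus-equicontinuity argument, exploiting that $\Xi$ is compact and convex. For $\epsilon>0$ fix a finite $\epsilon$-net $\left\{\xi_{1},\dots,\xi_{N}\right\}\subset\Xi$; by the mean value theorem,
\[
\sup_{\xi\in\Xi}\left|\overline{\nu}_{n}\left(f\left(\cdot,\xi\right)\right)-\nu_{0}\left(f\left(\cdot,\xi\right)\right)\right|\le \max_{1\le j\le N}\left|\overline{\nu}_{n}\left(f\left(\cdot,\xi_{j}\right)\right)-\nu_{0}\left(f\left(\cdot,\xi_{j}\right)\right)\right|+\epsilon\sup_{\xi\in\Xi}\left|\overline{\nu}_{n}\left(\partial_{\xi}f\left(\cdot,\xi\right)\right)-\nu_{0}\left(\partial_{\xi}f\left(\cdot,\xi\right)\right)\right|.
\]
The finite maximum tends to $0$ in probability by the previous steps applied at each $\xi_{j}$, whereas the oscillation factor is bounded in probability uniformly in $n$ because $\partial_{\xi}f$ also has polynomial growth and hence $\mathbf{E}\left[\sup_{\xi}\left|\overline{\nu}_{n}\left(\partial_{\xi}f\left(\cdot,\xi\right)\right)\right|\right]$ is bounded by [A1](ii); letting $n\to\infty$ and then $\epsilon\to0$ delivers the claim. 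The main technical obstacle is the convolution-removal step: the whole point is that Corollary \ref{corCorApprox1} lets one replace $f\left(\overline{X}_{ih_{n},n},\xi\right)$ by the latent value $f\left(X_{\left(i-p\right)h_{n}},\xi\right)$ with a bound uniform in $\xi$, after which the remaining argument is the classical ergodic law of large numbers for discretely sampled diffusions.
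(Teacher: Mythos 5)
Your argument is correct and follows essentially the same route as the paper, whose proof of this proposition is the one-line remark that it is immediate from Corollary \ref{corCorApprox1} and the assumptions on $f$; your write-up simply makes explicit the three implicit steps (replacing $f\left(\overline{X}_{ih_{n},n},\xi\right)$ by $f\left(X_{\left(i-p\right)h_{n}},\xi\right)$ via that corollary, invoking the ergodicity in [A1](iii), and upgrading to uniformity over the compact $\Xi$). No gaps to report.
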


\begin{proposition}\label{propEmpI}
If $\ell\in\mathbb{L}$ and [A1] hold, $\overline{I}_{\ell,n}\left(v\left(\cdot,\xi\right)\right)\to^{P}\nu_{0}\left(\partial_{x}v\left[D_{\ell}^{T}\right]\left(\cdot,\xi\right)\right)\text{ uniformly in }\xi\in\Xi.$
\end{proposition}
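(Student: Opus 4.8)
The plan is to freeze the coefficients at the base time $\tau_{i,n}:=\left(i-1-\ell\right)h_{n}-\Delta_{n}$, which obeys $\tau_{i,n}\ge-\lambda$ because $\sup_{n}ph_{n}\le\lambda$, to expand every factor of the $i$-th summand around $X_{\tau_{i,n}}$, and then to split $\overline{I}_{\ell,n}\left(v\left(\cdot,\xi\right)\right)$ into its conditional-mean part, which produces the limit, and a centred remainder, which vanishes. By Taylor's theorem together with Corollary~\ref{corApprox1} and the remainder bounds of Corollary~\ref{corCorApprox1},
\begin{align*}
v\left(\overline{X}_{\left(i-1-\ell\right)h_{n},n},\xi\right)=v\left(X_{\tau_{i,n}},\xi\right)+\partial_{x}v\left(X_{\tau_{i,n}},\xi\right)N_{i}^{\left(1\right)}+r_{i}\left(\xi\right),
\end{align*}
where $N_{i}^{\left(1\right)}$ is the leading martingale part of $\overline{X}_{\left(i-1-\ell\right)h_{n},n}$ furnished by Corollary~\ref{corApprox1}, and $r_{i}\left(\xi\right)$ collects the drift contribution, the error $e$ of Theorem~\ref{thmAppox}, and the quadratic Taylor remainder. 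Translating by $s\mapsto s-\tau_{i,n}$ and using $\mathrm{supp}\,\Phi_{\Delta_{n},n}\subset\left[0,\Delta_{n}\right]$ identifies $N_{i}^{\left(1\right)}$ with the first stochastic integral in the defining relation of $D_{\ell}$ at $x=X_{\tau_{i,n}}$. Likewise, Theorem~\ref{thmAppox} with $\int_{0}^{\Delta_{n}}\Phi_{\Delta_{n},n}\,\mathrm{d}s=I_{d}$ and the defining relation of $B$ yield
\begin{align*}
\overline{X}_{ih_{n},n}-\overline{X}_{\left(i-1\right)h_{n},n}-\left(h_{n}B\right)b\left(\overline{X}_{\left(i-1-\ell\right)h_{n},n}\right)=N_{i}^{\left(2\right)}+\widetilde{r}_{i},
\end{align*}
where $N_{i}^{\left(2\right)}$ matches the second stochastic integral in the defining relation of $D_{\ell}$ (frozen at $\tau_{i,n}$, its window having upper limit $ih_{n}-\tau_{i,n}=\Delta_{n}+\left(1+\ell\right)h_{n}$) and $\widetilde{r}_{i}$ has $\mathcal{F}_{\tau_{i,n}}$-conditional mean of order $h_{n}^{2}\left(1+\left|X_{\tau_{i,n}}\right|\right)^{C}$.

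Next I would compute the $\mathcal{F}_{\tau_{i,n}}$-conditional mean of the $i$-th summand. Since $N_{i}^{\left(2\right)}$ is a stochastic integral with coefficient frozen at $\tau_{i,n}$, its conditional mean vanishes, so the term $v\left(X_{\tau_{i,n}},\xi\right)\left[N_{i}^{\left(2\right)}\right]$ contributes nothing; the only surviving leading term is the noise--noise cross product, and the defining relation of $D_{\ell}$ gives $\mathbf{E}\left[N_{i}^{\left(1\right)}\left(N_{i}^{\left(2\right)}\right)^{T}\mid\mathcal{F}_{\tau_{i,n}}\right]=h_{n}D_{\ell}\left(X_{\tau_{i,n}}\right)+O\left(h_{n}^{2}\left(1+\left|X_{\tau_{i,n}}\right|\right)^{C}\right)$. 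Taking traces,
\begin{align*}
\mathbf{E}\left[\left(N_{i}^{\left(1\right)}\right)^{T}\partial_{x}v\left(X_{\tau_{i,n}},\xi\right)^{T}N_{i}^{\left(2\right)}\mid\mathcal{F}_{\tau_{i,n}}\right]=h_{n}\,\partial_{x}v\left[D_{\ell}^{T}\right]\left(X_{\tau_{i,n}},\xi\right)+O\left(h_{n}^{2}\left(1+\left|X_{\tau_{i,n}}\right|\right)^{C}\right),
\end{align*}
while the remaining cross terms (drift against noise, $r_{i}$ against $N_{i}^{\left(2\right)}$, noise against $\widetilde{r}_{i}$, and so on) have conditional means of order $h_{n}^{3/2}$ or smaller by Theorem~\ref{thmAppox}, Corollary~\ref{corCorApprox1}, the Cauchy--Schwarz inequality and the moment bounds [A1](ii). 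Dividing by $h_{n}$ and summing, the conditional-mean part equals $\frac{1}{n}\sum_{i}\partial_{x}v\left[D_{\ell}^{T}\right]\left(X_{\tau_{i,n}},\xi\right)+o_{P}\left(1\right)$, which converges to $\nu_{0}\left(\partial_{x}v\left[D_{\ell}^{T}\right]\left(\cdot,\xi\right)\right)$ uniformly in $\xi$ by the ergodic averaging [A1](iii), equivalently by Proposition~\ref{propEmpMean} after replacing $X_{\tau_{i,n}}$ by $\overline{X}$ at the matching index at a cost that is $o_{P}\left(1\right)$ via Corollary~\ref{corCorApprox1}.

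It then remains to show that the centred remainder $\frac{1}{nh_{n}}\sum_{i}\left(\chi_{i}\left(\xi\right)-\mathbf{E}\left[\chi_{i}\left(\xi\right)\mid\mathcal{F}_{\tau_{i,n}}\right]\right)$, with $\chi_{i}\left(\xi\right)$ the $i$-th summand, tends to $0$ in probability uniformly in $\xi$. The summands are not a martingale-difference sequence, because the overlapping convolution windows create dependence; however this dependence has fixed finite range $q:=p+1+\ell$, since $\chi_{i}$ is $\mathcal{F}_{ih_{n}}$-measurable and involves the Wiener process only on $\left[\tau_{i,n},ih_{n}\right]$. Splitting the index set into $q+1$ subsequences spaced by more than $q$, the centred terms form martingale-difference arrays for the associated subfiltrations, so their normalised $L^{2}$-norms are bounded by $\frac{C}{n^{2}h_{n}^{2}}\sum_{i}\mathbf{E}\left[\chi_{i}\left(\xi\right)^{2}\right]$; since $\chi_{i}=O_{P}\left(h_{n}^{1/2}\right)$ gives $\mathbf{E}\left[\chi_{i}^{2}\right]=O\left(h_{n}\right)$, this is of order $\left(nh_{n}\right)^{-1}=T_{n}^{-1}\to0$. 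Uniformity in $\xi$ follows from a standard Sobolev--Kolmogorov argument controlling $\sup_{\xi}$ through moments of the $\xi$-derivatives, admissible by the polynomial-growth hypotheses on $v,\partial_{x}v,\partial_{x}^{2}v$ and [A2]. The main obstacle is the second step: correctly matching the leading noise--noise cross product to $h_{n}\,\partial_{x}v\left[D_{\ell}^{T}\right]$ through the defining relation of $D_{\ell}$, and verifying that every other cross term is genuinely of smaller order in conditional mean. The overlap of the convolution windows, which is precisely what makes $D_{\ell}\neq O$ and distinguishes this statement from the classical directly-observed case, is the crux of the computation.
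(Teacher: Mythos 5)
Your plan is correct and follows essentially the same route as the paper's proof: Taylor-expanding $v$ around a sufficiently lagged grid point, identifying the limit through the noise--noise cross covariance that defines $D_{\ell}$, splitting the index set into finitely many subsequences of spacing exceeding the window overlap so that martingale-difference arguments (Lemma 9 of Genon-Catalot and Jacod) apply, and securing uniformity in $\xi$ via Ibragimov--Hasminskii-type moment bounds. The paper's base point is $X_{(i-2p-1)h_{n}}$ and its residue classes are modulo $2p+1$ rather than your $\tau_{i,n}$ and $q+1$, but these are immaterial variations of the same argument.
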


\begin{remark}
    When $\Phi_{\Delta_{n},n}\left(s\right)=\frac{1}{h_{n}}\mathbf{1}_{\left[0,h_{n}\right]}\left(s\right)I_{d}$, $p=1$, $M=1$, then
    \begin{align*}
        \int_{0}^{2h_{n}}\left(\Phi_{\Delta_{n},n}\left(2h_{n}-s\right)-\Phi_{\Delta_{n},n}\left(h_{n}-s\right)\right)s\mathrm{d}s&=\frac{1}{h_{n}}I_{d}\int_{h_{n}}^{2h_{n}}s\mathrm{d}s-\frac{1}{h_{n}}I_{d}\int_{0}^{h_{n}}s\mathrm{d}s\\
        &=\frac{1}{h_{n}}I_{d}\left[\frac{\left(2h_{n}\right)^{2}-2\left(h_{n}\right)^{2}}{2}\right]\\
        &=h_{n}I_{d}
    \end{align*}
    and hence we obtain $B=I_{d}$; we also can evaluate $D_{0}\left(x\right)=\frac{1}{6}A\left(x\right)$, which coincides with that of \citet{Gloter-2006}.
\end{remark}

\begin{proposition}\label{propEmpQ}Under [A1], $
    \overline{Q}_{n}\left(M\left(\cdot,\xi\right)\right)\to^{P}\nu_{0}\left(M\left[G\right]\left(\cdot,\xi\right)\right)\text{ uniformly in }\xi\in\Xi.$
\end{proposition}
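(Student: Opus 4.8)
The plan is to combine the pathwise expansion of the convoluted increment furnished by Corollary~\ref{corApprox2} with a law-of-large-numbers argument for triangular arrays, in the same spirit as the proofs of Proposition~\ref{propEmpMean} and Proposition~\ref{propEmpI}. First I would write the increment as a single convolution against the differenced kernel $\Psi_{i,n}(u):=\Phi_{\Delta_n,n}(u)-\Phi_{\Delta_n,n}(u-h_n)$, which is supported in $[0,\Delta_n+h_n]$ and satisfies $\int\Psi_{i,n}=O$, so that $\overline{X}_{ih_n,n}-\overline{X}_{(i-1)h_n,n}=\int_{(i-1-p)h_n}^{ih_n}\Psi_{i,n}(ih_n-s)X_s\,\mathrm{d}s$. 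Applying Corollary~\ref{corApprox2} with $\Delta=\Delta_n+h_n$ and base point $X_{(i-1-p)h_n}$ gives the decomposition $\overline{X}_{ih_n,n}-\overline{X}_{(i-1)h_n,n}=h_nB\,b(X_{(i-1-p)h_n})+\zeta_{i,n}+e_{i,n}$, where $\zeta_{i,n}$ is the stochastic-integral term, a conditional martingale increment with respect to $\mathcal{F}_{(i-1-p)h_n}$, and $e_{i,n}$ satisfies the bounds (i)--(iii) of that corollary.

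Next I would compute the leading order of the conditional second moment. Expanding $(\overline{X}_{ih_n,n}-\overline{X}_{(i-1)h_n,n})^{\otimes 2}$ and taking $\mathbf{E}[\,\cdot\,|\mathcal{F}_{(i-1-p)h_n}]$, the drift square is $O(h_n^2)$, the $\zeta$--drift cross term vanishes by the martingale property, the $e$-terms and the $\zeta$--$e$ cross term are $O(h_n^2)$ by Corollary~\ref{corApprox2}(i)--(iii), and the defining property of $G$ identifies $\mathbf{E}[\zeta_{i,n}^{\otimes2}|\mathcal{F}_{(i-1-p)h_n}]=h_nG(X_{(i-1-p)h_n})+O(h_n^2)(1+|X_{(i-1-p)h_n}|)^C$. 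Hence $\tfrac{1}{h_n}\mathbf{E}[(\overline{X}_{ih_n,n}-\overline{X}_{(i-1)h_n,n})^{\otimes2}|\mathcal{F}_{(i-1-p)h_n}]=G(X_{(i-1-p)h_n})+O(h_n)(1+|X_{(i-1-p)h_n}|)^C$. To make the coefficient measurable I would replace $M(\overline{X}_{(i-1)h_n,n},\xi)$ by $M(X_{(i-1-p)h_n},\xi)$, the replacement error being controlled uniformly in $\xi$ by Corollary~\ref{corCorApprox1} together with the moment bound $\mathbf{E}[|\overline{X}_{ih_n,n}-\overline{X}_{(i-1)h_n,n}|^{4}|\mathcal{F}_{(i-1-p)h_n}]=O(h_n^2)(1+|X_{(i-1-p)h_n}|)^C$ and Cauchy--Schwarz. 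The conditional mean of the summand then equals $\tfrac1n M[G](X_{(i-1-p)h_n},\xi)$ up to an $o_P(1)$ aggregate error, and summing while invoking the ergodic property [A1](iii), exactly as in Proposition~\ref{propEmpMean}, drives this conditional-mean part to $\nu_0(M[G](\cdot,\xi))$.

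It remains to kill the fluctuation $\sum_i(U_{i,n}-\mathbf{E}[U_{i,n}|\mathcal{F}_{(i-1-p)h_n}])$, where $U_{i,n}:=\tfrac{1}{nh_n}M(X_{(i-1-p)h_n},\xi)[(\overline{X}_{ih_n,n}-\overline{X}_{(i-1)h_n,n})^{\otimes2}]$. The main obstacle is that consecutive increments share a noise window of width $p$ steps, so the centred summands are not a single martingale-difference sequence and the naive one-step conditioning fails. I would resolve this by partitioning $\{1,\dots,n\}$ into the $p+1$ residue classes modulo $p+1$: within each class the noise windows are disjoint, so the centred terms are genuine martingale differences, and the bound $\mathbf{E}[U_{i,n}^2]\le Cn^{-2}$ gives an $O(1/n)$ second moment for each class sum, whence summing the finitely many classes yields convergence to $0$ in probability. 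Uniformity in $\xi$ then follows by running the same argument for $\partial_\xi(M[G])$ and applying the Sobolev-type/tightness argument behind Proposition~\ref{propEmpMean}. The two genuinely delicate points are the exact identification of the $h_nG$ coefficient through Corollary~\ref{corApprox2}, and the non-adaptedness caused by the width-$p$ overlap, which the residue-class decomposition handles.
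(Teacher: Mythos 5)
Your proposal is correct and follows essentially the same route as the paper: the paper likewise replaces $M\left(\overline{X}_{\left(i-1\right)h_{n},n},\xi\right)$ by $M\left(X_{\left(i-p-1\right)h_{n}},\xi\right)$ and then outsources the remaining conditional-moment computation and blocking/martingale argument to the analogous result in Gloter (2006), using the same machinery (Corollary \ref{corApprox2}, the defining property of $G$, residue-class decomposition as in the proof of Proposition \ref{propEmpI}, and Lemma 9 of Genon-Catalot and Jacod). You have simply spelled out the details that the paper handles by citation.
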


\begin{remark}
    As the previous remark, we can obtain $G\left(x\right)=\frac{2}{3}A\left(x\right)$ as shown in \citet{Gloter-2006}.
\end{remark}

\begin{proof}[Proof of Proposition \ref{propEmpMean}]
It is obvious by Corollary \ref{corCorApprox1} and the assumption for $f$.
\end{proof}

\begin{proof}[Proof of Proposition \ref{propEmpI}]
We decompose the summation as follows:
\begin{align*}
    &\overline{I}_{\ell,n}\left(v\left(\cdot,\xi\right)\right)\\
    &=\frac{1}{nh_{n}}\sum_{i=1+\ell}^{n}v\left(\overline{X}_{\left(i-1-\ell\right)h_{n},n},\xi\right)\left[\overline{X}_{ih_{n},n}-\overline{X}_{\left(i-1\right)h_{n},n}-\left(h_{n}B\right)b\left(\overline{X}_{\left(i-1-\ell\right)h_{n},n}\right)\right]\\
    &=\frac{1}{nh_{n}}\sum_{i=1+\ell}^{n}v\left(\overline{X}_{\left(i-1-\ell\right)h_{n},n},\xi\right)\left[\overline{X}_{ih_{n},n}-\overline{X}_{\left(i-1\right)h_{n},n}-\left(h_{n}B\right)b\left(X_{\left(i-2p-1\right)h_{n}}\right)\right]\\
    &\quad+\frac{1}{nh_{n}}\sum_{i=1+\ell}^{n}v\left(\overline{X}_{\left(i-1-\ell\right)h_{n},n},\xi\right)\left[\left(h_{n}B\right)b\left(X_{\left(i-2p-1\right)h_{n}}\right)-\left(h_{n}B\right)b\left(\overline{X}_{\left(i-1-\ell\right)h_{n},n}\right)\right]\\
    &=\frac{1}{nh_{n}}\sum_{i=1+\ell}^{n}v\left(X_{\left(i-2p-1\right)h_{n}},\xi\right)\left[\overline{X}_{ih_{n},n}-\overline{X}_{\left(i-1\right)h_{n},n}-\left(h_{n}B\right)b\left(X_{\left(i-2p-1\right)h_{n}}\right)\right]\\
    &\quad+\frac{1}{nh_{n}}\sum_{i=1+\ell}^{n}\partial_{x}v\left(X_{\left(i-2p-1\right)h_{n}},\xi\right)\left[\left(\overline{X}_{ih_{n},n}-\overline{X}_{\left(i-1\right)h_{n},n}-\left(h_{n}B\right)b\left(X_{\left(i-2p-1\right)h_{n}}\right)\right)\right.\\
    &\hspace{6cm}\left.\left(\overline{X}_{\left(i-1-\ell\right)h_{n},n}-X_{\left(i-2p-1\right)h_{n}}\right)^{T}\right]\\
    &\quad+\frac{1}{nh_{n}}\sum_{i=1+\ell}^{n}\sum_{j_{1}=1}^{d}\sum_{j_{2}=1}^{d}\\
    &\hspace{2cm}\int_{0}^{1}\left(1-s\right)\partial_{x^{\left(j_{1}\right)}}\partial_{x^{\left(j_{2}\right)}}v\left(X_{\left(i-p-1\right)h_{n}}+s\left(\overline{X}_{\left(i-1\right)h_{n},n}-X_{\left(i-p-1\right)h_{n}}\right),\xi\right)\mathrm{d}s\\
    &\hspace{2cm}\left(\overline{X}_{\left(i-1-\ell\right)h_{n},n}-X_{\left(i-2p-1\right)h_{n}}\right)^{\left(j_{1}\right)}\left(\overline{X}_{\left(i-1-\ell\right)h_{n},n}-X_{\left(i-2p-1\right)h_{n}}\right)^{\left(j_{2}\right)}\\
    &\hspace{2cm}\left[\overline{X}_{ih_{n},n}-\overline{X}_{\left(i-1\right)h_{n},n}-\left(h_{n}B\right)b\left(X_{\left(i-2p-1\right)h_{n}}\right)\right]\\
    &\quad+\frac{1}{nh_{n}}\sum_{i=1+\ell}^{n}v\left(\overline{X}_{\left(i-1-\ell\right)h_{n},n},\xi\right)\left[\left(h_{n}B\right)b\left(X_{\left(i-2p-1\right)h_{n}}\right)-\left(h_{n}B\right)b\left(\overline{X}_{\left(i-1-\ell\right)h_{n},n}\right)\right].
\end{align*}
Because of the evaluation such that
\begin{align*}
    &\frac{1}{nh_{n}}\sum_{1+\ell\le \left(2p+1\right)i\le n}\left|\mathbf{E}\left[\left.v\left(X_{\left(2p+1\right)\left(i-1\right)h_{n}},\xi\right)\right.\right.\right.\\
    &\hspace{2.5cm}\left.\left.\left.\left[\overline{X}_{\left(2p+1\right)ih_{n},n}-\overline{X}_{\left(\left(2p+1\right)i-1\right)h_{n},n}-\left(h_{n}B\right)b\left(X_{\left(2p+1\right)\left(i-1\right)h_{n}}\right)\right]\right|\mathcal{F}_{\left(2p+1\right)\left(i-1\right)h_{n}}\right]\right|\\
    &\to^{P}0,\\
    &\frac{1}{n^{2}h_{n}^{2}}\sum_{1+\ell\le \left(2p+1\right)i\le n}\mathbf{E}\left[\left|\left.v\left(X_{\left(2p+1\right)\left(i-1\right)h_{n}},\xi\right)\right.\right.\right.\\
    &\hspace{2.5cm}\left.\left.\left.\left[\overline{X}_{\left(2p+1\right)ih_{n},n}-\overline{X}_{\left(\left(2p+1\right)i-1\right)h_{n},n}-\left(h_{n}B\right)b\left(X_{\left(2p+1\right)\left(i-1\right)h_{n}}\right)\right]\right|^{2}\right|\mathcal{F}_{\left(2p+1\right)\left(i-1\right)h_{n}}\right]\\
    &\to^{P}0
\end{align*}
for all $\xi\in\Xi$ and Lemma 9 of \citet{Genon-Catalot-Jacod-1993}, we have
\begin{align*}
    \frac{1}{nh_{n}}\sum_{i=1+\ell}^{n}v\left(X_{\left(i-2p-1\right)h_{n}},\xi\right)\left[\overline{X}_{ih_{n},n}-\overline{X}_{\left(i-1\right)h_{n},n}-\left(h_{n}B\right)b\left(X_{\left(i-2p-1\right)h_{n}}\right)\right]\to^{P}0
\end{align*}
for all $\xi\in\Xi$. To verify the uniform convergence in probability of this summation, we show that the following inequalities
hold \citep{Ibragimov-Hasminskii-1981}: there exist $C>0$ and $k>\mathrm{dim}\Xi$ such that for all $n\in\mathbf{N}$ and $\xi,\xi'\in\Xi$,
\begin{align*}
    &\mathbf{E}\left[\left|\frac{1}{nh_{n}}\sum_{i=1+\ell}^{n}v\left(X_{\left(i-2p-1\right)h_{n}},\xi\right)\left[\overline{X}_{ih_{n},n}-\overline{X}_{\left(i-1\right)h_{n},n}-\left(h_{n}B\right)b\left(X_{\left(i-2p-1\right)h_{n}}\right)\right]\right|^{k}\right]\le C,\\
    &\mathbf{E}\left[\left|\frac{1}{nh_{n}}\sum_{i=1+\ell}^{n}v\left(X_{\left(i-2p-1\right)h_{n}},\xi\right)\left[\overline{X}_{ih_{n},n}-\overline{X}_{\left(i-1\right)h_{n},n}-\left(h_{n}B\right)b\left(X_{\left(i-2p-1\right)h_{n}}\right)\right]\right.\right.\\
    &\qquad\left.\left.-\frac{1}{nh_{n}}\sum_{i=1+\ell}^{n}v\left(X_{\left(i-2p-1\right)h_{n}},\xi'\right)\left[\overline{X}_{ih_{n},n}-\overline{X}_{\left(i-1\right)h_{n},n}-\left(h_{n}B\right)b\left(X_{\left(i-2p-1\right)h_{n}}\right)\right]\right|^{k}\right]\\
    &\le C\left|\xi-\xi'\right|^{k}.
\end{align*}
These evaluations can be led by the assumption of $\Phi_{\Delta}$ and Burkholder's inequality in a similar way to \citet{Nakakita-Uchida-2019a}.

With respect to the second summation, we can easily have the evaluation such that
\begin{align*}
    &\frac{1}{nh_{n}}\sum_{1+\ell\le \left(2p+1\right)i\le n}\partial_{x}v\left(X_{\left(2p+1\right)\left(i-1\right)h_{n}},\xi\right)\\
    &\hspace{3cm}\left[\left(\overline{X}_{\left(2p+1\right)ih_{n},n}-\overline{X}_{\left(\left(1p+1\right)i-1\right)h_{n},n}-\left(h_{n}B\right)b\left(X_{\left(2p+1\right)\left(i-1\right)h_{n}}\right)\right)\right.\\
    &\hspace{3cm}\left(\overline{X}_{\left(\left(2p+1\right)i-1-\ell\right)h_{n},n}-X_{\left(2p+1\right)\left(i-1\right)h_{n}}\right.\\
    &\hspace{4cm}\left.\left.-\left(\int_{0}^{\Delta_{n}}\Phi_{\Delta_{n},n}\left(\Delta_{n}-s\right)s\mathrm{d}s\right)b\left(X_{\left(p+1\right)\left(i-1\right)}\right)\right)^{T}\right]\\
    &+\frac{1}{nh_{n}}\sum_{1+\ell\le \left(2p+1\right)i\le n}\partial_{x}v\left(X_{\left(2p+1\right)\left(i-1\right)h_{n}},\xi\right)\\
    &\hspace{3cm}\left[\left(\overline{X}_{\left(2p+1\right)ih_{n},n}-\overline{X}_{\left(\left(2p+1\right)i-1\right)h_{n},n}-\left(h_{n}B\right)b\left(X_{\left(2p+1\right)\left(i-1\right)h_{n}}\right)\right)\right.\\
    &\hspace{3cm}\left.\left(\left(\int_{0}^{\Delta_{n}}\Phi_{\Delta_{n},n}\left(\Delta_{n}-s\right)s\mathrm{d}s\right)b\left(X_{\left(2p+1\right)\left(i-1\right)}\right)\right)^{T}\right]\\
    &\to ^{P} \frac{1}{2p+1}\nu_{0}\left(\partial_{x}v\left[D_{\ell}^{T}\right]\left(\cdot,\xi\right)\right)\text{ uniformly in }\xi\in\Xi
\end{align*}
by an analogous manner to \citet{Gloter-2006}. Hence we obtain
\begin{align*}
    &\frac{1}{nh_{n}}\sum_{i=1+\ell}^{n}\partial_{x}v\left(X_{\left(i-2p-1\right)h_{n}},\xi\right)\left[\left(\overline{X}_{ih_{n},n}-\overline{X}_{\left(i-1\right)h_{n},n}-\left(h_{n}B\right)b\left(X_{\left(i-2p-1\right)h_{n}}\right)\right)\right.\\
    &\hspace{6cm}\left.\left(\overline{X}_{\left(i-1-\ell\right)h_{n},n}-X_{\left(i-2p-1\right)h_{n}}\right)^{T}\right]\\
    &\to \nu_{0}\left(\partial_{x}v\left[D_{\ell}^{T}\right]\left(\cdot,\xi\right)\right)\text{ uniformly in }\xi\in\Xi.
\end{align*}
For the residual terms, it is obvious that they converge to zero in probability uniformly in $\xi\in\Xi$. Hence we complete the proof.
\end{proof}

\begin{proof}[Proof of Proposition \ref{propEmpQ}]
Because of the fact
\begin{align*}
    &\frac{1}{nh_{n}}\sum_{i=1}^{n}M\left(\overline{X}_{\left(i-1\right)h_{n},n},\xi\right)\left[\left(\overline{X}_{ih_{n},n}-\overline{X}_{\left(i-1\right)h_{n},n}\right)^{\otimes2}\right]\\
    &\quad-\frac{1}{nh_{n}}\sum_{i=1}^{n}M\left(X_{\left(i-p-1\right)h_{n}},\xi\right)\left[\left(\overline{X}_{ih_{n},n}-\overline{X}_{\left(i-1\right)h_{n},n}\right)^{\otimes2}\right]\\
    &\to^{P}0\text{ uniformly in }\xi\in\Xi
\end{align*}
which can be easily obtained, it is sufficient to evaluate
\begin{align*}
     \overline{Q}_{n}'\left(M\left(\cdot,\xi\right)\right)=\frac{1}{nh_{n}}\sum_{i=1}^{n}M\left(X_{\left(i-p-1\right)h_{n}},\xi\right)\left[\left(\overline{X}_{ih_{n},n}-\overline{X}_{\left(i-1\right)h_{n},n}\right)^{\otimes2}\right],
\end{align*}
and we can have an analogous result to \citet{Gloter-2006} such that
\begin{align*}
    \overline{Q}_{n}'\left(M\left(\cdot,\xi\right)\right)\to^{P}\nu_{0}\left(M\left[G\right]\left(\cdot,\xi\right)\right)\text{ uniformly in }\xi\in\Xi
\end{align*}
and hence obtain the proof.
\end{proof}

\subsubsection*{Some specific evaluation} 
We set $p=\left[\overline{\rho}\right]+1$, $\Delta_{n}=ph_{n}$ and show the evaluation of $B$, $D_{\ell}$ and $G$ when setting our kernel $\left\{\Phi_{\Delta,n}\right\}=\left\{V_{\rho, h_{n}}\right\}$ as follows: we have $\Delta_{n}=ph_{n}$, $B=I_{d}$, $D_{0}\left(x\right)=\left.\mathbb{D}_{0}\left(x|\rho\right)\right|_{\rho=\rho_{\star}}$, where $\mathbb{D}_{0}^{\left(i,j\right)}\left(x|\rho\right)=A^{\left(i,j\right)}\left(x\right)f_{\mathbb{D}_{0}}\left(\rho^{\left(i\right)},\rho^{\left(j\right)}\right)$,
\begin{align*}
    f_{\mathbb{D}_{0}}\left(\rho^{\left(i\right)},\rho^{\left(j\right)}\right)
    &:=\begin{cases}
    0 & \text{ if }\rho^{\left(j\right)}=0,\\
    \frac{\rho^{\left(j\right)}}{2} & \text{ if }\rho^{\left(i\right)}=0, \rho^{\left(j\right)}\in\left(0,1\right],\\
    \frac{2\rho^{\left(j\right)}-1}{2\rho^{\left(j\right)}}&\text{ if }\rho^{\left(i\right)}=0, \rho^{\left(j\right)}\in\left(1,\overline{\rho}\right],\\
    \frac{6\rho^{\left(i\right)}\rho^{\left(j\right)}-3\left(\rho^{\left(i\right)}\right)^{2}-3\rho^{\left(i\right)}}{6\rho^{\left(i\right)}\rho^{\left(j\right)}}&\text{ if }\rho^{\left(i\right)}>0,\rho^{\left(i\right)}+1<\rho^{\left(j\right)},\\
    \frac{\left(\rho^{\left(i\right)}-\rho^{\left(j\right)}\right)^{3}+3\left(\rho^{\left(j\right)}\right)^{2}-3\rho^{\left(j\right)}+1}{6\rho^{\left(i\right)}\rho^{\left(j\right)}}&\text{ if }\rho^{\left(j\right)}>1,\rho^{\left(i\right)}<\rho^{\left(j\right)}\le \rho^{\left(i\right)}+1,\\
    \frac{3\left(\rho^{\left(j\right)}\right)^{2}-3\rho^{\left(j\right)}+1}{6\rho^{\left(i\right)}\rho^{\left(j\right)}}&\text{ if }\rho^{\left(j\right)}>1,\rho^{\left(i\right)}\ge \rho^{\left(j\right)},\\
    \frac{\left(\rho^{\left(i\right)}-\rho^{\left(j\right)}\right)^{3}+\left(\rho^{\left(j\right)}\right)^{3}}{6\rho^{\left(i\right)}\rho^{\left(j\right)}}&\text{ if }\rho^{\left(j\right)}\in\left(0,1\right], \rho^{\left(i\right)}\in\left(0,\rho^{\left(j\right)}\right),\\
    \frac{\left(\rho^{\left(j\right)}\right)^{3}}{6\rho^{\left(i\right)}\rho^{\left(j\right)}}&\text{ if }\rho^{\left(j\right)}\in\left(0,1\right], \rho^{\left(i\right)}\ge\rho^{\left(j\right)},
    \end{cases}
\end{align*}
$D_{\ell}=O$ for $\ell\ge \left[\max_{i=1,\ldots,d}\rho_{\ast}^{\left(d\right)}\right]+1$ because of independent increments of the Wiener process,
and $G\left(x\right)=\left.\mathbb{G}\left(x|\rho\right)\right|_{\rho=\rho_{\star}}$ where $\mathbb{G}\left(x|\rho\right)=\left.\mathbb{G}\left(x,\alpha|\rho\right)\right|_{\alpha=\alpha_{\star}}$.
\begin{remark}
    Note that $\mathbb{D}_{0}\left(x|\rho\right)$ and $\mathbb{G}\left(x|\rho\right)$ is continuous w.r.t. $\rho$ for all fixed $x$ by Lemma \ref{LemmaFunctionD} and Lemma \ref{LemmaFunctionG} in Appendix B.
\end{remark}
For all $i=1,\ldots,d$, if $\rho^{\left(i\right)}=0$, then
\begin{align*}
     &\left[\int_{0}^{\Delta_{n}+h_{n}}\left(V_{\rho,h_{n}}\left(\left(\Delta_{n}+h_{n}\right)-s\right)-V_{\rho,h_{n}}\left(\Delta_{n}-s\right)\right)s\mathrm{d}s\right]^{\left(i,i\right)}\\
     &=
     \int_{0}^{\left(p+1\right)h_{n}}\left(\delta\left(\left(p+1\right)h_{n}-s\right)-\delta\left(ph_{n}-s\right)\right)s\mathrm{d}s\\
     &=\left(p+1\right)h_{n}-ph_{n}\\
     &=h_{n},
\end{align*}
and if $\rho^{\left(i\right)}\in\left(0,1\right]$, then
\begin{align*}
    &\left[\int_{0}^{\Delta_{n}+h_{n}}\left(V_{\rho,h_{n}}\left(\left(\Delta_{n}+h_{n}\right)-s\right)-V_{\rho,h_{n}}\left(\Delta_{n}-s\right)\right)s\mathrm{d}s\right]^{\left(i,i\right)}\\
    &=\int_{0}^{\left(p+1\right)h_{n}}\left(\rho^{\left(i\right)}h_{n}\right)^{-1}\left(\mathbf{1}_{\left[0,\rho^{\left(i\right)}h_{n}\right]}\left(\left(p+1\right)h_{n}-s\right)-\mathbf{1}_{\left[0,\rho^{\left(i\right)}h_{n}\right]}\left(ph_{n}-s\right)\right)s\mathrm{d}s\\
    &=\int_{\left(p+1-\rho^{\left(i\right)}\right)h_{n}}^{\left(p+1\right)h_{n}}\left(\rho^{\left(i\right)}h_{n}\right)^{-1}s\mathrm{d}s-\int_{\left(p-\rho^{\left(i\right)}\right)h_{n}}^{ph_{n}}\left(\rho^{\left(i\right)}h_{n}\right)^{-1}s\mathrm{d}s\\
    &=\frac{h_{n}}{2\rho^{\left(i\right)}}\left[\left(p+1\right)^{2}-\left(p+1-\rho^{\left(i\right)}\right)^{2}-p^{2}+\left(p-\rho^{\left(i\right)}\right)^{2}\right]\\
    &=h_{n},
\end{align*}
and if $\rho^{\left(i\right)}\in\left(1,\overline{\rho}\right]$, then
\begin{align*}
    &\left[\int_{0}^{\Delta_{n}+h_{n}}\left(V_{\rho,h_{n}}\left(\left(\Delta_{n}+h_{n}\right)-s\right)-V_{\rho,h_{n}}\left(\Delta_{n}-s\right)\right)s\mathrm{d}s\right]^{\left(i,i\right)}\\
    &=\int_{\left(p+1-\rho^{\left(i\right)}\right)h_{n}}^{\left(p+1\right)h_{n}}\left(\rho^{\left(i\right)}h_{n}\right)^{-1}s\mathrm{d}s-\int_{\left(p-\rho^{\left(i\right)}\right)h_{n}}^{ph_{n}}\left(\rho^{\left(i\right)}h_{n}\right)^{-1}s\mathrm{d}s\\
    &=\int_{ph_{n}}^{\left(p+1\right)h_{n}}\left(\rho^{\left(i\right)}h_{n}\right)^{-1}s\mathrm{d}s-\int_{\left(p-\rho^{\left(i\right)}\right)h_{n}}^{\left(p+1-\rho^{\left(i\right)}\right)h_{n}}\left(\rho^{\left(i\right)}h_{n}\right)^{-1}s\mathrm{d}s\\
    &=\frac{h_{n}}{2\rho^{\left(i\right)}}\left[\left(p+1\right)^{2}-p^{2}-\left(p+1-\rho^{\left(i\right)}\right)^{2}+\left(p-\rho^{\left(i\right)}\right)^{2}\right]\\
    &=h_{n}.
\end{align*}
It is obvious that if $i,j=1,\ldots d$ and $i\neq j$,
\begin{align*}
    \left[\int_{0}^{\Delta_{n}+h_{n}}\left(V_{\rho,h_{n}}\left(\left(\Delta_{n}+h_{n}\right)-s\right)-V_{\rho,h_{n}}\left(\Delta_{n}-s\right)\right)s\mathrm{d}s\right]^{\left(i,j\right)}=0;
\end{align*}
therefore it holds that $B=I_{d}$. Regarding to $D_{0}\left(x\right)$, for all $i,j=1,\ldots,d$, let us define
\begin{align*}
    &\mathbf{D}_{0}^{\left(i,j\right)}\left(x|\rho\right)\\
    &:={\frac{1}{h_{n}}}\mathbf{E}\left[\left(\int_{0}^{\left(p+1\right)h_{n}}V_{\rho,h_{n}}\left(ph_{n}-s_{1}\right)\left(\int_{0}^{s_{1}}a\left(x\right)\mathrm{d}w_{s_{2}}\right)\mathrm{d}s_{1}\right)\right.\\
    &\quad\left.\left(\int_{0}^{\left(p+1\right)h_{n}}\left(V_{\rho,h_{n}}\left(\left(p+1\right)h_{n}-s_{1}\right)-V_{\rho,h_{n}}\left(ph_{n}-s_{1}\right)\right)\left(\int_{0}^{s_{1}}a\left(x\right)\mathrm{d}w_{s_{2}}\right)\mathrm{d}s_{1}\right)^{T}\right]^{\left(i,j\right)}\\
    &={\frac{1}{h_{n}}}\mathbf{E}\left[\left(\int_{0}^{\left(p+1\right)h_{n}}V_{\rho,h_{n}}\left(ph_{n}-s_{1}\right)\left(\int_{0}^{s_{1}}a\left(x\right)\mathrm{d}w_{s_{2}}\right)\mathrm{d}s_{1}\right)^{\left(i\right)}\right.\\
    &\qquad\left.\left(\int_{0}^{\left(p+1\right)h_{n}}\left(V_{\rho,h_{n}}\left(\left(p+1\right)h_{n}-s_{1}\right)-V_{\rho,h_{n}}\left(ph_{n}-s_{1}\right)\right)\left(\int_{0}^{s_{1}}a\left(x\right)\mathrm{d}w_{s_{2}}\right)\mathrm{d}s_{1}\right)^{\left(j\right)}\right]\\
    &={\frac{1}{h_{n}}}\mathbf{E}\left[\left(\int_{0}^{\left(p+1\right)h_{n}}V_{\rho,h_{n}}^{\left(i,i\right)}\left(ph_{n}-s\right)\left(a\left(x\right)w_{s}\right)^{\left(i\right)}\mathrm{d}s\right)\right.\\
    &\qquad\left.\left(\int_{0}^{\left(p+1\right)h_{n}}\left(V_{\rho,h_{n}}^{\left(j,j\right)}\left(\left(p+1\right)h_{n}-s_{1}'\right)-V_{\rho,h_{n}}^{\left(j,j\right)}\left(ph_{n}-s'\right)\right)\left(a\left(x\right)w_{s'}\right)^{\left(j\right)}\mathrm{d}s'\right)\right]\\
    &={\frac{1}{h_{n}}}\int_{0}^{\left(p+1\right)h_{n}}\int_{0}^{\left(p+1\right)h_{n}}\mathbf{E}\left[\left(a\left(x\right)w_{s}\right)^{\left(i\right)}\left(a\left(x\right)w_{s'}\right)^{\left(j\right)}\right]\\
    &\qquad\left(V_{\rho,h_{n}}^{\left(i,i\right)}\left(ph_{n}-s\right)\left(V_{\rho,h_{n}}^{\left(j,j\right)}\left(\left(p+1\right)h_{n}-s'\right)-V_{\rho,h_{n}}^{\left(j,j\right)}\left(ph_{n}-s'\right)\right)\right)\mathrm{d}s'\mathrm{d}s\\
    &={\frac{1}{h_{n}}}\int_{0}^{\left(p+1\right)h_{n}}\int_{0}^{\left(p+1\right)h_{n}}A^{\left(i,j\right)}\left(x\right)\min\left\{s,s'\right\}\\
    &\qquad\left(V_{\rho,h_{n}}^{\left(i,i\right)}\left(ph_{n}-s\right)\left(V_{\rho,h_{n}}^{\left(j,j\right)}\left(\left(p+1\right)h_{n}-s'\right)-V_{\rho,h_{n}}^{\left(j,j\right)}\left(ph_{n}-s'\right)\right)\right)\mathrm{d}s'\mathrm{d}s,
\end{align*}
and if $\rho^{\left(i\right)}=\rho^{\left(j\right)}=0$,
\begin{align*}
    {h_{n}}\mathbf{D}_{0}^{\left(i,j\right)}\left(x|\rho\right) &= 
    \int_{0}^{\left(p+1\right)h_{n}}\int_{0}^{\left(p+1\right)h_{n}}A^{\left(i,j\right)}\left(x\right)\min\left\{s,s'\right\}\\
    &\qquad\left(V_{\rho,h_{n}}^{\left(i,i\right)}\left(ph_{n}-s\right)\left(V_{\rho,h_{n}}^{\left(j,j\right)}\left(\left(p+1\right)h_{n}-s'\right)-V_{\rho,h_{n}}^{\left(j,j\right)}\left(ph_{n}-s'\right)\right)\right)\mathrm{d}s'\mathrm{d}s\\
    &=0,
\end{align*}
and if $\rho^{\left(i\right)}=0$, $\rho^{\left(j\right)}\in\left(0,1\right]$,
\begin{align*}
    &{h_{n}}\mathbf{D}_{0}^{\left(i,j\right)}\left(x|\rho\right)\\
    &=\int_{0}^{\left(p+1\right)h_{n}}\int_{0}^{\left(p+1\right)h_{n}}A^{\left(i,j\right)}\left(x\right)\min\left\{s,s'\right\}\\
    &\qquad\left(V_{\rho,h_{n}}^{\left(i,i\right)}\left(ph_{n}-s\right)\left(V_{\rho,h_{n}}^{\left(j,j\right)}\left(\left(p+1\right)h_{n}-s'\right)-V_{\rho,h_{n}}^{\left(j,j\right)}\left(ph_{n}-s'\right)\right)\right)\mathrm{d}s'\mathrm{d}s\\
    &=\int_{0}^{\left(p+1\right)h_{n}}A^{\left(i,j\right)}\left(x\right)\min\left\{ph_{n},s'\right\}\\
    &\hspace{3cm}\left(\rho^{\left(j\right)}h_{n}\right)^{-1}\left(\mathbf{1}_{\left[0,\rho^{\left(j\right)}h_{n}\right]}\left(\left(p+1\right)h_{n}-s'\right)-\mathbf{1}_{\left[0,\rho^{\left(j\right)}h_{n}\right]}\left(ph_{n}-s'\right)\right)\mathrm{d}s'\\
    &=\int_{ph_{n}}^{\left(p+1\right)h_{n}}A^{\left(i,j\right)}\left(x\right)ph_{n}\\
    &\hspace{3cm}\left(\rho^{\left(j\right)}h_{n}\right)^{-1}\left(\mathbf{1}_{\left[0,\rho^{\left(j\right)}h_{n}\right]}\left(\left(p+1\right)h_{n}-s'\right)-\mathbf{1}_{\left[0,\rho^{\left(j\right)}h_{n}\right]}\left(ph_{n}-s'\right)\right)\mathrm{d}s'\\
    &\quad+\int_{0}^{ph_{n}}A^{\left(i,j\right)}\left(x\right)s'\\
    &\hspace{3cm}\left(\rho^{\left(j\right)}h_{n}\right)^{-1}\left(\mathbf{1}_{\left[0,\rho^{\left(j\right)}h_{n}\right]}\left(\left(p+1\right)h_{n}-s'\right)-\mathbf{1}_{\left[0,\rho^{\left(j\right)}h_{n}\right]}\left(ph_{n}-s'\right)\right)\mathrm{d}s'\\
    &=A^{\left(i,j\right)}\left(x\right)\left(ph_{n}-\frac{p^{2}h_{n}^{2}-\left(p-\rho^{\left(j\right)}\right)^{2}h_{n}^{2}}{2\rho^{\left(j\right)}h_{n}}\right)\\
    &=\frac{\rho^{\left(j\right)}h_{n}A^{\left(i,j\right)}\left(x\right)}{2},
\end{align*}
and if $\rho^{\left(i\right)}=0$, $\rho^{\left(j\right)}\in\left(1,\overline{\rho}\right]$,
\begin{align*}
    &{h_{n}}\mathbf{D}_{0}^{\left(i,j\right)}\left(x|\rho\right)\\
    &=
    \int_{0}^{\left(p+1\right)h_{n}}\int_{0}^{\left(p+1\right)h_{n}}A^{\left(i,j\right)}\left(x\right)\min\left\{s,s'\right\}\\
    &\qquad\left(V_{\rho,h_{n}}^{\left(i,i\right)}\left(ph_{n}-s\right)\left(V_{\rho,h_{n}}^{\left(j,j\right)}\left(\left(p+1\right)h_{n}-s'\right)-V_{\rho,h_{n}}^{\left(j,j\right)}\left(ph_{n}-s'\right)\right)\right)\mathrm{d}s'\mathrm{d}s\\
    &=\int_{ph_{n}}^{\left(p+1\right)h_{n}}A^{\left(i,j\right)}\left(x\right)ph_{n}\\
    &\hspace{3cm}\left(\rho^{\left(j\right)}h_{n}\right)^{-1}\left(\mathbf{1}_{\left[0,\rho^{\left(j\right)}h_{n}\right]}\left(\left(p+1\right)h_{n}-s'\right)-\mathbf{1}_{\left[0,\rho^{\left(j\right)}h_{n}\right]}\left(ph_{n}-s'\right)\right)\mathrm{d}s'\\
    &\quad+\int_{0}^{ph_{n}}A^{\left(i,j\right)}\left(x\right)s'\\
    &\hspace{3cm}\left(\rho^{\left(j\right)}h_{n}\right)^{-1}\left(\mathbf{1}_{\left[0,\rho^{\left(j\right)}h_{n}\right]}\left(\left(p+1\right)h_{n}-s'\right)-\mathbf{1}_{\left[0,\rho^{\left(j\right)}h_{n}\right]}\left(ph_{n}-s'\right)\right)\mathrm{d}s'\\
    &=A^{\left(i,j\right)}\left(x\right)\left(\frac{ph_{n}}{\rho^{\left(j\right)}}+\frac{p^{2}h_{n}^{2}-\left(p+1-\rho^{\left(j\right)}\right)^{2}h_{n}^{2}-p^{2}h_{n}^{2}+\left(p-\rho^{\left(j\right)}\right)^{2}h_{n}^{2}}{2\rho^{\left(j\right)}h_{n}}\right)\\
    &=A^{\left(i,j\right)}\left(x\right)\left(\frac{ph_{n}}{\rho^{\left(j\right)}}+\frac{-2\left(p-\rho^{\left(j\right)}\right)h_{n}^{2}-h_{n}^{2}}{2\rho^{\left(j\right)}h_{n}}\right)\\
    &=\frac{\left(2\rho^{\left(j\right)}-1\right)h_{n}A^{\left(i,j\right)}\left(x\right)}{2\rho^{\left(j\right)}},
\end{align*}
and if $\rho^{\left(i\right)}>0$, $\rho^{\left(j\right)}=0$,
\begin{align*}
    &{h_{n}}\mathbf{D}_{0}^{\left(i,j\right)}\left(x|\rho\right)\\
    &=
    \int_{0}^{\left(p+1\right)h_{n}}\int_{0}^{\left(p+1\right)h_{n}}A^{\left(i,j\right)}\left(x\right)\min\left\{s,s'\right\}\\
    &\qquad\left(V_{\rho,h_{n}}^{\left(i,i\right)}\left(ph_{n}-s\right)\left(V_{\rho,h_{n}}^{\left(j,j\right)}\left(\left(p+1\right)h_{n}-s'\right)-V_{\rho,h_{n}}^{\left(j,j\right)}\left(ph_{n}-s'\right)\right)\right)\mathrm{d}s'\mathrm{d}s\\
    &=\int_{0}^{\left(p+1\right)h_{n}}A^{\left(i,j\right)}\left(x\right)\min\left\{s,\left(p+1\right)h_{n}\right\}\left(\rho^{\left(i\right)}h_{n}\right)^{-1}\mathbf{1}_{\left[0,\rho^{\left(i\right)}h_{n}\right]}\left(ph_{n}-s\right)\mathrm{d}s\\
    &\quad-\int_{0}^{\left(p+1\right)h_{n}}A^{\left(i,j\right)}\left(x\right)\min\left\{s,ph_{n}\right\}\left(\rho^{\left(i\right)}h_{n}\right)^{-1}\mathbf{1}_{\left[0,\rho^{\left(i\right)}h_{n}\right]}\left(ph_{n}-s\right)\mathrm{d}s\\
    &=\int_{0}^{\left(p+1\right)h_{n}}A^{\left(i,j\right)}\left(x\right)s\left(\rho^{\left(i\right)}h_{n}\right)^{-1}\mathbf{1}_{\left[0,\rho^{\left(i\right)}h_{n}\right]}\left(ph_{n}-s\right)\mathrm{d}s\\
    &\quad-\int_{ph_{n}}^{\left(p+1\right)h_{n}}A^{\left(i,j\right)}\left(x\right)ph_{n}\left(\rho^{\left(i\right)}h_{n}\right)^{-1}\mathbf{1}_{\left[0,\rho^{\left(i\right)}h_{n}\right]}\left(ph_{n}-s\right)\mathrm{d}s\\
    &\quad-\int_{0}^{ph_{n}}A^{\left(i,j\right)}\left(x\right)s\left(\rho^{\left(i\right)}h_{n}\right)^{-1}\mathbf{1}_{\left[0,\rho^{\left(i\right)}h_{n}\right]}\left(ph_{n}-s\right)\mathrm{d}s\\
    &=0,
\end{align*}
and if $\rho^{\left(i\right)}>0$, $\rho^{\left(j\right)}>0$,
\begin{align*}
    &{h_{n}}\mathbf{D}_{0}^{\left(i,j\right)}\left(x|\rho\right)\\
    &= \int_{0}^{\left(p+1\right)h_{n}}\int_{0}^{\left(p+1\right)h_{n}}A^{\left(i,j\right)}\left(x\right)\min\left\{s,s'\right\}\\
    &\qquad\left(V_{\rho,h_{n}}^{\left(i,i\right)}\left(ph_{n}-s\right)\left(V_{\rho,h_{n}}^{\left(j,j\right)}\left(\left(p+1\right)h_{n}-s'\right)-V_{\rho,h_{n}}^{\left(j,j\right)}\left(ph_{n}-s'\right)\right)\right)\mathrm{d}s'\mathrm{d}s\\
    &=\frac{A^{\left(i,j\right)}\left(x\right)}{\rho^{\left(i\right)}\rho^{\left(j\right)}h_{n}^{2}}\int_{0}^{\left(p+1\right)h_{n}}\int_{0}^{\left(p+1\right)h_{n}}\min\left\{s,s'\right\}\mathbf{1}_{\left[0,\rho^{\left(i\right)}h_{n}\right]}\left(ph_{n}-s\right)\\
    &\qquad\left(\mathbf{1}_{\left[0,\rho^{\left(j\right)}h_{n}\right]}\left(\left(p+1\right)h_{n}-s'\right)-\mathbf{1}_{\left[0,\rho^{\left(j\right)}h_{n}\right]}\left(ph_{n}-s'\right)\right)\mathrm{d}s'\mathrm{d}s\\
    &=\frac{A^{\left(i,j\right)}\left(x\right)}{\rho^{\left(i\right)}\rho^{\left(j\right)}h_{n}^{2}}\int_{0}^{\left(p+1\right)h_{n}}\int_{s}^{\left(p+1\right)h_{n}}s\mathbf{1}_{\left[0,\rho^{\left(i\right)}h_{n}\right]}\left(ph_{n}-s\right)\\
    &\hspace{3cm}\left(\mathbf{1}_{\left[0,\rho^{\left(j\right)}h_{n}\right]}\left(\left(p+1\right)h_{n}-s'\right)-\mathbf{1}_{\left[0,\rho^{\left(j\right)}h_{n}\right]}\left(ph_{n}-s'\right)\right)\mathrm{d}s'\mathrm{d}s\\
    &\quad+\frac{A^{\left(i,j\right)}\left(x\right)}{\rho^{\left(i\right)}\rho^{\left(j\right)}h_{n}^{2}}\int_{0}^{\left(p+1\right)h_{n}}\int_{0}^{s}s'\mathbf{1}_{\left[0,\rho^{\left(i\right)}h_{n}\right]}\left(ph_{n}-s\right)\\
    &\hspace{4cm}\left(\mathbf{1}_{\left[0,\rho^{\left(j\right)}h_{n}\right]}\left(\left(p+1\right)h_{n}-s'\right)-\mathbf{1}_{\left[0,\rho^{\left(j\right)}h_{n}\right]}\left(ph_{n}-s'\right)\right)\mathrm{d}s'\mathrm{d}s\\
    &=\frac{A^{\left(i,j\right)}\left(x\right)}{\rho^{\left(i\right)}\rho^{\left(j\right)}h_{n}^{2}}\int_{0}^{\left(p+1\right)h_{n}}s\mathbf{1}_{\left[0,\rho^{\left(i\right)}h_{n}\right]}\left(ph_{n}-s\right)\\
    &\hspace{3cm}\int_{s}^{\left(p+1\right)h_{n}}\left(\mathbf{1}_{\left[0,\rho^{\left(j\right)}h_{n}\right]}\left(\left(p+1\right)h_{n}-s'\right)-\mathbf{1}_{\left[0,\rho^{\left(j\right)}h_{n}\right]}\left(ph_{n}-s'\right)\right)\mathrm{d}s'\mathrm{d}s\\
    &\quad+\frac{A^{\left(i,j\right)}\left(x\right)}{\rho^{\left(i\right)}\rho^{\left(j\right)}h_{n}^{2}}\int_{0}^{\left(p+1\right)h_{n}}\mathbf{1}_{\left[0,\rho^{\left(i\right)}h_{n}\right]}\left(ph_{n}-s\right)\\
    &\hspace{4cm}\int_{0}^{s}s'\left(\mathbf{1}_{\left[0,\rho^{\left(j\right)}h_{n}\right]}\left(\left(p+1\right)h_{n}-s'\right)-\mathbf{1}_{\left[0,\rho^{\left(j\right)}h_{n}\right]}\left(ph_{n}-s'\right)\right)\mathrm{d}s'\mathrm{d}s\\
    &=\frac{A^{\left(i,j\right)}\left(x\right)}{\rho^{\left(i\right)}\rho^{\left(j\right)}h_{n}^{2}}\int_{0}^{\left(p+1\right)h_{n}}s\mathbf{1}_{\left[0,\rho^{\left(i\right)}h_{n}\right]}\left(ph_{n}-s\right)\\
    &\hspace{3cm}\left[\left(p+1\right)h_{n}-\max\left\{s,\left(p+1-\rho^{\left(j\right)}\right)h_{n}\right\}\right.\\
    &\hspace{4cm}\left.-\mathbf{1}_{\left[0,ph_{n}\right]}\left(s\right)\left(ph_{n}-\max\left\{s,\left(p-\rho^{\left(j\right)}\right)h_{n}\right\}\right)\right]\mathrm{d}s\\
    &\quad+\frac{A^{\left(i,j\right)}\left(x\right)}{\rho^{\left(i\right)}\rho^{\left(j\right)}h_{n}^{2}}\int_{0}^{\left(p+1\right)h_{n}}\mathbf{1}_{\left[0,\rho^{\left(i\right)}h_{n}\right]}\left(ph_{n}-s\right)\\
    &\hspace{3cm}\left[\mathbf{1}_{\left[\left(p+1-\rho^{\left(j\right)}\right)h_{n},\left(p+1\right)h_{n}\right]}\left(s\right)\frac{1}{2}\left(s^{2}-\left(p+1-\rho^{\left(j\right)}\right)^{2}h_{n}^{2}\right)\right.\\
    &\hspace{4cm}\left.-\mathbf{1}_{\left[\left(p-\rho^{\left(j\right)}\right)h_{n},\left(p+1\right)h_{n}\right]}\left(s\right)\frac{1}{2}\left(\min\left\{s^{2},p^{2}h_{n}^{2}\right\}-\left(p-\rho^{\left(j\right)}\right)^{2}h_{n}^{2}\right)\right]\mathrm{d}s\\
    &=\frac{A^{\left(i,j\right)}\left(x\right)\left(p+1\right)}{\rho^{\left(i\right)}\rho^{\left(j\right)}h_{n}}\int_{0}^{\left(p+1\right)h_{n}}s\mathbf{1}_{\left[0,\rho^{\left(i\right)}h_{n}\right]}\left(ph_{n}-s\right)\mathrm{d}s\\
    &\quad-\frac{A^{\left(i,j\right)}\left(x\right)}{\rho^{\left(i\right)}\rho^{\left(j\right)}h_{n}^{2}}\int_{0}^{\left(p+1\right)h_{n}}s\mathbf{1}_{\left[0,\rho^{\left(i\right)}h_{n}\right]}\left(ph_{n}-s\right)\max\left\{s,\left(p+1-\rho^{\left(j\right)}\right)h_{n}\right\}\mathrm{d}s\\
    &\quad-\frac{A^{\left(i,j\right)}\left(x\right)p}{\rho^{\left(i\right)}\rho^{\left(j\right)}h_{n}}\int_{0}^{\left(p+1\right)h_{n}}s\mathbf{1}_{\left[0,\rho^{\left(i\right)}h_{n}\right]}\left(ph_{n}-s\right)\mathbf{1}_{\left[0,ph_{n}\right]}\left(s\right)\mathrm{d}s\\
    &\quad+\frac{A^{\left(i,j\right)}\left(x\right)}{\rho^{\left(i\right)}\rho^{\left(j\right)}h_{n}^{2}}\int_{0}^{\left(p+1\right)h_{n}}s\mathbf{1}_{\left[0,\rho^{\left(i\right)}h_{n}\right]}\left(ph_{n}-s\right)\mathbf{1}_{\left[0,ph_{n}\right]}\left(s\right)\max\left\{s,\left(p-\rho^{\left(j\right)}\right)h_{n}\right\}\mathrm{d}s\\
    &\quad+\frac{A^{\left(i,j\right)}\left(x\right)}{2\rho^{\left(i\right)}\rho^{\left(j\right)}h_{n}^{2}}\int_{0}^{\left(p+1\right)h_{n}}\mathbf{1}_{\left[0,\rho^{\left(i\right)}h_{n}\right]}\left(ph_{n}-s\right)\mathbf{1}_{\left[\left(p+1-\rho^{\left(j\right)}\right)h_{n},\left(p+1\right)h_{n}\right]}\left(s\right)s^{2}\mathrm{d}s\\
    &\quad-\frac{A^{\left(i,j\right)}\left(x\right)\left(p+1-\rho^{\left(j\right)}\right)^{2}}{2\rho^{\left(i\right)}\rho^{\left(j\right)}}\int_{0}^{\left(p+1\right)h_{n}}\mathbf{1}_{\left[0,\rho^{\left(i\right)}h_{n}\right]}\left(ph_{n}-s\right)\mathbf{1}_{\left[\left(p+1-\rho^{\left(j\right)}\right)h_{n},\left(p+1\right)h_{n}\right]}\left(s\right)\mathrm{d}s\\
    &\quad-\frac{A^{\left(i,j\right)}\left(x\right)}{2\rho^{\left(i\right)}\rho^{\left(j\right)}h_{n}^{2}}\int_{0}^{\left(p+1\right)h_{n}}\mathbf{1}_{\left[0,\rho^{\left(i\right)}h_{n}\right]}\left(ph_{n}-s\right)\mathbf{1}_{\left[\left(p-\rho^{\left(j\right)}\right)h_{n},\left(p+1\right)h_{n}\right]}\left(s\right)\min\left\{s^{2},p^{2}h_{n}^{2}\right\}\mathrm{d}s\\
    &\quad+\frac{A^{\left(i,j\right)}\left(x\right)\left(p-\rho^{\left(j\right)}\right)^{2}}{2\rho^{\left(i\right)}\rho^{\left(j\right)}}\int_{0}^{\left(p+1\right)h_{n}}\mathbf{1}_{\left[0,\rho^{\left(i\right)}h_{n}\right]}\left(ph_{n}-s\right)\mathbf{1}_{\left[\left(p-\rho^{\left(j\right)}\right)h_{n},\left(p+1\right)h_{n}\right]}\left(s\right)\mathrm{d}s\\
    &=\frac{A^{\left(i,j\right)}\left(x\right)\left(p+1\right)}{\rho^{\left(i\right)}\rho^{\left(j\right)}h_{n}}\int_{\left(p-\rho^{\left(i\right)}\right)h_{n}}^{ph_{n}}s\mathrm{d}s\\
    &\quad-\frac{A^{\left(i,j\right)}\left(x\right)}{\rho^{\left(i\right)}\rho^{\left(j\right)}h_{n}^{2}}\int_{\left(p+1-\rho^{\left(j\right)}\right)h_{n}}^{\left(p+1\right)h_{n}}s^{2}\mathbf{1}_{\left[0,\rho^{\left(i\right)}h_{n}\right]}\left(ph_{n}-s\right)\mathrm{d}s\\
    &\quad-\frac{A^{\left(i,j\right)}\left(x\right)\left(p+1-\rho^{\left(j\right)}\right)}{\rho^{\left(i\right)}\rho^{\left(j\right)}h_{n}}\int_{0}^{\left(p+1-\rho^{\left(j\right)}\right)h_{n}}s\mathbf{1}_{\left[0,\rho^{\left(i\right)}h_{n}\right]}\left(ph_{n}-s\right)\mathrm{d}s\\
    &\quad-\frac{A^{\left(i,j\right)}\left(x\right)p}{\rho^{\left(i\right)}\rho^{\left(j\right)}h_{n}}\int_{\left(p-\rho^{\left(i\right)}\right)h_{n}}^{ph_{n}}s\mathrm{d}s\\
    &\quad+\frac{A^{\left(i,j\right)}\left(x\right)}{\rho^{\left(i\right)}\rho^{\left(j\right)}h_{n}^{2}}\int_{\left(p-\rho^{\left(j\right)}\right)h_{n}}^{\left(p+1\right)h_{n}}s^{2}\mathbf{1}_{\left[0,\rho^{\left(i\right)}h_{n}\right]}\left(ph_{n}-s\right)\mathbf{1}_{\left[0,ph_{n}\right]}\left(s\right)\mathrm{d}s\\
    &\quad+\frac{A^{\left(i,j\right)}\left(x\right)\left(p-\rho^{\left(j\right)}\right)}{\rho^{\left(i\right)}\rho^{\left(j\right)}h_{n}}\int_{0}^{\left(p-\rho^{\left(j\right)}\right)h_{n}}s\mathbf{1}_{\left[0,\rho^{\left(i\right)}h_{n}\right]}\left(ph_{n}-s\right)\mathbf{1}_{\left[0,ph_{n}\right]}\left(s\right)\mathrm{d}s\\
    &\quad+\frac{A^{\left(i,j\right)}\left(x\right)}{2\rho^{\left(i\right)}\rho^{\left(j\right)}h_{n}^{2}}\int_{\left(p-\rho^{\left(i\right)}\right)h_{n}}^{ph_{n}}\mathbf{1}_{\left[\left(p+1-\rho^{\left(j\right)}\right)h_{n},\left(p+1\right)h_{n}\right]}\left(s\right)s^{2}\mathrm{d}s\\
    &\quad-\frac{A^{\left(i,j\right)}\left(x\right)\left(p+1-\rho^{\left(j\right)}\right)^{2}}{2\rho^{\left(i\right)}\rho^{\left(j\right)}}\int_{\left(p-\rho^{\left(i\right)}\right)h_{n}}^{ph_{n}}\mathbf{1}_{\left[\left(p+1-\rho^{\left(j\right)}\right)h_{n},\left(p+1\right)h_{n}\right]}\left(s\right)\mathrm{d}s\\
    &\quad-\frac{A^{\left(i,j\right)}\left(x\right)}{2\rho^{\left(i\right)}\rho^{\left(j\right)}h_{n}^{2}}\int_{\left(p-\rho^{\left(i\right)}\right)h_{n}}^{ph_{n}}\mathbf{1}_{\left[\left(p-\rho^{\left(j\right)}\right)h_{n},\left(p+1\right)h_{n}\right]}\left(s\right)s^{2}\mathrm{d}s\\
    &\quad+\frac{A^{\left(i,j\right)}\left(x\right)\left(p-\rho^{\left(j\right)}\right)^{2}}{2\rho^{\left(i\right)}\rho^{\left(j\right)}}\int_{\left(p-\rho^{\left(i\right)}\right)h_{n}}^{ph_{n}}\mathbf{1}_{\left[\left(p-\rho^{\left(j\right)}\right)h_{n},\left(p+1\right)h_{n}\right]}\left(s\right)\mathrm{d}s\\
    &=\frac{A^{\left(i,j\right)}\left(x\right)\left(p+1\right)}{2\rho^{\left(i\right)}\rho^{\left(j\right)}h_{n}}\left(p^{2}h_{n}^2-\left(p-\rho^{\left(i\right)}\right)^{2}h_{n}^{2}\right)\\
    &\quad-\frac{A^{\left(i,j\right)}\left(x\right)\mathbf{1}_{\left(1,\overline{\rho}\right]}\left(\rho^{\left(j\right)}\right)}{\rho^{\left(i\right)}\rho^{\left(j\right)}h_{n}^{2}}\int_{\max\left\{\left(p-\rho^{\left(i\right)}\right),\left(p+1-\rho^{\left(j\right)}\right)\right\}h_{n}}^{ph_{n}}s^{2}\mathrm{d}s\\
    &\quad-\frac{A^{\left(i,j\right)}\left(x\right)\left(p+1-\rho^{\left(j\right)}\right)\mathbf{1}_{\left(0,\rho^{\left(i\right)}+1\right]}\left(\rho^{\left(j\right)}\right)}{\rho^{\left(i\right)}\rho^{\left(j\right)}h_{n}}\int_{\left(p-\rho^{\left(i\right)}\right)h_{n}}^{\min\left\{p,\left(p+1-\rho^{\left(j\right)}\right)\right\}h_{n}}s\mathrm{d}s\\
    &\quad-\frac{A^{\left(i,j\right)}\left(x\right)p}{2\rho^{\left(i\right)}\rho^{\left(j\right)}h_{n}}\left(p^{2}h_{n}^{2}-\left(p-\rho^{\left(i\right)}\right)^{2}h_{n}^{2}\right)\\
    &\quad+\frac{A^{\left(i,j\right)}\left(x\right)}{\rho^{\left(i\right)}\rho^{\left(j\right)}h_{n}^{2}}\int_{\left(p-\rho^{\left(j\right)}\right)h_{n}}^{ph_{n}}s^{2}\mathbf{1}_{\left[0,\rho^{\left(i\right)}h_{n}\right]}\left(ph_{n}-s\right)\mathrm{d}s\\
    &\quad+\frac{A^{\left(i,j\right)}\left(x\right)\left(p-\rho^{\left(j\right)}\right)}{\rho^{\left(i\right)}\rho^{\left(j\right)}h_{n}}\int_{0}^{\left(p-\rho^{\left(j\right)}\right)h_{n}}s\mathbf{1}_{\left[0,\rho^{\left(i\right)}h_{n}\right]}\left(ph_{n}-s\right)\mathrm{d}s\\
    &\quad+\frac{A^{\left(i,j\right)}\left(x\right)\mathbf{1}_{\left(1,\overline{\rho}\right]}\left(\rho^{\left(j\right)}\right)}{2\rho^{\left(i\right)}\rho^{\left(j\right)}h_{n}^{2}}\int_{\max\left\{\left(p-\rho^{\left(i\right)}\right),\left(p+1-\rho^{\left(j\right)}\right)\right\}h_{n}}^{ph_{n}}s^{2}\mathrm{d}s\\
    &\quad-\frac{A^{\left(i,j\right)}\left(x\right)\left(p+1-\rho^{\left(j\right)}\right)^{2}\mathbf{1}_{\left(1,\overline{\rho}\right]}\left(\rho^{\left(j\right)}\right)}{2\rho^{\left(i\right)}\rho^{\left(j\right)}}\int_{\max\left\{\left(p-\rho^{\left(i\right)}\right),\left(p+1-\rho^{\left(j\right)}\right)\right\}h_{n}}^{ph_{n}}\mathrm{d}s\\
    &\quad-\frac{A^{\left(i,j\right)}\left(x\right)}{2\rho^{\left(i\right)}\rho^{\left(j\right)}h_{n}^{2}}\int_{\max\left\{\left(p-\rho^{\left(i\right)}\right),\left(p-\rho^{\left(j\right)}\right)\right\}h_{n}}^{ph_{n}}s^{2}\mathrm{d}s\\
    &\quad+\frac{A^{\left(i,j\right)}\left(x\right)\left(p-\rho^{\left(j\right)}\right)^{2}}{2\rho^{\left(i\right)}\rho^{\left(j\right)}}\int_{\max\left\{\left(p-\rho^{\left(i\right)}\right),\left(p-\rho^{\left(j\right)}\right)\right\}h_{n}}^{ph_{n}}\mathrm{d}s\\
    &=\frac{A^{\left(i,j\right)}\left(x\right)\left(p+1\right)h_{n}}{2\rho^{\left(i\right)}\rho^{\left(j\right)}}\left(p^{2}-\left(p-\rho^{\left(i\right)}\right)^{2}\right)\\
    &\quad-\frac{A^{\left(i,j\right)}\left(x\right)\mathbf{1}_{\left(1,\overline{\rho}\right]}\left(\rho^{\left(j\right)}\right)h_{n}}{3\rho^{\left(i\right)}\rho^{\left(j\right)}}\left(p^{3}-\max\left\{\left(p-\rho^{\left(i\right)}\right)^{3},\left(p+1-\rho^{\left(j\right)}\right)^{3}\right\}\right)\\
    &\quad-\frac{A^{\left(i,j\right)}\left(x\right)\left(p+1-\rho^{\left(j\right)}\right)\mathbf{1}_{\left(0,\rho^{\left(i\right)}+1\right]}\left(\rho^{\left(j\right)}\right)h_{n}}{2\rho^{\left(i\right)}\rho^{\left(j\right)}}\left(\min\left\{p^{2},\left(p+1-\rho^{\left(j\right)}\right)^{2}\right\}-\left(p-\rho^{\left(i\right)}\right)^{2}\right)\\
    &\quad-\frac{A^{\left(i,j\right)}\left(x\right)ph_{n}}{2\rho^{\left(i\right)}\rho^{\left(j\right)}}\left(p^{2}-\left(p-\rho^{\left(i\right)}\right)^{2}\right)\\
    &\quad+\frac{A^{\left(i,j\right)}\left(x\right)}{\rho^{\left(i\right)}\rho^{\left(j\right)}h_{n}^{2}}\int_{\max\left\{\left(p-\rho^{\left(i\right)}\right),\left(p-\rho^{\left(j\right)}\right)\right\}h_{n}}^{ph_{n}}s^{2}\mathrm{d}s\\
    &\quad+\frac{A^{\left(i,j\right)}\left(x\right)\left(p-\rho^{\left(j\right)}\right)\mathbf{1}_{\left(0,\rho^{\left(i\right)}\right]}\left(\rho^{\left(j\right)}\right)}{\rho^{\left(i\right)}\rho^{\left(j\right)}h_{n}}\int_{\left(p-\rho^{\left(i\right)}\right)h_{n}}^{\left(p-\rho^{\left(j\right)}\right)h_{n}}s\mathrm{d}s\\
    &\quad+\frac{A^{\left(i,j\right)}\left(x\right)\mathbf{1}_{\left(1,\overline{\rho}\right]}\left(\rho^{\left(j\right)}\right)h_{n}}{6\rho^{\left(i\right)}\rho^{\left(j\right)}}\left(p^{3}-\max\left\{\left(p-\rho^{\left(i\right)}\right)^{3},\left(p+1-\rho^{\left(j\right)}\right)^{3}\right\}\right)\\
    &\quad-\frac{A^{\left(i,j\right)}\left(x\right)\left(p+1-\rho^{\left(j\right)}\right)^{2}\mathbf{1}_{\left(1,\overline{\rho}\right]}\left(\rho^{\left(j\right)}\right)h_{n}}{2\rho^{\left(i\right)}\rho^{\left(j\right)}}\left(p-\max\left\{\left(p-\rho^{\left(i\right)}\right),\left(p+1-\rho^{\left(j\right)}\right)\right\}\right)\\
    &\quad-\frac{A^{\left(i,j\right)}\left(x\right)h_{n}}{6\rho^{\left(i\right)}\rho^{\left(j\right)}}\left(p^{3}-\max\left\{\left(p-\rho^{\left(i\right)}\right)^{3},\left(p-\rho^{\left(j\right)}\right)^{3}\right\}\right)\\
    &\quad+\frac{A^{\left(i,j\right)}\left(x\right)\left(p-\rho^{\left(j\right)}\right)^{2}h_{n}}{2\rho^{\left(i\right)}\rho^{\left(j\right)}}\left(p-\max\left\{\left(p-\rho^{\left(i\right)}\right),\left(p-\rho^{\left(j\right)}\right)\right\}\right)\\
    &=\frac{A^{\left(i,j\right)}\left(x\right)h_{n}}{2\rho^{\left(i\right)}\rho^{\left(j\right)}}\left(p^{2}-\left(p-\rho^{\left(i\right)}\right)^{2}\right)\\
    &\quad-\frac{A^{\left(i,j\right)}\left(x\right)\mathbf{1}_{\left(1,\overline{\rho}\right]}\left(\rho^{\left(j\right)}\right)h_{n}}{6\rho^{\left(i\right)}\rho^{\left(j\right)}}\left(p^{3}-\max\left\{\left(p-\rho^{\left(i\right)}\right)^{3},\left(p+1-\rho^{\left(j\right)}\right)^{3}\right\}\right)\\
    &\quad-\frac{A^{\left(i,j\right)}\left(x\right)\left(p+1-\rho^{\left(j\right)}\right)\mathbf{1}_{\left(0,\rho^{\left(i\right)}+1\right]}\left(\rho^{\left(j\right)}\right)h_{n}}{2\rho^{\left(i\right)}\rho^{\left(j\right)}}\left(\min\left\{p^{2},\left(p+1-\rho^{\left(j\right)}\right)^{2}\right\}-\left(p-\rho^{\left(i\right)}\right)^{2}\right)\\
    &\quad+\frac{A^{\left(i,j\right)}\left(x\right)h_{n}}{6\rho^{\left(i\right)}\rho^{\left(j\right)}}\left(p^{3}-\max\left\{\left(p-\rho^{\left(i\right)}\right)^{3},\left(p-\rho^{\left(j\right)}\right)^{3}\right\}\right)\\
    &\quad+\frac{A^{\left(i,j\right)}\left(x\right)\left(p-\rho^{\left(j\right)}\right)\mathbf{1}_{\left(0,\rho^{\left(i\right)}\right]}\left(\rho^{\left(j\right)}\right)h_{n}}{2\rho^{\left(i\right)}\rho^{\left(j\right)}}\left(\left(p-\rho^{\left(j\right)}\right)^{2}-\left(p-\rho^{\left(i\right)}\right)^{2}\right)\\
    &\quad-\frac{A^{\left(i,j\right)}\left(x\right)\left(p+1-\rho^{\left(j\right)}\right)^{2}\mathbf{1}_{\left(1,\overline{\rho}\right]}\left(\rho^{\left(j\right)}\right)h_{n}}{2\rho^{\left(i\right)}\rho^{\left(j\right)}}\left(p-\max\left\{\left(p-\rho^{\left(i\right)}\right),\left(p+1-\rho^{\left(j\right)}\right)\right\}\right)\\
    &\quad+\frac{A^{\left(i,j\right)}\left(x\right)\left(p-\rho^{\left(j\right)}\right)^{2}h_{n}}{2\rho^{\left(i\right)}\rho^{\left(j\right)}}\left(p-\max\left\{\left(p-\rho^{\left(i\right)}\right),\left(p-\rho^{\left(j\right)}\right)\right\}\right)
\end{align*}
and then we should consider five cases as follows: (i) $\rho^{\left(i\right)}+1<\rho^{\left(j\right)}$; (ii) $\rho^{\left(j\right)}> 1$, $ \rho^{\left(i\right)}<\rho^{\left(j\right)}\le \rho^{\left(i\right)}+1$; (iii) $\rho^{\left(j\right)}> 1$, $ \rho^{\left(i\right)}\ge \rho^{\left(j\right)}$; (iv) $\rho^{\left(j\right)}\le 1$, $\rho^{\left(i\right)}<\rho^{\left(j\right)}$; (v) $\rho^{\left(j\right)}\le 1$, $\rho^{\left(i\right)}\ge\rho^{\left(j\right)}$, and for the case (i), we have 
\begin{align*}
    &{h_{n}}\mathbf{D}_{0}^{\left(i,j\right)}\left(x|\rho\right)\\
    &= \int_{0}^{\left(p+1\right)h_{n}}\int_{0}^{\left(p+1\right)h_{n}}A^{\left(i,j\right)}\left(x\right)\min\left\{s,s'\right\}\\
    &\qquad\left(V_{\rho,h_{n}}^{\left(i,i\right)}\left(ph_{n}-s\right)\left(V_{\rho,h_{n}}^{\left(j,j\right)}\left(\left(p+1\right)h_{n}-s'\right)-V_{\rho,h_{n}}^{\left(j,j\right)}\left(ph_{n}-s'\right)\right)\right)\mathrm{d}s'\mathrm{d}s\\
    &=\frac{A^{\left(i,j\right)}\left(x\right)h_{n}}{2\rho^{\left(i\right)}\rho^{\left(j\right)}}\left(p^{2}-\left(p-\rho^{\left(i\right)}\right)^{2}\right)-\frac{A^{\left(i,j\right)}\left(x\right)h_{n}}{6\rho^{\left(i\right)}\rho^{\left(j\right)}}\left(p^{3}-\left(p-\rho^{\left(i\right)}\right)^{3}\right)\\
    &\quad+\frac{A^{\left(i,j\right)}\left(x\right)h_{n}}{6\rho^{\left(i\right)}\rho^{\left(j\right)}}\left(p^{3}-\left(p-\rho^{\left(i\right)}\right)^{3}\right)-\frac{A^{\left(i,j\right)}\left(x\right)\left(p+1-\rho^{\left(j\right)}\right)^{2}h_{n}}{2\rho^{\left(i\right)}\rho^{\left(j\right)}}\left(p-\left(p-\rho^{\left(i\right)}\right)\right)\\
    &\quad+\frac{A^{\left(i,j\right)}\left(x\right)\left(p-\rho^{\left(j\right)}\right)^{2}h_{n}}{2\rho^{\left(i\right)}\rho^{\left(j\right)}}\left(p-\left(p-\rho^{\left(i\right)}\right)\right)\\
    &=\frac{A^{\left(i,j\right)}\left(x\right)h_{n}}{2\rho^{\left(i\right)}\rho^{\left(j\right)}}\left(p^{2}-\left(p-\rho^{\left(i\right)}\right)^{2}\right)\\
    &\quad-\frac{A^{\left(i,j\right)}\left(x\right)\left(p+1-\rho^{\left(j\right)}\right)^{2}\rho^{\left(i\right)}h_{n}}{2\rho^{\left(i\right)}\rho^{\left(j\right)}}+\frac{A^{\left(i,j\right)}\left(x\right)\left(p-\rho^{\left(j\right)}\right)^{2}\rho^{\left(i\right)}h_{n}}{2\rho^{\left(i\right)}\rho^{\left(j\right)}}\\
    &=\frac{A^{\left(i,j\right)}\left(x\right)h_{n}}{2\rho^{\left(i\right)}\rho^{\left(j\right)}}\left(p^{2}-\left(p-\rho^{\left(i\right)}\right)^{2}-2\left(p-\rho^{\left(j\right)}\right)\rho^{\left(i\right)}-\rho^{\left(i\right)}\right)\\
    &=\frac{A^{\left(i,j\right)}\left(x\right)h_{n}}{2\rho^{\left(i\right)}\rho^{\left(j\right)}}\left(p^{2}-p^{2}+2p\rho^{\left(i\right)}-\left(\rho^{\left(i\right)}\right)^{2}-2p\rho^{\left(i\right)}+2\rho^{\left(i\right)}\rho^{\left(j\right)}-\rho^{\left(i\right)}\right)\\
    &=\frac{A^{\left(i,j\right)}\left(x\right)h_{n}}{6\rho^{\left(i\right)}\rho^{\left(j\right)}}\left(6\rho^{\left(i\right)}\rho^{\left(j\right)}-3\left(\rho^{\left(i\right)}\right)^{2}-3\rho^{\left(i\right)}\right),
\end{align*}
and for the case (ii),
\begin{align*}
    &{h_{n}}\mathbf{D}_{0}^{\left(i,j\right)}\left(x|\rho\right)\\
    &=
    \int_{0}^{\left(p+1\right)h_{n}}\int_{0}^{\left(p+1\right)h_{n}}A^{\left(i,j\right)}\left(x\right)\min\left\{s,s'\right\}\\
    &\qquad\left(V_{\rho,h_{n}}^{\left(i,i\right)}\left(ph_{n}-s\right)\left(V_{\rho,h_{n}}^{\left(j,j\right)}\left(\left(p+1\right)h_{n}-s'\right)-V_{\rho,h_{n}}^{\left(j,j\right)}\left(ph_{n}-s'\right)\right)\right)\mathrm{d}s'\mathrm{d}s\\
    &=\frac{A^{\left(i,j\right)}\left(x\right)h_{n}}{2\rho^{\left(i\right)}\rho^{\left(j\right)}}\left(p^{2}-\left(p-\rho^{\left(i\right)}\right)^{2}\right)-\frac{A^{\left(i,j\right)}\left(x\right)h_{n}}{6\rho^{\left(i\right)}\rho^{\left(j\right)}}\left(p^{3}-\left(p+1-\rho^{\left(j\right)}\right)^{3}\right)\\
    &\quad-\frac{A^{\left(i,j\right)}\left(x\right)\left(p+1-\rho^{\left(j\right)}\right)h_{n}}{2\rho^{\left(i\right)}\rho^{\left(j\right)}}\left(\left(p+1-\rho^{\left(j\right)}\right)^{2}-\left(p-\rho^{\left(i\right)}\right)^{2}\right)\\
    &\quad+\frac{A^{\left(i,j\right)}\left(x\right)h_{n}}{6\rho^{\left(i\right)}\rho^{\left(j\right)}}\left(p^{3}-\left(p-\rho^{\left(i\right)}\right)^{3}\right)\\
    &\quad-\frac{A^{\left(i,j\right)}\left(x\right)\left(p+1-\rho^{\left(j\right)}\right)^{2}h_{n}}{2\rho^{\left(i\right)}\rho^{\left(j\right)}}\left(p-\left(p+1-\rho^{\left(j\right)}\right)\right)\\
    &\quad+\frac{A^{\left(i,j\right)}\left(x\right)\left(p-\rho^{\left(j\right)}\right)^{2}h_{n}}{2\rho^{\left(i\right)}\rho^{\left(j\right)}}\left(p-\left(p-\rho^{\left(i\right)}\right)\right)\\
    &=\frac{A^{\left(i,j\right)}\left(x\right)h_{n}}{6\rho^{\left(i\right)}\rho^{\left(j\right)}}\left[3\left(p^{2}-\left(p-\rho^{\left(i\right)}\right)^{2}\right)-\left(p^{3}-\left(p+1-\rho^{\left(j\right)}\right)^{3}\right)\right.\\
    &\hspace{3cm}-3\left(p+1-\rho^{\left(j\right)}\right)\left(\left(p+1-\rho^{\left(j\right)}\right)^{2}-\left(p-\rho^{\left(i\right)}\right)^{2}\right)\\
    &\hspace{3cm}+\left(p^{3}-\left(p-\rho^{\left(i\right)}\right)^{3}\right)\\
    &\hspace{3cm}-3\left(p+1-\rho^{\left(j\right)}\right)^{2}\left(p-\left(p+1-\rho^{\left(j\right)}\right)\right)\\
    &\hspace{3cm}+3\left(p-\rho^{\left(j\right)}\right)^{2}\left(p-\left(p-\rho^{\left(i\right)}\right)\right)\\
    &=\frac{A^{\left(i,j\right)}\left(x\right)h_{n}}{6\rho^{\left(i\right)}\rho^{\left(j\right)}}\left[\left(\rho^{\left(i\right)}-\rho^{\left(j\right)}\right)^{3}+3\left(\rho^{\left(j\right)}\right)^{2}-3\rho^{\left(j\right)}+1\right],
\end{align*}
and for the case (iii),
\begin{align*}
    &{h_{n}}\mathbf{D}_{0}^{\left(i,j\right)}\left(x|\rho\right)\\
    &= \int_{0}^{\left(p+1\right)h_{n}}\int_{0}^{\left(p+1\right)h_{n}}A^{\left(i,j\right)}\left(x\right)\min\left\{s,s'\right\}\\
    &\qquad\left(V_{\rho,h_{n}}^{\left(i,i\right)}\left(ph_{n}-s\right)\left(V_{\rho,h_{n}}^{\left(j,j\right)}\left(\left(p+1\right)h_{n}-s'\right)-V_{\rho,h_{n}}^{\left(j,j\right)}\left(ph_{n}-s'\right)\right)\right)\mathrm{d}s'\mathrm{d}s\\
    &=\frac{A^{\left(i,j\right)}\left(x\right)h_{n}}{2\rho^{\left(i\right)}\rho^{\left(j\right)}}\left(p^{2}-\left(p-\rho^{\left(i\right)}\right)^{2}\right)-\frac{A^{\left(i,j\right)}\left(x\right)h_{n}}{6\rho^{\left(i\right)}\rho^{\left(j\right)}}\left(p^{3}-\left(p+1-\rho^{\left(j\right)}\right)^{3}\right)\\
    &\quad-\frac{A^{\left(i,j\right)}\left(x\right)\left(p+1-\rho^{\left(j\right)}\right)h_{n}}{2\rho^{\left(i\right)}\rho^{\left(j\right)}}\left(\left(p+1-\rho^{\left(j\right)}\right)^{2}-\left(p-\rho^{\left(i\right)}\right)^{2}\right)\\
    &\quad+\frac{A^{\left(i,j\right)}\left(x\right)h_{n}}{6\rho^{\left(i\right)}\rho^{\left(j\right)}}\left(p^{3}-\left(p-\rho^{\left(j\right)}\right)^{3}\right)\\
    &\quad+\frac{A^{\left(i,j\right)}\left(x\right)\left(p-\rho^{\left(j\right)}\right)h_{n}}{2\rho^{\left(i\right)}\rho^{\left(j\right)}}\left(\left(p-\rho^{\left(j\right)}\right)^{2}-\left(p-\rho^{\left(i\right)}\right)^{2}\right)\\
    &\quad-\frac{A^{\left(i,j\right)}\left(x\right)\left(p+1-\rho^{\left(j\right)}\right)^{2}h_{n}}{2\rho^{\left(i\right)}\rho^{\left(j\right)}}\left(p-\left(p+1-\rho^{\left(j\right)}\right)\right)\\
    &\quad+\frac{A^{\left(i,j\right)}\left(x\right)\left(p-\rho^{\left(j\right)}\right)^{2}h_{n}}{2\rho^{\left(i\right)}\rho^{\left(j\right)}}\left(p-\left(p-\rho^{\left(j\right)}\right)\right)\\
    &=\frac{A^{\left(i,j\right)}\left(x\right)h_{n}}{6\rho^{\left(i\right)}\rho^{\left(j\right)}}\left[3\left(p^{2}-\left(p-\rho^{\left(i\right)}\right)^{2}\right)-\left(p^{3}-\left(p+1-\rho^{\left(j\right)}\right)^{3}\right)\right.\\
    &\hspace{3cm}-3\left(p+1-\rho^{\left(j\right)}\right)\left(\left(p+1-\rho^{\left(j\right)}\right)^{2}-\left(p-\rho^{\left(i\right)}\right)^{2}\right)\\
    &\hspace{3cm}+\left(p^{3}-\left(p-\rho^{\left(j\right)}\right)^{3}\right)+3\left(p-\rho^{\left(j\right)}\right)\left(\left(p-\rho^{\left(j\right)}\right)^{2}-\left(p-\rho^{\left(i\right)}\right)^{2}\right)\\
    &\hspace{3cm}\left.-3\left(p+1-\rho^{\left(j\right)}\right)^{2}\left(p-\left(p+1-\rho^{\left(j\right)}\right)\right)+3\left(p-\rho^{\left(j\right)}\right)^{2}\rho^{\left(j\right)}\right]\\
    &=\frac{A^{\left(i,j\right)}\left(x\right)h_{n}}{6\rho^{\left(i\right)}\rho^{\left(j\right)}}\left[3\left(\rho^{\left(j\right)}\right)^{2}-3\rho^{\left(j\right)}+1\right]
\end{align*}
and for the case (iv),
\begin{align*}
    {h_{n}}\mathbf{D}_{0}^{\left(i,j\right)}\left(x|\rho\right) &=
    \int_{0}^{\left(p+1\right)h_{n}}\int_{0}^{\left(p+1\right)h_{n}}A^{\left(i,j\right)}\left(x\right)\min\left\{s,s'\right\}\\
    &\qquad\left(V_{\rho,h_{n}}^{\left(i,i\right)}\left(ph_{n}-s\right)\left(V_{\rho,h_{n}}^{\left(j,j\right)}\left(\left(p+1\right)h_{n}-s'\right)-V_{\rho,h_{n}}^{\left(j,j\right)}\left(ph_{n}-s'\right)\right)\right)\mathrm{d}s'\mathrm{d}s\\
    &=\frac{A^{\left(i,j\right)}\left(x\right)h_{n}}{2\rho^{\left(i\right)}\rho^{\left(j\right)}}\left(p^{2}-\left(p-\rho^{\left(i\right)}\right)^{2}\right)\\
    &\quad-\frac{A^{\left(i,j\right)}\left(x\right)\left(p+1-\rho^{\left(j\right)}\right)h_{n}}{2\rho^{\left(i\right)}\rho^{\left(j\right)}}\left(p^{2}-\left(p-\rho^{\left(i\right)}\right)^{2}\right)\\
    &\quad+\frac{A^{\left(i,j\right)}\left(x\right)h_{n}}{6\rho^{\left(i\right)}\rho^{\left(j\right)}}\left(p^{3}-\left(p-\rho^{\left(i\right)}\right)^{3}\right)\\
    &\quad+\frac{A^{\left(i,j\right)}\left(x\right)\left(p-\rho^{\left(j\right)}\right)^{2}h_{n}}{2\rho^{\left(i\right)}\rho^{\left(j\right)}}\left(p-\left(p-\rho^{\left(i\right)}\right)\right)\\
    &=\frac{A^{\left(i,j\right)}\left(x\right)h_{n}}{6\rho^{\left(i\right)}\rho^{\left(j\right)}}\left[3\left(p^{2}-\left(p-\rho^{\left(i\right)}\right)^{2}\right)-3\left(p+1-\rho^{\left(j\right)}\right)\left(p^{2}-\left(p-\rho^{\left(i\right)}\right)^{2}\right)\right.\\
    &\hspace{3cm}\left.+\left(p^{3}-\left(p-\rho^{\left(i\right)}\right)^{3}\right)+3\left(p-\rho^{\left(j\right)}\right)^{2}\left(p-\left(p-\rho^{\left(i\right)}\right)\right)\right]\\
    &=\frac{A^{\left(i,j\right)}\left(x\right)h_{n}}{6\rho^{\left(i\right)}\rho^{\left(j\right)}}\left[\left(\rho^{\left(i\right)}-\rho^{\left(j\right)}\right)^{3}+\left(\rho^{\left(j\right)}\right)^{3}\right]
\end{align*}
and for the case (v),
\begin{align*}
    &{h_{n}}\mathbf{D}_{0}^{\left(i,j\right)}\left(x|\rho\right)\\
    &=
    \int_{0}^{\left(p+1\right)h_{n}}\int_{0}^{\left(p+1\right)h_{n}}A^{\left(i,j\right)}\left(x\right)\min\left\{s,s'\right\}\\
    &\qquad\left(V_{\rho,h_{n}}^{\left(i,i\right)}\left(ph_{n}-s\right)\left(V_{\rho,h_{n}}^{\left(j,j\right)}\left(\left(p+1\right)h_{n}-s'\right)-V_{\rho,h_{n}}^{\left(j,j\right)}\left(ph_{n}-s'\right)\right)\right)\mathrm{d}s'\mathrm{d}s\\
    &=\frac{A^{\left(i,j\right)}\left(x\right)h_{n}}{2\rho^{\left(i\right)}\rho^{\left(j\right)}}\left(p^{2}-\left(p-\rho^{\left(i\right)}\right)^{2}\right)\\
    &\quad-\frac{A^{\left(i,j\right)}\left(x\right)\left(p+1-\rho^{\left(j\right)}\right)h_{n}}{2\rho^{\left(i\right)}\rho^{\left(j\right)}}\left(p^{2}-\left(p-\rho^{\left(i\right)}\right)^{2}\right)\\
    &\quad+\frac{A^{\left(i,j\right)}\left(x\right)h_{n}}{6\rho^{\left(i\right)}\rho^{\left(j\right)}}\left(p^{3}-\left(p-\rho^{\left(j\right)}\right)^{3}\right)\\
    &\quad+\frac{A^{\left(i,j\right)}\left(x\right)\left(p-\rho^{\left(j\right)}\right)h_{n}}{2\rho^{\left(i\right)}\rho^{\left(j\right)}}\left(\left(p-\rho^{\left(j\right)}\right)^{2}-\left(p-\rho^{\left(i\right)}\right)^{2}\right)\\
    &\quad+\frac{A^{\left(i,j\right)}\left(x\right)\left(p-\rho^{\left(j\right)}\right)^{2}h_{n}}{2\rho^{\left(i\right)}\rho^{\left(j\right)}}\left(p-\left(p-\rho^{\left(j\right)}\right)\right)\\
    &=\frac{A^{\left(i,j\right)}\left(x\right)h_{n}}{6\rho^{\left(i\right)}\rho^{\left(j\right)}}\left[3\left(p^{2}-\left(p-\rho^{\left(i\right)}\right)^{2}\right)\right.\\
    &\hspace{3cm}-3\left(p+1-\rho^{\left(j\right)}\right)\left(p^{2}-\left(p-\rho^{\left(i\right)}\right)^{2}\right)+\left(p^{3}-\left(p-\rho^{\left(j\right)}\right)^{3}\right)\\
    &\hspace{3cm}\left.+3\left(p-\rho^{\left(j\right)}\right)\left(\left(p-\rho^{\left(j\right)}\right)^{2}-\left(p-\rho^{\left(i\right)}\right)^{2}\right)+3\left(p-\rho^{\left(j\right)}\right)^{2}\left(p-\left(p-\rho^{\left(j\right)}\right)\right)\right]\\
    &=\frac{A^{\left(i,j\right)}\left(x\right)h_{n}}{6\rho^{\left(i\right)}\rho^{\left(j\right)}}\left(\rho^{\left(j\right)}\right)^{3}.
\end{align*}
Hence, 
it follows from (i)-(v) that
 $\mathbf{D}_{0}\left(x|\rho\right)=\mathbb{D}_{0}\left(x|\rho\right)$. Regarding $D_{\ell}\left(x\right)$ where $\ell\ge \left[\max_{i=1,\ldots,n}\rho^{\left(i\right)}\right]+1$, it is obvious that
\begin{align*}
    &\mathbf{E}\left[\left(\int_{0}^{\left(p+1+\ell\right)h_{n}}V_{\rho,h_{n}}\left(ph_{n}-s_{1}\right)\left(\int_{0}^{s_{1}}a\left(x\right)\mathrm{d}w_{s_{2}}\right)\mathrm{d}s_{1}\right)\right.\\
    &\hspace{1cm}\left(\int_{0}^{\left(p+1+\ell\right)h_{n}}\left(V_{\rho,h_{n}}\left(\left(p+1+\ell\right)h_{n}-s_{1}\right)-V_{\rho,h_{n}}\left(\left(p+\ell\right) h_{n}-s_{1}\right)\right)\right.\\
    &\hspace{9cm}\left.\left.\times\left(\int_{0}^{s_{1}}a\left(x\right)\mathrm{d}w_{s_{2}}\right)\mathrm{d}s_{1}\right)^{T}\right]=O
\end{align*}
With respect to $G\left(x\right)$, for all $i,j=1,\ldots,d$, let us define $\mathbf{G}^{\left(i,j\right)}\left(x|\rho\right)$ such that
\begin{align*}
    &\mathbf{G}^{\left(i,j\right)}\left(x|\rho\right)\\
    &:={\frac{1}{h_{n}}}\mathbf{E}\left[\left(\int_{0}^{\Delta_{n}+h_{n}}\left(\Phi_{\Delta_{n},n}\left(\left(\Delta_{n}+h_{n}\right)-s_{1}\right)-\Phi_{\Delta_{n},n}\left(\Delta_{n}-s_{1}\right)\right)\left(\int_{0}^{s_{1}}a\left(x\right)\mathrm{d}w_{s_{2}}\right)\mathrm{d}s_{1}\right)\right.\\
    &\quad\left.\left(\int_{0}^{\Delta_{n}+h_{n}}\left(\Phi_{\Delta_{n},n}\left(\left(\Delta_{n}+h_{n}\right)-s_{1}\right)-\Phi_{\Delta_{n},n}\left(\Delta_{n}-s_{1}\right)\right)\left(\int_{0}^{s_{1}}a\left(x\right)\mathrm{d}w_{s_{2}}\right)\mathrm{d}s_{1}\right)^{T}\right]^{\left(i,j\right)}\\
    &={\frac{1}{h_{n}}}\int_{0}^{\left(p+1\right)h_{n}}\int_{0}^{\left(p+1\right)h_{n}}A^{\left(i,j\right)}\left(x\right)\min\left\{s,s'\right\}\left(V_{\rho,h_{n}}^{\left(i,i\right)}\left(\left(p+1\right)h_{n}-s\right)-V_{\rho,h_{n}}^{\left(i,i\right)}\left(ph_{n}-s\right)\right)\\
    &\hspace{3cm}\left(V_{\rho,h_{n}}^{\left(j,j\right)}\left(\left(p+1\right)h_{n}-s'\right)-V_{\rho,h_{n}}^{\left(j,j\right)}\left(ph_{n}-s'\right)\right)\mathrm{d}s'\mathrm{d}s\\
    &={\frac{1}{h_{n}}}\int_{0}^{\left(p+1\right)h_{n}}\int_{0}^{\left(p+1\right)h_{n}}A^{\left(i,j\right)}\left(x\right)\min\left\{s,s'\right\}\left(V_{\rho,h_{n}}^{\left(i,i\right)}\left(\left(p+1\right)h_{n}-s\right)\right)\\
    &\hspace{3cm}\left(V_{\rho,h_{n}}^{\left(j,j\right)}\left(\left(p+1\right)h_{n}-s'\right)-V_{\rho,h_{n}}^{\left(j,j\right)}\left(ph_{n}-s'\right)\right)\mathrm{d}s'\mathrm{d}s-{\mathbf{D}_{0}^{\left(i,j\right)}\left(x|\rho\right)},
\end{align*}
{and $\mathbf{K}^{\left(i,j\right)}\left(x|\rho\right):= \mathbf{G}^{\left(i,j\right)}\left(x|\rho\right)+\mathbf{D}_{0}^{\left(i,j\right)}\left(x|\rho\right)$. If} $\rho^{\left(i\right)}=0$, as {evaluation of} $B$,
\begin{align*}
    &{h_{n}\mathbf{K}^{\left(i,j\right)}\left(x|\rho\right)}\\
    &=
    \int_{0}^{\left(p+1\right)h_{n}}\int_{0}^{\left(p+1\right)h_{n}}A^{\left(i,j\right)}\left(x\right)\min\left\{s,s'\right\}\left(V_{\rho,h_{n}}^{\left(i,i\right)}\left(\left(p+1\right)h_{n}-s\right)\right)\\
    &\hspace{3cm}\left(V_{\rho,h_{n}}^{\left(j,j\right)}\left(\left(p+1\right)h_{n}-s'\right)-V_{\rho,h_{n}}^{\left(j,j\right)}\left(ph_{n}-s'\right)\right)\mathrm{d}s'\mathrm{d}s\\
    &=A^{\left(i,j\right)}\left(x\right)\int_{0}^{\left(p+1\right)h_{n}}\left(V_{\rho,h_{n}}^{\left(j,j\right)}\left(\left(p+1\right)h_{n}-s'\right)-V_{\rho,h_{n}}^{\left(j,j\right)}\left(ph_{n}-s'\right)\right)s'\mathrm{d}s'\\
    &=h_{n}A^{\left(i,j\right)}\left(x\right),
\end{align*}
and if $\rho^{\left(i\right)}\in\left(0,1\right]$ and $\rho^{\left(j\right)}=0$,
\begin{align*}
    &{h_{n}
	\mathbf{K}^{\left(i,j\right)}\left(x|\rho\right)}\\
	&= \int_{0}^{\left(p+1\right)h_{n}}\int_{0}^{\left(p+1\right)h_{n}}A^{\left(i,j\right)}\left(x\right)\min\left\{s,s'\right\}\left(V_{\rho,h_{n}}^{\left(i,i\right)}\left(\left(p+1\right)h_{n}-s\right)\right)\\
    &\hspace{3cm}\left(V_{\rho,h_{n}}^{\left(j,j\right)}\left(\left(p+1\right)h_{n}-s'\right)-V_{\rho,h_{n}}^{\left(j,j\right)}\left(ph_{n}-s'\right)\right)\mathrm{d}s'\mathrm{d}s\\
    &=\int_{0}^{\left(p+1\right)h_{n}}\int_{0}^{\left(p+1\right)h_{n}}A^{\left(i,j\right)}\left(x\right)\min\left\{s,s'\right\}\left(V_{\rho,h_{n}}^{\left(i,i\right)}\left(\left(p+1\right)h_{n}-s\right)\right)\\
    &\hspace{3cm}\times  V_{\rho,h_{n}}^{\left(j,j\right)}\left(\left(p+1\right)h_{n}-s'\right)\mathrm{d}s'\mathrm{d}s\\
    &\quad-\int_{0}^{\left(p+1\right)h_{n}}\int_{0}^{\left(p+1\right)h_{n}}A^{\left(i,j\right)}\left(x\right)\min\left\{s,s'\right\}\left(V_{\rho,h_{n}}^{\left(i,i\right)}\left(\left(p+1\right)h_{n}-s\right)\right)\\
    &\hspace{3cm}\times V_{\rho,h_{n}}^{\left(j,j\right)}\left(ph_{n}-s'\right)\mathrm{d}s'\mathrm{d}s\\
    &=A^{\left(i,j\right)}\left(x\right)\int_{0}^{\left(p+1\right)h_{n}}s\left(V_{\rho,h_{n}}^{\left(i,i\right)}\left(\left(p+1\right)h_{n}-s\right)\right)\mathrm{d}s\\
    &\quad-A^{\left(i,j\right)}\left(x\right)\int_{0}^{\left(p+1\right)h_{n}}\min\left\{s,ph_{n}\right\}\left(V_{\rho,h_{n}}^{\left(i,i\right)}\left(\left(p+1\right)h_{n}-s\right)\right)\mathrm{d}s\\
    &=\frac{A^{\left(i,j\right)}\left(x\right)}{\rho^{\left(i\right)}h_{n}}\left(\int_{\left(p+1-\rho^{\left(i\right)}\right)h_{n}}^{\left(p+1\right)h_{n}}s\mathrm{d}s-\int_{\left(p+1-\rho^{\left(i\right)}\right)h_{n}}^{\left(p+1\right)h_{n}}\min\left\{s,ph_{n}\right\}\mathrm{d}s\right)\\
    &=\frac{A^{\left(i,j\right)}\left(x\right)}{\rho^{\left(i\right)}h_{n}}\left(\int_{\left(p+1-\rho^{\left(i\right)}\right)h_{n}}^{\left(p+1\right)h_{n}}s\mathrm{d}s-\int_{\left(p+1-\rho^{\left(i\right)}\right)h_{n}}^{\left(p+1\right)h_{n}}ph_{n}\mathrm{d}s\right)\\
    &=\frac{A^{\left(i,j\right)}\left(x\right)}{\rho^{\left(i\right)}h_{n}}\left(\frac{1}{2}\left(p+1\right)^{2}h_{n}^{2}-\frac{1}{2}\left(p+1-\rho^{\left(i\right)}\right)^{2}h_{n}^{2}-p\left(p+1\right)h_{n}^{2}+p\left(p+1-\rho^{\left(i\right)}\right)h_{n}^{2}\right)\\
    &=h_{n}A^{\left(i,j\right)}\left(x\right)\left(1-\frac{\rho^{\left(i\right)}}{2}\right)
\end{align*}
and if $\rho^{\left(i\right)}\in\left(1,\overline{\rho}\right]$ and $\rho^{\left(j\right)}=0$,
\begin{align*}
    &{h_{n}
	\mathbf{K}^{\left(i,j\right)}\left(x|\rho\right)}\\&=
    \int_{0}^{\left(p+1\right)h_{n}}\int_{0}^{\left(p+1\right)h_{n}}A^{\left(i,j\right)}\left(x\right)\min\left\{s,s'\right\}\left(V_{\rho,h_{n}}^{\left(i,i\right)}\left(\left(p+1\right)h_{n}-s\right)\right)\\
    &\hspace{3cm}\left(V_{\rho,h_{n}}^{\left(j,j\right)}\left(\left(p+1\right)h_{n}-s'\right)-V_{\rho,h_{n}}^{\left(j,j\right)}\left(ph_{n}-s'\right)\right)\mathrm{d}s'\mathrm{d}s\\
    &=\frac{A^{\left(i,j\right)}\left(x\right)}{\rho^{\left(i\right)}h_{n}}\left(\int_{\left(p+1-\rho^{\left(i\right)}\right)h_{n}}^{\left(p+1\right)h_{n}}s\mathrm{d}s-\int_{\left(p+1-\rho^{\left(i\right)}\right)h_{n}}^{\left(p+1\right)h_{n}}\min\left\{s,ph_{n}\right\}\mathrm{d}s\right)\\
    &=\frac{A^{\left(i,j\right)}\left(x\right)}{\rho^{\left(i\right)}h_{n}}\left(\int_{\left(p+1-\rho^{\left(i\right)}\right)h_{n}}^{\left(p+1\right)h_{n}}s\mathrm{d}s-\int_{ph_{n}}^{\left(p+1\right)h_{n}}ph_{n}\mathrm{d}s-\int_{\left(p+1-\rho^{\left(i\right)}\right)h_{n}}^{ph_{n}}s\mathrm{d}s\right)\\
    &=\frac{A^{\left(i,j\right)}\left(x\right)}{\rho^{\left(i\right)}h_{n}}\left(\frac{1}{2}\left(p+1\right)^{2}h_{n}^{2}-\frac{1}{2}\left(p+1-\rho^{\left(i\right)}\right)^{2}h_{n}^{2}-p\left(p+1\right)h_{n}^{2}\right.\\
    &\hspace{3cm}\left.+p^{2}h_{n}^{2}-\frac{1}{2}p^{2}h_{n}^{2}+\frac{1}{2}\left(p+1-\rho^{\left(i\right)}\right)^{2}h_{n}^{2}\right)\\
    &=\frac{h_{n}A^{\left(i,j\right)}\left(x\right)}{2\rho^{\left(i\right)}}
\end{align*}
and if $\rho^{\left(i\right)}>0$ and $\rho^{\left(j\right)}>0$,
\begin{align*}
    &{h_{n}
	\mathbf{K}^{\left(i,j\right)}\left(x|\rho\right)}\\
    &=
    \int_{0}^{\left(p+1\right)h_{n}}\int_{0}^{\left(p+1\right)h_{n}}A^{\left(i,j\right)}\left(x\right)\min\left\{s,s'\right\}\left(V_{\rho,h_{n}}^{\left(i,i\right)}\left(\left(p+1\right)h_{n}-s\right)\right)\\
    &\hspace{3cm}\left(V_{\rho,h_{n}}^{\left(j,j\right)}\left(\left(p+1\right)h_{n}-s'\right)-V_{\rho,h_{n}}^{\left(j,j\right)}\left(ph_{n}-s'\right)\right)\mathrm{d}s'\mathrm{d}s\\
    &=\frac{A^{\left(i,j\right)}\left(x\right)}{\rho^{\left(i\right)}\rho^{\left(j\right)}h_{n}^{2}}\int_{0}^{\left(p+1\right)h_{n}}\int_{0}^{\left(p+1\right)h_{n}}\min\left\{s,s'\right\}\left(\mathbf{1}_{\left[0,\rho^{\left(i\right)}h_{n}\right]}\left(\left(p+1\right)h_{n}-s\right)\right)\\
    &\hspace{5cm}\left(\mathbf{1}_{\left[0,\rho^{\left(j\right)}h_{n}\right]}\left(\left(p+1\right)h_{n}-s'\right)-\mathbf{1}_{\left[0,\rho^{\left(j\right)}h_{n}\right]}\left(ph_{n}-s'\right)\right)\mathrm{d}s'\mathrm{d}s\\
    &=\frac{A^{\left(i,j\right)}\left(x\right)}{\rho^{\left(i\right)}\rho^{\left(j\right)}h_{n}^{2}}\int_{0}^{\left(p+1\right)h_{n}}\int_{s}^{\left(p+1\right)h_{n}}s\left(\mathbf{1}_{\left[0,\rho^{\left(i\right)}h_{n}\right]}\left(\left(p+1\right)h_{n}-s\right)\right)\\
    &\hspace{5cm}\left(\mathbf{1}_{\left[0,\rho^{\left(j\right)}h_{n}\right]}\left(\left(p+1\right)h_{n}-s'\right)-\mathbf{1}_{\left[0,\rho^{\left(j\right)}h_{n}\right]}\left(ph_{n}-s'\right)\right)\mathrm{d}s'\mathrm{d}s\\
    &\quad+\frac{A^{\left(i,j\right)}\left(x\right)}{\rho^{\left(i\right)}\rho^{\left(j\right)}h_{n}^{2}}\int_{0}^{\left(p+1\right)h_{n}}\int_{0}^{s}s'\left(\mathbf{1}_{\left[0,\rho^{\left(i\right)}h_{n}\right]}\left(\left(p+1\right)h_{n}-s\right)\right)\\
    &\hspace{5cm}\left(\mathbf{1}_{\left[0,\rho^{\left(j\right)}h_{n}\right]}\left(\left(p+1\right)h_{n}-s'\right)-\mathbf{1}_{\left[0,\rho^{\left(j\right)}h_{n}\right]}\left(ph_{n}-s'\right)\right)\mathrm{d}s'\mathrm{d}s\\
    &=\frac{A^{\left(i,j\right)}\left(x\right)}{\rho^{\left(i\right)}\rho^{\left(j\right)}h_{n}^{2}}\int_{0}^{\left(p+1\right)h_{n}}s\left(\mathbf{1}_{\left[0,\rho^{\left(i\right)}h_{n}\right]}\left(\left(p+1\right)h_{n}-s\right)\right)\\
    &\hspace{5cm}\int_{s}^{\left(p+1\right)h_{n}}\left(\mathbf{1}_{\left[0,\rho^{\left(j\right)}h_{n}\right]}\left(\left(p+1\right)h_{n}-s'\right)\right)\mathrm{d}s'\mathrm{d}s\\
    &\quad-\frac{A^{\left(i,j\right)}\left(x\right)}{\rho^{\left(i\right)}\rho^{\left(j\right)}h_{n}^{2}}\int_{0}^{\left(p+1\right)h_{n}}s\left(\mathbf{1}_{\left[0,\rho^{\left(i\right)}h_{n}\right]}\left(\left(p+1\right)h_{n}-s\right)\right)\\
    &\hspace{5cm}\int_{s}^{\left(p+1\right)h_{n}}\left(\mathbf{1}_{\left[0,\rho^{\left(j\right)}h_{n}\right]}\left(ph_{n}-s'\right)\right)\mathrm{d}s'\mathrm{d}s\\
    &\quad+\frac{A^{\left(i,j\right)}\left(x\right)}{\rho^{\left(i\right)}\rho^{\left(j\right)}h_{n}^{2}}\int_{0}^{\left(p+1\right)h_{n}}\left(\mathbf{1}_{\left[0,\rho^{\left(i\right)}h_{n}\right]}\left(\left(p+1\right)h_{n}-s\right)\right)\\
    &\hspace{5cm}\int_{0}^{s}s'\left(\mathbf{1}_{\left[0,\rho^{\left(j\right)}h_{n}\right]}\left(\left(p+1\right)h_{n}-s'\right)\right)\mathrm{d}s'\mathrm{d}s\\
    &\quad-\frac{A^{\left(i,j\right)}\left(x\right)}{\rho^{\left(i\right)}\rho^{\left(j\right)}h_{n}^{2}}\int_{0}^{\left(p+1\right)h_{n}}\left(\mathbf{1}_{\left[0,\rho^{\left(i\right)}h_{n}\right]}\left(\left(p+1\right)h_{n}-s\right)\right)\\
    &\hspace{5cm}\int_{0}^{s}s'\left(\mathbf{1}_{\left[0,\rho^{\left(j\right)}h_{n}\right]}\left(ph_{n}-s'\right)\right)\mathrm{d}s'\mathrm{d}s\\
    &=\frac{A^{\left(i,j\right)}\left(x\right)}{\rho^{\left(i\right)}\rho^{\left(j\right)}h_{n}^{2}}\int_{0}^{\left(p+1\right)h_{n}}s\left(\mathbf{1}_{\left[0,\rho^{\left(i\right)}h_{n}\right]}\left(\left(p+1\right)h_{n}-s\right)\right)\int_{\max\left\{s,\left(p+1-\rho^{\left(j\right)}\right)h_{n}\right\}}^{\left(p+1\right)h_{n}}\mathrm{d}s'\mathrm{d}s\\
    &\quad-\frac{A^{\left(i,j\right)}\left(x\right)}{\rho^{\left(i\right)}\rho^{\left(j\right)}h_{n}^{2}}\int_{0}^{\left(p+1\right)h_{n}}s\left(\mathbf{1}_{\left[0,\rho^{\left(i\right)}h_{n}\right]}\left(\left(p+1\right)h_{n}-s\right)\right)\\
    &\hspace{5cm}\left(\mathbf{1}_{\left[0,ph_{n}\right]}\left(s\right)\right)\int_{\max\left\{s,\left(p-\rho^{\left(j\right)}\right)h_{n}\right\}}^{ph_{n}}\mathrm{d}s'\mathrm{d}s\\
    &\quad+\frac{A^{\left(i,j\right)}\left(x\right)}{\rho^{\left(i\right)}\rho^{\left(j\right)}h_{n}^{2}}\int_{0}^{\left(p+1\right)h_{n}}\left(\mathbf{1}_{\left[0,\rho^{\left(i\right)}h_{n}\right]}\left(\left(p+1\right)h_{n}-s\right)\right)\left(\mathbf{1}_{\left[\left(p+1-\rho^{\left(j\right)}\right)h_{n},\left(p+1\right)h_{n}\right]}\left(s\right)\right)\\
    &\hspace{5cm}\int_{\left(p+1-\rho^{\left(j\right)}\right)h_{n}}^{s}s'\mathrm{d}s'\mathrm{d}s\\
    &\quad-\frac{A^{\left(i,j\right)}\left(x\right)}{\rho^{\left(i\right)}\rho^{\left(j\right)}h_{n}^{2}}\int_{0}^{\left(p+1\right)h_{n}}\left(\mathbf{1}_{\left[0,\rho^{\left(i\right)}h_{n}\right]}\left(\left(p+1\right)h_{n}-s\right)\right)\left(\mathbf{1}_{\left[\left(p-\rho^{\left(j\right)}\right)h_{n},\left(p+1\right)h_{n}\right]}\left(s\right)\right)\\
    &\hspace{5cm}\int_{\left(p-\rho^{\left(j\right)}\right)h_{n}}^{\min\left\{s,ph_{n}\right\}}s'\mathrm{d}s'\mathrm{d}s\\
    &=\frac{A^{\left(i,j\right)}\left(x\right)}{\rho^{\left(i\right)}\rho^{\left(j\right)}h_{n}^{2}}\int_{\left(p+1-\rho^{\left(i\right)}\right)h_{n}}^{\left(p+1\right)h_{n}}s\left(\left(p+1\right)h_{n}-\max\left\{s,\left(p+1-\rho^{\left(j\right)}\right)h_{n}\right\}\right)\mathrm{d}s\\
    &\quad-\frac{A^{\left(i,j\right)}\left(x\right)}{\rho^{\left(i\right)}\rho^{\left(j\right)}h_{n}^{2}}\mathbf{1}_{\left(1,\overline{\rho}\right]}\left(\rho^{\left(i\right)}\right)\int_{\left(p+1-\rho^{\left(i\right)}\right)h_{n}}^{ph_{n}}s\left(ph_{n}-\max\left\{s,\left(p-\rho^{\left(j\right)}\right)h_{n}\right\}\right)\mathrm{d}s\\
    &\quad+\frac{A^{\left(i,j\right)}\left(x\right)}{\rho^{\left(i\right)}\rho^{\left(j\right)}h_{n}^{2}}\int_{\max\left\{\left(p+1-\rho^{\left(i\right)}\right)h_{n},\left(p+1-\rho^{\left(j\right)}\right)h_{n}\right\}}^{\left(p+1\right)h_{n}}\left(\frac{s^{2}}{2}-\frac{\left(p+1-\rho^{\left(j\right)}\right)^{2}h_{n}^{2}}{2}\right)\mathrm{d}s\\
    &\quad-\frac{A^{\left(i,j\right)}\left(x\right)}{\rho^{\left(i\right)}\rho^{\left(j\right)}h_{n}^{2}}\int_{\max\left\{\left(p+1-\rho^{\left(i\right)}\right)h_{n},\left(p-\rho^{\left(j\right)}\right)h_{n}\right\}}^{\left(p+1\right)h_{n}}\left(\frac{\min\left\{s^{2},p^{2}h_{n}^{2}\right\}}{2}-\frac{\left(p-\rho^{\left(j\right)}\right)^{2}h_{n}^{2}}{2}\right)\mathrm{d}s\\
    &=\frac{A^{\left(i,j\right)}\left(x\right)}{\rho^{\left(i\right)}\rho^{\left(j\right)}h_{n}^{2}}\int_{\max\left\{\left(p+1-\rho^{\left(i\right)}\right)h_{n},\left(p+1-\rho^{\left(j\right)}\right)h_{n}\right\}}^{\left(p+1\right)h_{n}}s\left(\left(p+1\right)h_{n}-s\right)\mathrm{d}s\\
    &\quad+\frac{A^{\left(i,j\right)}\left(x\right)}{\rho^{\left(i\right)}\rho^{\left(j\right)}h_{n}^{2}}\mathbf{1}_{\left(\rho^{\left(j\right)},\overline{\rho}\right]}\left(\rho^{\left(i\right)}\right)\int_{\left(p+1-\rho^{\left(i\right)}\right)h_{n}}^{\left(p+1-\rho^{\left(j\right)}\right)h_{n}}s\left(\left(p+1\right)h_{n}-\left(p+1-\rho^{\left(j\right)}\right)h_{n}\right)\mathrm{d}s\\
    &\quad-\frac{A^{\left(i,j\right)}\left(x\right)}{\rho^{\left(i\right)}\rho^{\left(j\right)}h_{n}^{2}}\mathbf{1}_{\left(1,\overline{\rho}\right]}\left(\rho^{\left(i\right)}\right)\int_{\max\left\{\left(p+1-\rho^{\left(i\right)}\right)h_{n},\left(p-\rho^{\left(j\right)}\right)h_{n}\right\}}^{ph_{n}}s\left(ph_{n}-s\right)\mathrm{d}s\\
    &\quad-\frac{A^{\left(i,j\right)}\left(x\right)}{\rho^{\left(i\right)}\rho^{\left(j\right)}h_{n}^{2}}\mathbf{1}_{\left(\rho^{\left(j\right)}+1,\overline{\rho}\right]}\left(\rho^{\left(i\right)}\right)\int_{\left(p+1-\rho^{\left(i\right)}\right)h_{n}}^{\left(p-\rho^{\left(j\right)}\right)h_{n}}s\left(ph_{n}-\left(p-\rho^{\left(j\right)}\right)h_{n}\right)\mathrm{d}s\\
    &\quad+\frac{A^{\left(i,j\right)}\left(x\right)}{\rho^{\left(i\right)}\rho^{\left(j\right)}h_{n}^{2}}\int_{\max\left\{\left(p+1-\rho^{\left(i\right)}\right)h_{n},\left(p+1-\rho^{\left(j\right)}\right)h_{n}\right\}}^{\left(p+1\right)h_{n}}\left(\frac{s^{2}}{2}-\frac{\left(p+1-\rho^{\left(j\right)}\right)^{2}h_{n}^{2}}{2}\right)\mathrm{d}s\\
    &\quad-\frac{A^{\left(i,j\right)}\left(x\right)}{\rho^{\left(i\right)}\rho^{\left(j\right)}h_{n}^{2}}\mathbf{1}_{\left(1,\overline{\rho}\right]}\left(\rho^{\left(i\right)}\right)\int_{ph_{n}}^{\left(p+1\right)h_{n}}\left(\frac{p^{2}h_{n}^{2}}{2}-\frac{\left(p-\rho^{\left(j\right)}\right)^{2}h_{n}^{2}}{2}\right)\mathrm{d}s\\
    &\quad-\frac{A^{\left(i,j\right)}\left(x\right)}{\rho^{\left(i\right)}\rho^{\left(j\right)}h_{n}^{2}}\mathbf{1}_{\left(1,\overline{\rho}\right]}\left(\rho^{\left(i\right)}\right)\int_{\max\left\{\left(p+1-\rho^{\left(i\right)}\right)h_{n},\left(p-\rho^{\left(j\right)}\right)h_{n}\right\}}^{ph_{n}}\left(\frac{s^{2}}{2}-\frac{\left(p-\rho^{\left(j\right)}\right)^{2}h_{n}^{2}}{2}\right)\mathrm{d}s\\
    &\quad-\frac{A^{\left(i,j\right)}\left(x\right)}{\rho^{\left(i\right)}\rho^{\left(j\right)}h_{n}^{2}}\mathbf{1}_{\left(0,1\right]}\left(\rho^{\left(i\right)}\right)\int_{\left(p+1-\rho^{\left(i\right)}\right)h_{n}}^{\left(p+1\right)h_{n}}\left(\frac{p^{2}h_{n}^{2}}{2}-\frac{\left(p-\rho^{\left(j\right)}\right)^{2}h_{n}^{2}}{2}\right)\mathrm{d}s\\
    &=\frac{A^{\left(i,j\right)}\left(x\right)}{\rho^{\left(i\right)}\rho^{\left(j\right)}h_{n}^{2}}\left(\frac{\left(p+1\right)^{2}h_{n}^{2}}{2}-\frac{\max\left\{\left(p+1-\rho^{\left(i\right)}\right)^{2}h_{n}^{2},\left(p+1-\rho^{\left(j\right)}\right)^{2}h_{n}^{2}\right\}}{2}\right)\left(\left(p+1\right)h_{n}\right)\\
    &\quad-\frac{A^{\left(i,j\right)}\left(x\right)}{\rho^{\left(i\right)}\rho^{\left(j\right)}h_{n}^{2}}\left(\frac{\left(p+1\right)^{3}h_{n}^{3}}{3}-\frac{\max\left\{\left(p+1-\rho^{\left(i\right)}\right)^{3}h_{n}^{3},\left(p+1-\rho^{\left(j\right)}\right)^{3}h_{n}^{3}\right\}}{3}\right)\\
    &\quad+\frac{A^{\left(i,j\right)}\left(x\right)}{\rho^{\left(i\right)}\rho^{\left(j\right)}h_{n}^{2}}\mathbf{1}_{\left(\rho^{\left(j\right)},\overline{\rho}\right]}\left(\rho^{\left(i\right)}\right)\left(\frac{\left(p+1-\rho^{\left(j\right)}\right)^{2}h_{n}^{2}}{2}-\frac{\left(p+1-\rho^{\left(i\right)}\right)^{2}h_{n}^{2}}{2}\right)\left(\rho^{\left(j\right)}h_{n}\right)\\
    &\quad-\frac{A^{\left(i,j\right)}\left(x\right)}{\rho^{\left(i\right)}\rho^{\left(j\right)}h_{n}^{2}}\mathbf{1}_{\left(1,\overline{\rho}\right]}\left(\rho^{\left(i\right)}\right)\left(\frac{p^{2}h_{n}^{2}}{2}-\frac{\max\left\{\left(p+1-\rho^{\left(i\right)}\right)^{2}h_{n}^{2},\left(p-\rho^{\left(j\right)}\right)^{2}h_{n}^{2}\right\}}{2}\right)\left(ph_{n}\right)\\
    &\quad+\frac{A^{\left(i,j\right)}\left(x\right)}{\rho^{\left(i\right)}\rho^{\left(j\right)}h_{n}^{2}}\mathbf{1}_{\left(1,\overline{\rho}\right]}\left(\rho^{\left(i\right)}\right)\left(\frac{p^{3}h_{n}^{3}}{3}-\frac{\max\left\{\left(p+1-\rho^{\left(i\right)}\right)^{3}h_{n}^{3},\left(p-\rho^{\left(j\right)}\right)^{3}h_{n}^{3}\right\}}{3}\right)\\
    &\quad-\frac{A^{\left(i,j\right)}\left(x\right)}{\rho^{\left(i\right)}\rho^{\left(j\right)}h_{n}^{2}}\mathbf{1}_{\left(\rho^{\left(j\right)}+1,\overline{\rho}\right]}\left(\rho^{\left(i\right)}\right)\left(\frac{\left(p-\rho^{\left(j\right)}\right)^{2}h_{n}^{2}}{2}-\frac{\left(p+1-\rho^{\left(i\right)}\right)^{2}h_{n}^{2}}{2}\right)\left(\rho^{\left(j\right)}h_{n}\right)\\
    &\quad+\frac{A^{\left(i,j\right)}\left(x\right)}{\rho^{\left(i\right)}\rho^{\left(j\right)}h_{n}^{2}}\left(\frac{\left(p+1\right)^{3}h_{n}^{3}}{6}-\frac{\max\left\{\left(p+1-\rho^{\left(i\right)}\right)^{3}h_{n}^{3},\left(p+1-\rho^{\left(j\right)}\right)^{3}h_{n}^{3}\right\}}{6}\right)\\
    &\quad-\frac{A^{\left(i,j\right)}\left(x\right)}{\rho^{\left(i\right)}\rho^{\left(j\right)}h_{n}^{2}}\left(\min\left\{\rho^{\left(i\right)}h_{n},\rho^{\left(j\right)}h_{n}\right\}\right)\left(\frac{\left(p+1-\rho^{\left(j\right)}\right)^{2}h_{n}^{2}}{2}\right)\\
    &\quad-\frac{A^{\left(i,j\right)}\left(x\right)}{\rho^{\left(i\right)}\rho^{\left(j\right)}h_{n}^{2}}\mathbf{1}_{\left(1,\overline{\rho}\right]}\left(\rho^{\left(i\right)}\right)\left(\frac{p^{2}h_{n}^{2}}{2}-\frac{\left(p-\rho^{\left(j\right)}\right)^{2}h_{n}^{2}}{2}\right)h_{n}\\
    &\quad-\frac{A^{\left(i,j\right)}\left(x\right)}{\rho^{\left(i\right)}\rho^{\left(j\right)}h_{n}^{2}}\mathbf{1}_{\left(1,\overline{\rho}\right]}\left(\rho^{\left(i\right)}\right)\left(\frac{p^{3}h_{n}^{3}}{6}-\frac{\max\left\{\left(p+1-\rho^{\left(i\right)}\right)^{3}h_{n}^{3},\left(p-\rho^{\left(j\right)}\right)^{3}h_{n}^{3}\right\}}{6}\right)\\
    &\quad+\frac{A^{\left(i,j\right)}\left(x\right)}{\rho^{\left(i\right)}\rho^{\left(j\right)}h_{n}^{2}}\mathbf{1}_{\left(1,\overline{\rho}\right]}\left(\rho^{\left(i\right)}\right)\left(ph_{n}-\max\left\{\left(p+1-\rho^{\left(i\right)}\right)h_{n},\left(p-\rho^{\left(j\right)}\right)h_{n}\right\}\right)\left(\frac{\left(p-\rho^{\left(j\right)}\right)^{2}h_{n}^{2}}{2}\right)\\
    &\quad-\frac{A^{\left(i,j\right)}\left(x\right)}{\rho^{\left(i\right)}\rho^{\left(j\right)}h_{n}^{2}}\mathbf{1}_{\left(0,1\right]}\left(\rho^{\left(i\right)}\right)\left(\frac{p^{2}h_{n}^{2}}{2}-\frac{\left(p-\rho^{\left(j\right)}\right)^{2}h_{n}^{2}}{2}\right)\left(\rho^{\left(i\right)}h_{n}\right),
\end{align*}
and we consider the following cases: (i) $\rho^{\left(i\right)}>1$ and $\rho^{\left(i\right)}>\rho^{\left(j\right)}+1$; (ii) $\rho^{\left(i\right)}>1$ and $\rho^{\left(j\right)}<\rho^{\left(i\right)}\le \rho^{\left(j\right)}+1$;  (iii) $\rho^{\left(i\right)}>1$ and $\rho^{\left(i\right)}\le \rho^{\left(j\right)}$; (iv) $\rho^{\left(i\right)}\le 1$ and $\rho^{\left(j\right)}<\rho^{\left(i\right)}$; (v) $\rho^{\left(i\right)}\le 1$ and $\rho^{\left(i\right)}\le \rho^{\left(j\right)}$, and for the case (i),
\begin{align*}
    &{h_{n}
	\mathbf{K}^{\left(i,j\right)}\left(x|\rho\right)}\\
	&=
    \int_{0}^{\left(p+1\right)h_{n}}\int_{0}^{\left(p+1\right)h_{n}}A^{\left(i,j\right)}\left(x\right)\min\left\{s,s'\right\}\left(V_{\rho,h_{n}}^{\left(i,i\right)}\left(\left(p+1\right)h_{n}-s\right)\right)\\
    &\hspace{3cm}\left(V_{\rho,h_{n}}^{\left(j,j\right)}\left(\left(p+1\right)h_{n}-s'\right)-V_{\rho,h_{n}}^{\left(j,j\right)}\left(ph_{n}-s'\right)\right)\mathrm{d}s'\mathrm{d}s\\
    &=\frac{A^{\left(i,j\right)}\left(x\right)}{\rho^{\left(i\right)}\rho^{\left(j\right)}h_{n}^{2}}\left(\frac{\left(p+1\right)^{2}h_{n}^{2}}{2}-\frac{\left(p+1-\rho^{\left(j\right)}\right)^{2}h_{n}^{2}}{2}\right)\left(\left(p+1\right)h_{n}\right)\\
    &\quad-\frac{A^{\left(i,j\right)}\left(x\right)}{\rho^{\left(i\right)}\rho^{\left(j\right)}h_{n}^{2}}\left(\frac{\left(p+1\right)^{3}h_{n}^{3}}{3}-\frac{\left(p+1-\rho^{\left(j\right)}\right)^{3}h_{n}^{3}}{3}\right)\\
    &\quad+\frac{A^{\left(i,j\right)}\left(x\right)}{\rho^{\left(i\right)}\rho^{\left(j\right)}h_{n}^{2}}\mathbf{1}_{\left(\rho^{\left(j\right)},\overline{\rho}\right]}\left(\rho^{\left(i\right)}\right)\left(\frac{\left(p+1-\rho^{\left(j\right)}\right)^{2}h_{n}^{2}}{2}-\frac{\left(p+1-\rho^{\left(i\right)}\right)^{2}h_{n}^{2}}{2}\right)\left(\rho^{\left(j\right)}h_{n}\right)\\
    &\quad-\frac{A^{\left(i,j\right)}\left(x\right)}{\rho^{\left(i\right)}\rho^{\left(j\right)}h_{n}^{2}}\mathbf{1}_{\left(1,\overline{\rho}\right]}\left(\rho^{\left(i\right)}\right)\left(\frac{p^{2}h_{n}^{2}}{2}-\frac{\left(p-\rho^{\left(j\right)}\right)^{2}h_{n}^{2}}{2}\right)\left(ph_{n}\right)\\
    &\quad+\frac{A^{\left(i,j\right)}\left(x\right)}{\rho^{\left(i\right)}\rho^{\left(j\right)}h_{n}^{2}}\mathbf{1}_{\left(1,\overline{\rho}\right]}\left(\rho^{\left(i\right)}\right)\left(\frac{p^{3}h_{n}^{3}}{3}-\frac{\left(p-\rho^{\left(j\right)}\right)^{3}h_{n}^{3}}{3}\right)\\
    &\quad-\frac{A^{\left(i,j\right)}\left(x\right)}{\rho^{\left(i\right)}\rho^{\left(j\right)}h_{n}^{2}}\mathbf{1}_{\left(\rho^{\left(j\right)}+1,\overline{\rho}\right]}\left(\rho^{\left(i\right)}\right)\left(\frac{\left(p-\rho^{\left(j\right)}\right)^{2}h_{n}^{2}}{2}-\frac{\left(p+1-\rho^{\left(i\right)}\right)^{2}h_{n}^{2}}{2}\right)\left(\rho^{\left(j\right)}h_{n}\right)\\
    &\quad+\frac{A^{\left(i,j\right)}\left(x\right)}{\rho^{\left(i\right)}\rho^{\left(j\right)}h_{n}^{2}}\left(\frac{\left(p+1\right)^{3}h_{n}^{3}}{6}-\frac{\left(p+1-\rho^{\left(j\right)}\right)^{3}h_{n}^{3}}{6}\right)\\
    &\quad-\frac{A^{\left(i,j\right)}\left(x\right)}{\rho^{\left(i\right)}\rho^{\left(j\right)}h_{n}^{2}}\left(\rho^{\left(j\right)}h_{n}\right)\left(\frac{\left(p+1-\rho^{\left(j\right)}\right)^{2}h_{n}^{2}}{2}\right)\\
    &\quad-\frac{A^{\left(i,j\right)}\left(x\right)}{\rho^{\left(i\right)}\rho^{\left(j\right)}h_{n}^{2}}\mathbf{1}_{\left(1,\overline{\rho}\right]}\left(\rho^{\left(i\right)}\right)\left(\frac{p^{2}h_{n}^{2}}{2}-\frac{\left(p-\rho^{\left(j\right)}\right)^{2}h_{n}^{2}}{2}\right)h_{n}\\
    &\quad-\frac{A^{\left(i,j\right)}\left(x\right)}{\rho^{\left(i\right)}\rho^{\left(j\right)}h_{n}^{2}}\mathbf{1}_{\left(1,\overline{\rho}\right]}\left(\rho^{\left(i\right)}\right)\left(\frac{p^{3}h_{n}^{3}}{6}-\frac{\left(p-\rho^{\left(j\right)}\right)^{3}h_{n}^{3}}{6}\right)\\
    &\quad+\frac{A^{\left(i,j\right)}\left(x\right)}{\rho^{\left(i\right)}\rho^{\left(j\right)}h_{n}^{2}}\mathbf{1}_{\left(1,\overline{\rho}\right]}\left(\rho^{\left(i\right)}\right)\left(\rho^{\left(j\right)}h_{n}\right)\left(\frac{\left(p-\rho^{\left(j\right)}\right)^{2}h_{n}^{2}}{2}\right)\\
    &=\frac{h_{n}A^{\left(i,j\right)}\left(x\right)}{\rho^{\left(i\right)}\rho^{\left(j\right)}}\left(\frac{\left(\rho^{\left(j\right)}\right)^{2}}{2}+\frac{\rho^{\left(j\right)}}{2}\right),
\end{align*}
and for the case (ii),
\begin{align*}
    &{h_{n}
	\mathbf{K}^{\left(i,j\right)}\left(x|\rho\right)}\\
    &=
    \int_{0}^{\left(p+1\right)h_{n}}\int_{0}^{\left(p+1\right)h_{n}}A^{\left(i,j\right)}\left(x\right)\min\left\{s,s'\right\}\left(V_{\rho,h_{n}}^{\left(i,i\right)}\left(\left(p+1\right)h_{n}-s\right)\right)\\
    &\hspace{3cm}\left(V_{\rho,h_{n}}^{\left(j,j\right)}\left(\left(p+1\right)h_{n}-s'\right)-V_{\rho,h_{n}}^{\left(j,j\right)}\left(ph_{n}-s'\right)\right)\mathrm{d}s'\mathrm{d}s\\
    &=\frac{A^{\left(i,j\right)}\left(x\right)}{\rho^{\left(i\right)}\rho^{\left(j\right)}h_{n}^{2}}\left(\frac{\left(p+1\right)^{2}h_{n}^{2}}{2}-\frac{\left(p+1-\rho^{\left(j\right)}\right)^{2}h_{n}^{2}}{2}\right)\left(\left(p+1\right)h_{n}\right)\\
    &\quad-\frac{A^{\left(i,j\right)}\left(x\right)}{\rho^{\left(i\right)}\rho^{\left(j\right)}h_{n}^{2}}\left(\frac{\left(p+1\right)^{3}h_{n}^{3}}{3}-\frac{\left(p+1-\rho^{\left(j\right)}\right)^{3}h_{n}^{3}}{3}\right)\\
    &\quad+\frac{A^{\left(i,j\right)}\left(x\right)}{\rho^{\left(i\right)}\rho^{\left(j\right)}h_{n}^{2}}\mathbf{1}_{\left(\rho^{\left(j\right)},\overline{\rho}\right]}\left(\rho^{\left(i\right)}\right)\left(\frac{\left(p+1-\rho^{\left(j\right)}\right)^{2}h_{n}^{2}}{2}-\frac{\left(p+1-\rho^{\left(i\right)}\right)^{2}h_{n}^{2}}{2}\right)\left(\rho^{\left(j\right)}h_{n}\right)\\
    &\quad-\frac{A^{\left(i,j\right)}\left(x\right)}{\rho^{\left(i\right)}\rho^{\left(j\right)}h_{n}^{2}}\mathbf{1}_{\left(1,\overline{\rho}\right]}\left(\rho^{\left(i\right)}\right)\left(\frac{p^{2}h_{n}^{2}}{2}-\frac{\left(p+1-\rho^{\left(i\right)}\right)^{2}h_{n}^{2}}{2}\right)\left(ph_{n}\right)\\
    &\quad+\frac{A^{\left(i,j\right)}\left(x\right)}{\rho^{\left(i\right)}\rho^{\left(j\right)}h_{n}^{2}}\mathbf{1}_{\left(1,\overline{\rho}\right]}\left(\rho^{\left(i\right)}\right)\left(\frac{p^{3}h_{n}^{3}}{3}-\frac{\left(p+1-\rho^{\left(i\right)}\right)^{3}h_{n}^{3}}{3}\right)\\
    &\quad+\frac{A^{\left(i,j\right)}\left(x\right)}{\rho^{\left(i\right)}\rho^{\left(j\right)}h_{n}^{2}}\left(\frac{\left(p+1\right)^{3}h_{n}^{3}}{6}-\frac{\left(p+1-\rho^{\left(j\right)}\right)^{3}h_{n}^{3}}{6}\right)\\
    &\quad-\frac{A^{\left(i,j\right)}\left(x\right)}{\rho^{\left(i\right)}\rho^{\left(j\right)}h_{n}^{2}}\left(\rho^{\left(j\right)}h_{n}\right)\left(\frac{\left(p+1-\rho^{\left(j\right)}\right)^{2}h_{n}^{2}}{2}\right)\\
    &\quad-\frac{A^{\left(i,j\right)}\left(x\right)}{\rho^{\left(i\right)}\rho^{\left(j\right)}h_{n}^{2}}\mathbf{1}_{\left(1,\overline{\rho}\right]}\left(\rho^{\left(i\right)}\right)\left(\frac{p^{2}h_{n}^{2}}{2}-\frac{\left(p-\rho^{\left(j\right)}\right)^{2}h_{n}^{2}}{2}\right)h_{n}\\
    &\quad-\frac{A^{\left(i,j\right)}\left(x\right)}{\rho^{\left(i\right)}\rho^{\left(j\right)}h_{n}^{2}}\mathbf{1}_{\left(1,\overline{\rho}\right]}\left(\rho^{\left(i\right)}\right)\left(\frac{p^{3}h_{n}^{3}}{6}-\frac{\left(p+1-\rho^{\left(i\right)}\right)^{3}h_{n}^{3}}{6}\right)\\
    &\quad+\frac{A^{\left(i,j\right)}\left(x\right)}{\rho^{\left(i\right)}\rho^{\left(j\right)}h_{n}^{2}}\mathbf{1}_{\left(1,\overline{\rho}\right]}\left(\rho^{\left(i\right)}\right)\left(\left(\rho^{\left(i\right)}-1\right)h_{n}\right)\left(\frac{\left(p-\rho^{\left(j\right)}\right)^{2}h_{n}^{2}}{2}\right)\\
    &=\frac{h_{n}A^{\left(i,j\right)}\left(x\right)}{6\rho^{\left(i\right)}\rho^{\left(j\right)}}\\
    &\quad\times\left(\left(\rho^{\left(i\right)}\right)^3 - 3 \left(\rho^{\left(i\right)}\right)^2\rho^{\left(j\right)} - 3\left(\rho^{\left(i\right)}\right)^2 + 3 \rho^{\left(i\right)}\left(\rho^{\left(j\right)}\right)^2 + 6\rho^{\left(i\right)}\rho^{\left(j\right)} + 3 \rho^{\left(i\right)} - \left(\rho^{\left(j\right)}\right)^3 - 1\right)
\end{align*}
and for the case (iii),
\begin{align*}
    &{h_{n}
	\mathbf{K}^{\left(i,j\right)}\left(x|\rho\right)}\\
	&=\int_{0}^{\left(p+1\right)h_{n}}\int_{0}^{\left(p+1\right)h_{n}}A^{\left(i,j\right)}\left(x\right)\min\left\{s,s'\right\}\left(V_{\rho,h_{n}}^{\left(i,i\right)}\left(\left(p+1\right)h_{n}-s\right)\right)\\
    &\hspace{3cm}\left(V_{\rho,h_{n}}^{\left(j,j\right)}\left(\left(p+1\right)h_{n}-s'\right)-V_{\rho,h_{n}}^{\left(j,j\right)}\left(ph_{n}-s'\right)\right)\mathrm{d}s'\mathrm{d}s\\
    &=\frac{A^{\left(i,j\right)}\left(x\right)}{\rho^{\left(i\right)}\rho^{\left(j\right)}h_{n}^{2}}\left(\frac{\left(p+1\right)^{2}h_{n}^{2}}{2}-\frac{\left(p+1-\rho^{\left(i\right)}\right)^{2}h_{n}^{2}}{2}\right)\left(\left(p+1\right)h_{n}\right)\\
    &\quad-\frac{A^{\left(i,j\right)}\left(x\right)}{\rho^{\left(i\right)}\rho^{\left(j\right)}h_{n}^{2}}\left(\frac{\left(p+1\right)^{3}h_{n}^{3}}{3}-\frac{\left(p+1-\rho^{\left(i\right)}\right)^{3}h_{n}^{3}}{3}\right)\\
    &\quad-\frac{A^{\left(i,j\right)}\left(x\right)}{\rho^{\left(i\right)}\rho^{\left(j\right)}h_{n}^{2}}\mathbf{1}_{\left(1,\overline{\rho}\right]}\left(\rho^{\left(i\right)}\right)\left(\frac{p^{2}h_{n}^{2}}{2}-\frac{\left(p+1-\rho^{\left(i\right)}\right)^{2}h_{n}^{2}}{2}\right)\left(ph_{n}\right)\\
    &\quad+\frac{A^{\left(i,j\right)}\left(x\right)}{\rho^{\left(i\right)}\rho^{\left(j\right)}h_{n}^{2}}\mathbf{1}_{\left(1,\overline{\rho}\right]}\left(\rho^{\left(i\right)}\right)\left(\frac{p^{3}h_{n}^{3}}{3}-\frac{\left(p+1-\rho^{\left(i\right)}\right)^{3}h_{n}^{3}}{3}\right)\\
    &\quad+\frac{A^{\left(i,j\right)}\left(x\right)}{\rho^{\left(i\right)}\rho^{\left(j\right)}h_{n}^{2}}\left(\frac{\left(p+1\right)^{3}h_{n}^{3}}{6}-\frac{\left(p+1-\rho^{\left(i\right)}\right)^{3}h_{n}^{3}}{6}\right)\\
    &\quad-\frac{A^{\left(i,j\right)}\left(x\right)}{\rho^{\left(i\right)}\rho^{\left(j\right)}h_{n}^{2}}\left(\rho^{\left(i\right)}h_{n}\right)\left(\frac{\left(p+1-\rho^{\left(j\right)}\right)^{2}h_{n}^{2}}{2}\right)\\
    &\quad-\frac{A^{\left(i,j\right)}\left(x\right)}{\rho^{\left(i\right)}\rho^{\left(j\right)}h_{n}^{2}}\mathbf{1}_{\left(1,\overline{\rho}\right]}\left(\rho^{\left(i\right)}\right)\left(\frac{p^{2}h_{n}^{2}}{2}-\frac{\left(p-\rho^{\left(j\right)}\right)^{2}h_{n}^{2}}{2}\right)h_{n}\\
    &\quad-\frac{A^{\left(i,j\right)}\left(x\right)}{\rho^{\left(i\right)}\rho^{\left(j\right)}h_{n}^{2}}\mathbf{1}_{\left(1,\overline{\rho}\right]}\left(\rho^{\left(i\right)}\right)\left(\frac{p^{3}h_{n}^{3}}{6}-\frac{\left(p+1-\rho^{\left(i\right)}\right)^{3}h_{n}^{3}}{6}\right)\\
    &\quad+\frac{A^{\left(i,j\right)}\left(x\right)}{\rho^{\left(i\right)}\rho^{\left(j\right)}h_{n}^{2}}\mathbf{1}_{\left(1,\overline{\rho}\right]}\left(\rho^{\left(i\right)}\right)\left(\left(\rho^{\left(i\right)}-1\right)h_{n}\right)\left(\frac{\left(p-\rho^{\left(j\right)}\right)^{2}h_{n}^{2}}{2}\right)\\
    &=\frac{h_{n}A^{\left(i,j\right)}\left(x\right)}{\rho^{\left(i\right)}\rho^{\left(j\right)}}\left(-\frac{\left(\rho^{\left(i\right)}\right)^{2}}{2}+\rho^{\left(i\right)}\rho^{\left(j\right)}+\frac{\rho^{\left(i\right)}}{2}-\frac{1}{6}\right),
\end{align*}
and for the case (iv),
\begin{align*}
    &{h_{n}
	\mathbf{K}^{\left(i,j\right)}\left(x|\rho\right)}\\
	&=\int_{0}^{\left(p+1\right)h_{n}}\int_{0}^{\left(p+1\right)h_{n}}A^{\left(i,j\right)}\left(x\right)\min\left\{s,s'\right\}\left(V_{\rho,h_{n}}^{\left(i,i\right)}\left(\left(p+1\right)h_{n}-s\right)\right)\\
    &\hspace{3cm}\left(V_{\rho,h_{n}}^{\left(j,j\right)}\left(\left(p+1\right)h_{n}-s'\right)-V_{\rho,h_{n}}^{\left(j,j\right)}\left(ph_{n}-s'\right)\right)\mathrm{d}s'\mathrm{d}s\\
    &=\frac{A^{\left(i,j\right)}\left(x\right)}{\rho^{\left(i\right)}\rho^{\left(j\right)}h_{n}^{2}}\left(\frac{\left(p+1\right)^{2}h_{n}^{2}}{2}-\frac{\left(p+1-\rho^{\left(j\right)}\right)^{2}h_{n}^{2}}{2}\right)\left(\left(p+1\right)h_{n}\right)\\
    &\quad-\frac{A^{\left(i,j\right)}\left(x\right)}{\rho^{\left(i\right)}\rho^{\left(j\right)}h_{n}^{2}}\left(\frac{\left(p+1\right)^{3}h_{n}^{3}}{3}-\frac{\left(p+1-\rho^{\left(j\right)}\right)^{3}h_{n}^{3}}{3}\right)\\
    &\quad+\frac{A^{\left(i,j\right)}\left(x\right)}{\rho^{\left(i\right)}\rho^{\left(j\right)}h_{n}^{2}}\mathbf{1}_{\left(\rho^{\left(j\right)},\overline{\rho}\right]}\left(\rho^{\left(i\right)}\right)\left(\frac{\left(p+1-\rho^{\left(j\right)}\right)^{2}h_{n}^{2}}{2}-\frac{\left(p+1-\rho^{\left(i\right)}\right)^{2}h_{n}^{2}}{2}\right)\left(\rho^{\left(j\right)}h_{n}\right)\\
    &\quad+\frac{A^{\left(i,j\right)}\left(x\right)}{\rho^{\left(i\right)}\rho^{\left(j\right)}h_{n}^{2}}\left(\frac{\left(p+1\right)^{3}h_{n}^{3}}{6}-\frac{\left(p+1-\rho^{\left(j\right)}\right)^{3}h_{n}^{3}}{6}\right)\\
    &\quad-\frac{A^{\left(i,j\right)}\left(x\right)}{\rho^{\left(i\right)}\rho^{\left(j\right)}h_{n}^{2}}\left(\rho^{\left(j\right)}h_{n}\right)\left(\frac{\left(p+1-\rho^{\left(j\right)}\right)^{2}h_{n}^{2}}{2}\right)\\
    &\quad-\frac{A^{\left(i,j\right)}\left(x\right)}{\rho^{\left(i\right)}\rho^{\left(j\right)}h_{n}^{2}}\mathbf{1}_{\left(0,1\right]}\left(\rho^{\left(i\right)}\right)\left(\frac{p^{2}h_{n}^{2}}{2}-\frac{\left(p-\rho^{\left(j\right)}\right)^{2}h_{n}^{2}}{2}\right)\left(\rho^{\left(i\right)}h_{n}\right)\\
    &=\frac{h_{n}A^{\left(i,j\right)}\left(x\right)}{\rho^{\left(i\right)}\rho^{\left(j\right)}}\left(-\frac{\left(\rho^{\left(i\right)}\right)^{2}\rho^{\left(j\right)}}{2}+\frac{\rho^{\left(i\right)}\left(\rho^{\left(j\right)}\right)^{2}}{2}+\rho^{\left(i\right)}\rho^{\left(j\right)}-\frac{\left(\rho^{\left(j\right)}\right)^{3}}{6}\right),
\end{align*}
and for the case (v),
\begin{align*}
    &{h_{n}
	\mathbf{K}^{\left(i,j\right)}\left(x|\rho\right)}\\
	&=    \int_{0}^{\left(p+1\right)h_{n}}\int_{0}^{\left(p+1\right)h_{n}}A^{\left(i,j\right)}\left(x\right)\min\left\{s,s'\right\}\left(V_{\rho,h_{n}}^{\left(i,i\right)}\left(\left(p+1\right)h_{n}-s\right)\right)\\
    &\hspace{3cm}\left(V_{\rho,h_{n}}^{\left(j,j\right)}\left(\left(p+1\right)h_{n}-s'\right)-V_{\rho,h_{n}}^{\left(j,j\right)}\left(ph_{n}-s'\right)\right)\mathrm{d}s'\mathrm{d}s\\
    &=\frac{A^{\left(i,j\right)}\left(x\right)}{\rho^{\left(i\right)}\rho^{\left(j\right)}h_{n}^{2}}\left(\frac{\left(p+1\right)^{2}h_{n}^{2}}{2}-\frac{\left(p+1-\rho^{\left(i\right)}\right)^{2}h_{n}^{2}}{2}\right)\left(\left(p+1\right)h_{n}\right)\\
    &\quad-\frac{A^{\left(i,j\right)}\left(x\right)}{\rho^{\left(i\right)}\rho^{\left(j\right)}h_{n}^{2}}\left(\frac{\left(p+1\right)^{3}h_{n}^{3}}{3}-\frac{\left(p+1-\rho^{\left(i\right)}\right)^{3}h_{n}^{3}}{3}\right)\\
    &\quad+\frac{A^{\left(i,j\right)}\left(x\right)}{\rho^{\left(i\right)}\rho^{\left(j\right)}h_{n}^{2}}\left(\frac{\left(p+1\right)^{3}h_{n}^{3}}{6}-\frac{\left(p+1-\rho^{\left(i\right)}\right)^{3}h_{n}^{3}}{6}\right)\\
    &\quad-\frac{A^{\left(i,j\right)}\left(x\right)}{\rho^{\left(i\right)}\rho^{\left(j\right)}h_{n}^{2}}\left(\rho^{\left(i\right)}h_{n}\right)\left(\frac{\left(p+1-\rho^{\left(j\right)}\right)^{2}h_{n}^{2}}{2}\right)\\
    &\quad-\frac{A^{\left(i,j\right)}\left(x\right)}{\rho^{\left(i\right)}\rho^{\left(j\right)}h_{n}^{2}}\mathbf{1}_{\left(0,1\right]}\left(\rho^{\left(i\right)}\right)\left(\frac{p^{2}h_{n}^{2}}{2}-\frac{\left(p-\rho^{\left(j\right)}\right)^{2}h_{n}^{2}}{2}\right)\left(\rho^{\left(i\right)}h_{n}\right)\\
    &=\frac{h_{n}A^{\left(i,j\right)}\left(x\right)}{\rho^{\left(i\right)}\rho^{\left(j\right)}}\left(\rho^{\left(i\right)}\rho^{\left(j\right)}-\frac{\left(\rho^{\left(i\right)}\right)^{3}}{6}\right);
\end{align*}
therefore, we obtain
\begin{align*}
    &\mathbf{G}^{\left(i,j\right)}\left(x|\rho\right)+\mathbf{D}_{0}^{\left(i,j\right)}\left(x|\rho\right)\\
    &=\begin{cases}
    A^{\left(i,j\right)}\left(x\right)& \text{ if }\rho^{\left(i\right)}=0,\\
    A^{\left(i,j\right)}\left(x\right)\left(1-\frac{\rho^{\left(i\right)}}{2}\right)&\text{ if }\rho^{\left(i\right)}\in\left(0,1\right],\rho^{\left(j\right)}=0,\\
    A^{\left(i,j\right)}\left(x\right)\frac{1}{2\rho^{\left(i\right)}}&\text{ if }\rho^{\left(i\right)}\in\left(1,\overline{\rho}\right],\rho^{\left(j\right)}=0,\\
    A^{\left(i,j\right)}\left(x\right)\frac{\left(\rho^{\left(j\right)}\right)^{2}+\rho^{\left(j\right)}}{2\rho^{\left(i\right)}\rho^{\left(j\right)}}& \text{ if }\rho^{\left(i\right)}\in\left(\rho^{\left(j\right)}+1,\overline{\rho}\right],\rho^{\left(j\right)}>0,\\
    A^{\left(i,j\right)}\left(x\right)\frac{\left(\rho^{\left(i\right)}-\rho^{\left(j\right)}\right)^3 - 3\left(\rho^{\left(i\right)}\right)^2 + 6\rho^{\left(i\right)}\rho^{\left(j\right)} + 3 \rho^{\left(i\right)}- 1}{6\rho^{\left(i\right)}\rho^{\left(j\right)}}&\text{ if }\rho^{\left(i\right)}\in\left(1,\overline{\rho}\right], \rho^{\left(i\right)}\in \left(\rho^{\left(j\right)},\rho^{\left(j\right)}+1\right],\\
    A^{\left(i,j\right)}\left(x\right)\frac{-3\left(\rho^{\left(i\right)}\right)^{2}+6\rho^{\left(i\right)}\rho^{\left(j\right)}+3\rho^{\left(i\right)}-1}{6\rho^{\left(i\right)}\rho^{\left(j\right)}}&\text{ if }\rho^{\left(i\right)}\in\left(1,\overline{\rho}\right],\rho^{\left(i\right)}\le \rho^{\left(j\right)},\\
    A^{\left(i,j\right)}\left(x\right)\frac{-3\left(\rho^{\left(i\right)}\right)^{2}\rho^{\left(j\right)}+3\rho^{\left(i\right)}\left(\rho^{\left(j\right)}\right)^{2}+6\rho^{\left(i\right)}\rho^{\left(j\right)}-\left(\rho^{\left(j\right)}\right)^{3}}{6\rho^{\left(i\right)}\rho^{\left(j\right)}}&\text{ if }\rho^{\left(i\right)}\in\left(0,1\right],\rho^{\left(i\right)}>\rho^{\left(j\right)},\\
    A^{\left(i,j\right)}\left(x\right)\frac{6\rho^{\left(i\right)}\rho^{\left(j\right)}-\left(\rho^{\left(i\right)}\right)^{3}}{6\rho^{\left(i\right)}\rho^{\left(j\right)}}&\text{ if }\rho^{\left(i\right)}\in\left(0,1\right],\rho^{\left(i\right)}\le\rho^{\left(j\right)},
    \end{cases}
\end{align*}
and $\mathbf{G}\left(x|\rho\right)=\mathbb{G}\left(x|\rho\right)=\left.\mathbb{G}\left(x,\alpha|\rho\right)\right|_{\alpha=\alpha_{\star}}$.

\subsection*{Proof of the results in Section 3.1}
\begin{proof}[Proof of Lemma \ref{LemmaReducedQ}]
By following the proof of the Proposition \ref{propEmpQ}, it is sufficient to {evaluate}
\begin{align*}
    &\int_{0}^{\left(p+2\right)h_{n}}\int_{0}^{\left(p+2\right)h_{n}}A^{\left(i,i\right)}\left(x\right)\min\left\{s,s'\right\}\left(V_{\rho,h_{n}}^{\left(i,i\right)}\left(\left(p+2\right)h_{n}-s\right)-V_{\rho,h_{n}}^{\left(i,i\right)}\left(ph_{n}-s\right)\right)\\
    &\hspace{3cm}\left(V_{\rho,h_{n}}^{\left(i,i\right)}\left(\left(p+2\right)h_{n}-s'\right)-V_{\rho,h_{n}}^{\left(i,i\right)}\left(ph_{n}-s'\right)\right)\mathrm{d}s'\mathrm{d}s
\end{align*}
for the asymptotic behaviour of the reduced quadratic variation. If $\rho^{\left(i\right)}=0$,
\begin{align*}
    &\int_{0}^{\left(p+2\right)h_{n}}\int_{0}^{\left(p+2\right)h_{n}}A^{\left(i,i\right)}\left(x\right)\min\left\{s,s'\right\}\left(V_{\rho,h_{n}}^{\left(i,i\right)}\left(\left(p+2\right)h_{n}-s\right)-V_{\rho,h_{n}}^{\left(i,i\right)}\left(ph_{n}-s\right)\right)\\
    &\hspace{3cm}\left(V_{\rho,h_{n}}^{\left(i,i\right)}\left(\left(p+2\right)h_{n}-s'\right)-V_{\rho,h_{n}}^{\left(i,i\right)}\left(ph_{n}-s'\right)\right)\mathrm{d}s'\mathrm{d}s\\
    &=A^{\left(i,i\right)}\left(x\right)\int_{0}^{\left(p+2\right)h_{n}}\int_{0}^{\left(p+2\right)h_{n}}\min\left\{s,s'\right\}\left(\delta\left(\left(p+2\right)h_{n}-s\right)-\delta\left(ph_{n}-s\right)\right)\\
    &\hspace{3cm}\left(\delta\left(\left(p+2\right)h_{n}-s'\right)-\delta\left(ph_{n}-s'\right)\right)\mathrm{d}s'\mathrm{d}s\\
    &=A^{\left(i,i\right)}\left(x\right)\left(\left(p+2\right)h_{n}-2ph_{n}+ph_{n}\right)\\
    &=2h_{n}A^{\left(i,i\right)}\left(x\right),
\end{align*}
and if $\rho^{\left(i\right)}\in\left(0,\overline{\rho}\right]$,
\begin{align*}
    &\int_{0}^{\left(p+2\right)h_{n}}\int_{0}^{\left(p+2\right)h_{n}}A^{\left(i,i\right)}\left(x\right)\min\left\{s,s'\right\}\left(V_{\rho,h_{n}}^{\left(i,i\right)}\left(\left(p+2\right)h_{n}-s\right)-V_{\rho,h_{n}}^{\left(i,i\right)}\left(ph_{n}-s\right)\right)\\
    &\hspace{3cm}\left(V_{\rho,h_{n}}^{\left(i,i\right)}\left(\left(p+2\right)h_{n}-s'\right)-V_{\rho,h_{n}}^{\left(i,i\right)}\left(ph_{n}-s'\right)\right)\mathrm{d}s'\mathrm{d}s\\
    &=\frac{A^{\left(i,i\right)}\left(x\right)}{\left(\rho^{\left(i\right)}h_{n}\right)^{2}}\int_{0}^{\left(p+2\right)h_{n}}\int_{0}^{\left(p+2\right)h_{n}}\min\left\{s,s'\right\}\\
    &\hspace{3cm}\times\left(\mathbf{1}_{\left[0,\rho^{\left(i\right)}h_{n}\right]}\left(\left(p+2\right)h_{n}-s\right)-\mathbf{1}_{\left[0,\rho^{\left(i\right)}h_{n}\right]}\left(ph_{n}-s\right)\right)\\
    &\hspace{3cm}\times\left(\mathbf{1}_{\left[0,\rho^{\left(i\right)}h_{n}\right]}\left(\left(p+2\right)h_{n}-s'\right)-\mathbf{1}_{\left[0,\rho^{\left(i\right)}h_{n}\right]}\left(ph_{n}-s'\right)\right)\mathrm{d}s'\mathrm{d}s\\
    &=\frac{A^{\left(i,i\right)}\left(x\right)}{\left(\rho^{\left(i\right)}h_{n}\right)^{2}}\int_{\left(p+2-\rho^{\left(i\right)}\right)h_{n}}^{\left(p+2\right)h_{n}}\int_{\left(p+2-\rho^{\left(i\right)}\right)h_{n}}^{\left(p+2\right)h_{n}}\min\left\{s,s'\right\}\mathrm{d}s'\mathrm{d}s\\
    &\quad-\frac{2A^{\left(i,i\right)}\left(x\right)}{\left(\rho^{\left(i\right)}h_{n}\right)^{2}}\int_{\left(p+2-\rho^{\left(i\right)}\right)h_{n}}^{\left(p+2\right)h_{n}}\int_{\left(p-\rho^{\left(i\right)}\right)h_{n}}^{ph_{n}}\min\left\{s,s'\right\}\mathrm{d}s'\mathrm{d}s\\
    &\quad+\frac{A^{\left(i,i\right)}\left(x\right)}{\left(\rho^{\left(i\right)}h_{n}\right)^{2}}\int_{\left(p-\rho^{\left(i\right)}\right)h_{n}}^{ph_{n}}\int_{\left(p-\rho^{\left(i\right)}\right)h_{n}}^{ph_{n}}\min\left\{s,s'\right\}\mathrm{d}s'\mathrm{d}s\\
    &=\frac{A^{\left(i,i\right)}\left(x\right)}{\left(\rho^{\left(i\right)}h_{n}\right)^{2}}\int_{\left(p+2-\rho^{\left(i\right)}\right)h_{n}}^{\left(p+2\right)h_{n}}\left(\int_{s}^{\left(p+2\right)h_{n}}s\mathrm{d}s'+\int_{\left(p+2-\rho^{\left(i\right)}\right)h_{n}}^{s}s'\mathrm{d}s'\right)\mathrm{d}s\\
    &\quad-\frac{2A^{\left(i,i\right)}\left(x\right)}{\left(\rho^{\left(i\right)}h_{n}\right)^{2}}\mathbf{1}_{\left(2,\overline{\rho}\right]}\left(\rho^{\left(i\right)}\right)\int_{ph_{n}}^{\left(p+2\right)h_{n}}\int_{\left(p-\rho^{\left(i\right)}\right)h_{n}}^{ph_{n}}s'\mathrm{d}s'\mathrm{d}s\\
    &\quad-\frac{2A^{\left(i,i\right)}\left(x\right)}{\left(\rho^{\left(i\right)}h_{n}\right)^{2}}\mathbf{1}_{\left(2,\overline{\rho}\right]}\left(\rho^{\left(i\right)}\right)\int_{\left(p+2-\rho^{\left(i\right)}\right)h_{n}}^{ph_{n}}\left(\int_{s}^{ph_{n}}s\mathrm{d}s'+\int_{\left(p-\rho^{\left(i\right)}\right)h_{n}}^{s}s'\mathrm{d}s'\right)\mathrm{d}s\\
    &\quad-\frac{2A^{\left(i,i\right)}\left(x\right)}{\left(\rho^{\left(i\right)}h_{n}\right)^{2}}\mathbf{1}_{\left(0,2\right]}\left(\rho^{\left(i\right)}\right)\int_{\left(p+2-\rho^{\left(i\right)}\right)h_{n}}^{\left(p+2\right)h_{n}}\int_{\left(p-\rho^{\left(i\right)}\right)h_{n}}^{ph_{n}}s'\mathrm{d}s'\mathrm{d}s\\
    &\quad+\frac{A^{\left(i,i\right)}\left(x\right)}{\left(\rho^{\left(i\right)}h_{n}\right)^{2}}\int_{\left(p-\rho^{\left(i\right)}\right)h_{n}}^{ph_{n}}\left(\int_{s}^{ph_{n}}s\mathrm{d}s'+\int_{\left(p-\rho^{\left(i\right)}\right)h_{n}}^{s}s'\mathrm{d}s'\right)\mathrm{d}s\\
    &=\frac{A^{\left(i,i\right)}\left(x\right)}{\left(\rho^{\left(i\right)}h_{n}\right)^{2}}\int_{\left(p+2-\rho^{\left(i\right)}\right)h_{n}}^{\left(p+2\right)h_{n}}\left(\left(\left(p+2\right)h_{n}s-s^{2}\right)+\left(\frac{s^{2}}{2}-\frac{\left(p+2-\rho^{\left(i\right)}\right)^{2}h_{n}^{2}}{2}\right)\right)\mathrm{d}s\\
    &\quad-\frac{A^{\left(i,i\right)}\left(x\right)}{\left(\rho^{\left(i\right)}h_{n}\right)^{2}}\mathbf{1}_{\left(2,\overline{\rho}\right]}\left(\rho^{\left(i\right)}\right)\int_{ph_{n}}^{\left(p+2\right)h_{n}}\left(p^{2}h_{n}^{2}-\left(p-\rho^{\left(i\right)}\right)^{2}h_{n}^{2}\right)\mathrm{d}s\\
    &\quad-\frac{2A^{\left(i,i\right)}\left(x\right)}{\left(\rho^{\left(i\right)}h_{n}\right)^{2}}\mathbf{1}_{\left(2,\overline{\rho}\right]}\left(\rho^{\left(i\right)}\right)\int_{\left(p+2-\rho^{\left(i\right)}\right)h_{n}}^{ph_{n}}\left(\left(ph_{n}s-s^{2}\right)+\left(\frac{s^{2}}{2}-\frac{\left(p-\rho^{\left(i\right)}\right)^{2}h_{n}^{2}}{2}\right)\right)\mathrm{d}s\\
    &\quad-\frac{2A^{\left(i,i\right)}\left(x\right)}{\left(\rho^{\left(i\right)}h_{n}\right)^{2}}\mathbf{1}_{\left(0,2\right]}\left(\rho^{\left(i\right)}\right)\int_{\left(p+2-\rho^{\left(i\right)}\right)h_{n}}^{\left(p+2\right)h_{n}}\left(\frac{p^{2}h_{n}^{2}}{2}-\frac{\left(p-\rho^{\left(i\right)}\right)^{2}h_{n}^{2}}{2}\right)\mathrm{d}s\\
    &\quad+\frac{A^{\left(i,i\right)}\left(x\right)}{\left(\rho^{\left(i\right)}h_{n}\right)^{2}}\int_{\left(p-\rho^{\left(i\right)}\right)h_{n}}^{ph_{n}}\left(\left(ph_{n}s-s^{2}\right)+\left(\frac{s^{2}}{2}-\frac{\left(p-\rho^{\left(i\right)}\right)^{2}h_{n}^{2}}{2}\right)\right)\mathrm{d}s\\
    &=\frac{A^{\left(i,i\right)}\left(x\right)}{\left(\rho^{\left(i\right)}h_{n}\right)^{2}}\int_{\left(p+2-\rho^{\left(i\right)}\right)h_{n}}^{\left(p+2\right)h_{n}}\left(-\frac{s^{2}}{2}+\left(p+2\right)h_{n}s-\frac{\left(p+2-\rho^{\left(i\right)}\right)^{2}h_{n}^{2}}{2}\right)\mathrm{d}s\\
    &\quad-\frac{A^{\left(i,i\right)}\left(x\right)}{\left(\rho^{\left(i\right)}h_{n}\right)^{2}}\mathbf{1}_{\left(2,\overline{\rho}\right]}\left(\rho^{\left(i\right)}\right)\left(p^{2}h_{n}^{2}-\left(p-\rho^{\left(i\right)}\right)^{2}h_{n}^{2}\right)2h_{n}\\
    &\quad-\frac{2A^{\left(i,i\right)}\left(x\right)}{\left(\rho^{\left(i\right)}h_{n}\right)^{2}}\mathbf{1}_{\left(2,\overline{\rho}\right]}\left(\rho^{\left(i\right)}\right)\int_{\left(p+2-\rho^{\left(i\right)}\right)h_{n}}^{ph_{n}}\left(-\frac{s^{2}}{2}+ph_{n}s-\frac{\left(p-\rho^{\left(i\right)}\right)^{2}h_{n}^{2}}{2}\right)\mathrm{d}s\\
    &\quad-\frac{2A^{\left(i,i\right)}\left(x\right)}{\left(\rho^{\left(i\right)}h_{n}\right)^{2}}\mathbf{1}_{\left(0,2\right]}\left(\rho^{\left(i\right)}\right)\left(\frac{p^{2}h_{n}^{2}}{2}-\frac{\left(p-\rho^{\left(i\right)}\right)^{2}h_{n}^{2}}{2}\right)\rho^{\left(i\right)}h_{n}\\
    &\quad+\frac{A^{\left(i,i\right)}\left(x\right)}{\left(\rho^{\left(i\right)}h_{n}\right)^{2}}\int_{\left(p-\rho^{\left(i\right)}\right)h_{n}}^{ph_{n}}\left(-\frac{s^{2}}{2}+ph_{n}s-\frac{\left(p-\rho^{\left(i\right)}\right)^{2}h_{n}^{2}}{2}\right)\mathrm{d}s\\
    &=-\frac{A^{\left(i,i\right)}\left(x\right)}{\left(\rho^{\left(i\right)}h_{n}\right)^{2}}\left(\frac{\left(p+2\right)^{3}h_{n}^{3}}{6}-\frac{\left(p+2-\rho^{\left(i\right)}\right)^{3}h_{n}^{3}}{6}\right)\\
    &\quad+\frac{A^{\left(i,i\right)}\left(x\right)}{\left(\rho^{\left(i\right)}h_{n}\right)^{2}}\left(\frac{\left(p+2\right)^{2}h_{n}^{2}}{2}-\frac{\left(p+2-\rho^{\left(i\right)}\right)^{2}h_{n}^{2}}{2}\right)\left(p+2\right)h_{n}\\
    &\quad-\frac{A^{\left(i,i\right)}\left(x\right)}{\left(\rho^{\left(i\right)}h_{n}\right)^{2}}\frac{\left(p+2-\rho^{\left(i\right)}\right)^{2}\rho^{\left(i\right)}h_{n}^{3}}{2}\\
    &\quad-\frac{A^{\left(i,i\right)}\left(x\right)}{\left(\rho^{\left(i\right)}h_{n}\right)^{2}}\mathbf{1}_{\left(2,\overline{\rho}\right]}\left(\rho^{\left(i\right)}\right)\left(p^{2}h_{n}^{2}-\left(p-\rho^{\left(i\right)}\right)^{2}h_{n}^{2}\right)2h_{n}\\
    &\quad+\frac{A^{\left(i,i\right)}\left(x\right)}{\left(\rho^{\left(i\right)}h_{n}\right)^{2}}\mathbf{1}_{\left(2,\overline{\rho}\right]}\left(\rho^{\left(i\right)}\right)\left(\frac{p^{3}h_{n}^{3}}{3}-\frac{\left(p+2-\rho^{\left(i\right)}\right)^{3}h_{n}^{3}}{3}\right)\\
    &\quad-\frac{A^{\left(i,i\right)}\left(x\right)}{\left(\rho^{\left(i\right)}h_{n}\right)^{2}}\mathbf{1}_{\left(2,\overline{\rho}\right]}\left(\rho^{\left(i\right)}\right)\left(p^{2}h_{n}^{2}-\left(p+2-\rho^{\left(i\right)}\right)^{2}h_{n}^{2}\right)ph_{n}\\
    &\quad+\frac{A^{\left(i,i\right)}\left(x\right)}{\left(\rho^{\left(i\right)}h_{n}\right)^{2}}\mathbf{1}_{\left(2,\overline{\rho}\right]}\left(\rho^{\left(i\right)}\right)\left(p-\rho^{\left(i\right)}\right)^{2}\left(\rho^{\left(i\right)}-2\right)h_{n}^{3}\\
    &\quad-\frac{A^{\left(i,i\right)}\left(x\right)}{\left(\rho^{\left(i\right)}h_{n}\right)^{2}}\mathbf{1}_{\left(0,2\right]}\left(\rho^{\left(i\right)}\right)\left(p^{2}\rho^{\left(i\right)}h_{n}^{3}-\left(p-\rho^{\left(i\right)}\right)^{2}\rho^{\left(i\right)}h_{n}^{3}\right)\\
    &\quad-\frac{A^{\left(i,i\right)}\left(x\right)}{\left(\rho^{\left(i\right)}h_{n}\right)^{2}}\left(\frac{p^{3}h_{n}^{3}}{6}-\frac{\left(p-\rho^{\left(i\right)}\right)^{3}h_{n}^{3}}{6}\right)\\
    &\quad+\frac{A^{\left(i,i\right)}\left(x\right)}{\left(\rho^{\left(i\right)}h_{n}\right)^{2}}\left(\frac{p^{2}h_{n}^{2}}{2}-\frac{\left(p-\rho^{\left(i\right)}\right)^{2}h_{n}^{2}}{2}\right)ph_{n}\\
    &\quad-\frac{A^{\left(i,i\right)}\left(x\right)}{\left(\rho^{\left(i\right)}h_{n}\right)^{2}}\frac{\left(p-\rho^{\left(i\right)}\right)^{2}\rho^{\left(i\right)}h_{n}^{3}}{2}\\
    &=\begin{cases}
    2h_{n}A^{\left(i,i\right)}\left(x\right)\left(1-\frac{\rho^{\left(i\right)}}{6}\right)&\text{ if }\rho^{\left(i\right)}\in\left(0,2\right],\\
    2h_{n}A^{\left(i,i\right)}\left(x\right)\left(\frac{2}{\rho^{\left(i\right)}}-\frac{4}{3\left(\rho^{\left(i\right)}\right)^{2}}\right)&\text{ if }\rho^{\left(i\right)}\in\left(2,\overline{\rho}\right].
    \end{cases}
\end{align*}
Hence, we obtain the proof.
\end{proof}

\begin{proof}[Proof of Lemma \ref{LemmaFunctionR}]
Continuity is obvious, and monotonicity is obtained as follows: if $\rho^{\left(i\right)}\in\left(0,1\right]$,
\begin{align*}
    \frac{\mathrm{d}}{\mathrm{d}\rho^{\left(i\right)}}\left(6-2\rho^{\left(i\right)}\right)\left(6-\rho^{\left(i\right)}\right)^{-1}&=\left(\left(-2\right)\left(6-\rho^{\left(i\right)}\right)-\left(6-2\rho^{\left(i\right)}\right)\left(-1\right)\right)\left(6-\rho^{\left(i\right)}\right)^{-2}\\
    &=\left(-12\right)\left(6-\rho^{\left(i\right)}\right)^{-2}<0,
\end{align*}
and if $\rho^{\left(i\right)}\in\left(1,2\right]$,
\begin{align*}
    &\frac{\mathrm{d}}{\mathrm{d}\rho^{\left(i\right)}}\left(6\rho^{\left(i\right)}-2\right)\left(6\left(\rho^{\left(i\right)}\right)^{2}-\left(\rho^{\left(i\right)}\right)^{3}\right)^{-1}\\
    &=\left(6\left(6\left(\rho^{\left(i\right)}\right)^{2}-\left(\rho^{\left(i\right)}\right)^{3}\right)-\left(6\rho^{\left(i\right)}-2\right)\left(12\rho^{\left(i\right)}-3\left(\rho^{\left(i\right)}\right)^{2}\right)\right)\left(6\left(\rho^{\left(i\right)}\right)^{2}-\left(\rho^{\left(i\right)}\right)^{3}\right)^{-2}\\
    &=6\rho^{\left(i\right)}\left(-7\rho^{\left(i\right)}+4+2\left(\rho^{\left(i\right)}\right)^{2}\right)\left(6\left(\rho^{\left(i\right)}\right)^{2}-\left(\rho^{\left(i\right)}\right)^{3}\right)^{-2}<0,
\end{align*}
and if $\rho^{\left(i\right)}\in\left(2,\overline{\rho}\right]$,
\begin{align*}
    \frac{\mathrm{d}}{\mathrm{d}\rho^{\left(i\right)}}\left(3\rho^{\left(i\right)}-1\right)\left(6\rho^{\left(i\right)}-4\right)^{-1}&=\left(-18\right)\left(6\rho^{\left(i\right)}-4\right)^{-2}<0.
\end{align*}
The inverse can be obtained directly.
\end{proof}

\subsection*{Proofs of the results in Section 3.2}
\begin{proof}[Proof of Theorem \ref{thmTestH0}]
    We can clearly prove the result by using Lemma 7 in \citet{Kessler-1997}, Proposition 7 in \citet{Nakakita-Uchida-2019a}, and Slutsky's theorem.
\end{proof}

\begin{proof}[Proof of Theorem \ref{thmTestH1}]
    By Lemma \ref{LemmaReducedQ}, there exists a number $\ell<0$ such that
    \begin{align*}
        \frac{1}{nh_{n}}\sum_{k=1}^{n}\left(\overline{X}_{kh_{n},n}^{\left(i\right)}-\overline{X}_{\left(k-1\right)h_{n},n}^{\left(i\right)}\right)^{2}-\frac{1}{nh_{n}}\sum_{2\le 2k\le n}\left(\overline{X}_{2kh_{n},n}^{\left(i\right)}-\overline{X}_{\left(2k-2\right)h_{n},n}^{\left(i\right)}\right)^{2}\to^{P}\ell<0,
    \end{align*}
    and hence it is sufficient to show that
    \begin{align*}
        \sup_{n\in\mathbf{N}}\mathbf{E}\left[\frac{2}{3nh_{n}^{2}}\sum_{k=1}^{n}\left(\overline{X}_{kh_{n},n}^{\left(i\right)}-\overline{X}_{\left(k-1\right)h_{n},n}^{\left(i\right)}\right)^{4}\right]<\infty;
    \end{align*}
    and it is obvious that
    \begin{align*}
        \left(\overline{X}_{kh_{n},n}^{\left(i\right)}-\overline{X}_{\left(k-1\right)h_{n},n}^{\left(i\right)}\right)^{4}\le C\left(\overline{X}_{kh_{n},n}^{\left(i\right)}-X_{\left(k-\overline{\rho}-1\right)h_{n}}\right)^{4}+C\left(\overline{X}_{\left(k-1\right)h_{n},n}^{\left(i\right)}-X_{\left(k-\overline{\rho}-1\right)h_{n}}\right)^{4}
    \end{align*}
    and
    \begin{align*}
        &\mathbf{E}\left[\left.\left(\overline{X}_{kh_{n},n}^{\left(i\right)}-X_{\left(k-\overline{\rho}-1\right)h_{n}}\right)^{4}\right|\mathcal{F}_{\left(k-\overline{\rho}-1\right)h_{n}}\right]\\
        &=\mathbf{E}\left[\left.\left(\frac{1}{\rho_{\star}^{\left(i\right)}h_{n}}\int_{\left(k-\rho_{\star}^{\left(i\right)}\right)h_{n}}^{kh_{n}}\left(X_{s}^{\left(i\right)}-X_{\left(k-\overline{\rho}-1\right)h_{n}}\right)\mathrm{d}s\right)^{4}\right|\mathcal{F}_{\left(k-\overline{\rho}-1\right)h_{n}}\right]\\
        &\le \mathbf{E}\left[\left.\left(\frac{1}{\rho_{\star}^{\left(i\right)}h_{n}}\int_{\left(k-\rho_{\star}^{\left(i\right)}\right)h_{n}}^{kh_{n}}\left|X_{s}^{\left(i\right)}-X_{\left(k-\overline{\rho}-1\right)h_{n}}\right|\mathrm{d}s\right)^{4}\right|\mathcal{F}_{\left(k-\overline{\rho}-1\right)h_{n}}\right]\\
        &\le \mathbf{E}\left[\left.\left(\frac{1}{\rho_{\star}^{\left(i\right)}h_{n}}\int_{\left(k-\rho_{\star}^{\left(i\right)}\right)h_{n}}^{kh_{n}}\sup_{s'\in\left[\left(k-\rho_{\star}^{\left(i\right)}\right)h_{n},kh_{n}\right]}\left|X_{s'}^{\left(i\right)}-X_{\left(k-\overline{\rho}-1\right)h_{n}}\right|\mathrm{d}s\right)^{4}\right|\mathcal{F}_{\left(k-\overline{\rho}-1\right)h_{n}}\right]\\
        &= \mathbf{E}\left[\left.\sup_{s\in\left[\left(k-\rho_{\star}^{\left(i\right)}\right)h_{n},kh_{n}\right]}\left|X_{s}^{\left(i\right)}-X_{\left(k-\overline{\rho}-1\right)h_{n}}\right|^{4}\right|\mathcal{F}_{\left(k-\overline{\rho}-1\right)h_{n}}\right]\\
        &\le Ch_{n}\left(1+\left|X_{\left(k-\overline{\rho}-1\right)h_{n}}\right|\right)^{C}
    \end{align*}
    by Proposition A in \citet{Gloter-2000}, and a parallel result holds for $\left(\overline{X}_{\left(k-1\right)h_{n},n}^{\left(i\right)}-X_{\left(k-\overline{\rho}-1\right)h_{n}}\right)^{4}$. Hence we obtain the result.
\end{proof}

\subsection*{Proof of the results in Section 4}
\begin{proof}[Proof of Theorem \ref{thmThetaEstimate}]
We only deal with the case where $\rho_{\star}$ is unknown because the discussion for the case where $\rho_{\star}$ is known is parallel.
First of all, we prove the consistency of $\hat{\alpha}_{n}$. We obtain that
\begin{align*}
    &\left|\frac{1}{n}\mathbb{H}_{1,n}\left(\alpha|\hat{\rho}_{n}\right)-\frac{1}{n}\mathbb{H}_{1,n}\left(\alpha|\rho_{\star}\right)\right|\\
    &=\left|-\frac{1}{n}\sum_{k=1}^{n}\left\|\frac{1}{h_{n}}\left(\overline{X}_{kh_{n},n}-\overline{X}_{\left(k-1\right)h_{n},n}\right)^{\otimes2}-\mathbb{G}\left(\overline{X}_{\left(k-1\right)h_{n},n},\alpha|\hat{\rho}_{n}\right)\right\|^{2}\right.\\
    &\qquad\left.+\frac{1}{n}\sum_{k=1}^{n}\left\|\frac{1}{h_{n}}\left(\overline{X}_{kh_{n},n}-\overline{X}_{\left(k-1\right)h_{n},n}\right)^{\otimes2}-\mathbb{G}\left(\overline{X}_{\left(k-1\right)h_{n},n},\alpha|\rho_{\star}\right)\right\|^{2}\right|\\
    &\le \left|\frac{2}{nh_{n}}\sum_{k=1}^{n}\left(\mathbb{G}\left(\overline{X}_{\left(k-1\right)h_{n},n},\alpha|\hat{\rho}_{n}\right)-\mathbb{G}\left(\overline{X}_{\left(k-1\right)h_{n},n},\alpha|\rho_{\star}\right)\right)\left[\left(\overline{X}_{kh_{n},n}-\overline{X}_{\left(k-1\right)h_{n},n}\right)^{\otimes2}\right]\right|\\
    &\quad+\left|\frac{1}{n}\sum_{k=1}^{n}\left(\left\|\mathbb{G}\left(\overline{X}_{\left(k-1\right)h_{n},n},\alpha|\hat{\rho}_{n}\right)\right\|^{2}-\left\|\mathbb{G}\left(\overline{X}_{\left(k-1\right)h_{n},n},\alpha|\rho_{\star}\right)\right\|^{2}\right)\right|\\
    &\le \frac{2}{nh_{n}}\sum_{k=1}^{n}\left|\overline{X}_{kh_{n},n}-\overline{X}_{\left(k-1\right)h_{n},n}\right|^{2}\\
    &\hspace{2cm}\times\sum_{i=1}^{d}\sum_{j=1}^{d}\left|A^{\left(i,j\right)}\left(\overline{X}_{\left(k-1\right)h_{n},n},\alpha\right)\right|\left|f_{\mathbb{G}}\left(\hat{\rho}_{n}^{\left(i\right)},\hat{\rho}_{n}^{\left(j\right)}\right)-f_{\mathbb{G}}\left(\rho_{\star}^{\left(i\right)},\rho_{\star}^{\left(j\right)}\right)\right|\\
    &\quad+\frac{1}{n}\sum_{k=1}^{n}\sum_{i=1}^{d}\sum_{j=1}^{d}\left|A^{\left(i,j\right)}\left(\overline{X}_{\left(k-1\right)h_{n},n},\alpha\right)\right|^{2}\left|f_{\mathbb{G}}^{2}\left(\hat{\rho}_{n}^{\left(i\right)},\hat{\rho}_{n}^{\left(j\right)}\right)-f_{\mathbb{G}}^{2}\left(\rho_{\star}^{\left(i\right)},\rho_{\star}^{\left(j\right)}\right)\right|\\
    &\le \frac{C}{nh_{n}}\sum_{k=1}^{n}\left(1+\left|\overline{X}_{\left(k-1\right)h_{n},n}\right|\right)^{C}\left|\overline{X}_{kh_{n},n}-\overline{X}_{\left(k-1\right)h_{n},n}\right|^{2}\\
    &\hspace{2cm}\times\sum_{i=1}^{d}\sum_{j=1}^{d}\left|f_{\mathbb{G}}\left(\hat{\rho}_{n}^{\left(i\right)},\hat{\rho}_{n}^{\left(j\right)}\right)-f_{\mathbb{G}}\left(\rho_{\star}^{\left(i\right)},\rho_{\star}^{\left(j\right)}\right)\right|\\
    &\quad+\frac{C}{n}\sum_{k=1}^{n}\left(1+\left|\overline{X}_{\left(k-1\right)h_{n},n}\right|\right)^{C}\sum_{i=1}^{d}\sum_{j=1}^{d}\left|f_{\mathbb{G}}^{2}\left(\hat{\rho}_{n}^{\left(i\right)},\hat{\rho}_{n}^{\left(j\right)}\right)-f_{\mathbb{G}}^{2}\left(\rho_{\star}^{\left(i\right)},\rho_{\star}^{\left(j\right)}\right)\right|\\
    &\to^{P}0\text{ uniformly in }\alpha,
\end{align*}
because continuous mapping theorem holds.
Therefore, it follows from Proposition \ref{propEmpMean} and Proposition \ref{propEmpQ} that
\begin{align*}
    &\frac{1}{n}\mathbb{H}_{1,n}\left(\alpha|\hat{\rho}_{n}\right)-\frac{1}{n}\mathbb{H}_{1,n}\left(\alpha_{\star}|\rho_{\star}\right)\\
    &=\frac{2}{nh_{n}}\sum_{k=1}^{n}\mathbb{G}\left(\overline{X}_{\left(k-1\right)h_{n},n},\alpha|\rho_{\star}\right)\left[\left(\overline{X}_{kh_{n},n}-\overline{X}_{\left(k-1\right)h_{n},n}\right)^{\otimes2}\right]\\
    &\quad-\frac{1}{n}\sum_{k=1}^{n}\left\|\mathbb{G}\left(\overline{X}_{\left(k-1\right)h_{n},n},\alpha|\rho_{\star}\right)\right\|^{2}\\
    &\quad-\frac{2}{nh_{n}}\sum_{k=1}^{n}\mathbb{G}\left(\overline{X}_{\left(k-1\right)h_{n},n},\alpha_{\star}|\rho_{\star}\right)\left[\left(\overline{X}_{kh_{n},n}-\overline{X}_{\left(k-1\right)h_{n},n}\right)^{\otimes2}\right]\\
    &\quad+\frac{1}{n}\sum_{k=1}^{n}\left\|\mathbb{G}\left(\overline{X}_{\left(k-1\right)h_{n},n},\alpha_{\star}|\rho_{\star}\right)\right\|^{2}\\
    &\quad+o_{P}^{\ast}\left(1\right)\\
    &\to^{P}\mathbb{V}_{1}\left(\alpha|\xi_{\star}\right)\text{ uniformly in }\alpha
\end{align*}
where $o_{P}^{\ast}\left(1\right)$ indicates the term converging in probability to zero uniformly in $\theta$. Then we obtain that $\hat{\alpha}_{n}\to\alpha_{\star}$ in the same way as \citet{Kessler-1997} with Assumption [A3].

In the next place, we consider the consistency of $\hat{\beta}_{n}$. Firstly, we consider the case $\max_{i}\rho_{\star}^{\left(i\right)}\in\left(\ell-1,\ell\right)$ for an integer $\ell\in\left\{1,\ldots,\left[\overline{\rho}\right]+1\right\}$. Then { it is sufficient to show}
\begin{align*}
    &\frac{1}{nh_{n}}\mathbb{H}_{2,n}\left(\beta|\hat{\rho}_{n}\right)-\frac{1}{nh_{n}}\mathbb{H}_{2,n}\left(\beta_{\star}|\rho_{\star}\right)
     \to^{P}\mathbb{V}_{2}\left(\beta|\xi_{\star}\right)\text{ uniformly in }\beta
\end{align*}
{due to Assumption [A3].}
{Because} the evaluation $D_{j}\left(x\right)=O$ where $j\ge \left[\max_{i=1,\ldots,n}\rho_{\ast}^{\left(i\right)}\right]+1$ using independent increments of the Wiener process, Proposition \ref{propEmpMean} {and} Proposition \ref{propEmpI} verify
\begin{align*}
    &F_{\ell}\left(\beta\right)-\frac{1}{nh_{n}}\mathbb{H}_{2,n}\left(\beta_{\star}|\rho_{\star}\right)\to^{P}\mathbb{V}_{2}\left(\beta|\xi_{\star}\right)\text{ uniformly in }\beta,
\end{align*}
where
\begin{align*}
    F_{j}\left(\beta\right):=-\frac{1}{nh_{n}^{2}}\sum_{k=1+j}^{n}\left|\overline{X}_{kh_{n},n}-\overline{X}_{\left(k-1\right)h_{n},n}-h_{n}b\left(\overline{X}_{\left(k-1-j\right)h_{n},n},\beta\right)\right|^{2}{.}
\end{align*}
{In addition,} the exact convergences such that
\begin{align*}
    P\left(\mathbf{1}_{\left\{\ell\right\}}\left(\left[\max_{i}\hat{\rho}_{n}^{\left(i\right)}\right]+1\right)=1\right)\to1,\ P\left(\mathbf{1}_{\left\{j\right\}}\left(\left[\max_{i}\hat{\rho}_{n}^{\left(i\right)}\right]+1\right)=1\right)\to0
\end{align*}
{hold} for all $j\neq\ell$, since for all $j=1,\ldots,\left[\overline{\rho}\right]+1$,
\begin{align*}
    P\left(\mathbf{1}_{\left\{j\right\}}\left(\left[\max_{i}\hat{\rho}_{n}^{\left(i\right)}\right]+1\right)=1\right)=P\left(\max_{i}\hat{\rho}_{n}^{\left(i\right)}\in\left[j-1,j\right)\right){.}
\end{align*}
{Therefore,} for any $\epsilon>0$,
\begin{align*}
    &P\left(\sup_{\beta\in\Theta_{2}}\left|\frac{1}{nh_{n}}\mathbb{H}_{2,n}\left(\beta|\hat{\rho}_{n}\right)-\frac{1}{nh_{n}}\mathbb{H}_{2,n}\left(\beta_{\star}|\rho_{\star}\right)-\mathbb{V}_{2}\left(\beta|\xi_{\star}\right)\right|>\epsilon\right)\\
    &\le \sum_{j\neq\ell}P\left(\mathbf{1}_{\left\{j\right\}}\left(\left[\max_{i}\hat{\rho}_{n}^{\left(i\right)}\right]+1\right)=1\right)\\
    &\quad+P\left(\left\{\mathbf{1}_{\left\{\ell\right\}}\left(\left[\max_{i}\hat{\rho}_{n}^{\left(i\right)}\right]+1\right)=1\right\}\right.\\
    &\hspace{2cm}\left.\cap\left\{\sup_{\beta\in\Theta_{2}}\left|F_{\ell}\left(\beta\right)-\frac{1}{nh_{n}}\mathbb{H}_{2,n}\left(\beta_{\star}|\rho_{\star}\right)-\mathbb{V}_{2}\left(\beta|\xi_{\star}\right)\right|>\epsilon\right\}\right)\\
    &\to0.
\end{align*} For the case $\max_{i}\rho_{\star}^{\left(i\right)}=\ell$ for an integer $\ell=\left\{0,\ldots,\left[\overline{\rho}\right]+1\right\}$, we similarly obtain
\begin{align*}
     &\frac{1}{nh_{n}}\mathbb{H}_{2,n}\left(\beta|\hat{\rho}_{n}\right)-\frac{1}{nh_{n}}\mathbb{H}_{2,n}\left(\beta_{\star}|\rho_{\star}\right)
     \to^{P}\mathbb{V}_{2}\left(\beta|\xi_{\star}\right)\text{ uniformly in }\beta
\end{align*}
because we have
\begin{align*}
    &F_{\ell}\left(\beta\right)-\frac{1}{nh_{n}}\mathbb{H}_{2,n}\left(\beta_{\star}|\rho_{\star}\right)\to^{P}\mathbb{V}_{2}\left(\beta|\xi_{\star}\right)\text{ uniformly in }\beta,\\
    &F_{\ell+1}\left(\beta\right)-\frac{1}{nh_{n}}\mathbb{H}_{2,n}\left(\beta_{\star}|\rho_{\star}\right)\to^{P}\mathbb{V}_{2}\left(\beta|\xi_{\star}\right)\text{ uniformly in }\beta,
\end{align*}
and 
\begin{align*}
    &P\left(\mathbf{1}_{\left\{\ell\right\}}\left(\left[\max_{i}\hat{\rho}_{n}^{\left(i\right)}\right]+1\right)+\mathbf{1}_{\left\{\ell+1\right\}}\left(\left[\max_{i}\hat{\rho}_{n}^{\left(i\right)}\right]+1\right)=1\right)\to1,\\
    &P\left(\mathbf{1}_{\left\{j\right\}}\left(\left[\max_{i}\hat{\rho}_{n}^{\left(i\right)}\right]+1\right)=0\right)\to1,\text{ for all } j\neq\ell,\ell+1,
\end{align*}
and it holds that for any $\epsilon>0$,
\begin{align*}
    &P\left(\sup_{\beta\in\Theta_{2}}\left|\frac{1}{nh_{n}}\mathbb{H}_{2,n}\left(\beta|\hat{\rho}_{n}\right)-\frac{1}{nh_{n}}\mathbb{H}_{2,n}\left(\beta_{\star}|\rho_{\star}\right)-\mathbb{V}_{2}\left(\beta|\xi_{\star}\right)\right|>\epsilon\right)\\
    &\le \sum_{j\neq\ell,\ell+1}P\left(\mathbf{1}_{\left\{j\right\}}\left(\left[\max_{i}\hat{\rho}_{n}^{\left(i\right)}\right]+1\right)=1\right)\\
    &\quad+P\left(\left\{\mathbf{1}_{\left\{\ell\right\}}\left(\left[\max_{i}\hat{\rho}_{n}^{\left(i\right)}\right]+1\right)=1\right\}\right.\\
    &\hspace{2cm}\left.\cap\left\{\sup_{\beta\in\Theta_{2}}\left|F_{\ell}\left(\beta\right)-\frac{1}{nh_{n}}\mathbb{H}_{2,n}\left(\beta_{\star}|\rho_{\star}\right)-\mathbb{V}_{2}\left(\beta|\xi_{\star}\right)\right|>\epsilon\right\}\right)\\
    &\quad+P\left(\left\{\mathbf{1}_{\left\{\ell+1\right\}}\left(\left[\max_{i}\hat{\rho}_{n}^{\left(i\right)}\right]+1\right)=1\right\}\right.\\
    &\hspace{2cm}\left.\cap\left\{\sup_{\beta\in\Theta_{2}}\left|F_{\ell+1}\left(\beta\right)-\frac{1}{nh_{n}}\mathbb{H}_{2,n}\left(\beta_{\star}|\rho_{\star}\right)-\mathbb{V}_{2}\left(\beta|\xi_{\star}\right)\right|>\epsilon\right\}\right)\\
    &\to0.
\end{align*}
Hence it is shown that $\hat{\beta}_{n}\to^{P}\beta_{\star}$ with Assumption [A3].
\end{proof}

\section*{Appendix B. Trivial discussion}

The next lemma supports an exchangeability of integrals.

\begin{lemma}\label{LemmaExchangeIntegrals}
    Let us fix $C>0$. For any $t_{1},t_{2}$ such that $0\le t_{1}\le t_{2}$ and $t_{2}-t_{1}\le C$, and $f:\mathbf{R}_{+}\to \mathbf{R}^{d}\otimes\mathbf{R}^{r}$ such that $f\in L^{1}\left(\left[t_{1},t_{2}\right]\right)$, the following equation holds:
    \begin{align*}
        \int_{t_{1}}^{t_{2}}f\left(s\right)\left(\int_{t_{1}}^{s_{1}}\mathrm{d}w_{s_{2}}\right)\mathrm{d}s_{1}=\int_{t_{1}}^{t_{2}}\left(\int_{s_{1}}^{t_{2}}f\left(s_{2}\right)\mathrm{d}s_{2}\right)\mathrm{d}w_{s_{1}};
    \end{align*}
    and both are normally distributed with mean $\mathbf{0}$ and variance  $\int_{t_{1}}^{t_{2}}\left(\int_{s_{1}}^{t_{2}}f\left(s_{2}\right)\mathrm{d}s_{2}\right)^{\otimes 2}\mathrm{d}s_{1}$.
\end{lemma}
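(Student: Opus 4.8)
The plan is to recognise this identity as a (deterministic-integrand) stochastic Fubini theorem and to prove it by the standard two-step route: first verify it for step functions, then pass to the $L^1$-limit. Before doing so I would check that both sides are genuinely well defined. For the left-hand side, the inner integral is simply the increment $\int_{t_1}^{s_1}\mathrm{d}w_{s_2}=w_{s_1}-w_{t_1}$, which has (a.s.) continuous, hence bounded, sample paths on the finite interval $[t_1,t_2]$; since $f\in L^1([t_1,t_2])$, the product $f(s_1)(w_{s_1}-w_{t_1})$ is pathwise integrable and the left-hand side is a well-defined pathwise Lebesgue integral. For the right-hand side, writing $g(s_1):=\int_{s_1}^{t_2}f(s_2)\,\mathrm{d}s_2$, one has $\sup_{s_1\in[t_1,t_2]}\left|g(s_1)\right|\le\int_{t_1}^{t_2}\left|f(s_2)\right|\mathrm{d}s_2<\infty$, so $g$ is bounded and therefore in $L^2([t_1,t_2])$ (the interval being of finite length $\le C$); thus the Wiener integral $\int_{t_1}^{t_2}g(s_1)\,\mathrm{d}w_{s_1}$ is a well-defined It\^o integral with deterministic integrand.

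For the core identity, I would first take $f$ to be a step function $f=\sum_{k=0}^{N-1}c_k\mathbf{1}_{[u_k,u_{k+1})}$ on a partition $t_1=u_0<\dots<u_N=t_2$. In this case both sides reduce to finite sums of ordinary integrals and of increments $w_{u_{k+1}}-w_{u_k}$, and the equality follows from a direct rearrangement, equivalently from Fubini over the triangle $\{t_1\le s_2\le s_1\le t_2\}$, which is legitimate term by term because the stochastic integral over each subinterval is just a constant multiple of an increment. To reach general $f$, I would choose simple functions $f_n\to f$ in $L^1([t_1,t_2])$. The left-hand sides converge a.s., since $\left|\int_{t_1}^{t_2}(f_n-f)(s_1)(w_{s_1}-w_{t_1})\,\mathrm{d}s_1\right|\le\left(\sup_{s_1}\left|w_{s_1}-w_{t_1}\right|\right)\left\|f_n-f\right\|_{L^1}\to0$. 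The right-hand sides converge in $L^2(\Omega)$: setting $g_n(s_1):=\int_{s_1}^{t_2}f_n(s_2)\,\mathrm{d}s_2$, we have $\sup_{s_1}\left|g_n(s_1)-g(s_1)\right|\le\left\|f_n-f\right\|_{L^1}\to0$, hence $g_n\to g$ in $L^2([t_1,t_2])$, and the It\^o isometry yields $\int_{t_1}^{t_2}g_n\,\mathrm{d}w\to\int_{t_1}^{t_2}g\,\mathrm{d}w$ in $L^2(\Omega)$. Passing to a subsequence along which the $L^2(\Omega)$-convergence is also almost sure, the two a.s.\ limits coincide, proving the identity for general $f$.

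It remains to identify the law. Because the right-hand side is a Wiener integral of a deterministic $L^2$ integrand, it is Gaussian: for step $g$ it is a linear combination of the independent, centred Gaussian increments of $w$, and the general case follows since an $L^2(\Omega)$-limit of centred Gaussian vectors is a centred Gaussian vector. The mean is therefore $\mathbf{0}$, and the covariance matrix is computed by the It\^o isometry as $\int_{t_1}^{t_2}g(s_1)^{\otimes2}\,\mathrm{d}s_1=\int_{t_1}^{t_2}\left(\int_{s_1}^{t_2}f(s_2)\,\mathrm{d}s_2\right)^{\otimes2}\mathrm{d}s_1$, using that the components of the $r$-dimensional Wiener process are independent.

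I do not expect a serious obstacle here, consistent with the section title. The only point requiring a little care is matching the two convergence modes: the left-hand side converges pathwise (a.s.) while the right-hand side converges in $L^2(\Omega)$, so I would extract a common almost surely convergent subsequence to conclude equality of the limits. As an even shorter alternative, one may bypass the approximation entirely by applying the integration-by-parts formula to the product $F(s)(w_s-w_{t_1})$ with $F(s):=\int_s^{t_2}f(u)\,\mathrm{d}u$: since $F$ is absolutely continuous, hence of finite variation with $\langle F,w\rangle\equiv0$, and $F(t_2)=0$, the product rule gives $0=-\int_{t_1}^{t_2}f(s)(w_s-w_{t_1})\,\mathrm{d}s+\int_{t_1}^{t_2}F(s)\,\mathrm{d}w_s$, which is exactly the claimed identity.
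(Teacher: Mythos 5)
Your proposal is correct, and your \emph{primary} route (verify the identity for step functions, then pass to the limit $f_n\to f$ in $L^{1}$, using a.s.\ convergence of the pathwise integrals on the left and It\^{o}-isometry convergence in $L^{2}\left(\Omega\right)$ on the right, reconciled along a common a.s.\ convergent subsequence) is a genuinely different and more elementary argument than the one in the paper. The paper instead does exactly what you sketch in your closing remark: it sets $F\left(t\right)=\int_{0}^{t}f\left(s\right)\mathrm{d}s$, applies the It\^{o} product rule to $F\left(t\right)w_{t}$ to get $F\left(t\right)w_{t}=\int_{0}^{t}f\left(s\right)w_{s}\mathrm{d}s+\int_{0}^{t}F\left(s\right)\mathrm{d}w_{s}$ (the cross-variation vanishing because $F$ is of finite variation), and then rearranges to obtain the identity in one computation, with normality read off from the fact that the result is a Wiener integral of a deterministic integrand. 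Your approximation route buys a self-contained verification that does not invoke the semimartingale product rule for an absolutely continuous (possibly non-$\mathcal{C}^{1}$) integrator, at the cost of the bookkeeping needed to match the two modes of convergence; the paper's route is shorter but is slightly informal in writing $\frac{\mathrm{d}}{\mathrm{d}t}F\left(t\right)$ for $f$ that is merely $L^{1}$, a point your finite-variation formulation handles more carefully. Your identification of the law and of the covariance via the It\^{o} isometry, using independence of the components of the $r$-dimensional Wiener process, matches the paper's conclusion.
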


\begin{proof}
For $f:\mathbf{R}_{+}\to \mathbf{R}^{d}\otimes\mathbf{R}^{r}$, we set $F\left(t\right)=\int_{0}^{t}f\left(s\right)\mathrm{d}s$, and then it follows from It\^{o}'s formula that
\begin{align*}
    F\left(t\right)w_{t}&=\int_{0}^{t}\left(\left.\frac{\mathrm{d}}{\mathrm{d}t}F\left(t\right)\right|_{t=s} \right)w_{s}\mathrm{d}s+\int_{0}^{t}F\left(s\right)\mathrm{d}w_{s}\\
    &=\int_{0}^{t}f\left(s\right)w_{s}\mathrm{d}s+\int_{0}^{t}F\left(s\right)\mathrm{d}w_{s},
\end{align*}
and
\begin{align*}
    \int_{t_{1}}^{t_{2}}f\left(s\right)\left(\int_{t_{1}}^{s_{1}}\mathrm{d}w_{s_{2}}\right)\mathrm{d}s_{1}
    &=\int_{t_{1}}^{t_{2}}f\left(s\right)\left(w_{s_{1}}-w_{t_{1}}\right)\mathrm{d}s_{1}\\
    &=F\left(t_{2}\right)w_{t_{2}}-F\left(t_{1}\right)w_{t_{1}}-\int_{t_{1}}^{t_{2}}f\left(s\right)w_{t_{1}}\mathrm{d}s-\int_{t_{1}}^{t_{2}}F\left(s\right)\mathrm{d}w_{s}\\
    &=F\left(t_{2}\right)w_{t_{2}}-F\left(t_{1}\right)w_{t_{1}}-\left(F\left(t_{2}\right)-F\left(t_{1}\right)\right)w_{t_{1}}-\int_{t_{1}}^{t_{2}}F\left(s\right)\mathrm{d}w_{s}\\
    &=\int_{t_{1}}^{t_{2}}\left(F\left(t_{2}\right)-F\left(s\right)\right)\mathrm{d}w_{s}\\
    &=\int_{t_{1}}^{t_{2}}\left(\int_{s_{1}}^{t_{2}}f\left(s_{2}\right)\mathrm{d}s_{2}\right)\mathrm{d}w_{s_{1}},
\end{align*}
which is normally distributed because of Wiener integral obviously.
\end{proof}

\begin{lemma}\label{LemmaFunctionD}
    $f_{\mathbb{D}_{0}}\left(\rho^{\left(i\right)},\rho^{\left(j\right)}\right)$ is continuous.
\end{lemma}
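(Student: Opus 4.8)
The plan is to regard $f_{\mathbb{D}_0}$ as a function on the compact square $[0,\overline{\rho}]^2$ and to establish continuity by a gluing argument. First I would note that the eight cases of the definition exhaust the square and overlap only along their common boundaries, so it suffices to treat continuity region by region and then across boundaries. On the interior of each of the five two-dimensional regions (those with $\rho^{(i)}>0$ and $\rho^{(j)}>0$) the prescribed formula is a rational function whose denominator is $6\rho^{(i)}\rho^{(j)}$, which does not vanish there; hence $f_{\mathbb{D}_0}$ is continuous on each such interior. Continuity of $f_{\mathbb{D}_0}$ on all of $[0,\overline{\rho}]^2$ then reduces to checking that, at every point of every shared boundary, the limits taken from the adjacent regions coincide with one another and with the assigned value.

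For the \emph{internal} dividing curves both competing expressions are rational functions with nonvanishing denominators, so the verification is a direct substitution followed by a one-line identity. These curves are $\rho^{(j)}=\rho^{(i)}+1$ (separating $\rho^{(i)}+1<\rho^{(j)}$ from $\rho^{(i)}<\rho^{(j)}\le\rho^{(i)}+1$), the diagonal $\rho^{(i)}=\rho^{(j)}$ (separating $\rho^{(i)}<\rho^{(j)}$ from $\rho^{(i)}\ge\rho^{(j)}$, within both the $\rho^{(j)}\le1$ and the $\rho^{(j)}>1$ strips), and the line $\rho^{(j)}=1$. Concretely, on $\rho^{(j)}=\rho^{(i)}+1$ one substitutes $(\rho^{(i)}-\rho^{(j)})^3=-1$ and uses $3(\rho^{(j)})^2-3\rho^{(j)}=3\rho^{(i)}\rho^{(j)}$ to see that both numerators equal $3(\rho^{(i)})^2+3\rho^{(i)}$; on $\rho^{(i)}=\rho^{(j)}$ the term $(\rho^{(i)}-\rho^{(j)})^3$ vanishes and the two formulas coincide termwise; and on $\rho^{(j)}=1$ the two adjacent numerators agree (for $\rho^{(i)}\ge\rho^{(j)}$ both $3(\rho^{(j)})^2-3\rho^{(j)}+1$ and $(\rho^{(j)})^3$ reduce to $1$, while for $\rho^{(i)}<\rho^{(j)}$ the competing cubics coincide).

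The genuinely delicate step, which I expect to be the main obstacle, is the behaviour along the coordinate axes, where the denominator $6\rho^{(i)}\rho^{(j)}$ degenerates and the assigned axis values $0$, $\rho^{(j)}/2$, and $(2\rho^{(j)}-1)/(2\rho^{(j)})$ must be recovered as removable-singularity limits. The key is to exhibit the vanishing factor explicitly in the numerator. Approaching $\rho^{(i)}=0$ with $\rho^{(j)}\in(0,1]$, one factors $(\rho^{(i)}-\rho^{(j)})^3+(\rho^{(j)})^3=\rho^{(i)}\bigl((\rho^{(i)})^2-3\rho^{(i)}\rho^{(j)}+3(\rho^{(j)})^2\bigr)$, cancels $\rho^{(i)}$ against the denominator, and obtains the limit $\rho^{(j)}/2$, matching the case $\rho^{(i)}=0,\rho^{(j)}\in(0,1]$. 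Approaching $\rho^{(i)}=0$ with $\rho^{(j)}\in(1,\overline{\rho}]$, the numerator factors as $6\rho^{(i)}\rho^{(j)}-3(\rho^{(i)})^2-3\rho^{(i)}=\rho^{(i)}\bigl(6\rho^{(j)}-3\rho^{(i)}-3\bigr)$ and the limit is $(2\rho^{(j)}-1)/(2\rho^{(j)})$, matching the case $\rho^{(i)}=0,\rho^{(j)}\in(1,\overline{\rho}]$. Along $\rho^{(j)}=0$ the surviving numerators carry enough factors of $\rho^{(j)}$ (for instance $(\rho^{(j)})^3$ in the region $\rho^{(i)}\ge\rho^{(j)}$, so that the quotient is $(\rho^{(j)})^2/(6\rho^{(i)})$) that every limit is $0$, matching the case $\rho^{(j)}=0$.

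To finish, I would confirm consistency at the finitely many junction points where more than two regions meet, notably $(0,0)$, $(0,1)$ and $(1,1)$, by checking that all the one-sided limits already computed agree there; near $(0,0)$ one uses the bound $|f_{\mathbb{D}_0}|\le C\rho^{(j)}$ valid in the region $\rho^{(i)}<\rho^{(j)}\le1$ to force the limit $0$. Assembling the interior continuity, the edge matchings, and the axis limits, the gluing argument yields continuity of $f_{\mathbb{D}_0}$ on $[0,\overline{\rho}]^2$. The only real care needed is bookkeeping: one must verify that the case list partitions the square and that each dividing curve is checked from both adjacent regions, so that no overlap is overlooked.
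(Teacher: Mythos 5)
Your proposal is correct and follows essentially the same route as the paper: both reduce the problem to verifying that the piecewise rational expressions agree in the limit across every region boundary, computing one-sided limits along the internal curves $\rho^{(i)}=\rho^{(j)}$, $\rho^{(j)}=1$, $\rho^{(j)}=\rho^{(i)}+1$ and resolving the removable singularity along the axes by factoring $\rho^{(i)}$ (resp.\ $\rho^{(j)}$) out of the numerator, exactly as in the paper's cases (i)--(xi). Your grouping into interior continuity, nondegenerate internal curves, degenerate axes, and junction points such as $(0,0)$, $(0,1)$, $(1,1)$ is just a tidier organisation of the same exhaustive check the paper carries out.
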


\begin{proof}
We check the continuity at 
(i) $\rho^{\left(i\right)}=\rho^{\left(j\right)}=0$, 
(ii) $\rho^{\left(i\right)}=0$ and $\rho^{\left(j\right)}\in\left(0,1\right)$, 
(iii) $\rho^{\left(i\right)}=0$ and $\rho^{\left(j\right)}=1$, 
(iv) $\rho^{\left(i\right)}=0$ and $\rho^{\left(j\right)}\in\left(1,\overline{\rho}\right]$,
(v) $\rho^{\left(j\right)}=0$ and $\rho^{\left(i\right)}\in\left(0,\overline{\rho}\right]$,
(vi) $\rho^{\left(i\right)}=\rho^{\left(j\right)}\in\left(0,1\right)$, 
(vii) $\rho^{\left(i\right)}=\rho^{\left(j\right)}=1$,
(viii) $\rho^{\left(i\right)}=\rho^{\left(j\right)}\in\left(1,p\right]$, 
(ix) $\rho^{\left(j\right)}=1$ and $\rho^{\left(i\right)}\in\left(0,1\right)$,
(x) $\rho^{\left(j\right)}=1$ and $\rho^{\left(i\right)}\in\left(1,\overline{\rho}\right]$,
(xi) $\rho^{\left(i\right)}\in\left(0,\overline{\rho}\right]$ and $\rho^{\left(i\right)}+1=\rho^{\left(j\right)}$. 

(i) We have that
\begin{align*}
    \left.f_{\mathbb{D}_{0}}\left(\rho^{\left(i\right)},\rho^{\left(j\right)}\right)\right|_{\rho^{\left(i\right)}=\rho^{\left(j\right)}=0}&=0,\\
    \lim_{\rho^{\left(i\right)}=0,\rho^{\left(j\right)}\downarrow0}f_{\mathbb{D}_{0}}\left(\rho^{\left(i\right)},\rho^{\left(j\right)}\right)&=\lim_{\rho^{\left(i\right)}=0,\rho^{\left(j\right)}\downarrow0}\frac{\rho^{\left(j\right)}}{2}=0,\\
    \lim_{\substack{\rho^{\left(i\right)}\downarrow,\rho^{\left(j\right)}\downarrow0\\\rho^{\left(i\right)}<\rho^{\left(j\right)}}}f_{\mathbb{D}_{0}}\left(\rho^{\left(i\right)},\rho^{\left(j\right)}\right)&=\lim_{\substack{\rho^{\left(i\right)}\downarrow,\rho^{\left(j\right)}\downarrow0\\\rho^{\left(i\right)}<\rho^{\left(j\right)}}}\frac{\left(\rho^{\left(i\right)}\right)^{2}-3\rho^{\left(i\right)}\rho^{\left(j\right)}+3\left(\rho^{\left(j\right)}\right)^{2}}{6\rho^{\left(j\right)}}=0,\\
    \lim_{\substack{\rho^{\left(i\right)}\downarrow,\rho^{\left(j\right)}\downarrow0\\\rho^{\left(i\right)}\ge\rho^{\left(j\right)}}}f_{\mathbb{D}_{0}}\left(\rho^{\left(i\right)},\rho^{\left(j\right)}\right)&=\lim_{\substack{\rho^{\left(i\right)}\downarrow,\rho^{\left(j\right)}\downarrow0\\\rho^{\left(i\right)}\ge\rho^{\left(j\right)}}}\frac{\left(\rho^{\left(j\right)}\right)^{2}}{6\rho^{\left(i\right)}}=0,\\
    \lim_{\rho^{\left(j\right)}=0,\rho^{\left(i\right)}\downarrow0}f_{\mathbb{D}_{0}}\left(\rho^{\left(i\right)},\rho^{\left(j\right)}\right)&=0.\\
\end{align*}
(ii)  It holds that
\begin{align*}
    \left.f_{\mathbb{D}_{0}}\left(\rho^{\left(i\right)},\rho^{\left(j\right)}\right)\right|_{\rho^{\left(i\right)}=0,\rho^{\left(j\right)}\in\left(0,1\right)}&=\frac{\rho^{\left(j\right)}}{2},\\
    \lim_{\rho^{\left(j\right)}\in\left(0,1\right),\rho^{\left(i\right)}\downarrow0}f_{\mathbb{D}_{0}}\left(\rho^{\left(i\right)},\rho^{\left(j\right)}\right)&=\lim_{\rho^{\left(j\right)}\in\left(0,1\right),\rho^{\left(i\right)}\downarrow0}\frac{\left(\rho^{\left(i\right)}\right)^{2}-3\rho^{\left(i\right)}\rho^{\left(j\right)}+3\left(\rho^{\left(j\right)}\right)^{2}}{6\rho^{\left(j\right)}}=\frac{\rho^{\left(j\right)}}{2}.
\end{align*}
(iii) We obtain that
\begin{align*}
    \left.f_{\mathbb{D}_{0}}\left(\rho^{\left(i\right)},\rho^{\left(j\right)}\right)\right|_{\rho^{\left(i\right)}=0,\rho^{\left(j\right)}=1}&=\frac{1}{2},\\
    \lim_{\rho^{\left(i\right)}=0,\rho^{\left(j\right)}\uparrow1}f_{\mathbb{D}_{0}}\left(\rho^{\left(i\right)},\rho^{\left(j\right)}\right)
    &=\frac{1}{2},\\
    \lim_{\rho^{\left(i\right)}=0,\rho^{\left(j\right)}\downarrow1}f_{\mathbb{D}_{0}}\left(\rho^{\left(i\right)},\rho^{\left(j\right)}\right)&=\lim_{\rho^{\left(i\right)}=0,\rho^{\left(j\right)}\downarrow1}\frac{2\rho^{\left(j\right)}-1}{2\rho^{\left(j\right)}}\\
    &=\frac{1}{2},\\
    \lim_{\substack{\rho^{\left(i\right)}\downarrow0,\rho^{\left(j\right)}\uparrow1\\\rho^{\left(i\right)}<\rho^{\left(j\right)}}}f_{\mathbb{D}_{0}}\left(\rho^{\left(i\right)},\rho^{\left(j\right)}\right)&=\lim_{\substack{\rho^{\left(i\right)}\downarrow0,\rho^{\left(j\right)}\uparrow1\\\rho^{\left(i\right)}<\rho^{\left(j\right)}}}\frac{\left(\rho^{\left(i\right)}\right)^{2}-3\rho^{\left(i\right)}\rho^{\left(j\right)}+3\left(\rho^{\left(j\right)}\right)^{2}}{6\rho^{\left(j\right)}}\\
    &=\frac{1}{2},\\
    \lim_{\substack{\rho^{\left(i\right)}\downarrow0,\rho^{\left(j\right)}\downarrow1,\\\rho^{\left(i\right)}<\rho^{\left(j\right)}\le \rho^{\left(i\right)}+1}}f_{\mathbb{D}_{0}}\left(\rho^{\left(i\right)},\rho^{\left(j\right)}\right)&=\lim_{\substack{\rho^{\left(i\right)}\downarrow0,\rho^{\left(j\right)}\downarrow1,\\
    \rho^{\left(i\right)}<\rho^{\left(j\right)}\le \rho^{\left(i\right)}+1}}\frac{\left(\rho^{\left(i\right)}-\rho^{\left(j\right)}\right)^{3}+3\left(\rho^{\left(j\right)}\right)^{2}-3\rho^{\left(j\right)}+1}{6\rho^{\left(i\right)}\rho^{\left(j\right)}}\\
    &=\lim_{\substack{\rho^{\left(i\right)}\downarrow0,\rho^{\left(j\right)}\downarrow1,\\\rho^{\left(i\right)}<\rho^{\left(j\right)}\le \rho^{\left(i\right)}+1}}\frac{\left(\rho^{\left(i\right)}\right)^{3}-3\left(\rho^{\left(i\right)}\right)^{2}\rho^{\left(j\right)}+3\rho^{\left(i\right)}\left(\rho^{\left(j\right)}\right)^{2}}{6\rho^{\left(i\right)}\rho^{\left(j\right)}}\\
    &\quad-\lim_{\substack{\rho^{\left(i\right)}\downarrow0,\rho^{\left(j\right)}\downarrow1,\\\rho^{\left(i\right)}<\rho^{\left(j\right)}\le \rho^{\left(i\right)}+1}}\frac{\left(\rho^{\left(j\right)}-1\right)^{3}}{6\rho^{\left(i\right)}\rho^{\left(j\right)}}\\
    &=\frac{1}{2},\\
    \lim_{\substack{\rho^{\left(i\right)}\downarrow0,\rho^{\left(j\right)}\downarrow1,\\\rho^{\left(j\right)}>\rho^{\left(i\right)}+1}}f_{\mathbb{D}_{0}}\left(\rho^{\left(i\right)},\rho^{\left(j\right)}\right)&=\lim_{\substack{\rho^{\left(i\right)}\downarrow0,\rho^{\left(j\right)}\downarrow1,\\\rho^{\left(j\right)}> \rho^{\left(i\right)}+1}}\frac{6\rho^{\left(i\right)}\rho^{\left(j\right)}-3\left(\rho^{\left(i\right)}\right)^{2}-3\rho^{\left(i\right)}}{6\rho^{\left(i\right)}\rho^{\left(j\right)}}\\
    &=\frac{1}{2}.
\end{align*}
(iv) We can have that
\begin{align*}
    \left.f_{\mathbb{D}_{0}}\left(\rho^{\left(i\right)},\rho^{\left(j\right)}\right)\right|_{\rho^{\left(i\right)}=0,\rho^{\left(j\right)}\in\left(1,\overline{\rho}\right]}&=\frac{2\rho^{\left(j\right)}-1}{2\rho^{\left(j\right)}},\\
    \lim_{\rho^{\left(i\right)}\downarrow0,\rho^{\left(j\right)}\in\left(1,\overline{\rho}\right]}f_{\mathbb{D}_{0}}\left(\rho^{\left(i\right)},\rho^{\left(j\right)}\right)&=\lim_{\rho^{\left(i\right)}\downarrow0,\rho^{\left(j\right)}\in\left(1,\overline{\rho}\right]}\frac{6\rho^{\left(i\right)}\rho^{\left(j\right)}-3\left(\rho^{\left(i\right)}\right)^{2}-3\rho^{\left(i\right)}}{6\rho^{\left(i\right)}\rho^{\left(j\right)}}\\
    &=\frac{2\rho^{\left(j\right)}-1}{2\rho^{\left(j\right)}}.
\end{align*}
(v) It holds that
\begin{align*}
    \left.f_{\mathbb{D}_{0}}\left(\rho^{\left(i\right)},\rho^{\left(j\right)}\right)\right|_{\rho^{\left(i\right)}\in\left(0,\overline{\rho}\right],\rho^{\left(j\right)}=0}&=0,\\
    \lim_{\substack{\rho^{\left(i\right)}\in\left(0,\overline{\rho}\right],\rho^{\left(j\right)}\downarrow0\\\rho^{\left(i\right)}\ge\rho^{\left(j\right)}}}f_{\mathbb{D}_{0}}\left(\rho^{\left(i\right)},\rho^{\left(j\right)}\right)&=\lim_{\substack{\rho^{\left(i\right)}\in\left(0,\overline{\rho}\right],\rho^{\left(j\right)}\downarrow0\\\rho^{\left(i\right)}\ge\rho^{\left(j\right)}}}\frac{\left(\rho^{\left(j\right)}\right)^{3}}{6\rho^{\left(i\right)}\rho^{\left(j\right)}}=0.
\end{align*}
(vi) We have that
\begin{align*}
    \left.f_{\mathbb{D}_{0}}\left(\rho^{\left(i\right)},\rho^{\left(j\right)}\right)\right|_{\rho^{\left(i\right)}=\rho^{\left(j\right)}\in\left(0,1\right)}&=\frac{\rho^{\left(i\right)}}{6}\\
    \lim_{\substack{\rho^{\left(i\right)}\in\left(0,1\right),\rho^{\left(i\right)}-\rho^{\left(j\right)}\to0,\\\rho^{\left(i\right)}\ge \rho^{\left(j\right)}}}f_{\mathbb{D}_{0}}\left(\rho^{\left(i\right)},\rho^{\left(j\right)}\right)&=\frac{\rho^{\left(i\right)}}{6},\\
    \lim_{\substack{\rho^{\left(i\right)}\in\left(0,1\right),\rho^{\left(i\right)}-\rho^{\left(j\right)}\to0,\\\rho^{\left(i\right)}< \rho^{\left(j\right)}}}f_{\mathbb{D}_{0}}\left(\rho^{\left(i\right)},\rho^{\left(j\right)}\right)&=\lim_{\substack{\rho^{\left(i\right)}\in\left(0,1\right),\rho^{\left(i\right)}-\rho^{\left(j\right)}\to0,\\\rho^{\left(i\right)}< \rho^{\left(j\right)}}}\frac{\left(\rho^{\left(i\right)}-\rho^{\left(j\right)}\right)^{3}+\left(\rho^{\left(j\right)}\right)^{3}}{6\rho^{\left(i\right)}\rho^{\left(j\right)}}=\frac{\rho^{\left(i\right)}}{6}.
\end{align*}
(vii) It holds that
\begin{align*}
    \left.f_{\mathbb{D}_{0}}\left(\rho^{\left(i\right)},\rho^{\left(j\right)}\right)\right|_{\rho^{\left(i\right)}=\rho^{\left(j\right)}=1}&=\frac{1}{6},\\
    \lim_{\substack{\rho^{\left(i\right)}\to1,\rho^{\left(j\right)}\uparrow1\\\rho^{\left(i\right)}<\rho^{\left(j\right)}}}f_{\mathbb{D}_{0}}\left(\rho^{\left(i\right)},\rho^{\left(j\right)}\right)&=\lim_{\substack{\rho^{\left(i\right)}\to1,\rho^{\left(j\right)}\uparrow1\\\rho^{\left(i\right)}<\rho^{\left(j\right)}}}\frac{\left(\rho^{\left(i\right)}-\rho^{\left(j\right)}\right)^{3}+\left(\rho^{\left(j\right)}\right)^{3}}{6\rho^{\left(i\right)}\rho^{\left(j\right)}}=\frac{1}{6},\\
    \lim_{\substack{\rho^{\left(i\right)}\to1,\rho^{\left(j\right)}\uparrow1\\\rho^{\left(i\right)}\ge\rho^{\left(j\right)}}}f_{\mathbb{D}_{0}}\left(\rho^{\left(i\right)},\rho^{\left(j\right)}\right)&=\lim_{\substack{\rho^{\left(i\right)}\to1,\rho^{\left(j\right)}\uparrow1\\\rho^{\left(i\right)}\ge\rho^{\left(j\right)}}}\frac{\left(\rho^{\left(j\right)}\right)^{3}}{6\rho^{\left(i\right)}\rho^{\left(j\right)}}=\frac{1}{6},\\
    \lim_{\substack{\rho^{\left(i\right)}\to1,\rho^{\left(j\right)}\downarrow1\\\rho^{\left(i\right)}<\rho^{\left(j\right)}}}f_{\mathbb{D}_{0}}\left(\rho^{\left(i\right)},\rho^{\left(j\right)}\right)&=\lim_{\substack{\rho^{\left(i\right)}\to1,\rho^{\left(j\right)}\downarrow1\\\rho^{\left(i\right)}<\rho^{\left(j\right)}}}\frac{\left(\rho^{\left(i\right)}-\rho^{\left(j\right)}\right)^{3}+3\left(\rho^{\left(j\right)}\right)^{2}-3\rho^{\left(j\right)}+1}{6\rho^{\left(i\right)}\rho^{\left(j\right)}}=\frac{1}{6},\\
    \lim_{\substack{\rho^{\left(i\right)}\to1,\rho^{\left(j\right)}\downarrow1\\\rho^{\left(i\right)}\ge\rho^{\left(j\right)}}}f_{\mathbb{D}_{0}}\left(\rho^{\left(i\right)},\rho^{\left(j\right)}\right)&=\lim_{\substack{\rho^{\left(i\right)}\to1,\rho^{\left(j\right)}\downarrow1\\\rho^{\left(i\right)}\ge\rho^{\left(j\right)}}}\frac{3\left(\rho^{\left(j\right)}\right)^{2}-3\rho^{\left(j\right)}+1}{6\rho^{\left(i\right)}\rho^{\left(j\right)}}=\frac{1}{6}.
\end{align*}
(viii) We have that
\begin{align*}
    \left.f_{\mathbb{D}_{0}}\left(\rho^{\left(i\right)},\rho^{\left(j\right)}\right)\right|_{\rho^{\left(i\right)}=\rho^{\left(j\right)}\in\left(1,\overline{\rho}\right]}&=\frac{3\left(\rho^{\left(i\right)}\right)^{2}-3\rho^{\left(i\right)}+1}{6\left(\rho^{\left(i\right)}\right)^{2}}\\
    \lim_{\substack{\rho^{\left(i\right)}\in\left(1,\overline{\rho}\right],\rho^{\left(i\right)}-\rho^{\left(j\right)}\to0,\\\rho^{\left(i\right)}\ge \rho^{\left(j\right)}}}f_{\mathbb{D}_{0}}\left(\rho^{\left(i\right)},\rho^{\left(j\right)}\right)&=\frac{3\left(\rho^{\left(i\right)}\right)^{2}-3\rho^{\left(i\right)}+1}{6\left(\rho^{\left(i\right)}\right)^{2}},\\
    \lim_{\substack{\rho^{\left(i\right)}\in\left(1,\overline{\rho}\right],\rho^{\left(i\right)}-\rho^{\left(j\right)}\to0,\\\rho^{\left(i\right)}< \rho^{\left(j\right)}}}f_{\mathbb{D}_{0}}\left(\rho^{\left(i\right)},\rho^{\left(j\right)}\right)&=\lim_{\substack{\rho^{\left(i\right)}\in\left(1,\overline{\rho}\right],\rho^{\left(i\right)}-\rho^{\left(j\right)}\to0,\\\rho^{\left(i\right)}< \rho^{\left(j\right)}}}\frac{\left(\rho^{\left(i\right)}-\rho^{\left(j\right)}\right)^{3}+3\left(\rho^{\left(j\right)}\right)^{2}-3\rho^{\left(j\right)}+1}{6\rho^{\left(i\right)}\rho^{\left(j\right)}}\\
    &=\frac{3\left(\rho^{\left(i\right)}\right)^{2}-3\rho^{\left(i\right)}+1}{6\left(\rho^{\left(i\right)}\right)^{2}}.
\end{align*}
(ix) It holds that
\begin{align*}
    \left.f_{\mathbb{D}_{0}}\left(\rho^{\left(i\right)},\rho^{\left(j\right)}\right)\right|_{\rho^{\left(i\right)}\in\left(0,1\right),\rho^{\left(j\right)}=1}&=\frac{\left(\rho^{\left(i\right)}-1\right)^{3}+1}{6\rho^{\left(i\right)}},\\
    \lim_{\rho^{\left(i\right)}\in\left(0,1\right),\rho^{\left(j\right)}\uparrow1}f_{\mathbb{D}_{0}}\left(\rho^{\left(i\right)},\rho^{\left(j\right)}\right)&=\frac{\left(\rho^{\left(i\right)}-1\right)^{3}+1}{6\rho^{\left(i\right)}},\\
    \lim_{\rho^{\left(i\right)}\in\left(0,1\right),\rho^{\left(j\right)}\downarrow1}f_{\mathbb{D}_{0}}\left(\rho^{\left(i\right)},\rho^{\left(j\right)}\right)&=\lim_{\rho^{\left(i\right)}\in\left(0,1\right),\rho^{\left(j\right)}\downarrow1}\frac{\left(\rho^{\left(i\right)}-\rho^{\left(j\right)}\right)^{3}+3\left(\rho^{\left(j\right)}\right)^{2}-3\rho^{\left(j\right)}+1}{6\rho^{\left(i\right)}\rho^{\left(j\right)}}\\
    &=\frac{\left(\rho^{\left(i\right)}-1\right)^{3}+1}{6\rho^{\left(i\right)}}.
\end{align*}
(x) We have that
\begin{align*}
    \left.f_{\mathbb{D}_{0}}\left(\rho^{\left(i\right)},\rho^{\left(j\right)}\right)\right|_{\rho^{\left(i\right)}\in\left(1,\overline{\rho}\right],\rho^{\left(j\right)}=1}&=\frac{1}{6\rho^{\left(i\right)}},\\
    \lim_{\rho^{\left(i\right)}\in\left(1,\overline{\rho}\right],\rho^{\left(j\right)}\uparrow1}f_{\mathbb{D}_{0}}\left(\rho^{\left(i\right)},\rho^{\left(j\right)}\right)&=\frac{1}{6\rho^{\left(i\right)}},\\
    \lim_{\rho^{\left(i\right)}\in\left(1,\overline{\rho}\right],\rho^{\left(j\right)}\downarrow1}f_{\mathbb{D}_{0}}\left(\rho^{\left(i\right)},\rho^{\left(j\right)}\right)&=\lim_{\rho^{\left(i\right)}\in\left(1,\overline{\rho}\right],\rho^{\left(j\right)}\downarrow1}\frac{3\left(\rho^{\left(j\right)}\right)^{2}-3\rho^{\left(j\right)}+1}{6\rho^{\left(i\right)}\rho^{\left(j\right)}}\\
    &=\frac{1}{6\rho^{\left(i\right)}}.
\end{align*}
(xi) It holds that
\begin{align*}
    \left.f_{\mathbb{D}_{0}}\left(\rho^{\left(i\right)},\rho^{\left(j\right)}\right)\right|_{\rho^{\left(i\right)}\in\left(0,\overline{\rho}\right],\rho^{\left(i\right)}+1=\rho^{\left(j\right)}}&=\frac{1}{2},\\
    \lim_{\substack{\rho^{\left(i\right)}\in\left(0,\overline{\rho}\right],\rho^{\left(i\right)}+1-\rho^{\left(j\right)}\to0\\\rho^{\left(j\right)}<\rho^{\left(i\right)}+1}}f_{\mathbb{D}_{0}}\left(\rho^{\left(i\right)},\rho^{\left(j\right)}\right)&=\frac{1}{2},\\
    \lim_{\substack{\rho^{\left(i\right)}\in\left(0,\overline{\rho}\right],\rho^{\left(i\right)}+1-\rho^{\left(j\right)}\to0\\\rho^{\left(j\right)}>\rho^{\left(i\right)}+1}}f_{\mathbb{D}_{0}}\left(\rho^{\left(i\right)},\rho^{\left(j\right)}\right)&=\lim_{\substack{\rho^{\left(i\right)}\in\left(0,\overline{\rho}\right],\rho^{\left(i\right)}+1-\rho^{\left(j\right)}\to0\\\rho^{\left(j\right)}>\rho^{\left(i\right)}+1}}\frac{6\rho^{\left(i\right)}\rho^{\left(j\right)}-3\left(\rho^{\left(i\right)}\right)^{2}-3\rho^{\left(i\right)}}{6\rho^{\left(i\right)}\rho^{\left(j\right)}}\\
    &=\lim_{\substack{\rho^{\left(i\right)}\in\left(0,\overline{\rho}\right],\rho^{\left(i\right)}+1-\rho^{\left(j\right)}\to0\\\rho^{\left(j\right)}>\rho^{\left(i\right)}+1}}\frac{6\left(\rho^{\left(i\right)}+1\right)-3\rho^{\left(i\right)}-3}{6\left(\rho^{\left(i\right)}+1\right)}\\
    &=\frac{1}{2}.
\end{align*}
Therefore we obtain the continuity of $f_{\mathbb{D}_{0}}$.
\end{proof}

\begin{lemma}\label{LemmaFunctionG}
$f_{\mathbb{G}}\left(\rho^{\left(i\right)},\rho^{\left(j\right)}\right)$ is continuous.
\end{lemma}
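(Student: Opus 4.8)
The plan is to follow verbatim the strategy already used for $f_{\mathbb{D}_{0}}$ in Lemma \ref{LemmaFunctionD}. On the interior of each region appearing in the definition of $f_{\mathbb{G}}$ the expression is a ratio of polynomials whose denominator (namely $6\rho^{(i)}\rho^{(j)}$, or $2\rho^{(i)}$, or $2\rho^{(j)}$, or a nonzero constant) is bounded away from zero; hence $f_{\mathbb{G}}$ is automatically continuous there, and the entire problem reduces to checking that the one-sided limits agree along the finitely many curves separating adjacent regions.

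First I would enumerate these boundary curves inside $\Theta_{\rho}=\left[0,\overline{\rho}\right]^{2}$: the two coordinate axes together with the origin; the lines $\rho^{(i)}=1$ and $\rho^{(j)}=1$, where the split between the $\left(0,1\right]$ and $\left(1,\overline{\rho}\right]$ subcases occurs; the diagonal $\rho^{(i)}=\rho^{(j)}$; and the two lines $\rho^{(j)}=\rho^{(i)}+1$ and $\rho^{(i)}=\rho^{(j)}+1$, across which the comparison between $\rho^{(i)}$ and $\rho^{(j)}\pm1$ changes sign. A useful preliminary reduction is the symmetry $f_{\mathbb{G}}\left(\rho^{(i)},\rho^{(j)}\right)=f_{\mathbb{G}}\left(\rho^{(j)},\rho^{(i)}\right)$, inherited from the symmetry of the limit covariance matrix $\mathbb{G}$ in its indices and also readable directly off the case list; it lets me verify only the boundaries lying in, say, $\left\{\rho^{(i)}\ge\rho^{(j)}\right\}$ and then transfer the conclusion to the reflected half. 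At each such boundary I would compute the limits of the two (or more) formulas that meet there and confirm they coincide, and at the corner points where three or more regions abut — the origin, the point $\left(\rho^{(i)},\rho^{(j)}\right)=\left(1,1\right)$, the intersections of the diagonal with $\rho^{(i)}=1$ or $\rho^{(j)}=1$, and the points where $\rho^{(i)}=\rho^{(j)}+1$ meets $\rho^{(j)}=1$ — I would examine the limit along each incoming region separately, exactly as the eleven-item case analysis is organised in the proof of Lemma \ref{LemmaFunctionD}.

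The main obstacle is the approach to the axes and to the origin. There the two-positive-argument formulas carry the vanishing factor $6\rho^{(i)}\rho^{(j)}$ in the denominator, so one faces an indeterminate $0/0$ form; the substance of the verification is that the cubic numerator vanishes to precisely the right order in the variable tending to zero, leaving a finite limit equal to the explicit axis value (for instance $1-\tfrac{\rho^{(j)}}{2}$ when $\rho^{(j)}\in\left(0,1\right]$, or $\tfrac{1}{2\rho^{(j)}}$ when $\rho^{(j)}\in\left(1,\overline{\rho}\right]$). Concretely, letting $\rho^{(i)}\downarrow0$ with $\rho^{(j)}$ fixed, one shows the numerator equals $6\rho^{(i)}\rho^{(j)}$ times the axis value plus a remainder that is $o\left(\rho^{(i)}\right)$, which is a short expansion in each case.

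Beyond this single analytic point, the difficulty is purely combinatorial: the number of regions and of multiply-adjacent corners is larger than for $f_{\mathbb{D}_{0}}$, so the symmetry reduction and a systematic tabulation recording which regions meet along each boundary are what keep the bookkeeping manageable. Once every boundary and corner limit has been matched, continuity on all of $\Theta_{\rho}$ follows, completing the proof.
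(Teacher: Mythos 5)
Your proposal is correct in substance --- $f_{\mathbb{G}}$ is piecewise rational with denominators bounded away from zero on the interior of each region, it is symmetric in its two arguments, and continuity reduces to matching one-sided limits across the finitely many boundary curves, with the only analytic content being the $0/0$ cancellation as one argument tends to zero. However, you attack $f_{\mathbb{G}}$ head-on, which means working through all fifteen branches of its definition and the full lattice of boundary lines ($\rho^{(i)}=0$, $\rho^{(j)}=0$, $\rho^{(i)}=1$, $\rho^{(j)}=1$, the diagonal, and $\rho^{(i)}=\rho^{(j)}\pm1$) together with their multiple intersections. The paper takes a shortcut you did not anticipate: since $f_{\mathbb{D}_{0}}$ has already been shown continuous in Lemma \ref{LemmaFunctionD}, it suffices to prove continuity of the \emph{sum} $f_{\mathbb{G}}+f_{\mathbb{D}_{0}}$, and this sum is exactly the quantity $\mathbf{K}^{(i,j)}=\mathbf{G}^{(i,j)}+\mathbf{D}_{0}^{(i,j)}$ already computed in closed form in Appendix A, which collapses to only eight branches with a markedly simpler boundary geometry (in particular, it is identically $1$ on the whole line $\rho^{(i)}=0$, so one entire axis check becomes trivial). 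The paper then runs the same eleven-point boundary verification you describe, but on this simpler function. Your route is self-contained and would go through --- the symmetry reduction you propose is valid and does cut the work roughly in half --- but it is combinatorially heavier; the paper's decomposition buys a shorter verification by reusing both the continuity of $f_{\mathbb{D}_{0}}$ and the intermediate computation of $\mathbf{K}$ that was needed anyway to identify $G(x)$.
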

\begin{proof}
Since the continuity of $f_{\mathbb{D}_{0}}$ is shown in Lemma \ref{LemmaFunctionD}, it is sufficient to show the continuity of $f_{\mathbb{G}}+f_{\mathbb{D}_{0}}$. Note that
\begin{align*}
    &\left(f_{\mathbb{G}}+f_{\mathbb{D}_{0}}\right)\left(\rho^{\left(i\right)},\rho^{\left(j\right)}\right)\\
    &=\begin{cases}
    1& \text{ if }\rho^{\left(i\right)}=0,\\
    1-\frac{\rho^{\left(i\right)}}{2}&\text{ if }\rho^{\left(i\right)}\in\left(0,1\right],\rho^{\left(j\right)}=0,\\
    \frac{1}{2\rho^{\left(i\right)}}&\text{ if }\rho^{\left(i\right)}\in\left(1,\overline{\rho}\right],\rho^{\left(j\right)}=0,\\
    \frac{\left(\rho^{\left(j\right)}\right)^{2}+\rho^{\left(j\right)}}{2\rho^{\left(i\right)}\rho^{\left(j\right)}}& \text{ if }\rho^{\left(i\right)}\in\left(\rho^{\left(j\right)}+1,\overline{\rho}\right],\rho^{\left(j\right)}>0,\\
    \frac{\left(\rho^{\left(i\right)}-\rho^{\left(j\right)}\right)^3 - 3\left(\rho^{\left(i\right)}\right)^2 + 6\rho^{\left(i\right)}\rho^{\left(j\right)} + 3 \rho^{\left(i\right)}- 1}{6\rho^{\left(i\right)}\rho^{\left(j\right)}}&\text{ if }\rho^{\left(i\right)}\in\left(1,\overline{\rho}\right], \rho^{\left(i\right)}\in \left(\rho^{\left(j\right)},\rho^{\left(j\right)}+1\right],\\
    \frac{-3\left(\rho^{\left(i\right)}\right)^{2}+6\rho^{\left(i\right)}\rho^{\left(j\right)}+3\rho^{\left(i\right)}-1}{6\rho^{\left(i\right)}\rho^{\left(j\right)}}&\text{ if }\rho^{\left(i\right)}\in\left(1,\overline{\rho}\right],\rho^{\left(i\right)}\le \rho^{\left(j\right)},\\
    \frac{-3\left(\rho^{\left(i\right)}\right)^{2}\rho^{\left(j\right)}+3\rho^{\left(i\right)}\left(\rho^{\left(j\right)}\right)^{2}+6\rho^{\left(i\right)}\rho^{\left(j\right)}-\left(\rho^{\left(j\right)}\right)^{3}}{6\rho^{\left(i\right)}\rho^{\left(j\right)}}&\text{ if }\rho^{\left(i\right)}\in\left(0,1\right],\rho^{\left(i\right)}>\rho^{\left(j\right)},\\
    \frac{6\rho^{\left(i\right)}\rho^{\left(j\right)}-\left(\rho^{\left(i\right)}\right)^{3}}{6\rho^{\left(i\right)}\rho^{\left(j\right)}}&\text{ if }\rho^{\left(i\right)}\in\left(0,1\right],\rho^{\left(i\right)}\le\rho^{\left(j\right)}.
    \end{cases}
\end{align*}
We check the continuity of $f_{\mathbb{G}}+f_{\mathbb{D}_{0}}$ at 
(i) $\rho^{\left(i\right)}=\rho^{\left(j\right)}=0$,  
(ii) $\rho^{\left(i\right)}\in\left(0,1\right)$ and $\rho^{\left(j\right)}=0$,
(iii) $\rho^{\left(i\right)}=1$ and $\rho^{\left(j\right)}=0$, 
(iv) $\rho^{\left(i\right)}\in\left(1,\overline{\rho}\right]$ and $\rho^{\left(j\right)}=0$,
(v) $\rho^{\left(i\right)}\in\left(1,\overline{\rho}\right]$ and $\rho^{\left(i\right)}=\rho^{\left(j\right)}+1$,
(vi) $\rho^{\left(i\right)}\in\left(0,1\right)$ and $\rho^{\left(i\right)}=\rho^{\left(j\right)}$,
(vii) $\rho^{\left(i\right)}=1$ and $\rho^{\left(j\right)}\in\left(0,1\right)$,
(viii) $\rho^{\left(i\right)}=\rho^{\left(j\right)}=1$,
(ix) $\rho^{\left(i\right)}=1$ and $\rho^{\left(j\right)}\in\left(1,\overline{\rho}\right]$,
(x) $\rho^{\left(i\right)}\in\left(1,\overline{\rho}\right]$ and $\rho^{\left(i\right)}=\rho^{\left(j\right)}$,
(xi) $\rho^{\left(i\right)}=0$, $\rho^{\left(j\right)}\in\left(0,\overline{\rho}\right]$. 

\noindent
For (i), we obtain that
\begin{align*}
    \left.\left(f_{\mathbb{G}}+f_{\mathbb{D}_{0}}\right)\left(\rho^{\left(i\right)},\rho^{\left(j\right)}\right)\right|_{\rho^{\left(i\right)}=\rho^{\left(j\right)}=0}&=1,\\
    \lim_{\rho^{\left(i\right)}\downarrow0,\rho^{\left(j\right)}=0}\left(f_{\mathbb{G}}+f_{\mathbb{D}_{0}}\right)\left(\rho^{\left(i\right)},\rho^{\left(j\right)}\right)&=\lim_{\rho^{\left(i\right)}\downarrow0,\rho^{\left(j\right)}=0}\left(1-\frac{\rho^{\left(i\right)}}{2}\right)=1,\\
    \lim_{\substack{\rho^{\left(i\right)}\downarrow0,\rho^{\left(j\right)}\downarrow0,\\\rho^{\left(i\right)}>\rho^{\left(j\right)}}}\left(f_{\mathbb{G}}+f_{\mathbb{D}_{0}}\right)\left(\rho^{\left(i\right)},\rho^{\left(j\right)}\right)&=\lim_{\substack{\rho^{\left(i\right)}\downarrow0,\rho^{\left(j\right)}\downarrow0,\\\rho^{\left(i\right)}>\rho^{\left(j\right)}}}\frac{3\rho^{\left(i\right)}\left(\rho^{\left(j\right)}\right)^{2}-3\left(\rho^{\left(i\right)}\right)^{2}\rho^{\left(j\right)}+6\rho^{\left(i\right)}\rho^{\left(j\right)}-\left(\rho^{\left(j\right)}\right)^{3}}{6\rho^{\left(i\right)}\rho^{\left(j\right)}}\\
    &=1,\\
    \lim_{\substack{\rho^{\left(i\right)}\downarrow0,\rho^{\left(j\right)}\downarrow0,\\\rho^{\left(i\right)}\le\rho^{\left(j\right)}}}\left(f_{\mathbb{G}}+f_{\mathbb{D}_{0}}\right)\left(\rho^{\left(i\right)},\rho^{\left(j\right)}\right)&=\lim_{\substack{\rho^{\left(i\right)}\downarrow0,\rho^{\left(j\right)}\downarrow0,\\\rho^{\left(i\right)}\le\rho^{\left(j\right)}}}\frac{6\rho^{\left(i\right)}\rho^{\left(j\right)}-\left(\rho^{\left(i\right)}\right)^{3}}{6\rho^{\left(i\right)}\rho^{\left(j\right)}}=1,\\
    \lim_{\rho^{\left(i\right)}=0,\rho^{\left(j\right)}\downarrow0}\left(f_{\mathbb{G}}+f_{\mathbb{D}_{0}}\right)\left(\rho^{\left(i\right)},\rho^{\left(j\right)}\right)&=\lim_{\rho^{\left(i\right)}=0,\rho^{\left(j\right)}\downarrow0}1=1.
\end{align*}
For (ii), one has that
\begin{align*}
    &\left.\left(f_{\mathbb{G}}+f_{\mathbb{D}_{0}}\right)\left(\rho^{\left(i\right)},\rho^{\left(j\right)}\right)\right|_{\rho^{\left(i\right)}\in\left(0,1\right),\rho^{\left(j\right)}=0}=1-\frac{\rho^{\left(i\right)}}{2},\\
    &\lim_{\rho^{\left(i\right)}\in\left(0,1\right),\rho^{\left(j\right)}\downarrow0}\left(f_{\mathbb{G}}+f_{\mathbb{D}_{0}}\right)\left(\rho^{\left(i\right)},\rho^{\left(j\right)}\right)\\
    &\quad=\lim_{\rho^{\left(i\right)}\in\left(0,1\right),\rho^{\left(j\right)}\downarrow0}\frac{-3\left(\rho^{\left(i\right)}\right)^{2}\rho^{\left(j\right)}+3\rho^{\left(i\right)}\left(\rho^{\left(j\right)}\right)^{2}+6\rho^{\left(i\right)}\rho^{\left(j\right)}-\left(\rho^{\left(j\right)}\right)^{3}}{6\rho^{\left(i\right)}\rho^{\left(j\right)}}\\
    &\quad=1-\frac{\rho^{\left(i\right)}}{2}.
\end{align*}
For (iii), we can evaluate that
\begin{align*}
    &\left.\left(f_{\mathbb{G}}+f_{\mathbb{D}_{0}}\right)\left(\rho^{\left(i\right)},\rho^{\left(j\right)}\right)\right|_{\rho^{\left(i\right)}=1,\rho^{\left(j\right)}=0}=\frac{1}{2},\\
    &\lim_{\rho^{\left(i\right)}\uparrow1,\rho^{\left(j\right)}=0}\left(f_{\mathbb{G}}+f_{\mathbb{D}_{0}}\right)\left(\rho^{\left(i\right)},\rho^{\left(j\right)}\right)=\lim_{\rho^{\left(i\right)}\uparrow1,\rho^{\left(j\right)}=0}\left(1-\frac{\rho^{\left(i\right)}}{2}\right)=\frac{1}{2},\\
    &\lim_{\substack{\rho^{\left(i\right)}\to1,\rho^{\left(j\right)}\to0\\\rho^{\left(i\right)}\in\left(0,1\right],\rho^{\left(i\right)}>\rho^{\left(j\right)}}}\left(f_{\mathbb{G}}+f_{\mathbb{D}_{0}}\right)\left(\rho^{\left(i\right)},\rho^{\left(j\right)}\right)\\
    &\quad=\lim_{\substack{\rho^{\left(i\right)}\to1,\rho^{\left(j\right)}\to0\\\rho^{\left(i\right)}\in\left(0,1\right],\rho^{\left(i\right)}>\rho^{\left(j\right)}}}\frac{3\rho^{\left(i\right)}\left(\rho^{\left(j\right)}\right)^{2}-3\left(\rho^{\left(i\right)}\right)^{2}\rho^{\left(j\right)}+6\rho^{\left(i\right)}\rho^{\left(j\right)}-\left(\rho^{\left(j\right)}\right)^{3}}{6\rho^{\left(i\right)}\rho^{\left(j\right)}}=\frac{1}{2},\\
    &\lim_{\substack{\rho^{\left(i\right)}\to1,\rho^{\left(j\right)}\to0\\\rho^{\left(i\right)}\in\left(1,\overline{\rho}\right],\rho^{\left(i\right)}\in\left(\rho^{\left(j\right)},\rho^{\left(j\right)}+1\right]}}\left(f_{\mathbb{G}}+f_{\mathbb{D}_{0}}\right)\left(\rho^{\left(i\right)},\rho^{\left(j\right)}\right)\\
    &\quad=\lim_{\substack{\rho^{\left(i\right)}\to1,\rho^{\left(j\right)}\to0\\\rho^{\left(i\right)}\in\left(1,\overline{\rho}\right],\rho^{\left(i\right)}\in\left(\rho^{\left(j\right)},\rho^{\left(j\right)}+1\right]}}\frac{\left(\rho^{\left(i\right)}-\rho^{\left(j\right)}\right)^3 - 3\left(\rho^{\left(i\right)}\right)^2 + 6\rho^{\left(i\right)}\rho^{\left(j\right)} + 3 \rho^{\left(i\right)}- 1}{6\rho^{\left(i\right)}\rho^{\left(j\right)}}\\
    &\quad=\lim_{\substack{\rho^{\left(i\right)}\to1,\rho^{\left(j\right)}\to0\\\rho^{\left(i\right)}\in\left(1,\overline{\rho}\right],\rho^{\left(i\right)}\in\left(\rho^{\left(j\right)},\rho^{\left(j\right)}+1\right]}}\frac{\left(\rho^{\left(i\right)}\right)^3 -3\left(\rho^{\left(i\right)}\right)^{2}\rho^{\left(j\right)}- 3\left(\rho^{\left(i\right)}\right)^2 +3 \rho^{\left(i\right)}- 1}{6\rho^{\left(i\right)}\rho^{\left(j\right)}}+1\\
    &\quad=\lim_{\substack{\rho^{\left(i\right)}\to1,\rho^{\left(j\right)}\to0\\\rho^{\left(i\right)}\in\left(1,\overline{\rho}\right],\rho^{\left(i\right)}\in\left(\rho^{\left(j\right)},\rho^{\left(j\right)}+1\right]}}\frac{\left(\rho^{\left(i\right)}\right)^3- 3\left(\rho^{\left(i\right)}\right)^2 +3 \rho^{\left(i\right)}- 1}{6\rho^{\left(i\right)}\rho^{\left(j\right)}}+\frac{1}{2}\\
    &\quad=\lim_{\substack{\rho^{\left(i\right)}\to1,\rho^{\left(j\right)}\to0\\\rho^{\left(i\right)}\in\left(1,\overline{\rho}\right],\rho^{\left(i\right)}\in\left(\rho^{\left(j\right)},\rho^{\left(j\right)}+1\right]}}\frac{\left(\rho^{\left(i\right)}-1\right)^3}{6\rho^{\left(i\right)}\rho^{\left(j\right)}}+\frac{1}{2}\\
    &\quad=\frac{1}{2},\\
    &\lim_{\substack{\rho^{\left(i\right)}\to1,\rho^{\left(j\right)}\to0\\\rho^{\left(i\right)}\in\left(1,\overline{\rho}\right],\rho^{\left(i\right)}>\rho^{\left(j\right)}+1}}\left(f_{\mathbb{G}}+f_{\mathbb{D}_{0}}\right)\left(\rho^{\left(i\right)},\rho^{\left(j\right)}\right)=\lim_{\substack{\rho^{\left(i\right)}\to1,\rho^{\left(j\right)}\to0\\\rho^{\left(i\right)}\in\left(1,\overline{\rho}\right],\rho^{\left(i\right)}>\rho^{\left(j\right)}+1}}\frac{\left(\rho^{\left(j\right)}\right)^{2}+\rho^{\left(j\right)}}{2\rho^{\left(i\right)}\rho^{\left(j\right)}}=\frac{1}{2},\\
    &\lim_{\rho^{\left(i\right)}\downarrow1,\rho^{\left(j\right)}=0}\left(f_{\mathbb{G}}+f_{\mathbb{D}_{0}}\right)\left(\rho^{\left(i\right)},\rho^{\left(j\right)}\right)=\lim_{\rho^{\left(i\right)}\downarrow1,\rho^{\left(j\right)}=0}\frac{1}{2\rho^{\left(i\right)}}=\frac{1}{2}.
\end{align*}
For (iv), we obtain that
\begin{align*}
    \left.\left(f_{\mathbb{G}}+f_{\mathbb{D}_{0}}\right)\left(\rho^{\left(i\right)},\rho^{\left(j\right)}\right)\right|_{\rho^{\left(i\right)}=1,\rho^{\left(j\right)}=0}&=\frac{1}{2\rho^{\left(i\right)}},\\
    \lim_{\rho^{\left(i\right)}\in\left(1,\overline{\rho}\right],\rho^{\left(j\right)}\downarrow0}\left(f_{\mathbb{G}}+f_{\mathbb{D}_{0}}\right)\left(\rho^{\left(i\right)},\rho^{\left(j\right)}\right)
    &=\lim_{\rho^{\left(i\right)}\in\left(1,\overline{\rho}\right],\rho^{\left(j\right)}\downarrow0}\frac{\left(\rho^{\left(j\right)}\right)^{2}+\rho^{\left(j\right)}}{2\rho^{\left(i\right)}\rho^{\left(j\right)}}=\frac{1}{2\rho^{\left(i\right)}}.
\end{align*}
For (v), it holds that
\begin{align*}
    \left.\left(f_{\mathbb{G}}+f_{\mathbb{D}_{0}}\right)\left(\rho^{\left(i\right)},\rho^{\left(j\right)}\right)\right|_{\rho^{\left(i\right)}\in\left(1,\overline{\rho}\right],\rho^{\left(i\right)}=\rho^{\left(j\right)}+1}&=\frac{1}{2},\\
    \lim_{\substack{\rho^{\left(i\right)}\in\left(1,\overline{\rho}\right],\rho^{\left(j\right)}-\rho^{\left(i\right)}+1\to0\\\rho^{\left(j\right)}>\rho^{\left(i\right)}-1 }}\left(f_{\mathbb{G}}+f_{\mathbb{D}_{0}}\right)\left(\rho^{\left(i\right)},\rho^{\left(j\right)}\right)&=\frac{1}{2},\\
    \lim_{\substack{\rho^{\left(i\right)}\in\left(1,\overline{\rho}\right],\rho^{\left(j\right)}-\rho^{\left(i\right)}+1\to0\\\rho^{\left(j\right)}\le\rho^{\left(i\right)}-1 }}\left(f_{\mathbb{G}}+f_{\mathbb{D}_{0}}\right)\left(\rho^{\left(i\right)},\rho^{\left(j\right)}\right)&=\lim_{\substack{\rho^{\left(i\right)}\in\left(1,\overline{\rho}\right],\rho^{\left(j\right)}-\rho^{\left(i\right)}+1\to0\\\rho^{\left(j\right)}\le \rho^{\left(i\right)}-1}}\frac{\left(\rho^{\left(j\right)}\right)^{2}+\rho^{\left(j\right)}}{2\rho^{\left(i\right)}\rho^{\left(j\right)}}=\frac{1}{2}.
\end{align*}
For (vi), we have that
\begin{align*}
    &\left.\left(f_{\mathbb{G}}+f_{\mathbb{D}_{0}}\right)\left(\rho^{\left(i\right)},\rho^{\left(j\right)}\right)\right|_{\rho^{\left(i\right)}\in\left(0,1\right),\rho^{\left(i\right)}=\rho^{\left(j\right)}}=1-\frac{\rho^{\left(i\right)}}{6},\\
    &\lim_{\substack{\rho^{\left(i\right)}\in\left(0,1\right),\rho^{\left(i\right)}-\rho^{\left(j\right)}\to0\\\rho^{\left(i\right)}\le\rho^{\left(j\right)}}}\left(f_{\mathbb{G}}+f_{\mathbb{D}_{0}}\right)\left(\rho^{\left(i\right)},\rho^{\left(j\right)}\right)=1-\frac{\rho^{\left(i\right)}}{6},\\
    &\lim_{\substack{\rho^{\left(i\right)}\in\left(0,1\right),\rho^{\left(i\right)}-\rho^{\left(j\right)}\to0\\\rho^{\left(i\right)}>\rho^{\left(j\right)}}}\left(f_{\mathbb{G}}+f_{\mathbb{D}_{0}}\right)\left(\rho^{\left(i\right)},\rho^{\left(j\right)}\right)\\
    &=\lim_{\substack{\rho^{\left(i\right)}\in\left(0,1\right),\rho^{\left(i\right)}-\rho^{\left(j\right)}\to0\\\rho^{\left(i\right)}>\rho^{\left(j\right)}}}\frac{-3\left(\rho^{\left(i\right)}\right)^{2}\rho^{\left(j\right)}+3\rho^{\left(i\right)}\left(\rho^{\left(j\right)}\right)^{2}+6\rho^{\left(i\right)}\rho^{\left(j\right)}-\left(\rho^{\left(j\right)}\right)^{3}}{6\rho^{\left(i\right)}\rho^{\left(j\right)}}\\
    &=1-\frac{\rho^{\left(i\right)}}{6}.
\end{align*}
For (vii), it holds that 
\begin{align*}
    &\left.\left(f_{\mathbb{G}}+f_{\mathbb{D}_{0}}\right)\left(\rho^{\left(i\right)},\rho^{\left(j\right)}\right)\right|_{\rho^{\left(i\right)}=1,\rho^{\left(j\right)}\in\left(0,1\right)}=\frac{1}{2}+\frac{\rho^{\left(j\right)}}{2}-\frac{\left(\rho^{\left(j\right)}\right)^{2}}{6},\\
    &\lim_{\substack{\rho^{\left(i\right)}\to1,\rho^{\left(j\right)}\in\left(0,1\right)\\\rho^{\left(i\right)}>\rho^{\left(j\right)}}}\left(f_{\mathbb{G}}+f_{\mathbb{D}_{0}}\right)\left(\rho^{\left(i\right)},\rho^{\left(j\right)}\right)=\frac{1}{2}+\frac{\rho^{\left(j\right)}}{2}-\frac{\left(\rho^{\left(j\right)}\right)^{2}}{6},\\
    &\lim_{\substack{\rho^{\left(i\right)}\to1,\rho^{\left(j\right)}\in\left(0,1\right)\\\rho^{\left(i\right)}\in\left(\rho^{\left(j\right)},\rho^{\left(j\right)}+1\right]}}\left(f_{\mathbb{G}}+f_{\mathbb{D}_{0}}\right)\left(\rho^{\left(i\right)},\rho^{\left(j\right)}\right)\\
    &\quad=\lim_{\substack{\rho^{\left(i\right)}\to1,\rho^{\left(j\right)}\in\left(0,1\right)\\\rho^{\left(i\right)}\in\left(\rho^{\left(j\right)},\rho^{\left(j\right)}+1\right]}}\frac{\left(\rho^{\left(i\right)}-\rho^{\left(j\right)}\right)^3 - 3\left(\rho^{\left(i\right)}\right)^2 + 6\rho^{\left(i\right)}\rho^{\left(j\right)} + 3 \rho^{\left(i\right)}- 1}{6\rho^{\left(i\right)}\rho^{\left(j\right)}}\\
    &\quad=\lim_{\substack{\rho^{\left(i\right)}\to1,\rho^{\left(j\right)}\in\left(0,1\right)\\\rho^{\left(i\right)}\in\left(\rho^{\left(j\right)},\rho^{\left(j\right)}+1\right]}}\frac{\left(\rho^{\left(i\right)}-\rho^{\left(j\right)}\right)^{3} - 1}{6\rho^{\left(i\right)}\rho^{\left(j\right)}}+1\\
    &\quad=\lim_{\substack{\rho^{\left(i\right)}\to1,\rho^{\left(j\right)}\in\left(0,1\right)\\\rho^{\left(i\right)}\in\left(\rho^{\left(j\right)},\rho^{\left(j\right)}+1\right]}}\frac{\left(\rho^{\left(i\right)}\right)^{3}-3\left(\rho^{\left(i\right)}\right)^{2}\rho^{\left(j\right)}+3\rho^{\left(i\right)}\left(\rho^{\left(j\right)}\right)^{2}-\left(\rho^{\left(j\right)}\right)^{3}-1}{6\rho^{\left(i\right)}\rho^{\left(j\right)}}+1\\
    &=\frac{1}{2}+\frac{\rho^{\left(j\right)}}{2}-\frac{\left(\rho^{\left(j\right)}\right)^{2}}{6}.
\end{align*}
For (viii), we have that
\begin{align*}
    &\left.\left(f_{\mathbb{G}}+f_{\mathbb{D}_{0}}\right)\left(\rho^{\left(i\right)},\rho^{\left(j\right)}\right)\right|_{\rho^{\left(i\right)}=\rho^{\left(j\right)}=1}=\frac{5}{6},\\
    &\lim_{\substack{\rho^{\left(i\right)}\to1,\rho^{\left(j\right)}\to1\\\rho^{\left(i\right)}\in\left(0,1\right),\rho^{\left(i\right)}\le\rho^{\left(j\right)}}}\left(f_{\mathbb{G}}+f_{\mathbb{D}_{0}}\right)\left(\rho^{\left(i\right)},\rho^{\left(j\right)}\right)=\frac{5}{6},\\
    &\lim_{\substack{\rho^{\left(i\right)}\to1,\rho^{\left(j\right)}\to1\\\rho^{\left(i\right)}\in\left(0,1\right),\rho^{\left(i\right)}>\rho^{\left(j\right)}}}\left(f_{\mathbb{G}}+f_{\mathbb{D}_{0}}\right)\left(\rho^{\left(i\right)},\rho^{\left(j\right)}\right)=\frac{5}{6},\\
    &\lim_{\substack{\rho^{\left(i\right)}\to1,\rho^{\left(j\right)}\to1\\\rho^{\left(i\right)}\in\left(1,\overline{\rho}\right],\rho^{\left(i\right)}\in\left(\rho^{\left(j\right)},\rho^{\left(j\right)}+1\right]}}\left(f_{\mathbb{G}}+f_{\mathbb{D}_{0}}\right)\left(\rho^{\left(i\right)},\rho^{\left(j\right)}\right)=\frac{5}{6},\\
    &\lim_{\substack{\rho^{\left(i\right)}\to1,\rho^{\left(j\right)}\to1\\\rho^{\left(i\right)}\in\left(1,\overline{\rho}\right],\rho^{\left(i\right)}\le\rho^{\left(j\right)}}}\left(f_{\mathbb{G}}+f_{\mathbb{D}_{0}}\right)\left(\rho^{\left(i\right)},\rho^{\left(j\right)}\right)=\frac{5}{6}
\end{align*}
For (ix), we obtain that
\begin{align*}
    \left.\left(f_{\mathbb{G}}+f_{\mathbb{D}_{0}}\right)\left(\rho^{\left(i\right)},\rho^{\left(j\right)}\right)\right|_{\rho^{\left(i\right)}=1,\rho^{\left(j\right)}\in\left(1,\overline{\rho}\right]}&=1-\frac{1}{6\rho^{\left(j\right)}},\\
    \lim_{\substack{\rho^{\left(i\right)}\to1,\rho^{\left(j\right)}\in\left(1,\overline{\rho}\right]\\\rho^{\left(i\right)}\in\left(0,1\right)}}\left(f_{\mathbb{G}}+f_{\mathbb{D}_{0}}\right)\left(\rho^{\left(i\right)},\rho^{\left(j\right)}\right)&=1-\frac{1}{6\rho^{\left(j\right)}},\\
    \lim_{\substack{\rho^{\left(i\right)}\to1,\rho^{\left(j\right)}\in\left(1,\overline{\rho}\right]\\\rho^{\left(i\right)}\in\left(1,\overline{\rho}\right]}}\left(f_{\mathbb{G}}+f_{\mathbb{D}_{0}}\right)\left(\rho^{\left(i\right)},\rho^{\left(j\right)}\right)&=\lim_{\substack{\rho^{\left(i\right)}\to1,\rho^{\left(j\right)}\in\left(1,\overline{\rho}\right]\\\rho^{\left(i\right)}\in\left(1,\overline{\rho}\right]}}\frac{-3\left(\rho^{\left(i\right)}\right)^{2}+6\rho^{\left(i\right)}\rho^{\left(j\right)}+3\rho^{\left(i\right)}-1}{6\rho^{\left(i\right)}\rho^{\left(j\right)}}\\
    &=1-\frac{1}{6\rho^{\left(j\right)}}.
\end{align*}
For (x), it holds that
\begin{align*}
    \left.\left(f_{\mathbb{G}}+f_{\mathbb{D}_{0}}\right)\left(\rho^{\left(i\right)},\rho^{\left(j\right)}\right)\right|_{\rho^{\left(i\right)}=\rho^{\left(j\right)}\in\left(1,\overline{\rho}\right]}&=\frac{3\left(\rho^{\left(i\right)}\right)^{2}+3\rho^{\left(i\right)}-1}{6\left(\rho^{\left(i\right)}\right)^{2}},\\
    \lim_{\substack{\rho^{\left(i\right)}-\rho^{\left(j\right)}\to0\\\rho^{\left(i\right)}\le\rho^{\left(j\right)}}}\left(f_{\mathbb{G}}+f_{\mathbb{D}_{0}}\right)\left(\rho^{\left(i\right)},\rho^{\left(j\right)}\right)&=\frac{3\left(\rho^{\left(i\right)}\right)^{2}+3\rho^{\left(i\right)}-1}{6\left(\rho^{\left(i\right)}\right)^{2}},\\
    \lim_{\substack{\rho^{\left(i\right)}-\rho^{\left(j\right)}\to0\\\rho^{\left(i\right)}>\rho^{\left(j\right)}}}\left(f_{\mathbb{G}}+f_{\mathbb{D}_{0}}\right)\left(\rho^{\left(i\right)},\rho^{\left(j\right)}\right)&=\frac{3\left(\rho^{\left(i\right)}\right)^{2}+3\rho^{\left(i\right)}-1}{6\left(\rho^{\left(i\right)}\right)^{2}}.
\end{align*}
For (xi), we obtain that
\begin{align*}
    \left.\left(f_{\mathbb{G}}+f_{\mathbb{D}_{0}}\right)\left(\rho^{\left(i\right)},\rho^{\left(j\right)}\right)\right|_{\rho^{\left(i\right)}=0,\rho^{\left(j\right)}\in\left(0,\overline{\rho}\right]}&=1,\\
    \lim_{\rho^{\left(i\right)}\downarrow0,\rho^{\left(j\right)}\in\left(0,\overline{\rho}\right]}\left(f_{\mathbb{G}}+f_{\mathbb{D}_{0}}\right)\left(\rho^{\left(i\right)},\rho^{\left(j\right)}\right)=1.
\end{align*}
Hence we have the continuity of $f_{\mathbb{G}}$.
\end{proof}

\end{document}